\DeclareSymbolFontAlphabet{\mathbb}{AMSb} % to ensure that the meaning of \mathbb does not change
\newcommand{\Ad}{\mathrm{Ad}}
\newcommand{\alg}{\mathrm{alg}} 
\newcommand{\CE}{\mathrm{CE}}
\newcommand{\Diff}{\mathrm{Diff}}
\newcommand{\ev}{\mathrm{ev}}
\newcommand{\id}{\mathrm{id}}
\newcommand{\mot}{\mathrm{mot}}
\newcommand{\Nov}{\mathrm{Nov}} 
\newcommand{\op}{\mathrm{op}}
\newcommand{\Sp}{\mathrm{Sp}}
\newcommand{\trc}{\mathrm{trc}}
\DeclareMathOperator{\Mor}{Mor}
\DeclareMathOperator{\cok}{cok}
\DeclareMathOperator{\End}{End}
\DeclareMathOperator{\eq}{eq}
\DeclareMathOperator{\Ext}{Ext}
\DeclareMathOperator{\fib}{fib} 
\DeclareMathOperator{\cof}{cof} 
\DeclareMathOperator*{\colim}{colim}
\DeclareMathOperator{\fil}{fil}
\DeclareMathOperator{\Fil}{Fil}
\DeclareMathOperator{\Fun}{Fun}
\DeclareMathOperator{\gr}{gr}
\DeclareMathOperator{\Gr}{Gr}
\DeclareMathOperator{\HH}{HH}
\DeclareMathOperator{\im}{im}
\DeclareMathOperator{\K}{K}
\DeclareMathOperator{\SH}{SH}
\DeclareMathOperator{\TC}{TC}
\DeclareMathOperator{\TF}{TF}
\DeclareMathOperator{\THH}{THH}
\DeclareMathOperator{\Tor}{Tor}
\DeclareMathOperator{\Tot}{Tot}
\DeclareMathOperator{\TP}{TP}
\DeclareMathOperator{\Wh}{Wh}
\newcommand{\CP}{\mathbb{C}\mathrm{P}}
\newcommand{\GM}{\mathrm{GM}}
\newcommand{\bC}{\mathbb{C}}
\newcommand{\bR}{\mathbb{R}}
\newcommand{\bE}{\mathbb{E}}
\newcommand{\bF}{\mathbb{F}} 
\newcommand{\BP}{\mathrm{BP}}
\newcommand{\bS}{\mathbb{S}}
\newcommand{\bT}{\mathbb{T}}
\newcommand{\BU}{\mathrm{BU}}
\newcommand{\bZ}{\mathbb{Z}}
\newcommand{\cA}{\mathcal{A}}
\newcommand{\can}{\mathrm{can}}
\newcommand{\cC}{\mathcal{C}}
\newcommand{\cotensor}{\mathbin{\square}}
\newcommand{\ko}{\mathrm{ko}}
\newcommand{\ksp}{\mathrm{ksp}}
\newcommand{\ku}{\mathrm{ku}}
\newcommand{\KU}{\mathrm{KU}}
\newcommand{\Lalg}{L^{\alg}}
\newcommand{\<}{\langle}
\newcommand{\longto}{\longrightarrow}
\newcommand{\MU}{\mathrm{MU}}
\newcommand{\MUP}{\mathrm{MUP}}
\newcommand{\olA}{\overline{A}}
\newcommand{\olC}{\overline{C}}
\newcommand{\olSigma}{\overline{\Sigma}}
\newcommand{\olV}{\overline{V}}
\newcommand{\tmf}{\mathrm{tmf}}
\renewcommand{\:}{\colon}
\renewcommand{\>}{\rangle}
\newtheorem{theorem}{Theorem}[section]
\newtheorem{thmx}{Theorem}
\newtheorem{proposition}[theorem]{Proposition}
\newtheorem{lemma}[theorem]{Lemma}
\newtheorem{corollary}[theorem]{Corollary}
\theoremstyle{definition}
\newtheorem{definition}[theorem]{Definition}
\newtheorem{convention}[theorem]{Convention}
\newtheorem{construction}[theorem]{Construction}
\newtheorem{notation}[theorem]{Notation}
\theoremstyle{remark}
\newtheorem{remark}[theorem]{Remark}
\newtheorem{example}[theorem]{Example}
\numberwithin{equation}{section}
\numberwithin{figure}{section}
\numberwithin{table}{section}
\begin{document}

\title[Algebraic $K$-theory of real topological $K$-theory]
	{Algebraic $K$-theory \\ of real topological $K$-theory}

\author{Gabriel Angelini-Knoll}
\email{gja39@case.edu}
\address{Department of Mathematics, Applied Mathematics and Statistics, Case Western Reserve University, 2145 Adelbert Rd, Cleveland, OH 44106, USA}

\author{Christian Ausoni}
\email{ausoni@math.univ-paris13.fr}
\address{Laboratoire de G\'eom\'etrie, Analyse et Applications, LAGA, UMR 7539, Universit\'e Sorbonne Paris Nord, CNRS, Villetaneuse 93430, France}

\author{John Rognes}
\email{rognes@math.uio.no}
\address{Department of Mathematics, University of Oslo, P.O. Box 1053 Blindern, NO-0316 Oslo, Norway}

\shortauthors{G. Angelini-Knoll, Ch. Ausoni and J. Rognes} 
    
% ’MSC classification, keywords and grant acknowledgements’
\subjclass{Primary
14F30, % $p$-adic cohomology, crystalline cohomology
19D50, % Computations of higher $K$-theory of rings
19D55, % $K$-theory and homology, cyclic homology and cohomology
55Q51, % $v_n$-periodicity
55P43; % Spectra with additional structure
Secondary
13D03, % (Co)homology of commutative rings and algebras
19E20, % Relations of $K$-theory with cohomology theories
55N15, % Topological $K$-theory
55Q10, % Stable homotopy groups
55T25. % Generalized cohomology and spectral sequences in AT
}
\keywords{Topological cyclic homology, algebraic K-theory, real topological K-theory, chromatic
redshift, telescope conjecture, prismatic cohomology, syntomic cohomology,
even filtration, motivic filtration, evenly free descent,
homotopy fixed points, Tate construction, Adams spectral sequence, Novikov
spectral sequence, $v_2$-periodicity.}
\thanks{This project has received funding from
the European Union's Horizon 2020 research and innovation programme
under the Marie Sk\l{}odowska-Curie grant agreement No 1010342555.
\thinspace \includegraphics[scale=0.1]{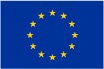}}

%\linenumbers

\begin{abstract}
We determine the $A(1)$-homotopy of the topological cyclic homology
of the connective real $K$-theory spectrum~$\ko$.  The answer has an
associated graded that is a free $\bF_2[v_2^4]$-module of rank~$52$,
on explicit generators in stems $-1 \le * \le 30$.  The calculation is
achieved by using prismatic and syntomic cohomology of~$\ko$ as introduced
by Hahn--Raksit--Wilson, extending work of Bhatt--Morrow--Scholze from the
case of classical commutative rings to $\bE_\infty$-rings.  A new feature
in our case is that there are nonzero differentials in the motivic
spectral sequence from syntomic cohomology to topological cyclic homology.
\end{abstract}

\maketitle
%\tableofcontents

\section{Introduction} \label{sec:intro}

Work of Hahn--Raksit--Wilson~\cite{HRW} extends the notions of prismatic
cohomology and syntomic cohomology to the setting of $\bE_\infty$-rings.
This produces a new tool for computing topological cyclic homology and,
consequently, algebraic $K$-theory.  In the present paper, we use this
tool to compute the $A(1)$-homotopy (cf.~Notation~\ref{not: A(1) def})
of the algebraic $K$-theory of the $\bE_\infty$-ring~$\ko$, known as
connective real $K$-theory.  Throughout, we work at the
prime $p=2$.

This paper continues the program from~\cite{AR02}, examining
the arithmetic of ring spectra through the lens of telescopically
localized algebraic $K$-theory.  In particular, the second and third
authors put forward, in a bundle of predictions known as the redshift
conjectures~\cite{AR08}, the assertion that algebraic $K$-theory
increases chromatic complexity by one.  This has now been proven in a
qualitative form, for all $\bE_\infty$-rings, in a \emph{tour de force}
by Burklund--Schlank--Yuan~\cite{BSY}, building on~\cite{Yua26},
\cite{CMNN24} and~\cite{LMMT24}.

\begin{figure}
\resizebox{\textwidth}{!}{ \input{A1TCko.inp} }
\caption{$\bF_2[v_2]$-basis for the syntomic cohomology modulo
	$(2, \eta, v_1)$ of~$\ko$, with lines of slope $-1$, $1$
	and~$1/3$ indicating multiplication by $\partial$, $\eta$
	and~$\nu$, respectively \label{fig:A1TCko}}
\end{figure}

To better understand the arithmetic of $\bE_\infty$-rings, it is still
desirable to prove more quantitative forms of the redshift conjectures,
such as the one originally appearing as the ``chromatic redshift problem''
in~\cite{Rog00}.  In the present paper, we solve this problem in the
case of~$\ko$.  Explicitly, we prove the following theorem.

\begin{thmx}[(Theorem~\ref{thm:A1htpyTCko})] \label{thm:A}
The $A(1)$-homotopy $A(1)_* \TC(\ko)$ of the topological cyclic homology
of~$\ko$ is a $\bZ/4[v_2^{32}]$-module.  The associated graded
\begin{align*}
\Gr_{\mot}^* A(1)_* \TC(\ko)
	&\cong \bF_2\{ v_2^i \mid 0 \le i \equiv 0, 1 \mod 4 \} \\
	&\qquad\oplus \bF_2[v_2] \{\partial,
		\varsigma, \nu, \lambda'_1, w, \lambda_2 \} \\
	&\qquad\oplus \bF_2[v_2] \{ \varsigma \nu, \nu^2, \partial \lambda_2,
		\nu w, \nu \lambda_2, \lambda'_1 \lambda_2 \} \\
	&\qquad\oplus \bF_2\{ v_2^j \nu^2 w \mid 0 \le j \equiv 2, 3 \mod 4 \}
\end{align*}
of its descending motivic filtration $\Fil_{\mot}^{\star} A(1)_* \TC(\ko)$ 
is a finitely generated and free $\bF_2[v_2^4]$-module of rank~$52$.
Here $|\partial| = -1$, $|\varsigma| = 1$, $|\nu| = 3$, $|\lambda'_1|
= |w| = 5$, $|v_2| = 6$ and $|\lambda_2| = 7$.
\end{thmx}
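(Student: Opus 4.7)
The plan is to invoke the Hahn--Raksit--Wilson motivic filtration $\Fil_{\mot}^{\star} \TC(\ko)$ from~\cite{HRW}, whose associated graded consists of the syntomic cohomology spectra $\bZ_2(i)(\ko)$, and to run the resulting motivic spectral sequence after smashing with $A(1)$, i.e., after reducing modulo $(2, \eta, v_1)$.

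First I would compute $A(1)_*$ of the syntomic cohomology of~$\ko$. The starting point is $\THH(\ko)$ with its motivic filtration, which may be accessed by cyclotomic descent from~$\ku$ (using the $C_2$-Galois extension $\ko \to \ku$) together with the mod $(2, \eta, v_1)$ THH computation of~$\ku$ available from~\cite{AR02}. Feeding this into the Nikolaus--Scholze fiber sequence, one obtains $\TC(\ko)$ as the fiber of $\can - \varphi \colon \TC^-(\ko) \to \TP(\ko)$. Passing to motivic graded pieces and then to $A(1)$-homotopy should reproduce the $\bF_2[v_2]$-module displayed in Figure~\ref{fig:A1TCko}, with the classes $\partial$, $\varsigma$, $\nu$, $\lambda'_1$, $w$, $\lambda_2$ living in explicit weights and filtrations.

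Next I would run the motivic spectral sequence $E_2^{s,t} \Rightarrow A(1)_{t-s} \TC(\ko)$ whose $E_2$-page is the syntomic cohomology just computed. Unlike in the commutative-ring case, this spectral sequence has nonzero differentials, and identifying them is the crux of the calculation. I would pin them down by combining naturality along the ring maps $\ko \to \ku$ (whose motivic spectral sequence is controlled by~\cite{AR02}) and $\ko \to H\bF_2$, multiplicativity of the spectral sequence, compatibility with the cyclotomic Frobenius, and Bockstein comparisons. Proving that no further differentials occur after the identified ones is the main technical obstacle, since in this chromatic range there is no cheap vanishing-line argument available; a sparseness analysis of weight versus stem, anchored on the generators in Figure~\ref{fig:A1TCko}, should in the end force the spectral sequence to degenerate at a controlled page.

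Once $E_\infty$ is known, I would reconstruct $A(1)_* \TC(\ko)$. The enumeration of $52$ generators over $\bF_2[v_2^4]$ follows directly from the surviving classes. To upgrade the result to a $\bZ/4[v_2^{32}]$-module, I would resolve hidden extensions: the $\bZ/4$-extension should be detected by the mod~$4$ Bockstein applied to a multiplicative extension in the motivic spectral sequence, while the $v_2^{32}$-periodicity refining the formal $v_2^4$-periodicity of the associated graded should come from an eightfold hidden $v_2$-extension, established either by comparison with the telescopically localized $\TC(\ko)$ or directly from the cyclotomic Frobenius action. I expect this identification of the precise differentials, together with these hidden extensions, to form the delicate heart of the proof.
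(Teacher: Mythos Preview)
Your broad outline (compute syntomic cohomology mod~$(2,\eta,v_1)$, then run the motivic spectral sequence) matches the paper, but several of the mechanisms you name are either wrong or would not work.

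\textbf{The $\bZ/4[v_2^{32}]$-module structure.} You have this backwards. It does not arise from hidden extensions in the motivic spectral sequence. It is intrinsic to the coefficients: by \cite{BEM17} each $A(1)[ij]$ admits a $v_2^{32}$-self-map, and $\id\colon A(1)\to A(1)$ has additive order~$4$ (Lemma~\ref{lem:d3v24zero}). Hence $A(1)_* Y$ is a $\bZ/4[v_2^{32}]$-module for \emph{any} spectrum~$Y$. There is nothing to prove here, and no ``eightfold hidden $v_2$-extension'' is involved. The paper in fact does \emph{not} resolve the additive extensions (Remark~\ref{rem:addext}); only the associated graded is determined.

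\textbf{Identifying the $d_3$-differentials.} Your proposed tools --- naturality along $\ko\to\ku$, $\ko\to H\bF_2$, and multiplicativity --- will not produce the key differential $d_3(v_2^2)=\nu^2 w$. For $\ku$ the motivic spectral sequence collapses at~$E_2$ \cite{HRW}*{Corollary~1.3.3}, so comparison along $\ko\to\ku$ cannot detect a nonzero differential. More seriously, $\olA(1)$ is \emph{not} a ring in $\gr_{\ev}^*\bS$-modules (Lemma~\ref{lem: not a ring}), so the motivic spectral sequence is not multiplicative in the way you invoke. The paper instead imports $d_3(v_2^2)=h_{11}^2 w$ and $d_3(v_2^3)=v_2 h_{11}^2 w$ from the Novikov spectral sequence for~$A(1)$ (Lemma~\ref{lem: Nov A1}, proved by comparing Adams and Novikov orders) along the unit $\bS\to\TC(\ko)$, and then propagates $v_2^4$-periodically by letting the Novikov filtration of the endomorphism ring $\End A(1)$ act on the motivic filtration (Lemma~\ref{lem:d3v24zero}, Corollary~\ref{cor:d3isv24periodic}). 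This $\End A(1)$ trick is the substitute for the missing ring structure and is the genuine crux.

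\textbf{Degeneration.} You overstate the difficulty: once syntomic cohomology is computed (Theorem~\ref{thm: syntomic A(1)}), the $E_2$-term is concentrated in motivic filtrations $0\le s\le 3$ and even total degree, so $d_3$ is the \emph{only} possible differential and $E_4=E_\infty$ automatically. No sparseness analysis is needed.

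\textbf{Syntomic input.} The paper does not use $C_2$-Galois descent along $\ko\to\ku$ for the $\THH$ computation; it uses descent along the evenly free map $\THH(\ko)\to\THH(\ku/\MU)$ (Proposition~\ref{prop: evenly free map}) to access the even filtration directly.
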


This calculation is carried out by first computing the syntomic
cohomology with $A(1)$-co\-ef\-fi\-cients, \emph{alias} modulo $(2, \eta,
v_1)$, of connective real $K$-theory.

\begin{thmx}[(Theorem~\ref{thm: syntomic A(1)})] %% \label{thm:B}
The syntomic cohomology modulo $(2, \eta, v_1)$ of~$\ko$ is
\begin{multline*}
\olA(1)_* \gr_{\mot}^* \TC(\ko)
	:= \pi_* ( \olA(1) \otimes \gr_{\mot}^* \TC(\ko) ) \\
\cong \bF_2[v_2] \{1, \partial, \varsigma, \nu, \lambda'_1, w,
	\lambda_2, \varsigma \nu, \nu^2, \partial \lambda_2, \nu w,
	\nu \lambda_2, \lambda'_1 \lambda_2, \nu^2 w \}  \,,
\end{multline*}
where the (stem, motivic filtration) bidegrees of the $\bF_2[v_2]$-module
generators are as displayed in Figure~\ref{fig:A1TCko} and
Table~\ref{tab:A1TCkogens}.
\end{thmx}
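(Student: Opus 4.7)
The plan is to apply the Hahn--Raksit--Wilson motivic filtration on $\TC(\ko)$, whose associated graded is the syntomic cohomology, and to compute this associated graded after smashing with $\olA(1)$.

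First, I would set up the motivic filtration on $\TC(\ko)$ from HRW and identify $\gr_{\mot}^* \TC(\ko)$ with a concrete syntomic complex built from the Nygaard-completed prismatic cohomology of~$\ko$, equipped with its Nygaard filtration and Frobenius structure. This requires verifying that $\ko$ falls within HRW's evenness framework at the prime~$2$, presumably via its relation to $\ku$ or to $\MU$-based descent, so that the even filtration provides the desired motivic filtration on $\THH(\ko)$ and its cyclotomic refinements.

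Second, I would analyze the reduction modulo $(2, \eta, v_1)$. The key simplification is that $\olA(1) \otimes \ko$ is a very small module, so the prismatic cochains of $\ko$ with $\olA(1)$-coefficients can be identified explicitly, for instance as a Koszul-type complex on elements coming from $\pi_* \ko$ and the generators of the relevant $\bE_\infty$-cotangent complex. The $\olA(1)$-reduction of the syntomic complex is then computed as the fiber of the difference of Frobenius and the canonical map on this Koszul complex, which with $\olA(1)$-coefficients is of tractable size.

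Third, I would match the resulting module to the 14 generators displayed in Figure~\ref{fig:A1TCko}. The unit $1$ and the boundary $\partial$ are visible in low motivic filtration; $\nu$ is the image of the Hopf invariant one class; $\varsigma, \lambda'_1, w, \lambda_2$ correspond to cotangent-complex classes and cyclotomic refinements in motivic filtration~one; the polynomial generator $v_2$ emerges as a Bott-type class of topological degree~$6$ and motivic filtration~$1$; and the filtration-two generators arise as the indicated products together with the new classes $\partial \lambda_2$ and $\nu^2 w$. A count against the syntomic complex confirms that exactly these 14 $\bF_2[v_2]$-module generators appear.

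The main obstacle will be to identify the HRW syntomic complex of $\ko$ with $\olA(1)$-coefficients precisely enough to read off the stated basis, and to verify \emph{freeness} as an $\bF_2[v_2]$-module — in particular, that every $v_2$-multiplication is nonzero and that no hidden additive or multiplicative extensions merge the generators. This amounts to careful simultaneous control of the Frobenius-equalizer construction and the Nygaard filtration, and is the heart of the calculation.
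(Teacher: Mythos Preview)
Your proposal is more of a strategic outline than a proof, and where it is specific it contains errors. Most concretely: you say $v_2$ has motivic filtration~$1$, but in fact $\|v_2\| = (6,0)$, i.e.\ motivic filtration~$0$; this matters because the freeness over $\bF_2[v_2]$ is a filtration-$0$ phenomenon, not a filtration-$1$ one. More seriously, the phrase ``identify the prismatic cochains \dots\ as a Koszul-type complex'' hides the entire computation. The paper does not obtain a closed-form syntomic complex for~$\ko$; it computes the relevant homotopy groups via spectral sequences.

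The paper's actual route is as follows. First, $\olA(1)_* \gr_{\mot}^* \THH(\ko) \cong \Lambda(\lambda'_1,\lambda_2)\otimes\bF_2[\mu]$ is established by descent along the evenly free map $\THH(\ko)\to\THH(\ku/\MU)$. Second, and this is the heart of the calculation you are skipping, the $\bT$-Tate and $\bT$-homotopy fixed point spectral sequences converging to $\olV(2)\otimes\olC\eta$-homotopy of $\gr_{\mot}^*\TP(\ko)$ and $\gr_{\mot}^*\TC^{-}(\ko)$ are run explicitly, with differentials such as $d^2(\varepsilon_2)=t\mu$, $d^6(t^{-2})=t\lambda'_1$, $d^8(t^{-3})=t\lambda_2$ determined by detection results (the unit images of $\nu$, $w$, $v_2$ in $A(1)_*\TC^{-}(\ko)$ are detected by $t\lambda'_1$, $t\lambda_2$, $t\mu$) and naturality along $c\colon\ko\to\ku$. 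Third, the cyclotomic Frobenius $\varphi_2$ is identified with inverting~$\mu$, again by comparison with the $\ku$ case. Fourth, $\gr_{\mot}^*\TC(\ko)$ is computed as the equalizer of $\can$ and $\varphi$, which splits into explicit isomorphisms and a zero map. Finally, the $v_2$-Bockstein spectral sequence from $\olV(2)\otimes\olC\eta$ to $\olA(1)$ collapses for degree reasons, giving freeness over $\bF_2[v_2]$ essentially for free --- so the ``main obstacle'' you identify is not where the work lies. The naming of the fourteen generators (e.g.\ that $\varsigma\nu$ is detected by $\partial\lambda'_1$) requires separate arguments using the map from $\TC(\bS)\simeq\bS\oplus\Sigma\bC P^\infty_{-1}$.
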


See Notation~\ref{not:FilGr} for the algebraic
filtration~$\Fil_{\mot}^{\star} (-)$ on $A(1)_* \TC(\ko)$ and its
associated graded $\Gr_{\mot}^* (-)$, see Definition~\ref{def:syntomic}
for the spectrum level filtration $\fil_{\mot}^{\star} (-)$ on
$\TC(\ko)$ and its associated graded $\gr_{\mot}^* (-)$, and see
Construction~\ref{con:A1V1Ceta} for the meaning of $\olA(1) \otimes (-)
:= \gr_{\ev/\bS}^* A(1) \otimes_{\gr_{\ev}^* \bS} (-)$.

As a consequence, we determine the $A(1)$-homotopy of the algebraic
$K$-theory of~$\ko$ and of its $2$-completion $\ko^\wedge_2$.

\begin{thmx}[(Theorems~\ref{thm:k2-complete} and~\ref{thm:k})] \label{thm:C}
There are exact sequences of $\bZ/4[v_2^{32}]$-modules
\begin{align*}
0 \to \Sigma^3 \bF_2
	\longto A(1)_* \K(\ko)
	&\overset{\trc}\longto A(1)_* \TC(\ko)
	\longto \bF_2\{\partial, \varsigma\} \to 0 \\
\intertext{and}
0 \to \Sigma^1 \bF_2 \oplus \Sigma^3 \bF_2
        \longto A(1)_* \K(\ko^\wedge_2)
        &\overset{\trc}\longto A(1)_* \TC(\ko)
        \longto \bF_2\{\partial, \varsigma\} \to 0 \,,
\end{align*}
with $|\partial| = -1$ and $|\varsigma| = 1$.
\end{thmx}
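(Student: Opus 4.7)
The plan is to invoke the Dundas--Goodwillie--McCarthy (DGM) theorem for the $0$-connected unit maps $\ko \to H\bZ$ and $\ko^\wedge_2 \to H\bZ_2$, each an isomorphism on~$\pi_0$. DGM supplies cartesian squares
\[
\begin{tikzcd}
\K(\ko) \arrow[r] \arrow[d, "\trc"'] & \K(\bZ) \arrow[d, "\trc"] \\
\TC(\ko) \arrow[r] & \TC(\bZ)
\end{tikzcd}
\qquad
\begin{tikzcd}
\K(\ko^\wedge_2) \arrow[r] \arrow[d, "\trc"'] & \K(\bZ_2) \arrow[d, "\trc"] \\
\TC(\ko^\wedge_2) \arrow[r] & \TC(\bZ_2)
\end{tikzcd}
\]
so that the fiber and cofiber of the left vertical trace agree with those of the right vertical trace. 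Since $A(1)$-coefficients are $2$-complete we have $A(1)_* \TC(\ko^\wedge_2) = A(1)_* \TC(\ko)$, so the theorem reduces to identifying the fiber and cofiber of the cyclotomic trace on $\bZ$ (respectively $\bZ_2$) in $A(1)$-coefficients and then assembling the result into a long exact sequence with Theorem~A.

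The next step is to invoke the classical $2$-primary computations of $\K(\bZ)$, $\K(\bZ_2)$, $\TC(\bZ)$ and $\TC(\bZ_2)$ due to B\"okstedt--Madsen, Rognes, and Rognes--Weibel, together with the known behaviour of the cyclotomic trace in low degrees. At $p = 2$ the map $\K(\bZ_2) \to \TC(\bZ_2)$ is an equivalence outside a narrow window; in $A(1)$-coefficients its cokernel contributes two classes in stems~$-1$ and~$1$, which along $\TC(\ko) \to \TC(\bZ)$ are identified with the generators $\partial$ and $\varsigma$ of $A(1)_* \TC(\ko)$ displayed in Figure~\ref{fig:A1TCko}, while its fiber contributes $\Sigma^3\bF_2$ to the kernel via the long exact sequence of the cartesian square.

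For the 2-complete version, the additional difference between $\K(\bZ)^\wedge_2$ and $\K(\bZ_2)$ is controlled via the localization sequence by $\K(\bQ_2)^\wedge_2$; in $A(1)$-coefficients this supplies the extra $\Sigma^1 \bF_2$ kernel summand. Splicing the resulting four-term sequences from the two cartesian squares with the computation of $A(1)_* \TC(\ko)$ provided by Theorem~A produces the asserted short exact sequences of $\bZ/4[v_2^{32}]$-modules.

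The main obstacle is to verify that $\partial$ and $\varsigma$ -- and no other $\bF_2[v_2]$-module generator of $A(1)_* \TC(\ko)$ from Theorem~A -- lie outside the image of $\trc$. One must track every generator in Figure~\ref{fig:A1TCko} under the push-forward $\TC(\ko) \to \TC(\bZ)$, confirming that all but $\partial$ and $\varsigma$ map to zero and hence lift to $A(1)_* \K(\ko)$ via the cartesian square. Ruling out possible additive extensions against the small kernel $\Sigma^3\bF_2$ (respectively $\Sigma^1 \bF_2 \oplus \Sigma^3 \bF_2$) is the other delicate piece, and together these constitute the technical heart of the argument.
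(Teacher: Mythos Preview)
Your overall strategy via the Dundas--Goodwillie--McCarthy square is the same as the paper's, but the paper executes it more directly. Rather than tracking all $\bF_2[v_2]$-module generators of $A(1)_*\TC(\ko)$ along $\TC(\ko)\to\TC(\bZ)$, the paper identifies the cofiber of $\trc$ explicitly as a very small spectrum. For the $2$-complete case, \cite{HM97}*{Theorem~D} combined with \cite{Dun97} yields a cofiber sequence $\K(\ko^\wedge_2)^\wedge_2 \to \TC(\ko)^\wedge_2 \to \Sigma^{-1} H\bZ_2$, so $A(1)_*$ of the cofiber has exactly four classes, in degrees $-1,1,2,4$. For the integral case the paper does \emph{not} go through $\K(\bZ)\to\TC(\bZ)$ at all; it uses \cite{Rog02}*{Theorem~3.13}, which produces a cofiber sequence $\K(\ko)^\wedge_2 \to \TC(\ko)^\wedge_2 \to X$ together with a second cofiber sequence $\Sigma^{-2}\ku^\wedge_2 \to \Sigma^4\ko^\wedge_2 \to X$ that computes $A(1)_* X = \bF_2\{x_{-1},x_1,x_4\}$. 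In both cases the paper then reads off $A(1)_n \K(\ko)$ (resp.\ $A(1)_n \K(\ko^\wedge_2)$) for $n\le 3$ directly from the $4$-connectedness of $\K(\bS)\to\K(\ko)$ (resp.\ the $2$-connectedness of $\K(\ko^\wedge_2)\to\K(\bZ_2)$ and the known $\K_0,\K_1$ of $\bZ_2$), and exactness forces the map to the cofiber.

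Your plan would ultimately work, but is harder: you would need full computations of $A(1)_*\K(\bZ)$, $A(1)_*\TC(\bZ)$ and the trace between them, whereas the paper only needs the four (resp.\ three) classes in the cofiber. Your device of passing between the integral and $2$-complete cases via a localization sequence and $\K(\bQ_2)$ is unnecessary once the two cofibers are identified separately. Finally, ``ruling out additive extensions against the small kernel'' is a red herring: the statement is an exact sequence, not a splitting, so no extension problem arises at this stage (the paper relegates genuine extension questions to a separate remark).
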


Using this, we show in Corollary~\ref{cor:nomaptotmf} that there is no
$\bE_0$-ring map $\K(\ko) \to \tmf$, i.e., no spectrum map inducing an
isomorphism on $\pi_0$.  In contrast, an $\bE_\infty$-ring map $\K(\ko)
\to E_2$ is guaranteed to exist by~\cite{BSY}.  Here $\tmf$ denotes the
topological modular forms spectrum, and $E_2$ is a suitable height~$2$
Lubin--Tate theory.

As usual, we let $K(n)$ denote the ($2$-primary) height~$n$ Morava
$K$-theory, with coefficient ring $K(n)_* = \bF_2[v_n^{\pm 1}]$, and
let~$L_n$ and~$L_n^f$ denote Bousfield localization at $K(0) \oplus
\dots \oplus K(n)$ and at $T(0) \oplus \dots \oplus T(n)$, respectively,
where $T(m) = v_m^{-1} F(m)$ is the telescope of any $v_m$ self-map of
a type~$m$ finite $2$-local spectrum $F(m)$.

We say that a spectrum~$X$ satisfies the (strong, $2$-primary)
\emph{height~$n$ telescope conjecture} if the canonical map $L_n^f
X \to L_n X$ is an equivalence.  By recent groundbreaking work of
Burklund--Hahn--Levy--Schlank~\cite{BHLS}, we know for each $n\ge2$ that
not all spectra have this property.  However, it is still interesting to
consider the question of which spectra do satisfy this conjecture, see for
example \cite{MR99}*{Conjecture~7.3}.  In Theorem~\ref{thm:telescope} we
deduce from~\cite{HRW} and~\cite{CMNN20} that the spectra $\K(\ko)$,
$\K(\ko^\wedge_2)$ and $\TC(\ko)$ all satisfy the height~$2$ telescope
conjecture.

We say that a spectrum~$X$ has \emph{telescopic complexity~$n$} if the
map $X \to L_n^f X$ is an equivalence in all sufficiently large degrees.
In~\cite{AR08}, the redshift conjecture was phrased in terms of this
property.  In Theorem~\ref{thm:LQ} we apply our Theorems~\ref{thm:A}
and~\ref{thm:C} to confirm that the spectra $\K(\ko)_{(2)}$,
$\K(\ko^\wedge_2)_{(2)}$ and $\TC(\ko)_{(2)}$ (as well as their
$2$-completions) have telescopic complexity~$2$, as predicted.

\medskip
\noindent {\bf Organization.}
In Section~\ref{sec:THHdescent} we give a presentation for the associated
graded $\gr_{\mot}^* \THH(\ko)$ of the motivic filtration on~$\THH(\ko)$,
and calculate its bigraded homotopy modulo~$(2, \eta, v_1)$ and
modulo~$(2, v_1)$.  In Section~\ref{sec:filtering} we compare two filtrations leading to
spectral sequences calculating the associated graded objects $\gr_\mot^*
\TC^{-}(\ko)$ and $\gr_\mot^* \TP(\ko)$ of the motivic filtrations on
$\TC^{-}(\ko)$ and $\TP(\ko)$, with appropriate finite coefficients. In Section~\ref{sec:detection} we show how some
homotopy classes from~$A(1)$ are detected in homotopy modulo~$(2, \eta,
v_1)$ of $\gr_{\mot}^* \TC^{-}(\ko)$.  In Section~\ref{sec:prismatic} we
study the $\bT$-Tate spectral sequence calculating prismatic cohomology,
i.e., the homotopy modulo~$(2, v_1, v_2)$ and~$(2, \eta, v_1, v_2)$
of $\gr_{\mot}^* \TP(\ko)$.  In Section~\ref{sec:syntomic} we study the
$\bT$-homotopy fixed point spectral sequence calculating the homotopy
of $\gr_{\mot}^* \TC^{-}(\ko)$ modulo~$(2, v_1, v_2)$ and~$(2, \eta,
v_1, v_2)$, and use this to calculate syntomic cohomology, i.e., the
homotopy of $\gr_{\mot}^* \TC(\ko)$ modulo~$(2, v_1)$ and~$(2, \eta,
v_1)$.  Finally, in Section~\ref{sec:tc} we determine the differential
pattern in the motivic spectral sequence calculating $A(1)_* \TC(\ko)$,
and use known facts about the cyclotomic trace map to deduce corresponding
results for $A(1)_* \K(\ko^\wedge_2)$ and~$A(1)_* \K(\ko)$.

\medskip
\noindent {\bf Conventions.}
We let $\cA$ and~$\cA^\vee$ denote the mod~$2$ Steenrod algebra and
its dual, respectively.  We set $H_* (X) := H_* (X; \bF_2)$ and write
\[
\nu \: H_* (X) \longto \cA^\vee \otimes H_* (X)
\]
for the $\cA^\vee$-coaction.  We write $\cA(1)$ for the subalgebra
of~$\cA$ generated by $Sq^1$ and~$Sq^2$, and note that $\cA(1)^\vee
= \bF_2[\xi_1, \xi_2]/(\xi_1^4, \xi_2^2)$.

We say that a spectrum~$X$ is \emph{even} if its homotopy groups $\pi_*(X)
= X_*$ are concentrated in even degrees. 
 
Let $\bZ^{\op}$ be the category whose objects are integers, such that
$\Mor_{\bZ^{\op}}(n,m) = *$ if $n\ge m$ and $= \varnothing$ otherwise.
Let $\bZ^{\delta}$ be the integers as a discrete category.  Given a
presentably symmetric monoidal stable $\infty$-category~$\cC$, we write
\[
\cC^{\fil} := \Fun(\bZ^{\op}, \cC)
	\qquad\text{and}\qquad
\cC^{\gr} := \Fun(\bZ^{\delta}, \cC) \,.
\]
Given $I^{\star} \in \cC^{\fil}$ and $J^* \in \cC^{\gr}$, we write
$I^w := I(w)$ and $J^w := J(w)$.  We recall that there is a symmetric monoidal
functor
\[
	\gr^*\: \cC^{\fil} \longto \cC^{\gr}
\] 
defined on an object~$I^{\star}$ by $\gr^w I = I^w /I^{w+1}$. 
As in the
notation above, for consistency we use the superscript~$\star$ as in
$I^{\star}$ for a filtered object, the superscript~$*$ as in $J^*$
for a graded object, and a superscript~$\bullet$ as in $K^{\bullet}$
for a cosimplicial object.
In Section 3 we also use increasing filtrations and
homological grading, in which case $\fil_w := \fil^{-w}$ and $\gr_w := \gr^{-w}$.

We use the terminology from \cite{Isa19}*{Definition~4.1.2} for (hidden)
extensions in spectral sequences. We use the term algebra spectral sequence in the sense of~\cite{HR24}*{Definition~4.9}, and
we will also consider module spectral sequences over such algebra
spectral sequences. 

In Sections~\ref{sec:THHdescent}--\ref{sec:syntomic}, we use the following
conventions:
We write $\THH(A/R) = |[q] \mapsto A \otimes_R \dots \otimes_R A|$ for
the relative topological Hochschild homology of an~$R$-algebra~$A$, with
$q+1$ copies of~$A$ in simplicial degree~$q$, and simply write~$\THH(A)$
when $R = \bS$.
We implicitly $2$-complete each of the following invariants:
$\THH := \THH(-)^\wedge_2$, $\TC^{-} := ( \THH(-)^{h\bT}
)^\wedge_2$, $\TP := ( \THH(-)^{t\bT} )^\wedge_2$,
$\TC := \TC(-)^\wedge_2$ and $\K := \K(-)^\wedge_2$.
We write $\ko$, $\ku$, $\KU$, $\MU$, $\MUP$, $\bS$ and~$\BP$ for the
$2$-completions of connective real $K$-theory, connective complex
$K$-theory, periodic complex $K$-theory, complex cobordism, periodic
complex cobordism, the sphere spectrum and the Brown--Peterson spectrum,
respectively.
We will simply write~$\bZ$ for the $2$-adic integers.
We write $\Sp_2$ for the $\infty$-category of $2$-complete
spectra with symmetric monoidal product $\otimes$.  Note
that our smash product is implicitly $2$-completed in
Sections~\ref{sec:THHdescent}--\ref{sec:syntomic}, so that $\otimes_R :=
(- \otimes_R - )^\wedge_2$ for any $2$-complete $\bE_\infty$-ring~$R$, and we omit~$R$ from the notation when $R$ is the $2$-complete
sphere spectrum.
We also write $\otimes$ for the (underived) tensor product over the
$2$-adic integers and expect the intended meaning to be clear from
context.
We write~$\bT$ for the circle regarded as the group of complex numbers
of modulus~$1$, and set $\cC^{B\bT} := \Fun(B\bT, \cC)$. We write $C_2 = \{\pm 1\}\subset \bT$ for the subgroup of order $2$.

In Section~\ref{sec:tc}, as in the present Section~\ref{sec:intro},
we explicitly include notation for $2$-completion, especially
in the argument of $\K(-)$.  Note that the canonical map
$\TC(\ko)^\wedge_2\to \TC(\ko^\wedge_2)^\wedge_2$ is an equivalence by
\cite{Mad94}*{pp.~274--275} (cf.~\cite{NS18}*{pp.~351--352}), so we can
omit $2$-completion in the argument of $\TC(-)^\wedge_2$.

\acknowledgements{
The first author would like to thank Jeremy Hahn and Dylan Wilson
for helpful conversations during the course of this project.
We thank Robert Bruner for freely sharing his minimal resolution
calculator~{\tt ext}, which is used in Section~\ref{sec:detection}.
We thank Ishan Levy for pointing out the proof by descent of
Theorem~\ref{thm:telescope}. 
We also thank the anonymous referees for useful comments, which have improved the paper. 
}
%\medskip
%{\bf Acknowledgments.} 
%The first author would like to thank Jeremy Hahn and Dylan Wilson
%for helpful conversations during the course of this project.
%We thank Robert Bruner for freely sharing his minimal resolution
%calculator~{\tt ext}, which is used in Section~\ref{sec:detection}.
%We thank Ishan Levy for pointing out the proof by descent of
%Theorem~\ref{thm:telescope}. 
%We also thank the anonymous referees for useful comments, which have improved the paper. 
%This project has received funding from
%the European Union's Horizon 2020 research and innovation programme
%under the Marie Sk\l{}odowska-Curie grant agreement No 1010342555.
%\thinspace \includegraphics[scale=0.1]{EU-symbol}

\section{Hochschild homology and motivic filtrations} \label{sec:THHdescent}

We first introduce filtrations on $\THH$, $\TC^{-}$, $\TP$ and~$\TC$.
The reader is encouraged to read \cites{BHM93, BM94, BM95, HN20} for
background on these invariants and~\cite{HRW} for a thorough account
of the filtrations that we use in this paper.

\begin{definition}
Given a map $f \: A \to B$ of $2$-complete $\bE_\infty$-rings, we write
$C^{\bullet}(B/A)$ for the associated cosimplicial \emph{Amitsur complex}
with $C^q(B/A) = B^{\otimes_A q+1}$.
\end{definition}
 
Recall that there is a map of $\bE_\infty$-rings $c \: \ko \to \ku$ called
the \emph{complexification map}, where $\ko_* = \bZ[\eta, A, B]/(2\eta,
\eta^3, \eta A, A^2-4B)$, $\ku_* = \bZ[u]$, $A \mapsto 2 u^2$ and $B
\mapsto u^4$, with $|\eta| = 1$, $|A| = 4$, $|B| = 8$ and $|u| = 2$.

\begin{lemma}\label{lem:generators-choice}
There exists an $\bE_\infty$-ring map $\MU \to \ku$.  We can choose the
generators of $\MU_* = \pi_*(\MU) = \bZ[x_i \mid i\ge1]$ so that
$x_1 \mapsto u$ and $x_i \mapsto 0$ for each $i\ge2$.  Here $|x_i| = 2i$.
\end{lemma}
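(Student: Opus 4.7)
The plan is twofold: establish existence of an $\bE_\infty$ ring map $\MU \to \ku$, and then choose polynomial generators appropriately.

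For existence, I would invoke a known refinement of Quillen's complex orientability of~$\ku$ to an $\bE_\infty$ ring map. Such refinements are available via, for instance, Hopkins--Lawson's strictly commutative complex orientation theory, Joachim's $\bE_\infty$ model for complex $K$-theory, or the $\bE_\infty$ Thom spectrum framework of Ando--Blumberg--Gepner--Hopkins--Rezk. Any of these yields the required $\bE_\infty$ ring map $f \: \MU \to \ku$, which then $2$-completes to produce the map used under the paper's conventions.

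For the generator adjustment, fix any polynomial presentation $\MU_* = \bZ[x_i \mid i \ge 1]$ with $|x_i| = 2i$. Since $\ku_* = \bZ[u]$ is concentrated in even degrees, $f_*(x_i) = a_i u^i$ for unique integers~$a_i$. I would first argue $a_1 = \pm 1$: the map $f$ encodes a complex orientation of~$\ku$, so the canonical generator of $\pi_2 \MU$ arising from $\bC P^1 \hookrightarrow \bC P^\infty$ is sent under $f$ to a generator of $\pi_2 \ku = \bZ\{u\}$ by the defining property of a complex orientation. After possibly replacing $x_1$ by $-x_1$, we may thus assume $a_1 = 1$, i.e., $x_1 \mapsto u$. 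Then, for each $i \ge 2$, I set $x_i' := x_i - a_i x_1^i \in \MU_{2i}$. The substitution $(x_1, x_2, x_3, \ldots) \mapsto (x_1, x_2', x_3', \ldots)$ is upper-triangular with unit diagonal (with respect to the natural grading), so $\{x_1, x_2', x_3', \ldots\}$ is again a set of polynomial generators of~$\MU_*$, and by construction $f_*(x_i') = a_i u^i - a_i u^i = 0$ for each $i \ge 2$.

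The main obstacle is the $\bE_\infty$-level existence statement; promoting the classical $\bE_1$-complex orientation of $\ku$ to an $\bE_\infty$ map requires genuine input from obstruction theory or $\bE_\infty$ Thom spectrum theory. Once this is granted, the identification $a_1 = \pm 1$ and the subsequent adjustment of higher generators are elementary manipulations with the polynomial ring~$\MU_*$.
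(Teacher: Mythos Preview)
Your proposal is correct and follows the same two-step outline as the paper, but the details differ in both steps. For existence, the paper cites Hahn--Yuan specifically, constructing $\bE_\infty$ maps $\MU \to \MUP \to \KU$ and then passing to connective covers; your alternative references are equally valid. For the $\pi_2$ isomorphism, the paper argues via Postnikov $k$-invariants: both $\MU$ and $\ku$ have first $k$-invariant the nonzero class in $H^3(H\bZ;\bZ) \cong \bZ/2$, so $f_* \: \pi_2\MU \to \pi_2\ku$ must be multiplication by an odd integer, hence a $2$-adic unit under the paper's standing $2$-completion convention, and one rescales $x_1$ by that unit. Your orientation argument reaches the sharper conclusion $a_1 = \pm1$, but as stated it is slightly loose: the defining property of a complex orientation is the cohomological restriction condition on $\tilde\ku^2(\bC P^1) \cong \pi_0\ku$, not a direct statement about $\pi_2\ku$. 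A clean way to complete it is to observe that any orientation of $\ku$ differs from the standard one by a coordinate change with leading coefficient in $(\ku_0)^\times = \{\pm1\}$, and this leading coefficient is exactly the factor by which the $xy$-coefficient $a_{11}$ (a generator of $\pi_2\MU$) is scaled. The adjustment of the higher generators is identical in both proofs.
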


\begin{proof}
The proof of \cite{HY20}*{Theorem~4.3} shows the existence of $\bE_\infty$-ring maps $\MU \to \MUP \to \KU$.  Passing to connective covers gives
the factorization through~$\ku$.  Recall our running convention that $\bZ$ denotes the $2$-adic integers.  Both $\MU$ and~$\ku$ have first
Postnikov $k$-invariant the generator of $H^3(H\bZ; \bZ) \cong \bZ/2$,
so $\bZ\{x_1\} = \pi_2(\MU) \to \pi_2(\ku) = \bZ\{u\}$ must be an
isomorphism.  
Dividing the first choice of~$x_1$ by a unit, we can assume
that $x_1 \mapsto u$.  For each $i\ge2$ we can then subtract a multiple
of $x_1^i$ from the first choice of~$x_i$ to ensure that $x_i \mapsto 0$.
\end{proof}

We hereafter fix $\MU \to \ku$ and the~$x_i$ as in the lemma above.
This provides $\bE_\infty$-ring maps $C^q(\MU/\bS) \to C^q(\ku/\ko)$,
compatibly for all $q \ge 0$.  We write
\[ 
\tau_{\ge \star} \: \Sp_2\longto \Sp_2^{\fil}
\]
for the monoidal Whitehead filtration \cite{Lur17}*{Proposition~1.4.3.6,
Example~2.2.1.10}. Note that the circle group~$\bT$ acts freely on $S^1 \subset S^3 \subset \dots \subset S^\infty = E\bT$. We write $F(S^{2\ell+1}_+, \THH)^{\bT}$ for the $\ell$-th
order approximate fixed points functor, taking $A/R$ to $F(S^{2\ell+1}_+,
\THH(A/R))^{\bT} \simeq \lim_{\CP^\ell} \THH(A/R)$, as in~\cite{BR05}*{\S2}
and~\cite{AKACHR}*{\S7}.  Here $\CP^\ell \subset \CP^\infty = B\bT$.
We also write $\THH^{hC_2}$ and~$\THH^{tC_2}$ for the functors obtained
by applying $C_2$-homotopy fixed points and the $C_2$-Tate construction
to $\THH$, respectively.

\begin{definition} \label{def: motivic filtrations on THH TN and TP}
For $F \in \{\THH, \TC^{-}, \TP, F(S^{2\ell+1}_+,\THH)^{\bT},\THH^{hC_2},\THH^{tC_2}\}$ we define $\bE_\infty$
algebras
\begin{align*}
\fil_{\mot}^{\star} F(\ko) & := \Tot \left(
	\tau_{\ge 2\star} F(C^{\bullet}(\ku/\ko)/C^{\bullet}(\MU/\bS))
	\right) \\
\gr_{\mot}^* F(\ko) & := \gr^* \left(
	\fil_{\mot}^{\star} F(\ko) \right)
\end{align*}
in $\Sp_2^{\fil}$ and $\Sp_2^{\gr}$, respectively.  Here $\Tot$ denotes
the totalization of a cosimplicial object.  We refer to $\gr_{\mot}^w
\TP(\ko)$ as the weight~$w$ \emph{prismatic cohomology} spectrum
of~$\ko$, and $\pi_* \gr_{\mot}^w \TP(\ko)$ as its the weight~$w$
prismatic cohomology groups.
\end{definition}

\begin{remark}\label{rem-well-behaved} 
We will show in Proposition~\ref{prop: equiv of gr modules} that 
\[ \THH(C^q(\ku/\ko) / C^q(\MU/\bS))=C^{q}(\THH(\ku/\MU)/\THH(\ko))\] 
is even and $2$-torsion free. It follows that $F(C^q(\ku/\ko) / C^q(\MU/\bS))$ is even for each functor
$F \in \{\THH, \TC^{-}, \TP, F(S^{2\ell+1}_+,\THH)^{\bT},\THH^{hC_2},\THH^{tC_2}\}$ 
and $q\ge 0$. 
Hence the double-speed Whitehead filtration in
the definition of $\fil_{\mot}^{\star} F(\ko)$ will be well-behaved for each $F$ in the list above. 
\end{remark}
\begin{remark}\label{rem-agreement} 
We will verify in Proposition~\ref{prop: relation to
motivic filtration} that $\pi_*\gr_{\mot}^* \TP(\ko)$ agrees
with prismatic cohomology in the sense of \cite{HRW}*{Definition~1.3.4},
building on~\cite{BMS19}. 
\end{remark}
\begin{remark}\label{rem-class-t} 
Note that when $\THH(A/R)$ is even, we are considering the filtered spectrum defined by $\tau_{\ge
2\star} \TP(A/R) = \tau_{\ge 2\star} (\THH(A/R)^{t\bT})$, which is not
the same as the filtered spectrum $(\tau_{\ge 2\star} \THH(A/R))^{t\bT}$.  In particular,
the class~$t$ in the associated graded of the $\bT$-Tate filtration,
lifting the generator of $\pi_{-2} (H\bF_2)^{t\bT}$, has weight~$-1$
(and motivic filtration~$0$, see below).  The same applies for~$t$
lifting the generator of $\pi_{-2} (H\bF_2)^{h\bT}$, in the associated
graded of the $\bT$-homotopy fixed point filtration for $\TC^{-}(A/R)
= \THH(A/R)^{h\bT}$.
\end{remark}

We also fix terminology for gradings.

\begin{definition} \label{def:gradings}
Given $M^* \in \Sp_2^{\gr}$ and $x \in \pi_n M^w$ we say that $x$ has
\emph{stem}~$n$, \emph{weight}~$w$ and \emph{motivic filtration} $2w-n$,
and write $\|x\| = (n,2w-n)$.  In \cite{HRW}*{Definition~1.4.2}, stem
and motivic filtration are called degree and Adams weight, respectively.
We refer to the (Adams indexed, Bousfield--Kan) spectral sequence
\[
E_2^{n,2w-n} = \pi_n \gr_{\mot}^w F(\ko) \Longrightarrow \pi_n F(\ko)
\]
for each $F \in \{\THH, \TC^{-}, \TP,F(S^{2\ell+1}_+,\THH)^{\bT},\THH^{hC_2}, \THH^{tC_2}\}$ as the \emph{motivic spectral
sequence}. We will show in Lemma~\ref{convergence} that the motivic spectral sequence converges in each case. 

We plot each motivic spectral sequence with stem on the horizontal axis
and motivic filtration on the vertical axis.  Given this convention,
if $\|x\| = (n,2w-n)$ then $\|d_r(x)\| = (n-1,2w-n+r)$.  Note that we
write $E_r^{n,2w-n}$ where it is also standard to write $E_r^{2w-n,2w}$
in the literature.  With these conventions, the motivic spectral sequence
$E_r^{n,s}$-term is concentrated in internal degrees $n+s = 2w$ and,
consequently, $d_r = 0$ for all even integers $r\ge 2$.
\end{definition}

Our first aim is to show that the filtrations from Definition~\ref{def:
motivic filtrations on THH TN and TP} agree with the motivic filtrations
considered in~\cite{HRW}*{Definition~4.2.1}.
%%%% 
\begin{definition}[\cite{HRW}*{Definition~2.2.15(4)}]
A map $A \to B$ of $\bE_\infty$-rings is  \emph{$2$-completely evenly free}
if for each nonzero even $\bE_\infty$ $A$-algebra $C$ of bounded $2$-power torsion the pushout $B
\otimes_A C$ is equivalent as a $C$-module to a nonzero wedge sum of
even suspensions of~$C$. In the future, we will omit ``$2$-completely'' and simply refer to such maps of $\bE_{\infty}$-rings as evenly free maps, to be consistent with our conventions. 
\end{definition}

\begin{remark}
This condition on $B \otimes_A C$ is equivalent to $\pi_*(B \otimes_A C)$
being a nonzero free $C_*$-module on generators in even stems.  An evenly
free map $A \to B$ of $\bE_\infty$-rings is $2$-completely eff (= evenly
faithfully flat), in the sense of~\cite{HRW}*{Definition~2.2.15(2)},
as per~\cite{HRW}*{Remark 2.2.16}. 
\end{remark}

\begin{lemma} \label{lem: complexification is an evenly free map}
The complexification map $\ko \to \ku$ is evenly free.
Moreover, for each even $\bE_\infty$ $\ko$-algebra~$C$
there is an isomorphism of $C_*$-algebras
\[
\pi_* (\ku \otimes_{\ko} C) \cong C_*[b_1]/(\hat b_1^2) \,,
\]
where $\hat b_1^2 = b_1^2+c_2b_1 +c_4$ for some $c_2, c_4 \in C_*$
with $|c_2| = 2$ and $|c_4| = 4$.
\end{lemma}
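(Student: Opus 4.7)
The plan is to reduce the computation to the $2$-complete version of \emph{Wood's theorem}, which asserts an equivalence $\ku \simeq \ko \otimes C\eta$ of $\ko$-modules, where $C\eta := \mathrm{cofib}(\eta \: \Sigma \bS \to \bS)$. Equivalently, there is a cofiber sequence
\[
\Sigma \ko \xrightarrow{\eta} \ko \longto \ku
\]
of $\ko$-modules, realizing $\ku$ as a cell $\ko$-module with cells in dimensions $0$ and $2$.

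Given an $\bE_\infty$ $\ko$-algebra $C$, base change along $\ko \to C$ yields a cofiber sequence of $C$-modules
\[
\Sigma C \xrightarrow{\eta} C \longto \ku \otimes_\ko C.
\]
When $C$ is even, multiplication by $\eta \in \pi_1(\bS)$ is null-homotopic as a $C$-module map $\Sigma C \to C$: such maps are classified by $\pi_{-1}(C)$, which vanishes because $C$ is even. Consequently the cofiber sequence splits, giving an equivalence $\ku \otimes_\ko C \simeq C \vee \Sigma^2 C$ of $C$-modules. When $C$ is nonzero this shows that $\pi_*(\ku \otimes_\ko C)$ is a nonzero free $C_*$-module on generators $1 \in \pi_0$ and $b_1 \in \pi_2$, which is precisely what it means for $\ko \to \ku$ to be evenly free.

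To identify the multiplicative structure, I observe that $b_1^2 \in \pi_4(\ku \otimes_\ko C) = C_4 \cdot 1 \oplus C_2 \cdot b_1$, so there exist unique elements $c_2 \in C_2$ and $c_4 \in C_4$, depending on the choice of lift of $b_1$, satisfying $b_1^2 + c_2 b_1 + c_4 = 0$. Setting $\hat b_1^2 := b_1^2 + c_2 b_1 + c_4$, the evident surjection $C_*[b_1]/(\hat b_1^2) \twoheadrightarrow \pi_*(\ku \otimes_\ko C)$ is then an isomorphism by a $C_*$-module rank count in each degree.

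The only real obstacle is citing the appropriate $2$-complete, $\ko$-module-level form of Wood's splitting, which is classical. Sign conventions for $c_2$ and $c_4$ are harmless and can be absorbed into the choice of generators if needed.
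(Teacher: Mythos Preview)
Your proof is correct and follows essentially the same route as the paper: both invoke the Wood cofiber sequence $\Sigma\ko \xrightarrow{\eta} \ko \to \ku$ of $\ko$-modules, base change along $\ko \to C$, and use evenness of~$C$ to kill the $\eta$-map. The paper works at the level of homotopy groups (obtaining a short exact sequence of $C_*$-modules and splitting it by choosing~$b_1$), whereas you split the cofiber sequence at the spectrum level; this is a mild strengthening but not a different idea.

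One small slip: $C$-module maps $\Sigma C \to C$ are classified by $\pi_1(C)$, not $\pi_{-1}(C)$, since $F_C(\Sigma C, C) \simeq \Omega C$. This does not affect your argument, as $\pi_1(C) = 0$ for even~$C$ just the same.
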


\begin{proof}
Recall the Wood (homotopy) cofiber sequence
\begin{equation} \label{eq:Wood}
\Sigma \ko \overset{\eta}\longto \ko
	\overset{c}\longto \ku
	\overset{R}\longto \Sigma^2\ko
\end{equation}
of $\ko$-modules, where we write~$R$ for the map satisfying $R \circ
(u\cdot -) = \Sigma^2 r$, with $r \: \ku \to \ko$ the realification map.
The resulting cofiber sequence
\[
\Sigma C\overset{\eta}\longto C
	\longto \ku \otimes_{\ko} C
	\overset{R}\longto \Sigma^2 C
\]
induces a short exact sequence of $C_*$-modules
\begin{equation} \label{eq: example ses}
0 \to C_* \longto \pi_* (\ku \otimes_{\ko} C)
	\overset{R}\longto \Sigma^2 C_* \to 0 \,,
\end{equation}
since $C$ is even so that $\eta \: \Sigma C_* \to C_*$ is zero.
We conclude that $ \pi_* (\ku \otimes_{\ko} C)$ is concentrated in
even degrees.

A choice of a class $b_1 \in \pi_2(\ku \otimes_{\ko} C)$ with $R(b_1)
= \Sigma^2 1$ is equivalent to a choice of a splitting of~\eqref{eq:
example ses}, so that $\pi_* (\ku \otimes_{\ko} C) \cong C_* \{1, b_1 \}$
is free over~$C_*$ and nonzero if $C \ne 0$.  Writing $\hat b_1^2 = b_1^2
+ c_2 b_1 + c_4$ for the monic polynomial that vanishes in $\pi_4(\ku
\otimes_{\ko} C)$, there is an isomorphism
\[
\pi_* (\ku \otimes_{\ko} C)\cong C_*[b_1]/(\hat b_1^2)
\]
of $C_*$-algebras.
\end{proof}

\begin{remark}
Note that $c_2$ and~$c_4$ in the statement of Lemma~\ref{lem:
complexification is an evenly free map} need not be zero.  For example, when $C
= \ku$ then $b_1$ can be chosen as in \cite{DLR22}*{Lemma~5.1} so that
$\hat b_1^2 = b_1^2-u b_1$. 
\end{remark}

\begin{lemma}\label{criteria-for-eff}
Let $A \to B$ be a map of connective $\bE_\infty$-rings with bounded $2$-power torsion.  If $\pi_*(B
\otimes_A H\pi_0 A)$ is a nonzero, free and even $\pi_0 A$-module,
then $A \to B$ is evenly free.
\end{lemma}
\begin{proof} 
Let $C$ be a nonzero even $\bE_\infty$ $A$-algebra of bounded $2$-power torsion.  Associated to the
double-speed Whitehead filtration $\tau_{\ge2\star} C$ we have an algebra
spectral sequence $E_1(C) \Longrightarrow \pi_*(C)$ with $E_1^{*,w}(C) =
\pi_{*-2w}(H \pi_{2w} C)$.  Similarly, the induced filtration $B \otimes_A
\tau_{\ge2\star} C$ yields a spectral sequence $E_1(B \otimes_A C)
\Longrightarrow \pi_*(B \otimes_A C)$ with $E_1^{*,w}(B \otimes_A C) =
\pi_{*-2w}(B \otimes_A H\pi_{2w} C)$, which we view as a module spectral
sequence over the first one.  Both spectral sequences are conditionally
convergent, since $A$ and~$B$ are assumed to be connective.
A choice of $\pi_0 A$-module basis for $\pi_*(B \otimes_A H\pi_0 A)$,
and the equivalence 
$$
B \otimes_A H\pi_{2w} C
	\simeq (B \otimes_A H\pi_0 A) \otimes_{H\pi_0 A} H\pi_{2w} C \,,
$$
show that~$E_1(B \otimes_A C) \cong \pi_*(B \otimes_A H\pi_0 A)
\otimes_{\pi_0 A} E_1(C)$ is a nonzero free $E_1(C)$-module on generators
in (filtration~$0$ and) even stems.  Hence both spectral sequences are
concentrated in even stems, collapse at $E_1 = E_\infty$, and are strongly
convergent by~\cite{Boa99}*{Theorem~8.10}.  It follows that the $E_1(C)$-module
generators lift to the abutment, giving a $\pi_*(C)$-module basis for
$\pi_*(B \otimes_A C)$.
\end{proof}

\begin{remark}
Without our implicit $2$-completion assumptions and the bounded $2$-power torsion assumption, the same argument as in the proof of the lemma above provides a criterion for a map of connective $\bE_{\infty}$-rings to be evenly free in the sense of~\cite{HRW}*{Definition~2.2.15(3)}.
\end{remark}

\begin{proposition} \label{prop: evenly free map}
The map $\THH(\ko) \to \THH(\ku/\MU)$, induced by the complexification
map $c \: \ko \to \ku$ and the unit map $\bS \to \MU$, is evenly free.
\end{proposition}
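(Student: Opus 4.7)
The plan is to factor the map as
$$\THH(\ko)\xrightarrow{\THH(c)}\THH(\ku)\longrightarrow\THH(\ku)\otimes_{\THH(\MU)}\MU\simeq\THH(\ku/\MU),$$
using the base-change identity $\THH(A/R)\simeq\THH(A)\otimes_{\THH(R)}R$ (a consequence of the formula $\THH(A)\simeq A\otimes\bT$ in $\bE_\infty$ rings) to identify the composite with the given map. Since composites and cobase changes of evenly free maps are evenly free, it will suffice to verify each of the two factors separately.

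For the second factor, which is obtained by cobase change from the augmentation $\THH(\MU)\to\MU$, I would show that $\THH(\MU)\to\MU$ is itself evenly free. The topological Hochschild computation yields $\pi_*\THH(\MU)\cong\MU_*\otimes\Lambda(\sigma x_i\mid i\geq 1)$ with generators $\sigma x_i$ in odd degrees $2i+1$. On any even $\bE_\infty$ $\THH(\MU)$-algebra $E$ these odd classes must vanish, and a Koszul-style resolution of $\MU$ over $\THH(\MU)$ presents $\MU\otimes_{\THH(\MU)}E\simeq E\otimes\Gamma(\tau x_i\mid i\geq 1)$ as an even free $E$-module, with $|\tau x_i|=2i+2$ in even degrees.

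For the first factor, I would combine Lemma~\ref{lem: complexification is an evenly free map}, which gives that $\ko\to\ku$ is evenly free, with the general principle that $\THH$ preserves evenly free maps. Using the cyclic bar presentation $\THH(A)=|[q]\mapsto A^{\otimes q+1}|$, an induction based on Lemma~\ref{lem: complexification is an evenly free map} shows that each termwise map $\ko^{\otimes q+1}\to\ku^{\otimes q+1}$ is evenly free; so for any even $\bE_\infty$ $\THH(\ko)$-algebra $C$ (viewed as $\ko^{\otimes q+1}$-algebra via $\ko^{\otimes q+1}\to\THH(\ko)\to C$) one obtains level-wise even free identifications of $\ku^{\otimes q+1}\otimes_{\ko^{\otimes q+1}}C$ as a $C$-module of rank $2^{q+1}$.

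The main obstacle is the passage from these termwise even free identifications to an even free structure on the realization $\THH(\ku)\otimes_{\THH(\ko)}C\simeq|[q]\mapsto\ku^{\otimes q+1}\otimes_{\ko^{\otimes q+1}}C|$. I expect to handle this with a convergent bar-type spectral sequence whose $E_1$-page is concentrated in even degrees, exploiting the compatibility of the termwise even free generators with the cyclic face and degeneracy maps to ensure that the totalization remains even and free over $C$.
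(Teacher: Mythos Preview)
Your factorization $\THH(\ko)\to\THH(\ku)\to\THH(\ku/\MU)$ is valid, and your argument for the second factor (that $\THH(\MU)\to\MU$ is evenly free via a Koszul/divided-power computation) is essentially correct.  The problem is the first factor: the map $\THH(\ko)\to\THH(\ku)$ is \emph{not} evenly free, so the whole strategy cannot succeed.

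To see this, take $C=\ku$, which is an even $\bE_\infty$ $\THH(\ko)$-algebra via $\THH(\ko)\to\ko\to\ku$.  Using the pushout identity for $\THH$ one gets $\THH(\ku)\otimes_{\THH(\ko)}\ku\simeq\THH(\ku\otimes_{\ko}\ku/\ku)$.  Write $D=\ku\otimes_{\ko}\ku$, so $D_*=\bZ[u,b_1]/(b_1^2-ub_1)$ by Lemma~\ref{lem: complexification is an evenly free map}.  The Hochschild--to--$\THH$ spectral sequence has $E^2_{s,*}=\HH_s(D_*/\ku_*)$, and
\[
\HH_1(D_*/\ku_*)=\Omega^1_{D_*/\ku_*}=D_*/(2b_1-u)\cdot db_1\cong\bZ[b_1]/(b_1^2)\cdot db_1
\]
is nonzero in internal degrees $2$ and $4$, hence in odd total degrees $3$ and $5$.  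One checks that $\HH_{2k}(D_*/\ku_*)=0$ for $k\ge1$ (the discriminant $u^2$ is a non-zero-divisor), so these odd classes are permanent cycles that cannot be boundaries.  Thus $\pi_3\bigl(\THH(\ku)\otimes_{\THH(\ko)}\ku\bigr)\ne0$, and the pushout is not even.

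A separate issue is your simplicial formula $\THH(\ku)\otimes_{\THH(\ko)}C\simeq\bigl|[q]\mapsto\ku^{\otimes q+1}\otimes_{\ko^{\otimes q+1}}C\bigr|$: geometric realization commutes with $(-)\otimes_{\THH(\ko)}C$, giving levels $\ku^{\otimes q+1}\otimes_{\THH(\ko)}C$, not $\ku^{\otimes q+1}\otimes_{\ko^{\otimes q+1}}C$; these differ because $\ko^{\otimes q+1}\to\THH(\ko)$ is far from an equivalence.

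The paper's proof avoids the obstruction by not factoring through $\THH(\ku)$ at all.  It identifies $\THH(\ku/\MU)\otimes_{\THH(\ko)}C\simeq\THH(\ku\otimes_{\ko}C/\MU\otimes C)$ directly and exploits that $\pi_*(\MU\otimes C)\cong C_*[b_k\mid k\ge1]$ is polynomial, with generators chosen so that $b_1\mapsto b_1$ and $b_k\mapsto0$ for $k\ge2$.  Two K{\"u}nneth spectral sequences then collapse to give an even free $C_*$-module.  The polynomial base $\MU\otimes C$ is exactly what turns the degree~$2$ extension $\ku\otimes_{\ko}C$ into a quotient by a regular sequence $(\hat b_1^2,b_2,b_3,\ldots)$, yielding the divided-power algebra on even classes; over the base $C$ itself there is no such regular sequence, and the odd class $db_1$ survives.
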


\begin{proof}
Since $\ko$, $\ku$ and $\MU$ are connective and have finite type, it follows that $\THH(\ko)$ and $\THH(\ku/\MU)$ have finite type, and therefore have bounded $2$-power
torsion. Hence, by Lemma~\ref{criteria-for-eff}, it suffices to show that
\[
\pi_*(\THH(\ku/\MU) \otimes_{\THH(\ko)} H\bZ)
\]
is a nonzero, free and even $\bZ$-module. The $\bE_\infty$-ring
map $\THH(\ko) \to H\bZ$ can be rewritten as $\THH(\ko/\bS) \to
\THH(H\bZ/H\bZ)$, which is induced by $\ko \to H\bZ$ over $\bS \to H\bZ$.
Then
\begin{align*}
\THH(\ku/\MU) \otimes_{\THH(\ko)} H\bZ
	&\cong \THH(\ku/\MU) \otimes_{\THH(\ko)} \THH(H\bZ/H\bZ) \\
	&\simeq \THH(\ku \otimes_{\ko} H\bZ / \MU \otimes H\bZ) \,,
\end{align*}
where the last equivalence holds because $\THH$ commutes with
pushouts of $\bE_\infty$-rings.

Recall that $\pi_*(\MU \otimes H\bZ) \cong H_*(\MU; \bZ) = \bZ[b_k \mid
k\ge1]$ as $\bZ$-algebras, with $|b_k| = 2k$.  Since $\MU \to \ku$ is
$4$-connected by Lemma~\ref{lem:generators-choice}, and $\bS \to \ko$ is $3$-connected, it follows
that $\MU \otimes H\bZ \to \ku \otimes_{\ko} H\bZ$ is $4$-connected.
Hence the generators~$b_k$ may be chosen so that $\pi_*(\MU \otimes
H\bZ) \to \pi_*(\ku \otimes_{\ko} H\bZ) = \bZ[b_1]/(b_1^2)$ satisfies
$b_1 \mapsto b_1$ and $b_k \mapsto 0$ for $k\ge2$, where $b_1$ in the
target is fixed as in Lemma~\ref{lem: complexification is an evenly free map}.

To determine $\THH$ of $\ku \otimes_{\ko} H\bZ$ relative to $\MU \otimes H\bZ$,
we first study the K{\"u}nneth (or $\Tor$) spectral sequence
\begin{align*}
E^2 &= \Tor_*^{\bZ[b_k \mid k\ge1]} (\bZ[b_1]/(b_1^2), \bZ[b_1]/(b_1^2)) \\
	&\Longrightarrow \pi_*( (\ku \otimes_{\ko} H\bZ) \otimes_{(\MU \otimes H\bZ)}
		(\ku \otimes_{\ko} H\bZ)) \,.
\end{align*}
Since $b_1^2$ and~$b_k$ for $k\ge 2$ act trivially on $\bZ[b_1]/(
b_1^2)$, we compute that
\[
E^2 \cong \bZ[b_1]/(b_1^2)
	\otimes \Lambda(\sigma b_1^2, \sigma b_k \mid k\ge 2)
\]
with $\sigma b_1^2, \sigma b_k \in \Tor_1$, where the triviality of
the square $(\sigma b_1^2)^2$ follows from the fact that the product
in $\Tor$ is given by the shuffle product.  Since the algebra generators
are all in filtration $\le 1$, the K{\"u}nneth spectral sequence collapses
at the $E^2$-term.  Note that this is a homological spectral sequence
associated to an increasing filtration.  Since the $E^{\infty}$-term is
a nonzero free $\bZ$-module and the abutment is also a $\bZ$-module,
we conclude that the abutment is a free and nonzero $\bZ$-module.

We can rule out all potential hidden multiplicative extensions, because
$\Tor_0$ splits off from the abutment and all the classes in K{\"u}nneth
filtration~$1$ are in odd total degree.  Therefore, we have an algebra
isomorphism
\[
D_* := \pi_*((\ku \otimes_{\ko} H\bZ) \otimes_{(\MU \otimes H\bZ)}
	(\ku \otimes_{\ko} H\bZ))
	\cong \bZ[b_1]/( b_1^2) 
	\otimes \Lambda(\sigma b_1^2, \sigma b_k \mid k\ge2) \,.
\]

Second, we apply the K{\"u}nneth spectral sequence
\[
E^2 = \Tor_*^{D_*} (\bZ[b_1]/( b_1^2), \bZ[b_1]/( b_1^2))
	\Longrightarrow \pi_* \THH(\ku \otimes_{\ko} H\bZ/ \MU \otimes H\bZ) \,.
\]
The $E^2$-term of this spectral sequence, namely
\[ 
E^2 = \bZ[b_1]/( b_1^2)
	\otimes \Gamma(\sigma^2 b_1^2, \sigma^2 b_k \mid k\ge2) \,,
\]
is concentrated in even degrees, so the spectral sequence collapses at
the $E^2$-term and the abutment is also nonzero, free and concentrated in even degrees.
\end{proof}

\begin{proposition} \label{prop: equiv of gr modules}
For each $q\ge0$, the spectrum $C^q(\THH(\ku/\MU)/\THH(\ko))$ is
an even $\bE_\infty$ $\ku$-algebra whose homotopy groups are free as
a $\ku_*$-module.
\end{proposition}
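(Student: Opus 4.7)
The plan is to proceed by induction on $q$. For the inductive step $q{-}1 \mapsto q$, Proposition~\ref{prop: evenly free map} (evenly-freeness of $\THH(\ko) \to \THH(\ku/\MU)$) will do essentially all the work, so the main content lies in the base case $q = 0$, which I would handle by a direct K{\"u}nneth spectral sequence computation.

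For the base case, I would use the equivalence $\THH(\ku/\MU) \simeq \ku \otimes_{\ku \otimes_{\MU} \ku} \ku$ and run two K{\"u}nneth spectral sequences in succession. Thanks to the fixed generators, one has an isomorphism of $\MU_*$-modules $\ku_* \cong \MU_*/(x_i \mid i \geq 2)$ (identifying $x_1$ with $u$), so a Koszul resolution of $\ku_*$ over $\MU_*$ produces
\[
\pi_*(\ku \otimes_{\MU} \ku) \cong \ku_* \otimes \Lambda(\sigma x_i \mid i \geq 2),
\]
with $|\sigma x_i| = 2i + 1$. The spectral sequence collapses on $E^2$ since all generators lie in filtration $\leq 1$, and the squares of the exterior generators vanish by the shuffle product formula in $\Tor$. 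The second K{\"u}nneth spectral sequence then gives
\[
\pi_* \THH(\ku/\MU) \cong \ku_* \otimes \Gamma(\sigma^2 x_i \mid i \geq 2),
\]
with $|\sigma^2 x_i| = 2i + 2$, which is $\ku_*$-free and concentrated in even degrees. The $\bE_\infty$ $\ku$-algebra structure comes from the unit map $\ku \to \THH(\ku/\MU)$.

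For the inductive step, assume that $C^{q-1}$ is a nonzero even $\bE_\infty$ $\THH(\ko)$-algebra with $\pi_* C^{q-1}$ free over $\ku_*$. Writing $C^q \simeq C^{q-1} \otimes_{\THH(\ko)} \THH(\ku/\MU)$, Proposition~\ref{prop: evenly free map} supplies a $C^{q-1}$-module equivalence
\[
C^q \simeq \bigvee_{\alpha} \Sigma^{2 n_\alpha} C^{q-1}
\]
for some integers $n_\alpha$. Thus $\pi_* C^q$ is free over $\pi_* C^{q-1}$ on even-stem generators, in particular concentrated in even degrees, and in turn $\ku_*$-free (since a free module over a $\ku_*$-free ring is itself $\ku_*$-free). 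The $\bE_\infty$ $\ku$-algebra structure on $C^q$ is inherited through the canonical $\bE_\infty$ map $\ku \to \THH(\ku/\MU) \to C^q$ into one of the tensor factors.

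The main obstacle is the base case: one must rule out all differentials and hidden extensions in the two iterated K{\"u}nneth spectral sequences. Fortunately, the even concentration on each relevant $E^\infty$-page forces these obstructions to vanish for parity reasons, so the computation carries through cleanly. After the base case, the inductive step is handled uniformly and without calculation by the evenly-free property.
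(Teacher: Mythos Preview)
Your approach is essentially identical to the paper's: induction on $q$ using Proposition~\ref{prop: evenly free map} for the inductive step, with the base case $q=0$ handled by two successive K{\"u}nneth spectral sequences computing $\pi_*(\ku \otimes_{\MU} \ku) \cong \ku_* \otimes \Lambda(\sigma x_i \mid i \ge 2)$ and then $\pi_* \THH(\ku/\MU) \cong \ku_* \otimes \Gamma(\sigma^2 x_i \mid i \ge 2)$. One small correction: the first $E^\infty$-page is \emph{not} concentrated in even degrees (the $\sigma x_i$ are odd), so parity alone does not rule out hidden multiplicative extensions such as $(\sigma x_i)^2 \doteq u^{2i+1}$; instead one uses that $\Tor_0 = \ku_*$ splits off the abutment via the augmentation $\ku \otimes_{\MU} \ku \to \ku$, exactly as in the proof of Proposition~\ref{prop: evenly free map}.
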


\begin{proof}
By Proposition~\ref{prop: evenly free map} and induction, it suffices
to show that $\pi_* \THH(\ku/\MU)$ is free as a $\ku_*$-module and
concentrated in even degrees.  This follows from~\cite{HW22}*{Theorem~E},
but we recall the argument in our context. The K{\"u}nneth spectral sequence
\begin{align*}
E^2 &= \Tor_*^{\MU_*}(\ku_*, \ku_*)
	= \Tor_*^{\bZ[x_i \mid i\ge1]}(\bZ[u], \bZ[u]) \\
	&= \bZ[u] \otimes \Lambda(\sigma x_i \mid i\ge2)
	\Longrightarrow \pi_*(\ku \otimes_{\MU} \ku)
\end{align*}
collapses at the $E^2$-term.  Again, we can rule out all hidden
multiplicative extensions, because $\Tor_0$ splits off from the abutment
and all classes in K{\"u}nneth filtration~$1$ are in odd total degrees.
Next we apply the K{\"u}nneth spectral sequence
\begin{align*}
E^2 &= \Tor_*^{\pi_*(\ku \otimes_{\MU} \ku)}(\ku_*, \ku_*)
	= \Tor_*^{\bZ[u] \otimes \Lambda(\sigma x_i \mid i\ge2)}(\bZ[u], \bZ[u]) \\
	&= \bZ[u] \otimes \Gamma(\sigma^2 x_i \mid i\ge2)
	\Longrightarrow \pi_* \THH(\ku/\MU) \,.
\end{align*}
This $E^2$-term is also concentrated in even degrees, and is free over
$\bZ[u] = \ku_*$.  Hence $E^2 = E^\infty$, and the abutment is free
over $\ku_*$ on even degree generators.
\end{proof}

The following fact is stated without proof
in~\cite{HRW}*{Example~1.3.3}.  We follow this reference and write
$\Lalg_{B/A}$ for the algebraic cotangent complex, with homological
grading, of a homomorphism $A \to B$ of (ungraded) commutative rings.

\begin{proposition} \label{prop:koMUpolynomial}
$\ko_* \MU \cong \bZ[u, b_1^2, \bar b_k \mid k\ge2]$ is a polynomial
ring on generators in dimensions~$2$, $4$ and $2k$ for $k\ge2$.  Hence
$\Lalg_{\ko_* \MU/\MU_*}$ has $2$-complete $\Tor$-amplitude contained
in $[0, 1]$, so that $\MU \to \ko \otimes \MU$ is $2$-quasi-lci.
\end{proposition}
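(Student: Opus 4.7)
The first assertion, identifying $\ko_*\MU$ as a polynomial $\bZ$-algebra, is the substantive part; the second assertion will then follow formally from the transitivity triangle for the algebraic cotangent complex.

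For the first assertion, I would use the ($2$-primary) Adams spectral sequence converging to $\ko_*\MU$. Since $H^*(\ko; \bF_2) \cong \cA \otimes_{\cA(1)} \bF_2$ as an $\cA$-module, the standard change-of-rings isomorphism identifies the $E_2$-page as
\[
E_2^{s,*} = \Ext^{s,*}_{\cA(1)^\vee}(\bF_2, H_*(\MU; \bF_2)).
\]
The problem therefore reduces to understanding the $\cA(1)^\vee$-comodule structure on $H_*(\MU; \bF_2) = \bF_2[b_k \mid k \ge 1]$, with $|b_k| = 2k$.

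The key step is to decompose $H_*(\MU; \bF_2)$ as a tensor product of $\cA(1)^\vee$-subcomodules, each of which is either free over $\cA(1)^\vee$ or trivial. Classical calculations with the Steenrod coaction on $H_*(\MU; \bF_2)$ (via the Milnor conjugates $\bar\xi_i$) show that, after a suitable choice of polynomial generators, the families corresponding to $u$ in degree~$2$ and to $\bar b_k$ in degree~$2k$ for $k \ge 2$ generate free $\cA(1)^\vee$-summands, while the square $b_1^2$ in degree~$4$ generates a trivial $\cA(1)^\vee$-summand. By the change-of-rings identity, each free summand contributes a single class in Adams filtration~$0$, and the trivial summand on $b_1^2$ contributes a copy of $\Ext^{*,*}_{\cA(1)^\vee}(\bF_2,\bF_2)$ whose $h_0$-tower encodes the integral powers of $b_1^2$. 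Assembling these contributions, the spectral sequence collapses at $E_2$ for degree reasons and leaves no hidden extensions, as can be checked via the integral Atiyah--Hirzebruch spectral sequence. This yields the polynomial ring $\bZ[u, b_1^2, \bar b_k \mid k \ge 2]$.

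I expect the main obstacle to be making the $\cA(1)^\vee$-comodule splitting of $H_*(\MU;\bF_2)$ precise and identifying the correct polynomial generators; an alternative is to use the wedge decomposition of the $\ko$-module spectrum $\ko \otimes \MU$ into suspensions of $\ko$ and $H\bZ$ to read off these generators directly.

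For the second assertion, the transitivity cofiber sequence for $\bZ \to \MU_* \to \ko_*\MU$ yields
\[
\ko_*\MU \otimes_{\MU_*} \Lalg_{\MU_*/\bZ} \longto \Lalg_{\ko_*\MU/\bZ} \longto \Lalg_{\ko_*\MU/\MU_*}.
\]
Since both $\MU_*$ and $\ko_*\MU$ are polynomial $\bZ$-algebras, the two left-hand terms are free $\ko_*\MU$-modules concentrated in homological degree~$0$. Hence $\Lalg_{\ko_*\MU/\MU_*}$ is equivalent to a two-term complex of flat $\ko_*\MU$-modules in degrees~$0$ and~$1$, so it has ($2$-complete) Tor-amplitude in $[0,1]$. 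By definition, $\MU \to \ko \otimes \MU$ is then ($2$-)quasi-lci.
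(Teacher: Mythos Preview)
Your treatment of the second assertion---the cotangent-complex argument via the transitivity cofiber sequence for $\bZ \to \MU_* \to \ko_*\MU$---is correct and is exactly what the paper does.

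The first part has a genuine gap. You propose to split $H_*(\MU;\bF_2)$ into $\cA(1)^\vee$-subcomodules that are either free or trivial, with $u$ and the $\bar b_k$ accounting for free summands. This cannot work for parity reasons: $\cA(1)^\vee = \bF_2[\xi_1,\xi_2]/(\xi_1^4,\xi_2^2)$ is $8$-dimensional with half of its basis in odd degrees, so no free $\cA(1)^\vee$-comodule can sit inside the evenly-graded $H_*(\MU)$. Your description of the trivial piece is also off: a trivial $\cA(1)^\vee$-comodule on $b_1^2$ would contribute a full copy of $\Ext_{\cA(1)^\vee}(\bF_2,\bF_2)$---the Adams chart for $\ko$, including $\eta$-towers---not just an $h_0$-tower. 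So neither the free nor the trivial pieces behave as you describe, and the ``collapse for degree reasons'' does not follow from this picture.

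The paper exploits evenness differently. Since $H_*(\BP)=\bF_2[\xi_k^2\mid k\ge1]$ is concentrated in even degrees, the $\cA(1)^\vee$-coaction factors through the sub-Hopf-algebra $\Lambda(\xi_1^2)$: explicitly $\xi_1^2\mapsto 1\otimes\xi_1^2+\xi_1^2\otimes 1$ and $\xi_k^2\mapsto 1\otimes\xi_k^2$ for $k\ge2$. One then runs the Cartan--Eilenberg spectral sequence for the extension $\Lambda(\xi_1^2)\to\cA(1)^\vee\to\Lambda(\xi_1,\xi_2)$. The $\Lambda(\xi_1,\xi_2)$-coaction is trivial (odd-degree generators), so the inner $\Ext$ is $\bF_2[v_0,v_1]\otimes H_*(\BP)$; this is free over $\Lambda(\xi_1^2)$, so the outer $\Ext$ is concentrated in filtration~$0$ and equals $\bF_2[v_0,v_1]\otimes\bF_2[\xi_1^4,\xi_k^2\mid k\ge2]$. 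The Adams $E_2$-term is then in even stems and collapses, giving $\ko_*\BP\cong\bZ[u,t_1^2,\bar t_k\mid k\ge2]$ (with $2$, $u$, $t_1^2$, $\bar t_k$ detected by $v_0$, $v_1$, $\xi_1^4$, $\xi_k^2$), and base change along $\BP\to\MU$ finishes. In particular, $u$ arises from the $v_1$-tower, not from any free $\cA(1)^\vee$-summand.
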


\begin{proof}
Since $\MU_*$ is free over~$\BP_*$, we have a ring isomorphism $\ko_*
\BP \otimes_{\BP_*} \MU_* \cong \ko_* \MU$.  Here $H_*(\ko) = \cA^\vee
\cotensor_{\cA(1)^\vee} \bF_2$, so $H_*(\ko \otimes \BP) \cong \cA^\vee
\cotensor_{\cA(1)^\vee} H_*(\BP)$ and the $E_2$-term of the Adams
spectral sequence
\[
{}^{\Ad} E_2 = \Ext_{\cA^\vee}(\bF_2, H_*(\ko \otimes \BP))
	\Longrightarrow \pi_*(\ko \otimes \BP)
\]
can be rewritten as $\Ext_{\cA(1)^\vee}(\bF_2, H_*(\BP))$.  Since
$H_*(\BP) = \bF_2[\xi_k^2 \mid k\ge1]$ is concentrated in even degrees,
its
$\cA(1)^\vee$-coaction factors as
\begin{align*}
H_*(\BP) &\overset{\nu}\longto \Lambda(\xi_1^2) \otimes H_*(\BP)
	\subset \cA(1)^\vee \otimes H_*(\BP) \\
\xi_k^2 &\longmapsto
	\begin{cases}
	1 \otimes \xi_1^2 + \xi_1^2 \otimes 1 & \text{for $k=1$,} \\
	1 \otimes \xi_k^2 & \text{for $k\ge2$.}
	\end{cases}
\end{align*}
To calculate the Adams $E_2$-term we use the Cartan--Eilenberg spectral
sequence
\[
{}^{\CE} E_2 = \Ext_{\Lambda(\xi_1^2)}(\bF_2,
	\Ext_{\Lambda(\xi_1, \xi_2)}(\bF_2, H_*(\BP)))
	\Longrightarrow \Ext_{\cA(1)^\vee}(\bF_2, H_*(\BP))
\]
(cf.~\cite{CE56}*{Theorem~XVI.6.1})
for the Hopf algebra extension $\Lambda(\xi_1^2) \to \cA(1)^\vee
\to \Lambda(\xi_1, \xi_2)$.  Its $E_2$-term
\[
{}^{\CE} E_2 \cong \Ext_{\Lambda(\xi_1^2)}(\bF_2, \bF_2[v_0, v_1]
		\otimes H_*(\BP))
	= \bF_2[v_0, v_1] \otimes \bF_2[\xi_1^4, \xi_k^2 \mid k\ge2]
\]
is concentrated in filtration degree~$0$, so ${}^{\CE} E_2 = {}^{\CE}
E_\infty$.  Hence the Adams $E_2$-term is concentrated in even stems, so
also ${}^{\Ad} E_2 = {}^{\Ad} E_\infty$.  Thus $\pi_*(\ko \otimes \BP)
\cong \bZ[u, t_1^2, \bar t_k \mid k\ge2]$, with $2$ detected by~$v_0$,
$u$ detected by~$v_1$, $t_1^2$ detected by~$\xi_1^4$ and $\bar t_k$
detected by~$\xi_k^2$.  Base change along $\BP \to \MU$ replaces the
degree~$2(2^k-1)$ generators $\bar t_k$ for $k \ge 2$ with degree~$2k$
polynomial generators $\bar b_k$ for $k \ge 2$, while $t_1^2$ is replaced
by~$b_1^2$.

Since $\MU_*$ and $\ko_* \MU$ are polynomial rings, the algebraic
cotangent complexes $\Lalg_{\MU_*/\bZ}$ and $\Lalg_{\ko_* \MU/\bZ}$
are free modules over~$\MU_*$ and $\ko_* \MU$, respectively.  By the
transitivity cofiber sequence
\[
\ko_* \MU \otimes_{\MU_*} \Lalg_{\MU_*/\bZ}
	\longto \Lalg_{\ko_* \MU/\bZ}
	\longto \Lalg_{\ko_* \MU/\MU_*}
\]
associated to the ring homomorphisms $\bZ \to \MU_* \to \ko_* \MU$,
cf.~\cite{Qui70}*{5.1}, it follows that
$\Lalg_{\ko_* \MU/\MU_*}$ has $2$-complete $\Tor$-amplitude
concentrated in homological degrees $[0,1]$.
By definition~\cite{HRW}*{Definitions~4.1.1(2)
and~4.1.6}, this means
that $\MU \to \ko \otimes \MU$ is $2$-quasi-lci.
\end{proof}

\begin{proposition} \label{prop: relation to motivic filtration}
The filtrations from Definition~\ref{def: motivic filtrations on
THH TN and TP} for $F \in \{\THH, \TC^{-},
\TP\}$ agree with the motivic filtrations considered in
\cite{HRW}*{Definition~4.2.1}.  Moreover, there
exist maps
\[
\can \,,\, \varphi \: \fil_{\mot}^{\star} \TC^{-}(\ko)
	\longto \fil_{\mot}^{\star} \TP(\ko)
\]
of $\bE_\infty$ algebras in~$\Sp_2^{\fil}$, which converge to the
canonical map and Frobenius map from~\cite{NS18}.
\end{proposition}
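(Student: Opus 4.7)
The plan is to check, one by one, the hypotheses under which Hahn--Raksit--Wilson's motivic filtration is defined, and verify that our explicit formula in Definition~\ref{def: motivic filtrations on THH TN and TP} recovers their construction. The HRW motivic filtration on $F(\ko)$ (for $F \in \{\THH, \TC^{-}, \TP\}$) is constructed by choosing an eff cover $\ko \to B$ with $\MU \to B \otimes \MU$ quasi-lci, forming the associated cosimplicial Amitsur complex, and taking the totalization of the double-speed Whitehead filtration of $F$ applied level-wise (relative to $\MU$ in the appropriate factor). First, I would note that Lemma~\ref{lem: complexification is an evenly free map} shows that $c \: \ko \to \ku$ is an evenly free, hence eff, cover. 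Second, Proposition~\ref{prop: evenly free map} shows that this property persists after applying $\THH(-/\MU)$, and Proposition~\ref{prop: equiv of gr modules} shows that each cosimplicial level $C^q(\THH(\ku/\MU)/\THH(\ko))$ is even with free homotopy groups. Third, Proposition~\ref{prop:koMUpolynomial} verifies the quasi-lci hypothesis on $\MU \to \ko \otimes \MU$ needed to identify the output with prismatic/syntomic cohomology in HRW's sense.

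With these inputs in place, I would identify the two constructions term by term. The outer level of HRW's construction is the double-speed Whitehead filtration $\tau_{\ge 2\star}$, which is exactly the filtration we apply. The cosimplicial object to which it is applied is $F$ of the Amitsur complex $C^{\bullet}(\ku/\MU)/C^{\bullet}(\ko/\bS)$, with the relative notation unambiguously meaning $\THH$ of the first complex relative to the (levelwise $\bE_\infty$) coefficient ring given by the second complex. For $F = \THH$ this is exactly HRW's \cite{HRW}*{Definition~4.2.1}, and for $F \in \{\TC^{-}, \TP\}$ one takes $\bT$-homotopy fixed points or $\bT$-Tate construction levelwise, which is~\cite{HRW}*{Variant~4.2.2}. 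The evenness statement in Proposition~\ref{prop: equiv of gr modules} ensures that the double-speed Whitehead filtration is well-behaved (the filtration is complete and the associated graded computes the expected thing), which is the crux of why the two constructions agree.

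For the second statement, I would argue that the canonical and Frobenius maps $\TC^{-} \to \TP$ are natural transformations of lax symmetric monoidal functors $\bE_\infty\text{-}\mathrm{Alg}(\Sp_2) \to \bE_\infty\text{-}\mathrm{Alg}(\Sp_2^{B\bT})$. Applying these levelwise to the cosimplicial object $C^{\bullet}(\ku/\MU)/C^{\bullet}(\ko/\bS)$ yields maps of cosimplicial $\bE_\infty$ algebras. Since the double-speed Whitehead filtration $\tau_{\ge 2\star}$ is a lax symmetric monoidal functor from $\Sp_2$ to $\Sp_2^{\fil}$, and $\Tot$ preserves $\bE_\infty$ algebras, we obtain the desired maps $\can, \varphi \: \fil_{\mot}^{\star} \TC^{-}(\ko) \to \fil_{\mot}^{\star} \TP(\ko)$ in $\bE_\infty\text{-}\mathrm{Alg}(\Sp_2^{\fil})$. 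Convergence to the maps defined in~\cite{NS18} follows because forgetting the filtration yields the totalization of the cosimplicial $\TC^{-}$ (respectively $\TP$) of the Amitsur complex, and descent along the evenly free cover $\ko \to \ku$ identifies this totalization with $\TC^{-}(\ko)$ (respectively $\TP(\ko)$) equipped with the usual $\can$ and $\varphi$.

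The main obstacle is not technical depth but careful bookkeeping: one must confirm that the particular formula chosen in Definition~\ref{def: motivic filtrations on THH TN and TP} — in which the Amitsur complex in $\ku$ is taken relative to the Amitsur complex in $\MU$ — is literally the object that HRW's framework outputs once one unravels what "relative to the quasi-lci cover" means in \cite{HRW}*{Variant~4.2.2}. Once the hypotheses of Propositions~\ref{prop: evenly free map}, \ref{prop: equiv of gr modules} and~\ref{prop:koMUpolynomial} are in hand, this comparison is essentially forced.
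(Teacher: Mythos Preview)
Your proposal is essentially correct and follows the same strategy as the paper: verify the hypotheses (evenly free cover via Proposition~\ref{prop: evenly free map}, evenness of cosimplicial levels via Proposition~\ref{prop: equiv of gr modules}, chromatic quasi-lci via Proposition~\ref{prop:koMUpolynomial}) and conclude that the explicit totalization formula computes the even filtration, which under the quasi-lci hypothesis is the HRW motivic filtration.

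Two small points where the paper is sharper. First, your description of HRW's motivic filtration as ``constructed by choosing an eff cover'' slightly misstates the logic: in \cite{HRW} the motivic filtration is \emph{defined} intrinsically as the even filtration (under the quasi-lci hypothesis), and the cover is only used to \emph{compute} it. The paper accordingly invokes \cite{HRW}*{Corollary~2.2.14} applied to the evenly free map $\THH(\ko) \to \THH(\ku/\MU)$ to identify the explicit formula with the even filtration, and then separately notes that the quasi-lci hypothesis makes the even filtration equal to the motivic one. Second, for $\can$ and $\varphi$ the paper simply cites \cite{HRW}*{Theorem~4.2.10}; your naturality-and-functoriality sketch is morally right but glosses over why the cyclotomic Frobenius is compatible with the even/Whitehead filtration, which is precisely what that theorem establishes.
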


\begin{proof}
By Proposition~\ref{prop: evenly free map} we can apply
\cite{HRW}*{Corollary~2.2.17} to $A  = \THH(\ko)$ and $B
= \THH(\ku/\MU)$.  In this case $B^{\otimes_A q+1} = C^q(B/A)$ is even by Proposition~\ref{prop:
equiv of gr modules}, so the totalization of the double-speed Whitehead
filtration agrees with the even filtration.

The map of connective $\bE_\infty$-rings $\bS \to \ko$ is chromatically
$2$-quasi-lci in the sense of~\cite{HRW}*{Definition~4.1.11}, because
by Proposition~\ref{prop:koMUpolynomial} we know that both $\MU$
and $\ko \otimes \MU$ are even, and the map $\MU \to \ko \otimes \MU$
is $2$-quasi-lci.  Hence the even filtrations also define the motivic
filtrations, as per~\cite{HRW}*{Definition~4.2.1}, and this confirms the
first statement. 

The second statement then follows directly from
\cite{HRW}*{Theorem~1.3.6}.
\end{proof}
 
\begin{definition} \label{def:syntomic}
Motivated by Proposition~\ref{prop: relation to motivic filtration},
we define $\bE_\infty$ algebras
\begin{align*}
\fil_{\mot}^{\star} \TC(\ko) &:= \eq (\can \,,\, \varphi \:
    \fil_{\mot}^{\star} \TC^{-}(\ko) \longto \fil_{\mot}^{\star} \TP(\ko)) \\
\gr_{\mot}^* \TC(\ko) &:= \gr^*
	\left( \fil_{\mot}^{\star} \TC(\ko) \right)
\end{align*}
in $\Sp_2^{\fil}$ and $\Sp_2^{\gr}$, respectively.  By taking the
(homotopy) equalizer of $\can$ and~$\varphi$ we retain the multiplicative
structure, but additively $\fil_{\mot}^{\star} \TC(\ko)$ is also the
(homotopy) fiber of $\can - \varphi$.  In light of~\cite{HRW}*{\S5}
and~\cite{BMS19} we refer to $\gr_{\mot}^w \TC(\ko)$ and $\pi_*
\gr_{\mot}^w \TC(\ko)$ as the weight~$w$ \emph{syntomic cohomology}
spectrum and syntomic cohomology groups of~$\ko$, refer to the spectral
sequence
\[
E_2^{n,2w-n} = \pi_n\gr_{\mot}^w \TC(\ko) \Longrightarrow \pi_n\TC(\ko)
\]
as the motivic spectral sequence, and follow the same grading conventions
as in Definition~\ref{def:gradings}. This spectral sequence converges, as we now
review. 
\end{definition}

\begin{lemma}\label{convergence}
For $F \in \{\THH, \TC^{-}, \TP, \TC, F(S^{2\ell+1}_+,\THH)^{\bT}, \THH^{hC_2},\THH^{tC_2} \}$, the filtration $\fil_{\mot}^{s} F(\ko)$ is (derived) complete and exhaustive. In other words, 
\[ \lim \fil_{\mot}^{\star} F(\ko)=0\qquad  \text{ and } \qquad \colim \fil_{\mot}^{\star} F(\ko)=F(\ko) \,. \]
\end{lemma}

\begin{proof}
Since limits commute with limits, $\lim  \fil_{\mot}^{\star} F(\ko) =0$ for each 
\[F \in \{\THH, \TC^{-}, \TP, \TC, F(S^{2\ell+1}_+,\THH)^{\bT},\THH^{hC_2},\THH^{tC_2}\}\,.\]

It follows from~\cite{HRW}*{Theorems~1.3.5 and 1.3.6, Corollary~2.2.17}, Proposition~\ref{prop: evenly free map} and Proposition~\ref{prop: relation to
motivic filtration} that $\colim \fil_{\mot}^{\star} F(\ko)  =F(\ko)$
for $F \in \{\THH, \TC^{-}, \TP,\TC\}$. It therefore suffices to show that $\colim \fil_{\mot}^{\star} F(\ko)  =F(\ko)$
for each $F\in \{F(S^{2\ell+1}_+,\THH)^{\bT},\THH^{hC_2},\THH^{tC_2}\}$. Here we adapt the proof of \cite{HRW}*{Proposition 2.3.5}.
Let 
\[ 
Y^{\bullet}:=C^{\bullet}(\THH(\ku/\MU)/\THH(\ko)) 
\]
and note that the canonical map
\begin{align*} 
\colim_w  \fil_{\mot}^{w} G(\THH(\ko)) &= \colim_w  \Tot  \tau_{\ge 2w} G(Y^{\bullet}) \overset{\simeq}{\longrightarrow } \Tot G(Y^{\bullet})
\end{align*}
 is an equivalence for each $G\in \{F(S^{2\ell+1}_+,-)^{\bT}, (-)^{hC_2},(-)^{tC_2}\}$ since $\Tot$ preserves coconnectivity. 
Note also that the canonical maps 
\[ F(S^{2\ell+1}_+,\THH(\ko))^{\bT} \overset{\simeq}{\longrightarrow} \Tot (F(S^{2\ell+1}_+,Y^{\bullet})^{\bT}) \]
and $ \THH(\ko)^{hC_2} \overset{\simeq}{\longrightarrow} \Tot ((Y^{\bullet})^{hC_2})$
are equivalences since limits commute with limits. This handles the cases $F=F(S^{2\ell+1}_+,\THH)^{\bT}$ and $F= \THH^{hC_2}$. 

To see that the canonical map 
$\THH(\ko)^{tC_2} \overset{\simeq}{\longrightarrow} \Tot ((Y^{\bullet})^{tC_2})$
is an equivalence, it now suffices to show that the canonical map 
\[ \THH(\ko)_{hC_2} \overset{\simeq}{\longrightarrow} \Tot ((Y^{\bullet})_{hC_2}) \]
is an equivalence. This will follow from the fact that the fiber $I$ of the map 
\[ A:=\THH(\ko)\to \THH(\ku/\MU)\] 
is $1$-connective and that $(-)_{hC_2}$ preserves connectivity. In particular, by~\cite{MNN17}*{Proposition~2.14}, the fiber of the map 
\[ \THH(\ko)_{hC_2}  \longrightarrow \Tot_n ((Y^{\bullet})_{hC_2}) \]
can be identified with $(I^{\otimes_{A} (n+1)})_{hC_2}$, which is $(n+1)$-connective, implying the claim after passing to the limit as $n$ grows. Here we use the fact that $\Tot_n$ is a finite limit and therefore commutes with $(-)_{hC_2}$. 
\end{proof}

We now introduce the relevant type~$2$ finite coefficient spectra.
Let $V(0)= C2$ denote the cofiber of the map $2\: \bS\to \bS$ and let
$C\eta$ denote the cofiber of the Hopf map $\eta \: \Sigma \bS\to \bS$.
By \cite{DM81}*{Proposition~2.1}, there are precisely four equivalence
classes of finite spectra $X = (V(0) \otimes C\eta)/v_1$ with the property
that $H^*(X) \cong \cA(1)$ as an $\cA(1)$-module, each characterized by
the $Sq^4$-action in its mod~$2$ cohomology.

\begin{notation} \label{not: A(1) def}
Following~\cite{BEM17}*{\S1}, we write $A(1)[ij]$ with $i,j \in
\{0,1\}$ for the spectrum $(V(0) \otimes C\eta)/v_1$ where $Sq^4$
on the cohomology generator in degree~$0$ is $i$~times the generator
in degree~$4$, and $Sq^4$ on the cohomology generator in degree~$2$
is $j$~times the generator in degree~$6$.  We write $A(1)$ in place
of $A(1)[ij]$ when making statements that hold for any choice of $i,j
\in \{0,1\}$.  The identity map $A(1) \to A(1)$ has additive order~$4$,
see~Lemma~\ref{lem:d3v24zero}, and \cite{BEM17}*{Main Theorem} proves
that $A(1)$ admits a $v_2^{32}$ self-map.  Hence the $A(1)$-homotopy
$A(1)_* Y = \pi_*(A(1) \otimes Y)$ of any spectrum~$Y$ is naturally
a $\bZ/4[v_2^{32}]$-module.
\end{notation}

In view of Proposition~\ref{prop: equiv of gr modules} and the
fact that $A(1) \otimes \ku$ is even, we know that $A(1) \otimes
C^{\bullet}(\THH(\ku/\MU) / \THH(\ko))$ is even.  This motivates the
following definition.

\begin{definition} \label{def:filmotTHHkoHF2}
Let
\[
\fil_{\mot}^{\star} (A(1) \otimes \THH(\ko))
	:= \Tot \bigl( \tau_{\ge 2\star} \bigl(
        A(1) \otimes  C^{\bullet}(\THH(\ku/\MU) / \THH(\ko))
        \bigr) \bigr) \,,
\]
and write $\gr_{\mot}^* (A(1) \otimes \THH(\ko)) := \gr^* \bigl(
\fil_{\mot}^{\star} (A(1) \otimes \THH(\ko)) \bigr)$
for the associated graded spectrum.
\end{definition}

\begin{remark} \label{rem: Einfty structure}
The spectrum $A(1) \otimes \ko \simeq H\bF_2$ admits a unique $\bE_\infty$
$\ko$-algebra structure.  Hence, for each $\bE_\infty$ $\ko$-algebra~$R$
the identification
\[
A(1) \otimes R = A(1) \otimes \ko \otimes_{\ko} R
	\simeq H\bF_2 \otimes_{\ko} R
\]
lets us regard $A(1) \otimes R$ as an $\bE_\infty$ $H\bF_2$-algebra.
For example, we will identify $\THH(\ko, H\bF_2) := H\bF_2 \otimes_{\ko}
\THH(\ko)$ with $A(1) \otimes \THH(\ko)$, and may therefore consider
the latter as an $\bE_\infty$ $H\bF_2$-algebra.
\end{remark}

\begin{lemma} \label{lem: THH computation}
Let $A := A(1) \otimes \THH(\ko)$ and $B := A(1)
\otimes \THH(\ku/\MU)$.  Then there is an isomorphism
\[
 A_* \cong \Lambda(\lambda'_1, \lambda_2) \otimes \bF_2[\mu]
\]
of graded $\bF_2$-algebras, where $|\lambda'_1| = 5$, $|\lambda_2|
= 7$ and $|\mu| = 8$.  There is also an isomorphism
\[
B_* \cong \Lambda(\hat\xi_1^2) \otimes \bF_2[\mu] \otimes P
\]
of $A_*$-algebras, where $|\hat\xi_1^2| = 2$, $|\mu| = 8$ and
$P$ is a polynomial algebra with generators in even degrees.  The
$A_*$-algebra structure is determined by the map $A_* \to B_*$ sending
$\lambda'_1$ and $\lambda_2$ to zero and mapping $\mu$ to~$\mu$.
\end{lemma}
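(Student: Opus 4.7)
My plan is to compute $B_*$ first by base change along the evenly-free map $\ko \to \ku$, and then to compute $A_*$ either by a Bökstedt-type spectral sequence for $\THH(\ko; H\bF_2)$ or by descent from $B_*$ along the Amitsur complex $C^\bullet(\ku/\ko)$. Throughout I use Remark~\ref{rem: Einfty structure} to identify $A(1) \otimes R \simeq H\bF_2 \otimes_\ko R$ for any $\bE_\infty$ $\ko$-algebra $R$.

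For $B_*$, I write $B \simeq (H\bF_2 \otimes_\ko \ku) \otimes_\ku \THH(\ku/\MU)$. Applying Lemma~\ref{lem: complexification is an evenly free map} with $C = H\bF_2$ gives $\pi_*(H\bF_2 \otimes_\ko \ku) \cong \Lambda(\hat\xi_1^2)$ with $|\hat\xi_1^2| = 2$, since the coefficients $c_2, c_4$ of the relation $\hat b_1^2 = b_1^2 + c_2 b_1 + c_4$ live in positive degrees of $\pi_* H\bF_2 = \bF_2$ and so must vanish. By Proposition~\ref{prop: equiv of gr modules} and the two Künneth computations in its proof, $\pi_* \THH(\ku/\MU)$ is a free $\ku_*$-module concentrated in even degrees, with a definite algebra presentation. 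The Künneth spectral sequence over $\ku$ converging to $B_*$ then collapses by freeness, and tensoring yields $B_*$ as a free $\Lambda(\hat\xi_1^2)$-module on even-degree generators that organize as $\bF_2[\mu] \otimes P$, once I identify the distinguished degree-$8$ polynomial generator as $\mu$ and absorb the remaining even-degree polynomial generators into $P$.

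For $A_*$, I identify $A \simeq H\bF_2 \otimes_\ko \THH(\ko) = \THH(\ko; H\bF_2)$ via the simplicial realization $|H\bF_2 \otimes \ko^{\otimes q}|_q$ and run the associated Bökstedt spectral sequence. The exterior generators $\lambda'_1$ and $\lambda_2$ arise as Bökstedt suspensions $\sigma$ applied to the polynomial generators of $H_*(\ko; \bF_2) = \cA^\vee \square_{\cA(1)^\vee} \bF_2$ in degrees $4$ and $6$, respectively, while $\mu$ in degree $8$ arises as a higher Bökstedt-type class. Alternatively, since $\THH(\ko) \to \THH(\ku/\MU)$ is evenly free (Proposition~\ref{prop: evenly free map}), I can compute $A_*$ by descent from $B_*$ through the totalization of the Amitsur cosimplicial object $C^\bullet(\THH(\ku/\MU)/\THH(\ko))$, which is the approach most consistent with the paper's motivic framework.

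The $A_*$-algebra structure on $B_*$ is induced by the ring map $\THH(\ko) \to \THH(\ku/\MU)$: the odd-degree classes $\lambda'_1$ and $\lambda_2$ must map to zero in $B_*$ for parity reasons, since $B_*$ is concentrated in even degrees, and $\mu$ maps to $\mu$ by naturality of the spectral sequence comparisons. The principal obstacle I anticipate is twofold: first, establishing the genuine polynomial structure $\bF_2[\mu] \otimes P$ on $B_*$, which may require resolving hidden multiplicative extensions that upgrade divided-power-like output of the Künneth spectral sequence (arising from the $\Gamma(\sigma^2 x_i)$-factor in $\pi_*\THH(\ku/\MU)$) into a polynomial algebra; second, matching named classes across the $A_*$ and $B_*$ calculations precisely enough to identify the ring map $A_* \to B_*$ as stated, especially to pin down that $\mu$ on both sides corresponds to the same underlying degree-$8$ Bökstedt / Künneth generator.
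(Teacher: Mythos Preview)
Your plan is sound and the ingredients (B\"okstedt for $A_*$, K\"unneth plus Dyer--Lashof for $B_*$, parity for $\lambda'_1,\lambda_2\mapsto 0$) are the right ones, but the paper organizes the $B_*$ computation differently and this difference bears directly on the obstacle you flag. Rather than your decomposition $B \simeq (H\bF_2 \otimes_\ko \ku) \otimes_\ku \THH(\ku/\MU)$, the paper writes
\[
B \simeq (A(1) \otimes \THH(\ku)) \otimes_{\THH(\MU)} \MU
\]
and runs the K\"unneth spectral sequence over $\THH(\MU)$. The payoff is that $A(1)_* \THH(\ku) \cong \Lambda(\hat\xi_1^2,\lambda_1,\lambda_2)\otimes\bF_2[\mu]$ is computed first via B\"okstedt (the hidden extension $(\sigma\bar\xi_3)^{2^{k-3}}=\sigma\bar\xi_k$ supplying the polynomial $\bF_2[\mu]$), so $\mu$ enters the K\"unneth $E^2$-term already named and polynomial, and the factorization $A_*\to A(1)_*\THH(\ku)\to B_*$ makes $\mu\mapsto\mu$ automatic. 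Only the residual divided powers $\Gamma(\sigma^2\bar b_j)$ for $j\notin\{1,3\}$ then need the Steinberger/Kochman Dyer--Lashof argument to become $P$. In your route, by contrast, everything in $B_*$ comes from $\pi_*\THH(\ku/\MU)\otimes_{\ku_*}\Lambda(\hat\xi_1^2)$, whose associated graded is entirely $\Gamma(\sigma^2 x_i\mid i\ge 2)$ with no distinguished $\mu$ until extensions are resolved; matching your eventual degree-$8$ polynomial generator to the B\"okstedt $\mu$ in $A_*$ then requires a separate comparison that the paper's factorization through $\THH(\ku)$ handles for free. Your parity argument for $\lambda'_1,\lambda_2\mapsto 0$ is exactly what the paper uses (``for degree reasons'').
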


\begin{proof}
The complexification map $c \: \ko \to \ku$ and the unique $\bE_\infty$-ring map $\ku \to H\bF_2$ induce monomorphisms
\[
H_*(\ko) = \bF_2[\xi_1^4, \bar\xi_2^2, \bar\xi_k \mid k\ge 3]
\longto H_*(\ku) = \bF_2[\xi_1^2, \bar\xi_2^2, \bar\xi_k \mid k\ge 3]
	\longto \cA^\vee
\]
of $\cA^\vee$-comodule algebras.
By Milnor's construction of the $\xi_i$, the composite map $\MU \to \ku
\to H\bF_2$ induces the homomorphism $H_*(\MU) = \bF_2[b_i \mid i\ge1]
\to H_*(H\bF_2) = \cA^\vee$ given by $b_{2^i-1} \mapsto \xi_i^2$ and $b_j
\mapsto 0$ for $j \ne 2^i-1$.
Hence these formulas also hold in $H_*(\ku)$.  Let $\bar b_i = \chi b_i$
denote the conjugate classes in $H_*(\MU)$, so that $\bar b_{2^i-1}
\mapsto \bar\xi_i^2$ and $\bar b_j \mapsto 0$ for $j \ne 2^i-1$.
Standard Hochschild homology computations (cf.~\cite{MS93}*{Proposition~2})
yield
\begin{align*}
\HH_* (H_* (\ko)) &= H_* (\ko) \otimes \Lambda(\sigma\xi_1^4,
	\sigma\bar\xi_2^2, \sigma\bar\xi_k \mid k\ge 3 ) \\
\HH_* (H_* (\ku)) &= H_* (\ku) \otimes \Lambda(\sigma\xi_1^2,
	\sigma\bar\xi_2^2, \sigma\bar\xi_k \mid k\ge 3 ) \,.
\end{align*}
The usual argument for hidden extensions in the B{\"o}kstedt spectral
sequence (see e.g.~\cite{AR05}*{Theorem~6.2}) implies that
\[
(\sigma\bar\xi_3)^{2^{k-3}} = \sigma\bar\xi_k \,,
\]
and produces identifications
\begin{align*}
H_*(\THH(\ko)) & \cong H_* (\ko) \otimes \Lambda(\sigma\xi_1^4,
	\sigma\bar\xi_2^2) \otimes \bF_2[\sigma\bar\xi_3] \\
H_*(\THH(\ku)) & \cong H_* (\ku) \otimes \Lambda(\sigma\xi_1^2,
	\sigma\bar\xi_2^2) \otimes \bF_2[\sigma\bar\xi_3] \,.
\end{align*}
As noted above, $H_*(A(1) \otimes \ko) \cong \cA^\vee$.
By Remark~\ref{rem: Einfty structure} and the evident collapsing
K{\"u}nneth spectral sequence we have
\[
H_* (A(1) \otimes \ku)
	\cong H_* (H\bF_2 \otimes_{\ko} \ku)
	\cong \cA^\vee \otimes_{H_*(\ko)} H_*(\ku)
	\cong \cA^\vee \otimes \Lambda(\hat\xi_1^2) \,,
\]
where $\hat\xi_1^2 := 1 \otimes \xi_1^2 + \xi_1^2 \otimes 1$ denotes
the $\cA^\vee$-comodule primitive class, so that $A(1)_*(\ku) = \pi_*
(A(1) \otimes \ku) = \Lambda(\hat\xi_1^2)$.
We conclude that
\begin{align*}
A_* = A(1)_* \THH(\ko)
	&= \Lambda(\lambda'_1, \lambda_2) \otimes \bF_2[\mu] \\
A(1)_* \THH(\ku)
	&= \Lambda(\hat\xi_1^2, \lambda_1, \lambda_2) \otimes \bF_2[\mu] \,,
\end{align*}
with $\lambda'_1$, $\lambda_2$ and $\mu$ in $A(1)_* \THH(\ko)$ having
Hurewicz images $\sigma\xi_1^4$, $\sigma\bar\xi_2^2 + \xi_1^2 \cdot
\sigma\xi_1^4$ and $\sigma\bar\xi_3 + \xi_1 \cdot \sigma\bar\xi_2^2 +
\xi_2 \cdot \sigma\xi_1^4$, respectively, while $\lambda_1$, $\lambda_2$
and $\mu$ in $A(1)_* \THH(\ku)$ have Hurewicz images $\sigma\xi_1^2$,
$\sigma\bar\xi_2^2$ and $\sigma\bar\xi_3 + \xi_1 \cdot \sigma\bar\xi_2^2$,
cf.~\cite{AR05}*{Proposition~8.7}.  Note that $\sigma\xi_1^4 =
\sigma \xi_1^2 \cdot \xi_1^2 + \xi_1^2 \cdot \sigma\xi_1^2 = 0$ in the
$\ku$-case.  Hence the induced homomorphism $A(1)_* \THH(\ko) \to A(1)_*
\THH(\ku)$ is given by $\lambda'_1 \mapsto 0$, $\lambda_2 \mapsto
\lambda_2$ and ~$\mu \mapsto \mu$.

Next, we compute $A(1)_* \THH(\ku/\MU)$, using the fact that
\begin{equation} \label{eq:fact}
A(1) \otimes \THH(\ku/\MU)
	\simeq (A(1) \otimes \THH(\ku)) \otimes_{\THH(\MU)} \MU \,.
\end{equation}
We know that
\[
\pi_* \THH(\MU) \cong \MU_* \otimes \Lambda(\sigma\bar b_i \mid i\ge 1)
\]
by \cite{MS93}*{Remark~4.3}, cf.~\cite{Rog20}*{Proposition~4.5}.
We expand
\[
\THH(\MU) \longto \THH(\ku)\longto A(1) \otimes \THH(\ku)
\]
as the composite
\begin{align*}
\MU \otimes_{\MU \otimes \MU} \MU
	& \longto \ku \otimes_{\ku \otimes \ku}\ku \\
	& \longto A(1) \otimes \ku \otimes_{A(1) \otimes \ku \otimes \ku}
	A(1) \otimes \ku \,,
\end{align*}
where
\begin{align*}
\MU_*[\bar b_i \mid i\ge1] =
\pi_*(\MU \otimes \MU) & \longto \pi_*(\ku \otimes \ku) \\
	& \longto \pi_*(A(1) \otimes \ku \otimes \ku)
	= \Lambda(\hat\xi_1^2) \otimes H_*(\ku)
\end{align*}
takes $\bar b_{2^i-1}$ to $\bar\xi_i^2$ for $i\ge1$ and $\bar b_j$
to~$0$ for $j \ne 2^i-1$.  Hence $\pi_* \THH(\MU) \to A(1)_* \THH(\ku)$ is
given by $\sigma\bar b_1 \mapsto \sigma\xi_1^2 = \lambda_1$, $\sigma\bar
b_3 \mapsto \sigma\bar\xi_2^2 = \lambda_2$, and $\sigma\bar {b}_i \mapsto
0$ for $i \notin \{1,3\}$.  (This uses that $\sigma\bar b_{2^i-1}
\mapsto \sigma\bar\xi_i^2 = \sigma\bar\xi_i \cdot \bar\xi_i + \bar\xi_i
\cdot \sigma\bar\xi_i = 0$ for $i\ge3$, while $\xi_1$ and $\bar\xi_2$
do not exist in $H_*(\ku)$.)  The $\pi_* \THH(\MU)$-algebra structure on
$\MU_*$ is given by mapping $x_i$ to $x_i$ and mapping $\sigma\bar b_i$
trivially for all $i\ge 1$.  The K{\"u}nneth spectral sequence associated
to~\eqref{eq:fact} therefore has $E^2$-term
\[
	\Lambda(\hat\xi_1^2) \otimes \bF_2[\mu]
	\otimes \Gamma(\sigma^2 \bar b_{2^i-1} \mid i\ge3)
	\otimes \Gamma(\sigma^2 \bar b_j \mid j \ne 2^i-1) \,.
\]
This spectral sequence is concentrated in even total degrees and therefore
collapses at the $E^2$-term.
We resolve the hidden multiplicative extensions using
Steinberger's computation~\cite{BMMS86}*{III.2} of the Dyer--Lashof
operations on $H_*(H\bF_2) = \cA^\vee$ and Kochman's computation
\cite{Koc73}*{Theorem~6} of the Dyer--Lashof operations on $H_*(\BU)
\cong H_*(\MU)$, as in the proof of \cite{HW22}*{Lemma~2.4.1}.  (Note
that Steinberger's result is used for the $\sigma^2 \bar b_{2^i-1}$,
while Kochman's theorem is used for the remaining $\sigma^2\bar b_j$.)
This produces the identification
\[
B_* = A(1)_* \THH(\ku/\MU)
	\cong \Lambda(\hat\xi_1^2) \otimes \bF_2[\mu] \otimes P \,,
\]
where
\[
P := \bF_2[w_i \mid i\ge0]
	\otimes \bF_2[y_{j,i} \mid \text{$j\ge2$ even, $i\ge0$}]
\]
is a polynomial algebra with algebra generators in even degrees.
Here $w_i$ is any choice of lift of $\gamma_{2^i}(\sigma^2 \bar b_7)$
and $y_{j,i}$ is any choice of lift of $\gamma_{2^i}(\sigma^2 \bar b_j)$.
We conclude that $B_*$ is an even $\bE_\infty$-ring, and the $A_*$-algebra
structure is determined by $\lambda'_1$ and $\lambda_2$ mapping
trivially for degree reasons and $\mu$ mapping to~$\mu$ by the first
half of this proof.
\end{proof}

\begin{corollary} \label{cor: evenly free map}
The map $A(1) \otimes \THH(\ko) \to A(1) \otimes \THH(\ku/\MU)$, induced
by the complexification map $c \: \ko \to \ku$ and the unit map $\bS
\to \MU$, is evenly free.
\end{corollary}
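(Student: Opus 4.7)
The plan is to verify the defining condition of \emph{evenly free} directly, by computing $\pi_*(B \otimes_A C)$ for an arbitrary nonzero even $\bE_\infty$ $A$-algebra $C$ via the K\"unneth spectral sequence
\[
E^2 = \Tor_*^{A_*}(B_*, C_*) \Longrightarrow \pi_*(B \otimes_A C) \,,
\]
and to show the output is a nonzero free $C_*$-module in even degrees. By Lemma~\ref{lem: THH computation}, $A_* = \Lambda(\lambda'_1, \lambda_2) \otimes \bF_2[\mu]$, the algebra $B_* = \Lambda(\hat\xi_1^2) \otimes \bF_2[\mu] \otimes P$ is concentrated in even degrees and free over $\bF_2[\mu]$, and the structure map $A_* \to B_*$ sends the exterior generators to zero while $\mu \mapsto \mu$. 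Since $|\lambda'_1| = 5$ and $|\lambda_2| = 7$ are odd while $C$ is even, both $\lambda'_1$ and $\lambda_2$ must act as zero on $C_*$, so the $A_*$-module structures on both $B_*$ and $C_*$ factor through the quotient $A_*/(\lambda'_1, \lambda_2) = \bF_2[\mu]$.

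I would then build a free $A_*$-resolution of $B_*$ by tensoring the Koszul resolution $A_* \otimes \Gamma(\sigma\lambda'_1, \sigma\lambda_2)$ of $\bF_2[\mu]$ (with $|\sigma\lambda'_1| = 6$ and $|\sigma\lambda_2| = 8$) over $\bF_2[\mu]$ with the free $\bF_2[\mu]$-module $\Lambda(\hat\xi_1^2) \otimes P$. Tensoring with $C_*$ over $A_*$ kills the Koszul differential (since $\lambda'_1$ and $\lambda_2$ act trivially on $C_*$), so
\[
E^2 \cong C_* \otimes \Gamma(\sigma\lambda'_1, \sigma\lambda_2) \otimes \Lambda(\hat\xi_1^2) \otimes P \,,
\]
which is a free $C_*$-module concentrated in even total degrees, since $C_*$, the Koszul classes $\sigma\lambda'_1, \sigma\lambda_2$, the class $\hat\xi_1^2$, and all generators of $P$ lie in even degrees.

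The even concentration forces $E^2 = E^\infty$ for degree reasons. The resulting filtration on $\pi_*(B \otimes_A C)$ has associated graded a free $C_*$-module on even generators; lifting each basis element produces a surjection from a free $C_*$-module onto the abutment that is an isomorphism on associated gradeds, hence an isomorphism. Nontriviality is immediate, as $1 \otimes 1 \otimes 1 \otimes 1$ survives and $C_* \ne 0$. I expect the main bookkeeping obstacle to be writing down the Koszul resolution cleanly in this setting and checking that the tensor factorization $B_* = \bF_2[\mu] \otimes_{\bF_2} (\Lambda(\hat\xi_1^2) \otimes P)$ legitimately yields a free $A_*$-resolution; once this is in place, the collapse and the absence of hidden extensions are forced by the even concentration, and no further homotopical input is needed.
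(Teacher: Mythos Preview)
Your proposal is correct. The K\"unneth spectral sequence argument works as you describe: the explicit form of $A_*$ and $B_*$ from Lemma~\ref{lem: THH computation}, together with the evenness of $C$, forces the $E^2$-term to be a free $C_*$-module concentrated in even total degrees, and the passage to the abutment goes through without hidden extensions by the standard filtration argument you sketch.

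The paper takes a different, more formal route. Rather than redo any computation, it observes that the map in question is the base change of $\THH(\ko) \to \THH(\ku/\MU)$ along $\THH(\ko) \to A(1) \otimes \THH(\ko) \simeq H\bF_2 \otimes_{\ko} \THH(\ko)$, and that evenly free maps are stable under pushout of $\bE_\infty$ rings: for any even $\bE_\infty$ algebra $C$ over $A(1) \otimes \THH(\ko)$, one has $(A(1) \otimes \THH(\ku/\MU)) \otimes_{A(1) \otimes \THH(\ko)} C \simeq \THH(\ku/\MU) \otimes_{\THH(\ko)} C$, and the right-hand side is already handled by Proposition~\ref{prop: evenly free map}. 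This avoids reopening any spectral sequence and makes no use of the explicit structure of $B_*$. Your direct computation is self-contained once Lemma~\ref{lem: THH computation} is in hand and identifies the $\Tor$-groups explicitly; the paper's approach is shorter but leans on the earlier, harder Proposition~\ref{prop: evenly free map}. The paper itself remarks that a direct proof via Lemma~\ref{lem: THH computation} is possible, so both routes are acknowledged.
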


\begin{proof}
This can be proven directly using Lemma~\ref{lem: THH computation}, but
instead we simply point out that it follows from Proposition~\ref{prop:
evenly free map} and Remark~\ref{rem: Einfty structure} by base change along
$\ko \to H\bF_2$, using the following three pushout squares of
$\bE_\infty$-rings.
\[
\xymatrix{
\ko \ar[r] \ar[d] & \THH(\ko) \ar[r] \ar[d] & \THH(\ku/\MU) \ar[d] \\
H\bF_2 \ar[r] & A(1) \otimes \THH(\ko) \ar[r] \ar[d]
	& A(1) \otimes \THH(\ku/\MU) \ar[d] \\
& C \ar[r] & \THH(\ku/\MU) \otimes_{\THH(\ko)} C
}
\]
\end{proof}

\begin{convention}
To be consistent with our implicit $2$-completion, we write
$\fil_{\ev}^{\star}$ for the functor denoted $\fil_{\ev,2}^{\star}$
in \cite{HRW}*{Construction~2.1.11}.
\end{convention}

\begin{remark} %% \label{rem: our motivic with coeff is even}
By \cite{HRW}*{Corollary~2.2.17}, Remark~\ref{rem: Einfty structure}
and Corollary~\ref{cor: evenly free map}, we can identify
\[
\fil_{\mot}^{\star} (A(1) \otimes \THH(\ko))
	\simeq \fil_{\ev}^{\star} (A(1) \otimes \THH(\ko)) \,,
\]
in the sense of Definition~\ref{def:filmotTHHkoHF2} and \cite{HRW}*{Construction~2.1.11}.
\end{remark}

\begin{theorem} \label{thm: A(1) coeff}
There is an isomorphism
\[
\pi_* \gr_{\mot}^* (A(1) \otimes \THH(\ko))
	\cong \Lambda(\lambda'_1, \lambda_2) \otimes \bF_2[\mu]
\]
of bigraded $\bF_2$-algebras, with $\|\lambda'_1\| = (5,1)$,
$\|\lambda_2\| = (7,1)$ and $\|\mu\| = (8,0)$.
\end{theorem}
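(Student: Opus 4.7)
The plan is to exploit the evenly free cover $A \to B$ from Corollary~\ref{cor: evenly free map} together with the descent machinery of Hahn--Raksit--Wilson. By \cite{HRW}*{Corollary~2.2.14}, the motivic filtration admits the description
\[
\fil_{\mot}^{\star}(A) \simeq \Tot \bigl( \tau_{\ge 2\star} C^{\bullet}(B/A) \bigr) \,,
\]
and since each $C^q(B/A) = B^{\otimes_A q+1}$ is even (by evenness of $B = A(1) \otimes \THH(\ku/\MU)$ from Lemma~\ref{lem: THH computation}, propagated inductively by the evenly free property), its graded pieces $\gr^w \tau_{\ge 2\star} C^q(B/A)$ are Eilenberg--MacLane spectra $H\pi_{2w}(C^q(B/A))$ in stem~$2w$. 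Commuting $\gr^w$ with the totalization therefore yields a descent spectral sequence
\[
E_2^{s,t} = H^s\bigl( \pi_t C^{\bullet}(B/A) \bigr) \Longrightarrow \pi_{t-s} \gr_{\mot}^{t/2}(A) \,,
\]
where only even values of~$t$ contribute and a class in $E_2^{s,t}$ has stem $n = t-s$ and motivic filtration $s$.

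Next I would compute the $E_2$-page using the $A_*$-algebra presentation of $B_*$ from Lemma~\ref{lem: THH computation}. The Künneth spectral sequences for the iterated relative smash products $B^{\otimes_A q+1}$ collapse for evenness reasons parallel to the proof of Proposition~\ref{prop: evenly free map}, so the cosimplicial $\bF_2$-algebra $\pi_* C^{\bullet}(B/A)$ is determined by the explicit formula $B_* = \Lambda(\hat\xi_1^2) \otimes \bF_2[\mu] \otimes P$ and the map $A_* \to B_*$ sending $\lambda'_1, \lambda_2 \mapsto 0$ and $\mu \mapsto \mu$. The class $\mu$ lifts from $\pi_8 B$ to the edge line $E_2^{0,8}$, giving $\|\mu\| = (8,0)$; by contrast, $\lambda'_1$ and $\lambda_2$, which are in the kernel of $A_* \to B_*$, are detected at cosimplicial filtration $s = 1$, placing them in $E_2^{1,6}$ and $E_2^{1,8}$ respectively, and matching the asserted bidegrees $\|\lambda'_1\| = (5,1)$ and $\|\lambda_2\| = (7,1)$. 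All remaining classes are products of these generators.

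Convergence and collapse at $E_2$ then follow because the total $\bF_2$-rank in each stem of the computed $E_2$-page matches $\dim_{\bF_2} A_n$ from Lemma~\ref{lem: THH computation}, which is the rank of the abutment; any nonzero differential would strictly decrease this rank. Hidden multiplicative extensions among $\lambda'_1, \lambda_2$ are ruled out by vanishing of squares of odd-degree classes in the target $\bF_2$-algebra, and extensions involving $\mu$ are trivial since $\mu$ is already lifted from $B$. The main obstacle is the algebraic $E_2$-computation: one must verify that the cosimplicial cohomology of $\pi_* C^{\bullet}(B/A)$ is concentrated in filtrations $s \in \{0,1\}$ and has the specific form asserted, despite the polynomial factor $P$ contributing classes in arbitrarily high even degrees. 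Controlling this cohomology reduces to an explicit cobar calculation using the structure of $B_*$ as a free $A_*$-module in appropriate gradings, which is tractable but requires systematic bookkeeping.
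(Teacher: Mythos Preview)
Your architecture is the same as the paper's: descend along the evenly free map $A \to B$, identify the descent $E_2$-term with Hopf algebroid cohomology $\Ext_\Sigma(B_*,B_*)$ for $\Sigma = \pi_*(B\otimes_A B)$, and conclude collapse by matching ranks against $A_*$. However, you have correctly flagged and then left open the actual computation of $E_2$, and this is where the paper supplies a specific idea you are missing. A direct cobar calculation is not straightforward: although the K\"unneth spectral sequences for $C^q(B/A)$ collapse for parity reasons, they leave hidden multiplicative extensions, so you do not know the Hopf algebroid structure on $(B_*,\Sigma)$ well enough to run the cobar complex by hand. The paper sidesteps this by filtering $A$ and $B$ by their Whitehead towers, passing to the associated graded pair $(H\pi_*A, H\pi_*B)$, and observing that the resulting Hopf algebroid $(B_*,\olSigma)$ with
\[
\olSigma \cong \Gamma(\sigma\lambda'_1,\sigma\lambda_2) \otimes \bigl(\Lambda(\hat\xi_1^2)\otimes P\bigr)^{\otimes 2} \otimes \bF_2[\mu]
\]
splits as a tensor product of three elementary Hopf algebroids whose $\Ext$ groups are immediate: $\Lambda(\lambda'_1,\lambda_2)$, $\bF_2$, and $\bF_2[\mu]$. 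This feeds a May--Ravenel spectral sequence $\Ext_{\olSigma}(B_*,B_*) \Rightarrow \Ext_\Sigma(B_*,B_*)$; since $\Ext_{\olSigma}$ already has the same stemwise rank as $A_*$, both the May--Ravenel and the descent spectral sequences are forced to collapse simultaneously. Without this filtration trick, your ``systematic bookkeeping'' has no handle on the problem.

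Two smaller points. First, the $E_2$-term is not concentrated in cosimplicial filtrations $s\in\{0,1\}$: the class $\lambda'_1\lambda_2$ lives in $\Ext^2$, so the correct statement is that the \emph{algebra generators} lie in $s\le 1$. Second, your rank-matching argument has a slight mismatch: you set up a spectral sequence converging to $\pi_*\gr_{\mot}^*(A)$, but compare against $\dim_{\bF_2} A_n = \dim_{\bF_2}\pi_n A$. The paper handles this by first running the descent spectral sequence converging to $A_*$ itself, establishing collapse there, and only afterwards translating to the weight-$w$ filtration pieces $\fil_{\ev}^w A$ by restricting the descent complex to internal degrees $\ge 2w$ and invoking naturality of the collapse.
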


\begin{proof}
We closely follow~\cite{HW22} and~\cite{HRW}.  Starting with the
proof of \cite{HW22}*{Proposition~6.1.6}, let $A := A(1) \otimes
\THH(\ko)$ and $B := A(1) \otimes \THH(\ku/\MU)$, so that $A_* \cong
\Lambda(\lambda'_1, \lambda_2) \otimes \bF_2[\mu]$ and $B_* \cong
\Lambda(\hat\xi_1^2) \otimes \bF_2[\mu] \otimes P$ by Lemma~\ref{lem:
THH computation}.  The descent spectral sequence associated to the
cosimplicial Amitsur resolution $C^{\bullet}(B/A) = B^{\otimes_A
\bullet+1}$ for $A \to B$ has $E_1$-term
\[
E^q_1(B/A) = \pi_*(B^{\otimes_A q+1})
\]
for $q\ge0$, and converges to~$A_*$.  Since $B_*$ is concentrated in even
stems, Corollary~\ref{cor: evenly free map} implies that $\Sigma :=
\pi_*(B \otimes_A B)$ is even and free over~$B_*$, so that $(B_*, \Sigma)$
is a flat Hopf algebroid.
Let $C^*_\Sigma(B_*, B_*)$ denote the associated cobar complex.
It follows by induction on~$q$ that the natural homomorphism
\begin{multline*}
C^q_{\Sigma}(B_*, B_*) = \Sigma \otimes_{B_*} \dots \otimes_{B_*} \Sigma
	\overset{\cong}\longto
\pi_*((B \otimes_A B) \otimes_B \dots \otimes_B (B \otimes_A B)) \\ 
	\cong \pi_*(B \otimes_A \dots \otimes_A B) = E^q_1(B/A)
\end{multline*}
is an isomorphism for each $q\ge0$, since the relevant K{\"u}nneth
spectral sequences collapse.
Passing to cohomology, we obtain an isomorphism
\[
\Ext_{\Sigma}^*(B_*, B_*) \cong E_2^*(B/A) \,,
\]
identifying the descent spectral sequence $E_2$-term with the Hopf
algebroid cohomology of $(B_*, \Sigma)$.  We claim that in each stem
this $E_2$-term has the same finite order as~$A_*$, so that the descent
spectral sequence for $A \to B$ must collapse at $E_2 = E_\infty$.

By convergence, the descent $E_2$-term is an upper bound for~$A_*$.
To show that the bound is exact, we consider the multiplicative Whitehead
filtrations $\tau_{\ge\star} A$ and $\tau_{\ge\star} B$ of $A$ and~$B$,
respectively.
For each $q\ge0$ we equip $B^{\otimes_A q+1}$ with the relative
convolution filtration
\[
\fil^{\star} B^{\otimes_A q+1}
	= (\tau_{\ge\star} B)^{\otimes_{(\tau_{\ge\star} A)} q+1} \,,
\]
having associated graded $\bE_\infty$-ring
\[
\gr^* B^{\otimes_A q+1}
	= (H\pi_* B)^{\otimes_{(H\pi_* A)} q+1} \,.
\]
Here $H\pi_* A$ and $H\pi_* B$ are to be interpreted as the graded
$\bE_\infty$-rings $\gr^*(\tau_{\ge\star} A)$ and $\gr^*(\tau_{\ge\star}
B)$, respectively.  We proved in Lemma~\ref{lem: THH computation} that
$A_* \to B_*$ is given by $\lambda'_1 \mapsto 0$, $\lambda_2 \mapsto 0$
and $\mu \mapsto \mu$, so that
\[
\olSigma := \pi_* (H\pi_* B \otimes_{H\pi_* A} H\pi_* B) \cong
	\Gamma(\sigma\lambda'_1, \sigma\lambda_2)
	\otimes \Lambda(\hat\xi_1^2) \otimes P
	\otimes \Lambda(\hat\xi_1^2) \otimes P
	\otimes \bF_2[\mu]
\]
is even and free over~$B_*$.  Hence $(B_*, \olSigma)$ is a flat Hopf
algebroid, and as above we have compatible isomorphisms
\[
C^q_{\olSigma}(B_*, B_*)
	= \olSigma \otimes_{B_*} \dots \otimes_{B_*} \olSigma
\overset{\cong}\longto (H\pi_* B)^{\otimes_{(H\pi_* A)} q+1}
\]
for all $q\ge0$.  Since these bigraded groups are concentrated in
even stems, and differentials reduce the stem by one, the convolution
filtration spectral sequence
\[
\pi_* ((H\pi_* B)^{\otimes_{(H\pi_* A)} q+1})
	\Longrightarrow \pi_*(B^{\otimes_A q+1})
\]
collapses at this term.  This proves that $\pi_*(B^{\otimes_A q+1}) =
E^q_1(B/A)$ has a descending filtration with associated graded given
by $C^q_{\olSigma}(B_*, B_*)$.  These filtrations are compatible for
varying $q\ge0$, so the descent $E_1$-term is a filtered differential
graded algebra with associated graded $E_1 = C^*_{\olSigma}(B_*, B_*)$.
Passing to cohomology, we obtain the May--Ravenel spectral sequence
\[
E_2 = \Ext_{\olSigma}(B_*, B_*) \Longrightarrow \Ext_\Sigma(B_*, B_*)
\]
converging to the descent $E_2$-term, cf.~\cite{Rav86}*{Theorem~A1.3.9}.

We now view the Hopf algebroid $(B_*, \olSigma)$ as the tensor product
of the three Hopf algebroids
\[
(\bF_2, \Gamma(\sigma\lambda'_1, \sigma\lambda_2))
	\quad,\quad
(\Lambda(\hat\xi_1^2) \otimes P,
	\Lambda(\hat\xi_1^2) \otimes P \otimes \Lambda(\hat\xi_1^2) \otimes P)
	\quad\text{and}\quad
(\bF_2[\mu], \bF_2[\mu]) \,.
\]
These have cohomology algebras $\Lambda(\lambda'_1, \lambda_2)$, $\bF_2$
and $\bF_2[\mu]$, respectively, with $\lambda'_1, \lambda_2 \in \Ext^1$
and $\mu \in \Ext^0$.  This confirms that the May--Ravenel $E_2$-term
\[
\Ext_{\olSigma}(B_*, B_*)
	\cong \Lambda(\lambda'_1, \lambda_2) \otimes \bF_2[\mu]
\]
has the same finite order in each stem as~$A_*$, which implies that
the May--Ravenel spectral sequence and the descent spectral sequence
both collapse at their $E_2$-terms.  Moreover, there is no room for
hidden multiplicative extensions, since $\lambda'_1$ and $\lambda_2$
both square to zero in~$A_*$.

We have now established that the descent spectral sequence
\[
E_1^q(B/A) = \pi_*(B^{\otimes_A q+1})
	\Longrightarrow A_*
\]
is concentrated in even internal degrees $n+q = 2w$, having $E_2$-term
\[
E_2(B/A) \cong \Ext_{\Sigma}(B_*, B_*) \cong
\Lambda(\lambda'_1, \lambda_2) \otimes \bF_2[\mu]
\]
with $(n,q)$-bidegrees $\|\lambda'_1\| = (5,1)$, $\|\lambda_2\| = (7,1)$
and $\|\mu\| = (8,0)$.  Following \cite{HRW}*{Example~4.2.3} we apply
\cite{HRW}*{Corollary 2.2.17(1)} to the evenly free map $A \to B$,
to see that
\[
\fil_{\ev}^{\star} A \overset{\simeq}\longto
	\Tot \bigl( \fil_{\ev}^{\star} B^{\otimes_A \bullet+1} \bigr)
	= \Tot \bigl( \tau_{\ge 2\star}(B^{\otimes_A \bullet+1}) \bigr)
\]
is an equivalence.  For each integer weight~$w$ there is a spectral
sequence converging to 
\[\pi_* \Tot \bigl( \tau_{\ge 2w}(B^{\otimes_A
\bullet+1}) \bigr) \,,\]
with $E_1$-term given by the part of the descent
spectral sequence $E_1(B/A)$ that is located in internal degrees $n+q
\ge 2w$.  The $d_1$-differential preserves this part, so the $E_2$-term
for weight~$w$ is given by the part of $E_2(B/A)$ in the same range of
internal degrees.  By naturality, this spectral sequence must collapse
at the $E_2$-term, since the full descent spectral sequence does so.
It follows that
\[
\pi_* \fil_{\ev}^w A \longto A_*
\]
maps the source isomorphically to the subgroup of classes in internal
degree $\ge 2w$, and $\pi_*\gr_{\ev}^w A$ is isomorphic to the summand
in~$A_*$ consisting of classes in internal degree $= 2w$.  Hence
\[
\pi_*\gr_{\ev}^* A \cong \Lambda(\lambda'_1, \lambda_2) \otimes \bF_2[\mu]
\]
as bigraded algebras, with $(n, 2w-n)$-bidegrees $\|\lambda'_1\| =
(5,1)$, $\|\lambda_2\| = (7,1)$ and $\|\mu\| = (8,0)$.
\end{proof}

By~\cite{HRW}*{Corollary~2.2.21(1)}, there are equivalences 
\[\fil_{\ev}^{\star}
\bS \simeq \Tot \bigl( \tau_{\ge 2\star} C^\bullet(\MU/\bS) \bigr)
\text{ and }
\gr_{\ev}^*\bS \simeq \Tot \Sigma^{2*} H\pi_{2*} C^{\bullet}(\MU/\bS) \,.
\]
The even filtration is extended
to module spectra in~\cite{HRW}*{Appendix~A}.  As such the even filtration is lax symmetric monoidal~\cite{HRW}*{Remark A.1.6} and the associated graded functor is symmetric monoidal, so $\gr_{\ev}^*
\bS$ is an $\bE_\infty$ algebra in graded $2$-complete spectra, and
$\gr_{\ev/\bS}^*$ is a lax symmetric monoidal functor from $2$-complete
spectra to $\gr_{\ev}^* \bS$-modules. 
Note that when $R$ is an $\bE_{\infty}$-ring and $\MU_*R$ is concentrated in even degrees, 
then $\fil_{\ev}^{\star}R$ agrees with  $\fil_{\ev/\bS}^{\star}R$ by~\cite{HRW}*{Corollary~1.1.6 and Remark~1.1.7}. 

\begin{convention} \label{conv: modules over gr even}
We will simply write $\otimes$ for $\otimes_{\gr_{\ev}^* \bS}$ when it
is clear from the context that we are in the category of modules over
$\gr_{\ev}^* \bS$.
\end{convention}

\begin{construction} \label{con:A1V1Ceta}
For finite spectra~$V$ with $\MU_*(V)$ concentrated in even
degrees, we shall write $\olV$ for the $\gr_{\ev}^* \bS$-module
$\gr^{*}_{\ev/\bS}V \simeq \Tot \Sigma^{2*} H\pi_{2*} (C^\bullet(\MU/\bS) \otimes V)$.
In particular, this defines the modules $\olV(0)$, $\olC\eta$ and
$\olA(1)$.
 
By~\cite{GIKR22}, we can identify $\gr_{\ev}^*\bS$ with
$C\tau$ in the $\bC$-motivic homotopy category $\SH(\bC)$
(cf.~\cite{HRW}*{Recollection~6.0.1}).  Consequently, the $\gr_{\ev}^*
\bS$-module~$\olV$ corresponds to the even $\MU_*\MU$-comodule $\MU_*(V)$
under the equivalence of \cite{GWX21}*{Theorem~1.13(2)}.  Moreover,
by \cite{GWX21}*{Remark~4.15}, this equivalence is symmetric monoidal.

Hence, for each $k\ge0$ there is an essentially unique $\bE_\infty$
$\gr_{\ev}^* \bS$-algebra $\olV(k)$ that corresponds to the commutative
$\MU_*\MU$-comodule algebra $\MU_*/(2, v_1, \dots, v_k)$.  For $k=0$
this gives the previously defined $\gr_{\ev}^* \bS$-module $\olV(0)$
an $\bE_\infty$ algebra structure, even though $V(0) = C2$ is not a
ring spectrum, and for $k\ge1$ it defines the $\bE_\infty$ algebras
$\olV(k)$, in spite of $V(k)$ not existing as a spectrum.  Furthermore,
the isomorphism
\begin{equation} \label{eq:MUA1}
	\MU_*(A(1)) \cong \MU_*/(2,v_1) \otimes_{\MU_*} \MU_*(C\eta)
\end{equation}
of $\MU_*\MU$-comodules exhibits $\MU_*(A(1))$ as an
$\MU_*/(2,v_1)$-module in that category.  It follows that we have an
equivalence $\olA(1) \simeq \olV(1) \otimes \olC\eta$, exhibiting
$\olA(1)$ as a $\olV(1)$-module in the category of $\gr_{\ev}^*
\bS$-modules.  
Here we use the fact
that $\eta \in \pi_1 \bS$ is detected in Novikov filtration~$1$, so
the cofiber of $\eta \: \Sigma^{1,1} \gr_{\ev}^* \bS \to \gr_{\ev}^* \bS$
is equivalent to $\olC\eta$ as defined above.
Hence we have a cofiber sequence
\begin{equation} \label{eq:V1-A1-cofibseq}
\Sigma^{1,1} \olV(1) \overset{\eta}\longto \olV(1)
	\overset{i}\longto \olA(1)
	\overset{j}\longto \Sigma^{2,0} \olV(1)
\end{equation}
of $\olV(1)$-modules, mapping to the cofiber sequence
\begin{equation} \label{eq:V2-V2Ceta-cofibseq}
\Sigma^{1,1} \olV(2) \overset{\eta}\longto \olV(2)
        \overset{i}\longto \olV(2) \otimes \olC\eta
        \overset{j}\longto \Sigma^{2,0} \olV(2)
\end{equation}
of $\olV(2)$-modules.

When $\olV$ and $M^*$ are $\gr_{\ev}^* \bS$-modules, we shall write
\[
\olV_* M^* := \pi_* (\olV \otimes M^*)
\]
for the homotopy groups of the graded spectrum $\olV \otimes M^*$, keeping
in mind that this is a bigraded abelian group.  Note that we are applying
Convention~\ref{conv: modules over gr even} throughout this construction.
\end{construction}

\begin{lemma} \label{lem: not a ring}
The $\MU_*\MU$-comodules $\MU_*(C\eta)$ and $\MU_*(A(1))$ do not admit the structure of
$\MU_*\MU$-comodule algebras. 
\end{lemma}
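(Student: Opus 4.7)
The plan is to derive a contradiction in each case by computing the putative coaction on the degree-$2$ generator and squaring it.

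For $C\eta$, observe that $\MU_*(C\eta) \cong \MU_*\{1, x\}$ with $|x|=2$ is free over $\MU_*$, so a comodule algebra structure would descend, via base change along the $\bE_\infty$ ring map $\MU \to H\bF_2$, to an $\cA^\vee$-comodule algebra structure on $H_*(C\eta; \bF_2) \cong \bF_2\{1, x\}$. The Steenrod relation $Sq^2(1) = x$ in $H^*(C\eta; \bF_2)$ pins down the coaction as $\psi(x) = 1 \otimes x + \xi_1^2 \otimes 1$. Since $H_4(C\eta; \bF_2) = 0$, we must have $x^2 = 0$, so $\psi(x^2) = 0$. But multiplicativity of $\psi$, combined with $\bF_2$-coefficients, would yield $\psi(x)^2 = 1 \otimes x^2 + \xi_1^4 \otimes 1 = \xi_1^4 \otimes 1$, which is nonzero in $\cA^\vee \otimes H_*(C\eta; \bF_2)$. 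This is the contradiction.

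For $A(1)$, since $\MU_*(A(1))$ is not flat over $\MU_*$, the analogous argument must be carried out at the $\MU$-level. By~\eqref{eq:MUA1}, $\MU_*(A(1))$ is free over $\MU_*/(2, v_1)$ on $\{1, \tilde x\}$ with $|\tilde x|=2$. Working modulo $(2, v_1)$, the coaction would take the form $\psi(\tilde x) = 1 \otimes \tilde x + b_1 \otimes 1$, where $b_1 \in \MU_2\MU$ is a polynomial generator detecting $\eta \in \pi_1(\bS)$. Squaring would give $\psi(\tilde x)^2 = 1 \otimes \tilde x^2 + b_1^2 \otimes 1$ modulo~$2$. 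Since $\MU_2/(2, v_1) = 0$ (as $v_1$ generates $\MU_2/2$), one has $\MU_4(A(1)) \cong \MU_4/(2, v_1) \cong \bF_2$, so $\tilde x^2$ is either $0$ or the unique nonzero class. Either way $\psi(\tilde x^2)$ is a ``$\otimes 1$'' term whose coefficient lies in the image of $\eta_R\colon \MU_4/(2, v_1) \to (\MU_*\MU)/(2, v_1)$, while $b_1^2$ is a nonzero indecomposable monomial in the polynomial ring $(\MU_*\MU)/(2, v_1) \cong \bF_2[v_2, v_3, \ldots][b_1, b_2, \ldots]$ not belonging to this image. This yields the desired contradiction.

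The hard part will be identifying the explicit form of the coaction on the degree-$2$ generator, in particular justifying that the $\xi_1^2 \otimes 1$ (respectively $b_1 \otimes 1$) term appears with nonzero coefficient. This reflects that the extension class of the short exact sequence $0 \to \MU_* \to \MU_*(C\eta) \to \Sigma^2 \MU_* \to 0$ of $\MU_*\MU$-comodules corresponds to the class of $\eta$ in the Adams--Novikov $E_2^{1,2}$-term; once this is pinned down, the remainder is a direct manipulation of the comodule algebra axioms.
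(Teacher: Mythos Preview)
Your approach is essentially the same as the paper's: compute the coaction on the degree-$2$ generator, square it, and obtain a contradiction from the surviving $b_1^2 \otimes 1$ term. The paper streamlines by observing that~\eqref{eq:MUA1} reduces the $C\eta$ case to the $A(1)$ case (a comodule algebra structure on $\MU_*(C\eta)$ would induce one on the tensor product $\MU_*/(2,v_1) \otimes_{\MU_*} \MU_*(C\eta)$), so only the $A(1)$ argument is written out; your separate treatment of $C\eta$ via base change to $H\bF_2$ is a pleasant alternative. Your case analysis on whether $\tilde x^2$ is zero is more scrupulous than the paper, which simply asserts $b_1^2 = 0$ in $\MU_*(A(1))$; in fact the case distinction is unnecessary, since $\tilde x^2$ lies in the subcomodule $\MU_*/(2,v_1)\{1\}$ with coaction $c \mapsto 1 \otimes c$, so the equation $\psi(\tilde x^2) = \psi(\tilde x)^2$ reduces directly to $b_1^2 \otimes 1 = 0$ regardless. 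Two small slips: $b_1^2$ is decomposable, not indecomposable, and $(\MU_*\MU)/(2,v_1)$ is $\bF_2[x_2, x_3, \ldots][b_1, b_2, \ldots]$ rather than $\bF_2[v_2, v_3, \ldots][b_1, \ldots]$ (you are conflating $\MU$ with $\BP$); neither affects the argument, since the needed fact $b_1^2 \ne 0$ in this quotient is clear either way.
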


\begin{proof}
In view of~\eqref{eq:MUA1}, it suffices to prove this for~$\MU_*(A(1))$.
Writing $\MU_*(C\eta) = \MU_*\{1, b_1\}$, so that $\MU_*(A(1)) \cong
\MU_*/(2,v_1) \{1, b_1\}$, the coaction
\[
\nu \: \MU_*(A(1)) \longto \MU_*\MU \otimes_{\MU_*} \MU_*(A(1))
\] 
satisfies $\nu(b_1) = b_1 \otimes 1 + 1 \otimes b_1$ and $\nu(x_2) = b_2 \otimes
1 + 1 \otimes x_2$, see e.g.~\cite{Rog20}*{(2.1)}. If $\MU_*(A(1))$
were an $\MU_*\MU$-comodule algebra, we would have $\nu(b_1^2) = (b_1
\otimes 1 + 1 \otimes b_1)^2 = b_1^2 \otimes 1 +
1 \otimes b_1^2$.  This contradicts each of the possibilities $b_1^2 = 0$
or $b_1^2 = x_2$, and there are no other alternatives. 
\end{proof}

\begin{lemma} \label{lem: identification}
There is an equivalence
\[
\olA(1) \otimes \gr_{\mot}^* \THH(\ko)
	\simeq \gr_{\mot}^* (A(1) \otimes \THH(\ko))
\]
of $\gr_{\ev}^* \bS$-modules. 
\end{lemma}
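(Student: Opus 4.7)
The plan is to reduce the claim to HRW's even-filtration framework, and then deduce the equivalence from the lax symmetric monoidality of $\gr_{\ev}^{*}$ together with the fact that $A(1)$ is a finite spectrum.

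First, by Proposition~\ref{prop: relation to motivic filtration} we have $\fil_{\mot}^{\star}\THH(\ko) \simeq \fil_{\ev}^{\star}\THH(\ko)$, and by combining Corollary~2.2.14 of~\cite{HRW} with Remark~\ref{rem: Einfty structure} and Corollary~\ref{cor: evenly free map} we also have $\fil_{\mot}^{\star}(A(1) \otimes \THH(\ko)) \simeq \fil_{\ev}^{\star}(A(1) \otimes \THH(\ko))$. Since $\gr^{\star}$ is symmetric monoidal and $\olA(1) = \gr_{\ev}^{*} A(1)$ by Construction~\ref{con:A1V1Ceta}, it suffices to establish a natural equivalence
\[
\gr_{\ev}^{*} A(1) \otimes_{\gr_{\ev}^{*}\bS} \gr_{\ev}^{*} \THH(\ko)
\simeq \gr_{\ev}^{*}\bigl( A(1) \otimes \THH(\ko) \bigr)
\]
of $\gr_{\ev}^{*}\bS$-modules.

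Next, the composite $\gr_{\ev}^{*} = \gr^{*} \circ \fil_{\ev}^{\star}$ is lax symmetric monoidal, so it provides a canonical comparison map
\[
\gr_{\ev}^{*}(X) \otimes_{\gr_{\ev}^{*}\bS} \gr_{\ev}^{*}(Y) \longto \gr_{\ev}^{*}(X \otimes Y)
\]
for all $X, Y \in \Sp_{2}$. Specialising to $X = A(1)$ and $Y = \THH(\ko)$, I would prove this is an equivalence by cellular induction on~$X$. Both sides are exact in~$X$, since $\gr_{\ev}^{*}$ is exact and smashing with a spectrum preserves cofiber sequences. At $X = \bS$ the map is the canonical identification of $\gr_{\ev}^{*}\THH(\ko)$ with itself. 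Because $A(1)$ is a finite spectrum, built from~$\bS$ by the cofiber sequences defining $V(0) = C2$, $C\eta$, $V(0) \otimes C\eta$, and finally $A(1) = (V(0) \otimes C\eta)/v_{1}$, the five-lemma at each stage promotes the equivalence from $X = \bS$ to $X = A(1)$.

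The main obstacle is really Step~1: reconciling the explicit Amitsur-complex definitions in Definitions~\ref{def: motivic filtrations on THH TN and TP} and~\ref{def:filmotTHHkoHF2} with the abstract HRW even filtration, and verifying that this identification is compatible with the lax monoidal structure used in Step~2. Both pieces of this compatibility are already encoded in the evenly-free descent results established earlier. Once they are in hand, Steps~2 and~3 are formal consequences of the stability and lax monoidality of $\gr_{\ev}^{*}$.
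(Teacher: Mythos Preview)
Your overall strategy of cellular induction is close to the paper's, but there is a genuine gap in the justification of exactness. You assert that ``$\gr_{\ev}^{*}$ is exact,'' and use this to conclude that both sides of the comparison map preserve the cofiber sequences building $A(1)$ out of~$\bS$. This is not true in general: the even filtration is defined by descent along evenly free maps (a limit construction), and neither $\fil_{\ev}^{\star}$ nor $\gr_{\ev}^{*}$ preserves arbitrary cofiber sequences. One only knows that $\gr_{\ev}^{*}$ takes a cofiber sequence $X \to Y \to Z$ to a cofiber sequence under additional hypotheses---for finite spectra, typically that $\MU_{*}X$, $\MU_{*}Y$, $\MU_{*}Z$ are all concentrated in even degrees (this is the content of \cite{GIKR22}*{Proposition~3.18}). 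For the left-hand side of your comparison this is fine, since $\bS$, $V(0)$, $C\eta$, $V(0)\otimes C\eta$, and $A(1)$ all have even $\MU_{*}$. But for the right-hand side you need $\gr_{\ev}^{*}$ to preserve cofiber sequences of the form $X \otimes \THH(\ko) \to Y \otimes \THH(\ko) \to Z \otimes \THH(\ko)$, and you have not verified the relevant evenness hypothesis for these smash products.

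The paper addresses exactly this issue by working not with $\THH(\ko)$ directly but termwise in the cosimplicial Amitsur resolution $C^{\bullet}(\THH(\ku/\MU)/\THH(\ko))$, each term of which is an even $\bE_\infty$ $\ku$-algebra by Proposition~\ref{prop: equiv of gr modules}. The paper first establishes $\olA(1) \otimes \gr_{\ev}^{*}\ku \simeq \gr_{\ev}^{*}(A(1) \otimes \ku)$ by a careful cellular induction: the $2$- and $\eta$-cells are handled because those attaching maps already live in $\pi_{*}\gr_{\ev}^{*}\bS$, while the $v_{1}$-cell requires invoking \cite{GIKR22}*{Proposition~3.18} together with the observation that $\MU_{*}(V(0)\otimes C\eta \otimes \ku)$ is even. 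This then propagates to each $C^{q}$ by freeness over~$\ku$, and finally one totalizes using finiteness of $\olA(1)$. Your argument can be repaired along these lines, but as written the appeal to exactness of $\gr_{\ev}^{*}$ is the missing idea.
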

 
\begin{proof}
Let $v_1 \: \Sigma^2 V(0) \otimes C\eta \to V(0) \otimes C\eta$ be one
of the (eight) $v_1$-maps with cofiber one of the four spectra~$A(1)$.
Multiplication by~$2$ acts injectively on $\MU_*(\bS)$ and on $\MU_*(\ku)
\cong \ku_* [b_k \mid k\ge1]$, so the natural map $\olV(0) \otimes
\gr_{\ev}^* \ku \to \gr_{\ev/\bS}^*( V(0) \otimes \ku)$ is an equivalence by
\cite{GIKR22}*{Proposition~3.18}.  Since $\MU_*(C\eta)$ is a free $\MU_*$-module
on two generators, the maps $\olV(0) \otimes \olC\eta \to \gr_{\ev/\bS}^*(V(0)
\otimes C\eta)$ and $\olC\eta \otimes \gr_{\ev/\bS}^*(V(0) \otimes \ku)
\to \gr_{\ev/\bS}^*(V(0) \otimes C\eta \otimes \ku)$ are equivalences by
\cite{Pst23}*{Lemma~4.24}.  Thus the lower horizontal arrow in the following
commutative square is also an equivalence.
\[
\xymatrix{
\olV(0) \otimes \olC\eta \otimes \gr_{\ev}^* \ku
	\ar[r]^-{\simeq} \ar[d]^-{\simeq}
    & \olC\eta \otimes \gr_{\ev/\bS}^* (V(0) \otimes \ku)
	\ar[d]^-{\simeq} \\
\gr_{\ev/\bS}^*(V(0) \otimes C\eta) \otimes \gr_{\ev}^* \ku
	\ar[r]^-{\simeq}
    & \gr_{\ev/\bS}^* (V(0) \otimes C\eta \otimes \ku)
}
\]
Since $v_1$ acts injectively on $\MU_*(V(0) \otimes C\eta)$ and
on $\MU_*(V(0) \otimes C\eta \otimes \ku)$ it follows, by \cite{GIKR22}*{Proposition~3.18} again,  that the columns in the following diagram are
cofiber sequences.

\[
\begin{tikzcd}
\gr_{\ev/\bS}^*(\Sigma^2 V(0) \otimes C\eta) \otimes \gr_{\ev}^* \ku \arrow{r}{\simeq}  \arrow{d}{v_1}  &  \gr_{\ev/\bS}^*(\Sigma^2 V(0) \otimes C\eta \otimes \ku) \arrow{d}{v_1} \\ 
 \gr_{\ev/\bS}^*(V(0) \otimes C\eta) \otimes \gr_{\ev}^* \ku  \arrow{r}{\simeq}  \arrow{d}  & \gr_{\ev/\bS}^*(V(0) \otimes C\eta \otimes \ku) \arrow{d}  \\ 
\olA(1) \otimes \gr_{\ev}^* \ku \arrow{r}{\simeq}   & \gr_{\ev}^*(A(1) \otimes \ku)
\end{tikzcd}
\]
%\[
%\begin{tikzcd}[column sep=small]
%\gr_{\ev/\bS}^*(\Sigma^2 V(0) \otimes C\eta) \otimes \gr_{\ev}^* \ku
%	\arrow{r}{v_1}  \arrow{d}{\simeq}
%	& \gr_{\ev/\bS}^*(V(0) \otimes C\eta) \otimes \gr_{\ev}^* \ku
%\arrow{r} \arrow{d}{\simeq}
%	& \olA(1) \otimes \gr_{\ev}^* \ku \arrow{d}{\simeq} \\
%\gr_{\ev/\bS}^*(\Sigma^2 V(0) \otimes C\eta \otimes \ku)
%	\arrow{r}{v_1}
%& \gr_{\ev/\bS}^*(V(0) \otimes C\eta \otimes \ku) \ar[r]
%	& \gr_{\ev}^*(A(1) \otimes \ku)
%\end{tikzcd}
%\]
Hence the lower horizontal arrow is also an equivalence.

Using Proposition~\ref{prop: equiv of gr modules},
it follows that there are equivalences
\[
\olA(1) \otimes \gr_{\ev}^* C^q(\THH(\ku/\MU)/\THH(\ko)) 
	\overset{\simeq}\longto
\gr_{\ev}^* \bigl( A(1) \otimes C^q(\THH(\ku/\MU)/\THH(\ko)) \bigr)
\]
for all $q\ge0$, compatible with the cosimplicial structure maps.
Passing to totalizations, and using that $\olA(1)$ is
a finite $\gr_{\ev}^* \bS$-module, we obtain
\begin{multline*}
\olA(1) \otimes \Tot \bigl(
	\gr_{\ev}^* C^\bullet(\THH(\ku/\MU)/\THH(\ko)) \bigr) \\
\simeq \Tot \bigl(
	\olA(1) \otimes \gr_{\ev}^* C^\bullet(\THH(\ku/\MU)/\THH(\ko)) \bigr) \\
\simeq \Tot \gr_{\ev}^* \bigl(
	A(1) \otimes C^\bullet(\THH(\ku/\MU)/\THH(\ko)) \bigr) \,.
\end{multline*}
In view of Definitions~\ref{def: motivic filtrations on THH TN and TP}
and~\ref{def:filmotTHHkoHF2}, this establishes the asserted equivalence.
\end{proof}

\begin{remark} \label{rem: A(1) mult}
A consequence of Lemma~\ref{lem: not a ring} is that $\olC\eta$
and $\olA(1)$ are not $\bE_1$ algebras in $\gr_{\ev}^* \bS$-modules.
However, by Lemma~\ref{lem: identification}, there is an identification
of $\gr_{\ev}^*\bS$-modules
\[
\olA(1) \otimes \gr_{\mot}^* \THH(\ko)
	\simeq \gr_{\mot}^* (A(1) \otimes \THH(\ko)) \,,
\]
where the right-hand side is an $\bE_\infty$ $\gr_{\ev}^*\bS$-algebra.
We therefore use this to equip the left-hand side with an $\bE_\infty$
$\gr_{\ev}^*\bS$-algebra structure.  Note that the left-hand side also
has a canonical action of the circle~$\bT$, but this $\bT$-action is
not an action through $\bE_\infty$-ring maps, because the right-hand side
is not equipped with a compatible $\bT$-action.  See Remark~\ref{rem:
lack of Leibniz rule} for an algebraic incarnation of this.
\end{remark}

\begin{lemma} \label{lem:etaBockspseq}
Let $M^*$ be a $\gr_{\ev}^* \ko$-module.  Then $\olC\eta \otimes M^*
\simeq \gr_{\ev}^* \ku \otimes_{\gr_{\ev}^* \ko} M^*$ and there is a
natural, trigraded, $\eta$-Bockstein spectral sequence
\[
E_1 = (\olC\eta_* M^*) \, [\eta]
	\Longrightarrow \pi_*(M^*) \,.
\]
If $M^*$ is uniformly bounded below, then this spectral sequence is
conditionally convergent.  If $M^*$ is a $\gr_{\ev}^* \ko$-algebra,
then this is an algebra spectral sequence.
\end{lemma}

\begin{proof}
The Wood cofiber sequence~\eqref{eq:Wood} induces a cofiber sequence
\[
\Sigma^{1,1} \gr_{\ev}^* \ko
	\overset{\eta}\longto \gr_{\ev}^* \ko
	\overset{c}\longto \gr_{\ev}^* \ku
	\overset{R}\longto \Sigma^{2,0} \gr_{\ev}^* \ko
\]
in the category of $\gr_{\ev}^* \ko$-modules, where $c$
is a map of $\bE_\infty$ algebras.  This follows as in
\cite{GIKR22}*{Proposition~3.18}, since $\MU_*(\ko)$ is concentrated in
even degrees by Proposition~\ref{prop:koMUpolynomial}.  Hence $\olC\eta
\otimes M^*$ and $\gr_{\ev}^* \ku \otimes_{\gr_{\ev}^* \ko} M^*$ are
both the cofiber of $\eta \: \Sigma^{1,1} M^* \to M^*$.

The Bousfield--Kan homotopy (= descent) spectral sequence for the
cosimplicial $\bE_\infty$ $\gr_{\ev}^* \ko$-algebra
\[
C^\bullet( \gr_{\ev}^* \ku / \gr_{\ev}^* \ko )
\]
is well-known to be multiplicative, and converges conditionally (and
strongly) to $\pi_* \gr_{\ev}^* \ko$.  The normalized $\Tot$-filtration
is the same as the $\eta$-adic tower
\[
\dots \overset{\eta}\longto \Sigma^{2,2} \gr_{\ev}^* \ko
	 \overset{\eta}\longto \Sigma^{1,1} \gr_{\ev}^* \ko
	 \overset{\eta}\longto \gr_{\ev}^* \ko \,,
\]
so we can equally well call this the $\eta$-Bockstein spectral sequence.
In particular, its $E_1$-term is
\[
E_1^q = \Sigma^{q,q} \pi_* \gr_{\ev}^* \ku
	\cong (\pi_* \gr_{\ev}^* \ku) \{\eta^q\}
\]
for each $q\ge0$, and is concentrated in even internal
degrees (= integer weights).

Tensoring over~$\gr_{\ev}^* \ko$ with~$M^*$, the Bousfield--Kan spectral
sequence for the cosimplicial $\gr_{\ev}^* \ko$-module
\[
C^\bullet( \gr_{\ev}^* \ku / \gr_{\ev}^* \ko )
	\otimes_{\gr_{\ev}^* \ko} M^*
\]
has abutment~$\pi_* M^*$, and is multiplicative if~$M^*$
is a $\gr_{\ev}^* \ko$-algebra.  The normalized $\Tot$-filtration
is the same as the $\eta$-adic tower
\[
\dots \overset{\eta}\longto \Sigma^{2,2} M^*
	 \overset{\eta}\longto \Sigma^{1,1} M^*
	 \overset{\eta}\longto M^* \,,
\]
and the $E_1$-term is
\[
E_1^* \cong \pi_* (\olC\eta \otimes M^*) \, [\eta] \,.
\]
If $M^*$ is uniformly bounded below, then its $\eta$-adic tower
has trivial (homotopy) limit, which ensures conditional convergence.
\end{proof}

\begin{example}
The $\eta$-Bockstein spectral sequence
\[
E_1 = (\pi_* \gr_{\ev}^* \ku) \, [\eta] \Longrightarrow \pi_* \gr_{\ev}^* \ko
\]
has $E_1 = \bZ[\eta, u]$, $d_1(u) = 2 \eta$ and $E_2 = E_\infty =
\bZ[\eta, u^2]/(2\eta) \cong \pi_* \gr_{\ev}^* \ko$.  The motivic
(= Novikov) spectral sequence
\[
E_2 = \pi_* \gr_{\ev}^* \ko 
	\Longrightarrow \pi_* \ko
\]
has $E_2 = \bZ[\eta, u^2]/(2\eta)$ with $\|\eta\| = (1,1)$, $\|u^2\|
= (4,0)$, $d_3(u^2) = \eta^3$ and $E_4 = E_\infty = (\bZ\{1, 2u^2\}
\oplus \bZ/2\{\eta, \eta^2\}) \otimes \bZ[u^4]$.  Here $A \in \pi_4(\ko)$
and~$B \in \pi_8(\ko)$ are detected by $2 u^2$ and~$u^4$, respectively.
The bigraded homotopy rings of the $\bE_\infty$ $\gr_{\ev}^* \ko$-algebras
\[
\xymatrix{
\gr_{\ev}^* \ko \ar[r]^-{i} \ar[d]^-{i_0}
	& \olC\eta \otimes \gr_{\ev}^* \ko \simeq
	\gr_{\ev}^* \ku \ar[d]^-{i_0} \\
\olV(0) \otimes \gr_{\ev}^* \ko \ar[r]^-{i} \ar[d]^-{i_1}
	& \olV(0) \otimes \olC\eta \otimes \gr_{\ev}^* \ko \simeq
	\olV(0) \otimes \gr_{\ev}^* \ku \ar[d]^-{i_1} \\
\olV(1) \otimes \gr_{\ev}^* \ko \ar[r]^-{i}
	& \olA(1) \otimes \gr_{\ev}^* \ko \simeq
	\olV(1) \otimes \gr_{\ev}^* \ku
}
\]
are thus
\[
\xymatrix@C+6pc{
\bZ[\eta, u^2]/(2\eta) \ar[r] \ar[d]
	& \bZ[u] \ar[d] \\
\bZ/2[\eta, u] \ar[r] \ar[d]
	& \bZ/2[u] \ar[d] \\
\bZ/2[\eta] \ar[r]
	& \bZ/2 \rlap{\,.}
}
\]
\end{example}

\begin{definition} \label{def:epsilon2}
Let $v_2 \: \Sigma^{6,0} \olV(1) \to \olV(1)$ be the
$\gr_{\ev}^* \bS$-module map corresponding to the
$\MU_*\MU$-comodule homomorphism
$v_2 \: \Sigma^6 \MU_*/(2,v_1) \to \MU_*/(2,v_1)$, so that
there is a cofiber sequence
\[
\Sigma^{6,0} \olV(1) \overset{v_2}\longto \olV(1)
	\overset{i_2}\longto \olV(2)
	\overset{j_2}\longto \Sigma^{7,-1} \olV(1)
\]
of $\gr_{\ev}^* \bS$-modules.
The induced map
\[
v_2 \: \Sigma^{6,0} \olV(1) \otimes \gr_{\ev}^* \ko
	\longto \olV(1) \otimes \gr_{\ev}^* \ko
\]
of $\gr_{\ev}^* \ko$-modules is null-homotopic, and there is a unique
class
\[
\varepsilon_2 \in \olV(2)_* \gr_{\ev}^* \ko
\]
in bidegree~$\|\varepsilon_2\| = (7,-1)$ with $j_2(\varepsilon_2) =
\Sigma^{7,-1} 1$.  We have $\varepsilon_2^2 = 0$, since the group in
bidegree~$(14,-2)$ is trivial.  Then $\olV(2)_* \gr_{\ev}^* \ko \cong
\Lambda(\varepsilon_2) \otimes \olV(1)_* \gr_{\ev}^* \ko$, and in general
we have a natural algebra isomorphism
\[
\Lambda(\varepsilon_2) \otimes \olV(1)_* M^* \cong \olV(2)_* M^*
\]
for any $\gr_{\ev}^* \ko$-algebra~$M^*$.
\end{definition}

\begin{corollary} \label{cor: A(1) coeff}
There are preferred isomorphisms of bigraded $\bF_2$-algebras
\[
\olA(1)_* \gr_{\mot}^* \THH(\ko)
	\cong \Lambda(\lambda'_1, \lambda_2) \otimes \bF_2[\mu]
\]
and
\[
(\olV(2) \otimes \olC\eta )_* \gr_{\mot}^* \THH(\ko)
	\cong \Lambda(\varepsilon_2, \lambda'_1, \lambda_2)
		\otimes \bF_2[\mu] \,.
\]
\end{corollary}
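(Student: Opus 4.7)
The plan is to assemble results already established earlier in the excerpt; no new computational input is needed. For the first isomorphism, Lemma~\ref{lem: identification} provides an equivalence
\[
\olA(1) \otimes \gr_{\mot}^* \THH(\ko) \simeq \gr_{\mot}^* (A(1) \otimes \THH(\ko))
\]
of $\gr_{\ev}^* \bS$-modules, and Remark~\ref{rem: A(1) mult} uses this equivalence to transport the $\bE_\infty$ $\gr_{\ev}^* \bS$-algebra structure on the right to the left-hand side. Passing to bigraded homotopy and invoking Theorem~\ref{thm: A(1) coeff} would then immediately yield the preferred $\bF_2$-algebra isomorphism $\olA(1)_* \gr_{\mot}^* \THH(\ko) \cong \Lambda(\lambda'_1, \lambda_2) \otimes \bF_2[\mu]$.

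For the second isomorphism, the approach is to set $M^* := \olC\eta \otimes \gr_{\mot}^* \THH(\ko)$ and apply the natural algebra isomorphism from Definition~\ref{def:epsilon2}, which gives
\[
\olV(2)_* M^* \cong \Lambda(\varepsilon_2) \otimes \olV(1)_* M^*,
\]
provided $M^*$ is a $\gr_{\ev}^* \ko$-algebra. To verify this hypothesis I would invoke Lemma~\ref{lem:etaBockspseq}, which identifies $M^* \simeq \gr_{\ev}^* \ku \otimes_{\gr_{\ev}^* \ko} \gr_{\mot}^* \THH(\ko)$, a relative tensor product of $\bE_\infty$ $\gr_{\ev}^* \ko$-algebras and therefore itself such an algebra. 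Using $\olV(1) \otimes \olC\eta \simeq \olA(1)$ from Construction~\ref{con:A1V1Ceta}, one then has $\olV(1)_* M^* = \olA(1)_* \gr_{\mot}^* \THH(\ko)$, which by the first part equals $\Lambda(\lambda'_1, \lambda_2) \otimes \bF_2[\mu]$. Assembling these pieces gives
\[
(\olV(2) \otimes \olC\eta)_* \gr_{\mot}^* \THH(\ko) \cong \Lambda(\varepsilon_2, \lambda'_1, \lambda_2) \otimes \bF_2[\mu].
\]

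The only subtle point, and the thing to watch most carefully, is multiplicativity, since by Lemma~\ref{lem: not a ring} neither $\olC\eta$ nor $\olA(1)$ carries an $\bE_\infty$ ring structure in its own right. The key observation is that the requisite $\bE_\infty$ $\gr_{\ev}^* \ko$-algebra structures need only exist after tensoring with $\gr_{\mot}^* \THH(\ko)$, and both Remark~\ref{rem: A(1) mult} and the Lemma~\ref{lem:etaBockspseq} identification $\olC\eta \otimes (-) \simeq \gr_{\ev}^* \ku \otimes_{\gr_{\ev}^* \ko} (-)$ supply them in exactly the form required. Modulo this bookkeeping no further obstacles arise, and the corollary reduces to a one-line combination of Theorem~\ref{thm: A(1) coeff} with the natural isomorphism of Definition~\ref{def:epsilon2}.
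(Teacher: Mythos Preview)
Your proposal is correct and follows essentially the same approach as the paper: the first isomorphism combines Theorem~\ref{thm: A(1) coeff}, Lemma~\ref{lem: identification}, and Remark~\ref{rem: A(1) mult}, and the second applies Definition~\ref{def:epsilon2} with $M^* = \olC\eta \otimes \gr_{\mot}^* \THH(\ko)$. Your additional justification that $M^*$ is a $\gr_{\ev}^* \ko$-algebra via Lemma~\ref{lem:etaBockspseq}, and your care about multiplicativity in light of Lemma~\ref{lem: not a ring}, make explicit what the paper leaves implicit.
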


\begin{proof}
The first isomorphism is a direct consequence of Theorem~\ref{thm: A(1)
coeff}, Lemma~\ref{lem: identification} and Remark~\ref{rem: A(1) mult}.
The second isomorphism arises as in Definition~\ref{def:epsilon2}
with $M^* = \olC\eta \otimes \gr_{\mot}^* \THH(\ko)$.
\end{proof}

In the following proposition, we evaluate the spectral sequence from
Lemma~\ref{lem:etaBockspseq} in the case when $M^* = \olV(1) \otimes
\gr_{\mot}^*\THH(\ko)$ and $\olC\eta \otimes M^* \cong \olA(1) \otimes
\gr_{\mot}^* \THH(\ko)$.  See Figure~\ref{fig:etaBockTHHko} for an
illustration.  The authors thank an anonymous referee for pointing out
how to remove an indeterminacy in the following statement, which we had
previously only resolved later in the paper, in a more indirect manner.

\begin{proposition} \label{prop: eta BSS computatation}
The $\eta$-Bockstein spectral sequence
\[
E_1 = \olA(1)_* \gr_{\mot}^* \THH(\ko) \, [\eta]
	\Longrightarrow \olV(1)_* \gr_{\mot}^* \THH(\ko)
\]
has differentials
\begin{align*}
	d_1 (\lambda_2) &= \eta \lambda'_1 \\
	d_3 (\lambda'_1\lambda_2) &= \eta^3 \mu
\end{align*}
and no further differentials besides those generated by the Leibniz rule.
Moreover, there is no room for $\eta$-extensions.
Consequently, we can identify
\[
\olV(1)_* \gr_{\mot}^* \THH(\ko) \cong
 \frac{
\bF_2[\eta, \lambda'_1, \mu] }{ (\eta\lambda'_1,
	(\lambda'_1)^2 = \eta^2 \mu) }
\]
as a bigraded $\bF_2$-algebra, where $\|\eta\| = (1,1)$, $\|\mu\| =
(8,0)$ and $\|\lambda'_1\| = (5,1)$.
\end{proposition}

\begin{proof}
We deduce these differentials using a small part of the known (implicitly
$2$-complete) computation of $\pi_*\THH(\ko)$ from~\cite{AHL10}*{\S7}.
The unit $\ko \to\THH(\ko)$ and augmentation $\epsilon \:\THH(\ko)
\to \ko$ exhibit~$\ko$ as a retract of $\THH(\ko)$ in the category of
$\bE_\infty$-rings.  We write $\THH(\ko)/\ko$ for the complementary
summand in $\ko$-modules.  In degrees $* < 12$ we have 
\[H_*(\ko)
\{\sigma\bar\xi_1^4, \sigma\bar\xi_2^2, \sigma\bar\xi_3\} \cong
H_*(\THH(\ko)/\ko)\,,\] 
so there is an $11$-connected map $\Sigma^5 \ksp
\simeq \ko \otimes (S^5 \cup_\eta e^7 \cup_2 e^8) \to\THH(\ko)/\ko$.
By \cite{AHL10}*{Corollary~7.3, Figure 5}, the $\eta^2$-multiple in $\pi_6
\ksp \cong \pi_{11} \Sigma^5 \ksp$ maps to zero in $\THH$, so
\[\pi_*(\THH(\ko)/\ko) \cong (\bZ, 0, 0, 0, \bZ, \bZ/2, 0)\] 
for $5 \le * \le 11$.

We consider the $\eta$-Bockstein spectral sequence
\[
E_1 = \Lambda(\lambda'_1, \lambda_2) \otimes \bF_2[\eta, \mu]
	\Longrightarrow \olV(1)_* \gr_{\mot}^* \THH(\ko)
\]
with $\|\lambda'_1\| = (5,1)$, $\|\lambda_2\| = (7,1)$,
$\|\mu\| = (8,0)$ and $\|\eta\| = (1,1)$,
the $v_1$-Bockstein spectral sequence
\[
E_1 = \olV(1)_* \gr_{\mot}^* \THH(\ko) \, [v_1]
	\to \olV(0)_*\gr_{\mot}^* \THH(\ko)
\]
with $\|v_1\| = (2,0)$,
the $v_0$-Bockstein spectral sequence
\[
E_1 = \olV(0)_* \gr_{\mot}^* \THH(\ko) \, [v_0]
	\Longrightarrow \pi_*\gr_{\mot}^* \THH(\ko)
\]
with $\|v_0\| = (0,0)$, and the motivic spectral sequence
\[
E_2 = \pi_*\gr_{\mot}^* \THH(\ko)
	\Longrightarrow \pi_*\THH(\ko) \,.
\]
In each case the spectral sequence for~$\ko$ splits off as a direct
summand.  Taking this into account, there is no possible source or
target for a differential affecting $\lambda'_1$ in any of these
spectral sequences.  Hence $\lambda'_1$ survives in bidegree $(5,1)$
to detect the generator of $\pi_5(\THH(\ko)/\ko) \cong \bZ$.
Since $\pi_6(\THH(\ko)/\ko) = 0$, it follows that $\eta \lambda'_1$
in bidegree $(6,2)$ is an infinite cycle that detects zero, i.e., a
boundary in one of these spectral sequences.  Since $\eta \lambda'_1$
is not a $v_1$- or $v_0$-multiple, it cannot be a $v_1$-Bockstein or
$v_0$-Bockstein boundary.  Since the motivic $E_2$-term is readily seen
to be zero in bidegree $(7,0)$, it can also not be a motivic boundary.
Hence $d_1(\lambda_2) = \eta \lambda'_1$ in the $\eta$-Bockstein spectral
sequence is the only remaining possibility. 

By construction, this $d_1$-differential can
be rewritten as $i(j(\lambda_2)) = \Sigma^{2,0} \lambda'_1$, with~$i$
and~$j$ induced by the maps in~\eqref{eq:V1-A1-cofibseq}.  Here
\[
i \: \olV(1)_* \gr_{\mot}^* \THH(\ko)
	\longto \olA(1)_* \gr_{\mot}^* \THH(\ko)
\]
must be injective (in fact bijective) in bidegree~$(5,1)$, since there
are no nonzero $\eta$-multiples there, so $\Sigma^{-2,0} j(\lambda_2)$
in its source is the unique lift over~$i$ of~$\lambda'_1$, which we also
denote by~$\lambda'_1$.  We shall return to this insight at the end of the
proof.

There is no room for other $\eta$-Bockstein $d_1$-differentials, so
the next differential to be determined is $d_3(\lambda'_1 \lambda_2)
\in \bF_2\{\eta^3 \mu\}$.  On one hand, if $d_3(\lambda'_1 \lambda_2)
= \eta^3 \mu$ then the $\eta$-Bockstein $E_\infty$-term (modulo the
summand for~$\ko$) will be
\[
\bF_2\{\lambda'_1, \mu, \eta \mu, \eta^2 \mu\}
\]
in stems $\le 12$.  On the other hand, if $d_3(\lambda'_1 \lambda_2) =
0$ then it will be
\[
\bF_2\{\lambda'_1, \mu, \eta \mu, \eta^2 \mu, \eta^3 \mu\}
\]
in stems $\le 11$, with the $12$-stem concentrated in motivic filtrations
$\ge 2$.  In either case this determines $\olV(1)_* \gr_{\mot}^* \THH(\ko)$
in these stems.

The first nonzero $v_1$-Bockstein differential is $d_1(\mu) =
v_1 \lambda'_1$.  If it were not there, then $v_1 \lambda'_1$ would
survive to $\olV(0)_*\gr_{\mot}^* \THH(\ko)$ and $\pi_*\gr_{\mot}^*
\THH(\ko)$ to detect a nonzero class in the trivial group $\pi_7(\THH(\ko)/\ko)$, which
is impossible.  There is no room for other $v_1$-Bockstein differentials
affecting stems $\le 11$, so if $d_3(\lambda'_1 \lambda_2) = \eta^3 \mu$
then the $v_1$-Bockstein $E_\infty$-term (modulo the summand for~$\ko$)
will be
\[
\bF_2\{\lambda'_1, \eta \mu, \eta^2 \mu, v_1 \eta \mu\}
\]
in stems $\le 11$, while if $d_3(\lambda'_1 \lambda_2) = 0$ then it
will be
\[
\bF_2\{\lambda'_1, \eta \mu, \eta^2 \mu, \eta^3 \mu, v_1 \eta \mu\}
\]
in these stems.  In either case the $12$-stem is concentrated
in motivic filtrations $\ge 2$, and these expressions determine
$\olV(0)_*\gr_{\mot}^* \THH(\ko)$ in this range of stems.
 
In the $v_0$-Bockstein spectral sequence, there is no room for
differentials on ($\lambda'_1$ and) $\eta \mu$.  Multiplying by $\eta^2$,
it follows that $\eta^3 \mu$ is an infinite cycle (but possibly zero).
Since it is not a $v_0$-multiple, it cannot be a $v_0$-Bockstein boundary,
and since it is in motivic filtration~$3$, and the motivic $E_2$-term is
now known to be zero in bidegrees $(12,0)$ and $(12,1)$, it cannot be a
motivic $d_r$-boundary for $r\ge2$.  Hence if $d_3(\lambda'_1 \lambda_2)$
were zero, then $\eta^3 \mu$ would survive to $\olV(0)_*\gr_{\mot}^*
\THH(\ko)$ and $\pi_*\gr_{\mot}^* \THH(\ko)$ to detect a nonzero class
in $\pi_{11}(\THH(\ko)/\ko) = 0$, which is impossible.

This contradiction shows that $d_3(\lambda'_1 \lambda_2) = \eta^3
\mu$, as claimed.  This leaves the $\eta$-Bockstein $E_4$-term
\[ 
\frac{ \Lambda(\lambda'_1) \otimes \bF_2[\eta, \mu] }
	{ (\eta \lambda'_1, \eta^3 \mu) } \,.
\]  
There is no room for further differentials, so this is also
the $E_\infty$-term.  The only multiplicative ambiguity in the abutment $\olV(1)_* \gr_{\mot}^*
\THH(\ko)$ is whether $(\lambda'_1)^2$ is~$0$
or $\eta^2 \mu$.  To resolve this indeterminacy, observe that
\[
j \: \olA(1)_* \gr_{\mot}^* \THH(\ko) \longto
	\Sigma^{2,0} \olV(1)_* \gr_{\mot}^* \THH(\ko)
\]
is a map of $\olV(1)_* \gr_{\mot}^* \THH(\ko)$-modules, so
$j(\lambda_2) = \Sigma^{2,0} \lambda'_1$ implies that
\[
j(\lambda'_1 \lambda_2) = \Sigma^{2,0} (\lambda'_1)^2 \,.
\]
If $(\lambda'_1)^2$ were~$0$, then $\lambda'_1 \lambda_2$ would lift 
over~$i$ to a class in $\olV(1)_* \gr_{\mot}^* \THH(\ko)$, but this cannot
occur because it would contradict the nonzero differential $d_3(\lambda'_1
\lambda_2) = \eta^3 \mu$.  Hence $(\lambda'_1)^2 = \eta^2 \mu$.
\end{proof}

\begin{figure}
\centering
\resizebox{\textwidth}{!}{ \input{etaBockTHHko.inp} }
\caption{
 $\eta$-Bockstein $E_{1}\Longrightarrow \olV(1)_{*}\gr_{\mot}^{*}\THH(\ko)$
	\label{fig:etaBockTHHko}
}
\end{figure}

\begin{corollary} \label{cor: V2 homotopy}
We can identify
\[
\olV(2)_* \gr_{\mot}^* \THH(\ko) \cong
	\Lambda(\varepsilon_2) \otimes
 \frac{
\bF_2[\eta, \lambda'_1, \mu] }{ (\eta\lambda'_1,
	(\lambda'_1)^2 = \eta^2 \mu ) }
\]
as a bigraded $\bF_2$-algebra. 
\end{corollary}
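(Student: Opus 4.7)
The plan is a short, essentially formal deduction from two earlier results, so there is no genuine computational content to add beyond what has already been done.

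First, I would verify that $M^* := \gr_{\mot}^* \THH(\ko)$ fits the hypothesis of Definition \ref{def:epsilon2}, i.e.\ is a $\gr_{\ev}^* \ko$-algebra. Since $\ko \to \THH(\ko)$ is an $\bE_\infty$ ring map and the motivic filtration is symmetric monoidal, $\gr_{\mot}^* \THH(\ko)$ is an $\bE_\infty$ algebra over $\gr_{\mot}^* \ko$. By Propositions \ref{prop:koMUpolynomial} and \ref{prop: relation to motivic filtration}, the motivic and even filtrations on $\ko$ agree, so that $\gr_{\mot}^* \ko \simeq \gr_{\ev}^* \ko$ and $M^*$ is a $\gr_{\ev}^* \ko$-algebra in the required sense.

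Second, I would apply the natural algebra isomorphism
\[
\olV(2)_* M^* \;\cong\; \Lambda(\varepsilon_2) \otimes \olV(1)_* M^*
\]
from Definition \ref{def:epsilon2} to this $M^*$, and then substitute the presentation of $\olV(1)_* \gr_{\mot}^* \THH(\ko)$ computed in Proposition \ref{prop: eta BSS computatation}. This immediately yields the asserted formula, with the same indeterminate $c \in \bF_2$ as in the source.

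The only point worth monitoring is multiplicativity of the splitting by $\Lambda(\varepsilon_2)$. The relation $\varepsilon_2^2 = 0$ holds for bidegree reasons (the ambient group in bidegree $(14,-2)$ of $\olV(2)_* \gr_{\ev}^* \ko$ vanishes, as noted in Definition \ref{def:epsilon2}), and this rules out any hidden multiplicative extension involving $\varepsilon_2$ once we tensor up to $M^*$. Since no other step involves new spectral sequence input, this is the entire argument; there is no real obstacle, only bookkeeping.
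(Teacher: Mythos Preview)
Your proposal is correct and follows the same approach as the paper: apply the natural algebra isomorphism $\olV(2)_* M^* \cong \Lambda(\varepsilon_2) \otimes \olV(1)_* M^*$ from Definition~\ref{def:epsilon2} with $M^* = \gr_{\mot}^* \THH(\ko)$, and substitute the computation of $\olV(1)_* \gr_{\mot}^* \THH(\ko)$ from Proposition~\ref{prop: eta BSS computatation}. The paper's proof is a single sentence to this effect; your additional verification that $M^*$ is a $\gr_{\ev}^* \ko$-algebra and your remark on $\varepsilon_2^2 = 0$ are reasonable elaborations of points the paper leaves implicit.
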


\begin{proof}
Here $\varepsilon_2$ is chosen as in Definition~\ref{def:epsilon2}
with $M^* = \gr_{\mot}^* \THH(\ko)$.
\end{proof}
 
\section{Filtering the motivic associated graded}\label{sec:filtering}
In subsequent sections, we will use various filtrations on the associated graded of the motivic filtration on topological negative cyclic homology, topological periodic cyclic homology, and the $C_2$-homotopy fixed points and $C_2$-Tate construction on topological Hochschild homology. Here we discuss two filtrations that one might consider and show in what sense they agree. 
  
We first discuss a collection of increasing filtrations $\fil_{\star}^{\GM}(-)$. Here we use (subscripted) homological indexing. 
Recall that the circle group~$\bT$ acts freely on $S^1 \subset S^3 \subset \dots \subset S^\infty = E\bT$ and $C_2$ acts freely on $S^0 \subset S^1 \subset \dots \subset S^{\infty}=EC_2$. 
\begin{definition}\label{GM}
Define a filtered $\bT$-spectrum $\widetilde{E\bT}_{\star}$ by $\widetilde{E\bT}_{2k}= \widetilde{E\bT}_{2k+1}= S^{k\bC}$, and a filtered $C_2$-spectrum $(\widetilde{EC_2})_{\star}$ by  $(\widetilde{EC_2})_k=S^{k\bR}$, for all integers $k$. Following Greenlees–May~\cite{GM95}, for each spectrum $X$ with $\bT$-action we define a filtered spectrum 
\[ \GM_{\star}^{\bT}(X):= [ \widetilde{E\bT}_{\star}\otimes F(E\bT_+,X) ]^{\bT}\]
and for each spectrum $X$ with $C_2$-action we define a filtered spectrum 
\[ \GM_{\star}^{C_2}(X):=  [(\widetilde{EC_2})_{\star}\otimes F(EC_{2 +},X)]^{C_2} \,.\]
In filtration $\star = 0$ these specialize to the homotopy fixed point
spectra $ \GM_{0}^{\bT}(X)= X^{h\bT}$ and $\GM_{0}^{C_2}(X)= X^{hC_2}$. 
More generally, we have 
\[ 
\GM_{-2k}^{\bT}(X)=\GM_{-2k+1}^{\bT}(X)\simeq F(E\bT/S^{2k-1},X)^{\bT}
\]
and $\GM_{-k}^{C_2}(X)\simeq F(EC_2/S^{k-1},X)^{C_2}$ for all $k\ge 0$, cf.~\cite{Gre87}*{p.~437} or~\cite{HR24}*{Lemma~6.32}. 

For brevity, let  
\[ Y^{\bullet}=C^{\bullet}(\THH(\ku/\MU)/\THH(\ko)) \,.\] 
This is naturally an $\bE_\infty$ algebra in $C^\bullet(\MU/\bS)$-modules with $\bT$-action.
Define a filtered graded spectrum 
\[
\fil_{\star}^{\GM} \gr_{\mot}^{\ast}\TP(\ko) :=   \Tot \Sigma^{2\ast}H\pi_{2\ast } \GM_{\star}^{\bT}(Y^{\bullet})   \,,
\]
and define $\fil_{\star}^{\GM} \gr_{\mot}^{\ast}\TC^{-}(\ko)$ and $\fil_{\star}^{\GM} \gr_{\mot}^{\ast}F(S^{2\ell+1}_+,\THH(\ko))^{\bT}$ for $\ell\ge 0$ by truncating this filtration to live in the ranges $\star \le 0$ and $-2\ell \le \star \le 0$,
respectively. Similarly, define a filtered graded spectrum 
\[
\fil_{\star}^{\GM} \gr_{\mot}^{\ast}\THH(\ko)^{tC_2} := \Tot  \Sigma^{2\ast}H\pi_{2\ast} \GM_{\star}^{C_2}(Y^{\bullet})  \,,
\]
and define $\fil_{\star}^{\GM} \gr_{\mot}^{\ast}\THH(\ko)^{hC_2}$ by truncating this filtration to the range $\star \le 0$.

We know from
\cite{BBLNR14}*{Proposition~3.8} (cf.~\cite{NS18}*{Lemma~II.4.2})
that there is a natural equivalence $G \: (Y^q)^{t\bT} \simeq
((Y^q)^{tC_2})^{h\bT}$ for each~$q\ge0$. Since each $Y^q$ is even and $2$-torsion free by Proposition~\ref{prop: equiv of gr modules}, we know that each
$(Y^q)^{tC_2}$ is even, and we obtain a second filtration of $\gr_{\mot}^*\TP(\ko) \simeq \gr_{\mot}^*(\THH(\ko)^{tC_2})^{h\bT}$ by truncating the filtration
$\Tot \Sigma^{2\ast} H\pi_{2\ast} \GM_{\star}^{\bT}((Y^{\bullet})^{tC_2})$
to the range $\star\le 0$. We write $\fil_{\star}^{\GM} \gr_{\mot}^{\ast}(\THH(\ko)^{tC_2})^{h\bT}$ for this filtration. Letting the weight~$w$ vary, these are modules over $\gr_{\ev}^*
\bS \simeq \Tot \Sigma^{2*} H\pi_{2*} C^\bullet(\MU/\bS)$, regarded as a filtered graded spectrum with $\gr_{\ev}^*\bS$ in filtrations $\star \ge 0$ and $0$ in filtrations $\star <0$, where the structure maps are identities for $\star \ge 0$ and the zero map for $\star <0$.

We refer to these as Greenlees--May filtrations. Each one
is complete and exhaustive, hence induces a conditionally
convergent spectral sequence.  
We write
\[  \gr_{s}^{\GM} \gr_{\mot}^wF(\ko)= \fil_{s}^{\GM} \gr_{\mot}^wF(\ko)/ \fil_{s-1}^{\GM} \gr_{\mot}^wF(\ko)  \]
for each $F\in \{\TC^{-},\TP, F(S^{2\ell+1}_+,\THH)^{\bT},\THH^{hC_2},\THH^{tC_2},(\THH^{tC_2})^{h\bT}\}$. 
Letting~$s$ and~$w$ vary, these are modules over $\gr_{\ev}^* \bS$, with action mapping $\gr_{s}^{\GM} \gr_{\mot}^wF(\ko)$ to $\gr_{s}^{\GM} \gr_{\mot}^{w+*}F(\ko)$.
\end{definition}

Alternatively, we can take the approach of~\cite{HRW}*{Construction 2.1.14} and define the following filtrations of the associated graded of the motivic filtration.  Here we return to our standard conventions, and use (superscripted) cohomological indexing. 
\begin{definition}\label{filplus}
Let  $Y^{\bullet}=C^{\bullet}(\THH(\ku/\MU)/\THH(\ko))$. 
We define filtered graded objects 
\begin{align*} 
\fil_+^{\star}\gr_{\mot}^*\TC^{-}(\ko) & := \Tot \Sigma^{2*}H\pi_{2*} \left ( \tau_{\ge \star} Y^{\bullet} \right )^{h\bT} \,,  \\
\fil_+^{\star}\gr_{\mot}^*\TP(\ko)& := \Tot  \Sigma^{2*}H\pi_{2*} \left ( \tau_{\ge \star} Y^{\bullet} \right )^{t\bT}  \,,\\
\fil_+^{\star}\gr_{\mot}^*F(S^{2\ell+1}_+,\THH(\ko))^{\bT}& := \Tot  \Sigma^{2*}H\pi_{2*}F(S^{2\ell+1}_+, \tau_{\ge \star} Y^{\bullet} )^{\bT}   \,,\\
\fil_+^{\star}\gr_{\mot}^*\THH(\ko)^{hC_2} & := \Tot  \Sigma^{2*}H\pi_{2*} \left ( \tau_{\ge \star} Y^{\bullet} \right )^{hC_2}   \,,\\
\fil_+^{\star}\gr_{\mot}^*\THH(\ko)^{tC_2} & := \Tot  \Sigma^{2*}H\pi_{2*} \left ( \tau_{\ge \star} Y^{\bullet} \right )^{tC_2}     \text{ and }\\
\fil_+^{\star}\gr_{\mot}^*(\THH(\ko)^{tC_2})^{h\bT} & := \Tot \Sigma^{2*}H\pi_{2*} \left ( \tau_{\ge \star} ((Y^{\bullet})^{tC_2}) \right )^{h\bT} \,,  
\end{align*}
where $\ell\ge 0$. Each one is complete and exhaustive, by proofs similar
to that of Lemma~\ref{convergence}, hence induces a conditionally
convergent spectral sequence.
We refer to these as Whitehead filtrations. 
They come equipped with an action of the $\bE_\infty$ algebra in filtered graded spectra 
\[\Tot  \Sigma^{2*}H\pi_{2*} \left ( \tau_{\ge \star} C^{\bullet}(\MU/\bS ) \right ) \simeq
\begin{cases}
\gr_\ev^* \bS & \text{for $\star \le 2*$,} \\
0 & \text{for $\star > 2*$.}
\end{cases}
\]

We write 
\[ \gr_+^s\gr_{\mot}^wF(\ko)= \fil_+^s\gr_{\mot}^wF(\ko)/\fil_+^{s+1}\gr_{\mot}^wF(\ko) \]
for $F\in \{\TC^{-},\TP, F(S^{2\ell+1}_+,\THH)^{\bT}, \THH^{hC_2}, \THH^{tC_2},(\THH^{tC_2})^{h\bT}\}$. 
Letting~$s$ and~$w$ vary, these are modules over $\gr_{\ev}^* \bS$, with action mapping $\gr_+^s \gr_{\mot}^w F(\ko)$ to $\gr_+^{s+2*} \gr_{\mot}^{w+*} F(\ko)$.
\end{definition}

The filtrations from Definition~\ref{filplus} are clearly multiplicative, whereas our identification of the spectral sequence starting pages is more clear for the filtrations of Definition~\ref{GM}. Luckily, we can prove that they agree, up to an indexing shift that depends upon the weight
grading. 

\begin{proposition}\label{agreement1}
There is an equivalence of filtered $\gr_{\ev}^*\bS$-modules
\[  \fil_{-2n}^{\GM}\gr_{\mot}^{\ast}F(\ko)  \longrightarrow \fil_+^{2n+2\ast}\gr_{\mot}^{\ast}F(\ko)   \]
for $F\in \{\TC^{-},\TP,F(S^{2\ell+1}_+,\THH)^{\bT},(\THH^{tC_2})^{h\bT}\}$. 
\end{proposition}

\begin{proof}
Again, let $Y^{\bullet}=C^{\bullet}(\THH(\ku/\MU)/\THH(\ko))$. We first discuss the case $F=\TP$. 
We have
$$
\gr_{\mot}^w \THH(\ko) = \Tot \Sigma^{2w}H\pi_{2w}Y^\bullet
$$
and 
$$
\gr_{\mot}^w \TP(\ko) = \Tot \Sigma^{2w}H\pi_{2w} ((Y^\bullet)^{t\bT}) \,.
$$
The latter is filtered by 
$$
\fil_{m}^{\GM} \gr_{\mot}^w \TP(\ko)
	= \Tot \Sigma^{2w}H\pi_{2w}\GM_{m}^{\bT} (Y^\bullet)
$$
for $m\in \bZ$, with associated graded
\begin{align*}
\gr_{-2n}^{\GM} \gr_{\mot}^w \TP(\ko)
	&= \Tot \Sigma^{2w}H\pi_{2w} (\Omega^{2n} Y^\bullet) \\
	&\cong (\Tot \Sigma^{2w+2n}H\pi_{2w+2n} Y^\bullet) \, \{t^n\}
\end{align*}
in even gradings, where $t^n$ reduces stem by~$2n$.  In odd gradings, $\gr_{-2n+1}^{\GM} \gr_{\mot}^w \TP(\ko)= 0$.

The Whitehead filtration 
$$
\fil^{2w+2n}_+ \gr_{\mot}^w \TP(\ko)
	= \Tot \Sigma^{2w}H\pi_{2w} ((\tau_{\ge 2w+2n} Y^\bullet)^{t\bT}) 
$$
has associated graded 
\begin{align*}
\gr^{2w+2n}_+ \gr_{\mot}^w \TP(\ko)
	&= \Tot \Sigma^{2w}H\pi_{2w} ((\Sigma^{2w+2n}H\pi_{2w+2n} Y^\bullet)^{t\bT}) \\
	&\cong (\Tot \Sigma^{2w+2n}H\pi_{2w+2n} Y^\bullet) \, \{t^n\} \,.
\end{align*}
Again, $\gr^{2w+2n-1}_+ \gr_{\mot}^w \TP(\ko)= 0$. These constructions are lax symmetric monoidal,
by~\cite{NS18}*{Corollary~I.4.3}. 

The composite map 
\[
\Sigma^{2w}H\pi_{2w}([\widetilde{E\bT}_{-2n} \otimes F(E\bT_+, Y^\bullet)]^{\bT})
	\longto \Sigma^{2w}H\pi_{2w} ((Y^\bullet)^{t\bT}) 
	\longto \Sigma^{2w}H\pi_{2w} ((\tau_{< 2w+2n} Y^\bullet)^{t\bT})
\]
factors through 
$$
\Sigma^{2w}H\pi_{2w}( [ \widetilde{E\bT}_{-2n} \otimes F(E\bT_+, \tau_{< 2w+2n} Y^\bullet)]^{\bT}) = 0 \,,
$$
hence the first map in the composite lifts to induce a map of filtrations
$$
\fil_{-2n}^{\GM} \gr_{\mot}^w \TP(\ko)
	\longto \fil^{2w+2n}_+ \gr_{\mot}^w \TP(\ko) \,,
$$   
both with limit~$0$ and colimit
$\gr_{\mot}^w \TP(\ko)$. From the definition of the filtered $\gr_{\ev}^*\mathbb{S}$-module structure on either side it is clear that this is a map of filtered $\gr_{\ev}^*\mathbb{S}$-modules. The associated graded spectra are equivalent as $\gr_{\ev}^*\mathbb{S}$-modules,
hence so are the filtrations, up to the reindexing from $\fil_{-2n}^{\GM}$
to $\fil^{2w+2n}_+$. Hence the filtrations agree, up to reindexing. 
By restricting to filtrations
$m\le 0$, or $-2\ell \le m\le 0$, the same proof applies when replacing $\TP$ with $\TC^{-}$ or $F(S^{2\ell+1}_+,\THH)^{\bT}$. 
Since $Y^q$ is even and $2$-torsion free, we know $(Y^{q})^{tC_2}$ is even for each $q\ge 0$, and again the same proof applies to deduce the result for $F=(\THH^{tC_2})^{h\bT}$.  
\end{proof} 
 
\begin{corollary}\label{mult-TP}
Let $\olV$ be a $\gr_{\ev}^*\bS$-module. There are trigraded spectral sequences
\begin{align*} 
\olV_* \gr_{\mot}^* \THH(\ko) \, [t]
	& \Longrightarrow \olV_* \gr_{\mot}^* \TC^{-}(\ko) \,, \\ 
\olV_* \gr_{\mot}^* \THH(\ko) \, [t^{\pm 1}] & \Longrightarrow \olV_*  \gr_{\mot}^* \TP(\ko)  \,, \\ 
\olV_* \gr_{\mot}^* \THH(\ko) \, [t]/(t^{\ell+1}) 
	& \Longrightarrow \olV_* \gr_{\mot}^* F(S^{2\ell+1}_+,\THH(\ko))^{\bT}  \text{ and }\\
\olV_* \gr_{\mot}^* \THH(\ko)^{tC_2} \, [t] & \Longrightarrow \olV_* \gr_{\mot}^* (\THH(\ko)^{tC_2})^{h\bT} \,.
\end{align*}
The first is the (algebraic) homotopy fixed point spectral sequence, the second is the (algebraic) Tate spectral sequence, the third is the approximate fixed point spectral sequence, and the last is the $\mu$-inverted homotopy fixed point spectral sequence. 

When $\olV$ is a $\gr_{\ev}^*\bS$-algebra, these are algebra spectral sequences. When $\olV$ is a module over a given
$\gr_\ev^* \bS$-algebra, they are module spectral sequences over the
corresponding algebra spectral sequences. When $\olV$ is a finite $\gr_{\ev}^* \bS$-module, they
are conditionally convergent.
\end{corollary}

\begin{proof}
We apply $\olV\otimes$ to the filtered $\gr_{\ev}^*\bS$-modules $\fil_{\star}^{\GM}\gr_{\mot}^{\ast}F(\ko)$ for 
\[ F\in \{\TC^{-}, \TP, F(S^{2\ell+1}_+,\THH)^{\bT}, (\THH^{tC_2})^{h\bT}\}\,.\]
In the case of $F=\TP$, we observed in the proof of Proposition~\ref{agreement1} that there is an equivalence
$\gr^{\GM}_{-2n} \gr_{\mot}^* \TP(\ko) \simeq \gr_{\mot}^{*+n} \THH(\ko)
\{t^n\}$, so that we have an equivalence 
\[\olV \otimes \gr^{\GM}_{-2n} \gr_{\mot}^* \TP(\ko) \simeq
\olV \otimes \gr_{\mot}^{*+n} \THH(\ko) \{t^n\}\] 
for each integer~$n$. 
The cases $F = \TC^{-}$ and $F = F(S^{2\ell+1}_+, \THH)^{\bT}$ are obtained
by restricting the filtrations.  
The case of $F = (\THH^{tC_2})^{h\bT}$
is similar to that of~$\TC^{-}$, since each $(Y^q)^{tC_2}$ is even by Proposition~\ref{prop: equiv of gr modules}. The fact that these are algebra spectral sequences when $\olV$ is a $\gr_{\ev}^*\bS$-algebra follows by Proposition~\ref{agreement1} and the fact that the filtrations in Definition~\ref{filplus} are multiplicative. 
\end{proof}

\begin{proposition}\label{C2Tate}
Suppose $\olV$ is a $\gr_{\ev}^*\bS$-module such that $2$ acts trivially on $\olV$. Then we can identify the $C_2$-Tate spectral sequence associated to the filtered $\gr_{\ev}^*\bS$-module
\[ \olV\otimes \fil_+^{\star}\gr_{\mot}^*\THH(\ko)^{tC_2} \]
with the spectral sequence associated to the filtered $\gr_{\ev}^*\bS$-module 
\[ \olV\otimes \fil_{\star}^{\GM}\gr_{\mot}^*\THH(\ko)^{tC_2} \,.\]
The spectral sequence is of the form 
\begin{align*}
\pi_* \gr_{\mot}^* \THH(\ko) \, [t^{\pm 1}]\otimes \Lambda(u_1) 
	& \Longrightarrow \pi_* \gr_{\mot}^* \THH(\ko)^{tC_2} \,, 
\end{align*}
where $t$ in Tate filtration~$-2$ has bidegree $\|t\| = (-2, 0)$, and $u_1$ in Tate filtration~$-1$ has bidegree $\|u_1\| = (-1, -1)$. 

Under the same hypotheses, we can similarly identify the $C_2$-homotopy fixed point spectral sequence associated to the filtered $\gr_{\ev}^*\bS$-module
\[ \olV\otimes \fil_+^{\star}\gr_{\mot}^*\THH(\ko)^{hC_2} \]
with the spectral sequence associated to the filtered $\gr_{\ev}^*\bS$-module 
\[ \olV\otimes \fil_{\star}^{\GM}\gr_{\mot}^*\THH(\ko)^{hC_2} \,.\]
In this case, the spectral sequence is of the form 
\begin{align*}
\pi_* \gr_{\mot}^* \THH(\ko) \, [t]\otimes \Lambda(u_1) 
	& \Longrightarrow \pi_* \gr_{\mot}^* \THH(\ko)^{hC_2} \,, 
\end{align*}
where $t$
in homotopy fixed point filtration~$-2$ has bidegree $\|t\| = (-2,
0)$, and $u_1$ in homotopy fixed point filtration~$-1$ has bidegree
$\|u_1\| = (-1, -1)$. 

In each case, the spectral sequence is an algebra spectral sequence when $\olV$ is a $\gr_{\ev}^*\bS$-algebra, and is conditionally convergent
when $\olV$ is a finite $\gr_{\ev}^* \bS$-module. 
\end{proposition}

\begin{proof}
Let $Y^{\bullet}=C^{\bullet}(\THH(\ku/\MU)/\THH(\ko))$. 
We write $\tau_{[a,b]}=\cof (\tau_{\ge b+1} \to \tau_{\ge a})$ for integers $a<b$ and similarly we write 
$\GM^{C_2}_{[m,m+1]}(X)=\cof(\GM_{m-1}^{C_2}(X)\to \GM_{m+1}^{C_2}(X))$.
When $m$ is even,
there are equivalences
$$
\tau_{[2w,2w+1]}(\GM_{[m,m+1]}^{C_2} (Y^q))
	\simeq \tau_{[2w,2w+1]} ((\Sigma^{2w-m}H\pi_{2w-m} Y^q)^{tC_2}) 
$$
since $Y^q$ is even for each $q$. These are compatible with the cosimplicial structure maps, and totalize to an equivalence
$$
\fil^\GM_{[m,m+1]} \gr_{\mot}^w \THH(\ko)^{tC_2}
	\simeq \fil_+^{2w-m} \gr_{\mot}^w \THH(\ko)^{tC_2} \,.
$$
Here we write $\fil^\GM_{[m,m+1]} \gr_{\mot}^w \THH(\ko)^{tC_2}$ for the cofiber of 
$$
\fil^\GM_{m-1} \gr_{\mot}^w \THH(\ko)^{tC_2}
	\longto \fil^\GM_{m+1} \gr_{\mot}^w \THH(\ko)^{tC_2} \,.
$$

The free $C_2$-equivariant $(m+1)$-cell in $\widetilde{EC}_2
= S^\infty$ is attached to the free $C_2$-equivariant $m$-cell
by a pinch map $S^m \to S^m/S^{m-1} \cong C_{2+} \wedge S^m$.
Hence there are cofiber sequences
\[
\Sigma^{2w} H\pi_{2w}(\Sigma^m Y^q)
	\longto \tau_{[2w,2w+1]} \GM_{[m,m+1]}(Y^q) 
	\longto \Sigma^{2w+1} H\pi_{2w+1}(\Sigma^{m+1} Y^q)
	\overset{N}\longto \Sigma(-) \,,
\]
with $N = (+1)_*+(-1)_*$ the norm (or trace) map given by the
sum of the actions by the elements in $C_2$,
having totalization a cofiber sequence
\[
\gr^\GM_m \gr_{\mot}^w \THH(\ko)^{tC_2}
	\longto \fil^\GM_{[m,m+1]} \gr_{\mot}^w \THH(\ko)^{tC_2} 
	\longto \gr^\GM_{m+1} \gr_{\mot}^w \THH(\ko)^{tC_2}
	\overset{N}\longto \Sigma(-) \,.
\]
Since the $C_2$-action on each~$Y^q$ is the restriction of a $\bT$-action,
the map~$N$ is homotopic to multiplication by~$2$.

Using the earlier equivalence, and letting $w$ vary, we get a cofiber
sequence
\begin{multline*}
\gr^\GM_m \gr_{\mot}^{\ast} \THH(\ko)^{tC_2}
	\longto \fil_+^{2\ast-m} \gr_{\mot}^{\ast}\THH(\ko)^{tC_2} \\
	\longto \gr^\GM_{m+1} \gr_{\mot}^{\ast}\THH(\ko)^{tC_2}
	\overset{N}\longto \Sigma(-)
\end{multline*}
of $\gr_{\ev}^*\bS$-modules, where the $\gr_{\ev}^*\bS$-actions are as in Definitions~\ref{GM} and~\ref{filplus}.  For each $\gr_{\ev}^*\bS$-module~$\olV$,
we get another cofiber sequence
\begin{multline*}
\olV \otimes \gr^\GM_m \gr_{\mot}^* \THH(\ko)^{tC_2}
        \longto \olV \otimes  \fil_+^{2\ast-m} \gr_{\mot}^* \THH(\ko)^{tC_2} \\
        \longto \olV \otimes \gr^\GM_{m+1} \gr_{\mot}^* \THH(\ko)^{tC_2}
        \overset{\olV \otimes N}\longto \dots \,,
\end{multline*}
where we again consider the $\gr_{\ev}^*\bS$-actions as in Definitions~\ref{GM} and~\ref{filplus}. If multiplication by~$2$ acts trivially on $\olV$, e.g.~if this is a
$\olV(0)$-module, then the connecting map $\olV \otimes  N$ is null-homotopic, and the cofiber sequence splits.
Then the spectral sequence associated to the
filtration $\olV \otimes \fil_+^{\star} \gr_{\mot}^*
\THH(\ko)^{tC_2}$ of $\olV \otimes  \gr_{\mot}^* \THH(\ko)^{tC_2}$
has starting term the sum over $n \in \bZ$ of
\begin{multline*}
\olV_* \gr_+^{2*+2n} \gr_{\mot}^* \THH(\ko)^{tC_2} \\
\cong \olV_* \gr^\GM_{-2n} \gr_{\mot}^* \THH(\ko)^{tC_2}
\oplus \olV_* \gr^\GM_{-2n+1} \gr_{\mot}^* \THH(\ko)^{tC_2} \\
\cong
\olV_* \gr_{\mot}^* \THH(\ko)^{tC_2} \{t^n, u_1 t^{n-1}\} \,,
\end{multline*}
with $t^n$ and $u_1 t^{n-1}$ in (stem, motivic filtration)
bigrading equal to $(-2n, 0)$ and $(-2n+1, -1)$, respectively. Truncating at $\star \le 0$ produces the result for $(-)^{hC_2}$ in place of $(-)^{tC_2}$. 

Suppose $\olV$ is a $\gr_{\ev}^*\bS$-algebra. To confirm the formula
\[  
    \olV_*\gr^{\GM}_*\gr_{\mot}^*\THH(\ko)^{tC_2}
    \cong
    \olV_* \gr_{\mot}^* \THH(\ko)\, [t^{\pm1}] \otimes \Lambda(u_1)
\]
for the $E^2$-page of the $C_2$-Tate spectral sequence as a trigraded $\mathbb{F}_2$-algebra, it
suffices to check that multiplication by $u_1t^{-1}$ takes the copy of 
$\olV_* \gr_{\mot}^* \THH(\ko)$
in Tate filtration $0$ to the copy in Tate filtration $1$, since we know that $t$, in the case of the $\bT$-Tate spectral sequence, acts invertibly on this spectral sequence. 
However, this is already built into the notation since, in the $n=0$ case, the two module generators there are $1 = t^0$ and $u_1 t^{-1}$. Note that $u_1^2$ lands in a bidegree that is trivial, so the relation $u_1^2=0$ follows. In the case of the $C_2$-homotopy fixed point spectral sequence, the algebra structure on the $E^2$-page can be determined from the fact that it is a subalgebra of the $E^2$-page of the $C_2$-Tate spectral sequence. 
\end{proof}

\section{Detection} \label{sec:detection}

The classical mod~$2$ Adams spectral sequence
\[
{}^{\Ad} E_2(X) = \Ext_{\cA^\vee}(\bF_2, H_*(X))
	\Longrightarrow \pi_*(X^\wedge_2)
\]
is strongly convergent for bounded below spectra~$X$ with $H_*(X)$ of
finite type.  Its $E_2$-term can be calculated as the cohomology of the
normalized cobar complex
\[
0 \longto H_*(X) \overset{d_1^0} \longto
	\bar\cA^\vee \otimes H_*(X) \overset{d_1^1} \longto
	\bar\cA^\vee \otimes \bar\cA^\vee \otimes H_*(X) \longto \dots \,.
\]
Here $\bar\cA^\vee = \cok(\bF_2 \to \cA^\vee)$, and we will use
the notation $[a]m = a \otimes m \in \bar\cA^\vee \otimes H_*(X)$.
Recall that $d_1^0$ is given by the normalized $\cA^\vee$-coaction
on $H_*(X)$, while $d_1^1$ also involves the coproduct $\psi \: \cA^\vee
\to \cA^\vee \otimes \cA^\vee$.

When $X = A(1)[ij]$ as in Notation~\ref{not: A(1) def}, the Adams
$E_2$-terms
\[
{}^{\Ad} E_2(A(1)[ij]) = \Ext_{\cA}(H^*(A(1)[ij]), \bF_2)
	\Longrightarrow \pi_* A(1)[ij]
\]
are readily calculated in a finite range using Bruner's {\tt ext}
software~\cites{Bru93, BR}.  The results in stems $* \le 28$ are
shown in Figure~\ref{fig:Ext-A-A1abcd}, with the usual (stem, Adams
filtration) bigrading.  Lines of bidegree $(0,1)$, $(1,1)$ and~$(3,1)$
(dashed) indicate multiplications by $h_0$, $h_1$ and~$h_2$, respectively.

In each case, the three $1$-cochains
\begin{equation} \label{eq:cocycles}
[\xi_1^4]1
	\quad,\quad
[\xi_2^2]1 + [\xi_1^4]\xi_1^2
	\quad\text{and}\quad
[\xi_3]1 + [\xi_2^2]\xi_1 + [\xi_1^4]\xi_2
\end{equation}
in $\bar\cA^\vee \otimes \cA(1)^\vee$ are cocycles, but not coboundaries,
hence they represent nonzero classes in ${}^{\Ad} E_2(A(1)[ij])$ in bidegrees
$(3, 1)$, $(5, 1)$ and~$(6, 1)$, respectively.  For sparsity reasons,
these classes all survive to ${}^{\Ad} E_\infty(A(1)[ij])$, and detect nonzero
homotopy classes in stems $3$, $5$ and~$6$, denoted
\[
\nu  \quad,\quad  w  \quad\text{and}\quad  v_2
\]
in $\pi_* A(1)[ij]$, for each $i,j \in \{0,1\}$.  Observe that $v_2$
is only defined modulo $\nu^2$, and $2 v_2 = 0$ if $i=0$ while $2 v_2 =
\nu^2$ if $i=1$.

\begin{figure}
\centering
\resizebox{\textwidth}{!}{ \input{Ext-A-A1a.inp} }
\vskip 5mm
\resizebox{\textwidth}{!}{ \input{Ext-A-A1b.inp} }
\vskip 5mm
\resizebox{\textwidth}{!}{ \input{Ext-A-A1c.inp} }
\vskip 5mm
\resizebox{\textwidth}{!}{ \input{Ext-A-A1d.inp} }
\caption{Adams $E_2$-terms for $A(1)[00]$, $A(1)[10]$, $A(1)[01]$ and
	$A(1)[11]$ (from top to bottom) \label{fig:Ext-A-A1abcd}}
\end{figure}

\begin{lemma} \label{lem: Adams for A(1)}
In the Adams spectral sequences for the $A(1)[ij]$ the differentials
originating in stems $* \le 24$ are all zero.  The class $\nu \bar\kappa
\in \pi_{23}(\bS)$ maps to zero in $\pi_{23} A(1)[ij]$.
\end{lemma}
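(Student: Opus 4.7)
The plan is to combine Bruner's explicit computation of the Adams $E_2$-term $\Ext_\cA(H^*A(1)[ij], \bF_2)$ (as in Figure~\ref{fig:Ext-A-A1abcd}) with the naturality of Adams spectral sequences under the unit map $\bS \to A(1)[ij]$. This unit map endows the Adams spectral sequence for $A(1)[ij]$ with the structure of a module over the classical Adams spectral sequence for~$\bS$, so multiplication by classes like $h_0$, $h_1$, $h_2$ obeys the Leibniz rule with $d_r$; in particular the image of any permanent cycle from $E_\infty(\bS)$ is a permanent cycle, and the $E_\infty(\bS)$-submodule of $E_2(A(1)[ij])$ generated by any permanent cycle consists of permanent cycles.

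For the first assertion, the classes $\nu, w, v_2$ at bidegrees $(3,1), (5,1), (6,1)$ are permanent cycles by the discussion preceding the lemma, as is the unit class at $(0,0)$. Together with the $E_\infty(\bS)$-action and the completely known classical Adams differentials in $\pi_*(\bS)$ in stems $\le 24$ (see, e.g., \cite{Isa19}), these generate a large submodule of $E_2(A(1)[ij])$ composed of permanent cycles. For each class $x \in E_2^{s,t}(A(1)[ij])$ with $t-s \le 24$ not already in this submodule, one inspects the bidegrees $(s+r, t+r-1)$ of all potential targets of $d_r(x)$ for $r \ge 2$ and verifies, by direct inspection of Figure~\ref{fig:Ext-A-A1abcd}, that every such bidegree is empty. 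This enumeration (carried out independently for each of the four choices $i, j \in \{0,1\}$) establishes that no $d_r$ originating in stems $\le 24$ is nonzero.

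For the image of~$\nu\bar\kappa$, which is detected by $h_2 g \in \Ext_\cA^{5, 28}(\bF_2, \bF_2)$ in $\pi_{23}(\bS)$: by the first part of the lemma, $E_\infty(A(1)[ij])$ coincides with $E_2(A(1)[ij])$ in stems $\le 23$, and inspection of Figure~\ref{fig:Ext-A-A1abcd} shows that at stem~$23$ the only class of Adams filtration $\ge 5$ is the class at bidegree $(23, 5)$, with no classes of filtration $\ge 6$. It therefore suffices to show that the image of $h_2 g$ under the $E_2$-level map $\Ext_\cA^{*,*}(\bF_2, \bF_2) \to \Ext_\cA^{*,*}(H^*A(1)[ij], \bF_2)$ vanishes. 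This is an explicit cobar-complex calculation: transport a normalized cocycle representing $h_2 g$ along the cobar map induced by the inclusion $\bF_2 = H_*\bS \to H_*A(1)[ij] = \cA(1)^\vee$ and exhibit the result as a coboundary. The main obstacle is precisely this cochain-level computation; it can be performed using Bruner's software directly, or deduced by comparison with an auxiliary spectrum such as $V(0) \otimes C\eta$, using the $v_1$-self-map cofiber sequence to reduce the calculation to an already-known one.
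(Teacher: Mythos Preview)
Your overall strategy matches the paper's: use the $E_\infty(\bS)$-module structure plus sparsity for the first claim, and check that $h_2 g \mapsto 0$ on $E_2$ for the second. The second part is essentially identical to the paper's argument.

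However, there is a genuine gap in your treatment of the first claim. Your assertion that ``every such bidegree is empty'' fails for the class at $(t-s,s) = (19,2)$: its potential $d_2$-target at $(18,4)$ is \emph{nonzero} in each of the four charts, and the class at $(19,2)$ does not lie in the submodule generated by $1,\nu,w,v_2$ under the $E_\infty(\bS)$-action (there is nothing at $(19,1)$, $(18,1)$, $(16,1)$, or $(13,1)$ to multiply by $h_0$, $h_1$, $h_2$, or $v_2$, and stem~$19$ filtration~$2$ is empty in $E_2(\bS)$). So sparsity and module structure alone do not rule out this differential. The paper resolves this case by an external input: the Novikov $E_2$-term for $A(1)$ (Figure~\ref{fig:Ext-BPA1}) shows that $\pi_{19} A(1)$ has order $2^2=4$, which equals the order of the Adams $E_2$ in stem~$19$, so no Adams differential can leave that stem.

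A smaller point: the image of $h_4$ at $(15,1)$ is not automatically a permanent cycle, since $h_4$ is not one in $E_2(\bS)$. You implicitly need naturality plus the fact that $d_2(h_4)=h_0 h_3^2$ maps to zero in $E_2(A(1))$; the paper makes this explicit. Your phrase ``together with the completely known classical Adams differentials'' gestures at this, but you should say so.
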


\begin{proof}
This mostly follows from sparsity and the module structure over the
Adams spectral sequence for~$\bS$, using that $d_2(h_4) = h_0 h_3^2$
maps to zero under $\bS \to A(1)$.  Only the Adams $d_2$-differential from
bidegree $(t-s,s) = (19,2)$ requires special attention, but the Novikov
$E_2$-term (see Figure~\ref{fig:Ext-BPA1}) shows that $\pi_{19} A(1)$
has order $2^2 = 4$, so there is no room for such an Adams differential.

In each case, the map ${}^{\Ad} E_2(\bS) \to {}^{\Ad} E_2(A(1))$ of Adams
$E_2$-terms takes the bidegree $(23,5)$ class $h_2 g$ detecting $\nu
\bar\kappa$ to zero, as can be checked with {\tt ext}, and the target
has no classes in stem~$23$ and Adams filtration $\ge 6$.  It follows
that $\nu \bar\kappa \mapsto 0$.
\end{proof}

\begin{figure}
\centering
\resizebox{\textwidth}{!}{ \input{Einfty-v2Bock.inp} }
\caption{$v_2$-Bockstein $E_\infty \Longrightarrow
	\Ext_{\BP_*\BP}(\BP_*, \BP_*/I_2)$ \label{fig:Ext-BP/I2}}
\end{figure}

To calculate the Novikov $E_2$-term
\[
{}^{\Nov} E_2(A(1)) = \Ext_{\MU_*\MU}(\MU_*, \MU_* A(1))
	\cong \Ext_{\BP_*\BP}(\BP_*, \BP_* A(1))
\]
for these spectra, we can note that $\BP_* A(1) = \BP_*/I_2 \{1, t_1\}$
and use the long exact sequence obtained by applying $\Ext_{\BP_*\BP}(\BP_*,
-)$ to the $\BP_*\BP$-comodule extension
\[
0 \longto \BP_*/I_2 \longto \BP_* A(1) \longto \Sigma^2 \BP_*/I_2 \to 0
\]
classified by
\[
h_{10} = [t_1] \in \Ext_{\BP_*\BP}(\BP_*, \BP_*)
\]
in (stem, Novikov filtration) bidegree~$(1,1)$.  The groups
\[
\Ext_{\BP_*\BP}(\BP_*, \BP_*/I_2)
\]
are calculated in a range as in \cite{Rav86}*{\S4.4, p.~162}, starting
with the isomorphism
\[
\Ext_{\cA}(\bF_2, \bF_2) \cong \Ext_{\BP_*\BP}(\BP_*, \BP_*/I_\infty)
\]
that doubles internal degrees, followed by the $v_n$-Bockstein spectral
sequences
\[
E_1 = \Ext_{\BP_*\BP}(\BP_*, \BP_*/I_{n+1}) \, [v_n]
	\Longrightarrow \Ext_{\BP_*\BP}(\BP_*, \BP_*/I_n)
\]
for descending $n\ge2$.  The $v_2$-Bockstein spectral
sequence $E_\infty$-term for $\BP_*/I_2$ in stems $* \le
26$ is shown in Figure~\ref{fig:Ext-BP/I2}, corresponding to
\cite{Rav86}*{Fig. 4.4.23(c)}.  Lines of bidegree $(1,1)$, $(3,1)$ and
$(7,1)$ (dashed) indicate multiplications by $h_{10} = [t_1]$, $h_{11} =
[t_1^2]$ and $h_{12} = [t_1^4]$, respectively.  (Some) hidden extensions
are shown as dotted lines.  We emphasize the relation
\begin{equation} \label{eq:v2h113=v22h103}
v_2 h_{11}^3 = v_2^2 h_{10}^3 \,,
\end{equation}
which follows from $v_2 h_{12} = v_2^2 h_{10}$, which in turn follows
from the formula $\eta_R(v_3) \equiv v_3 + v_2 t_1^4 + v_2^2 t_1 \mod I_2$
for the right unit in~$\BP_*\BP$, see~\cite{Rav86}*{4.3.1}.

Alternatively, one can start with the internal-degree-doubling
isomorphism
\[
\Ext_{\cA}(H^*(C2), \bF_2)
	\cong \Ext_{\BP_*\BP}(\BP_*, \BP_*/I_\infty\{1, t_1\})
\]
and calculate the $v_n$-Bockstein spectral sequences
\[
E_1 = \Ext_{\BP_*\BP}(\BP_*, \BP_*/I_{n+1}\{1, t_1\}) \, [v_n]
	\Longrightarrow \Ext_{\BP_*\BP}(\BP_*, \BP_*/I_n\{1, t_1\})
\]
for descending $n\ge2$.  The Adams $E_2$-term for~$C2$ in stems $* \le 16$
is shown in Figure~\ref{fig:Ext-A-M1}, and the resulting $v_2$-Bockstein
$E_\infty$-term for $\BP_*/I_2\{1, t_1\} = \BP_* A(1)$ in stems $* \le 26$
is shown in Figure~\ref{fig:Ext-BPA1}.  Again, (some) hidden extensions
are shown as dotted lines.

\begin{figure}
\centering
\resizebox{.7\textwidth}{!}{ \input{Ext-A-M1.inp} }
\caption{Adams $E_2$-term for~$C2$ \label{fig:Ext-A-M1}}
\end{figure}

\begin{figure}
\centering
\resizebox{\textwidth}{!}{ \input{Einfty-v2BockCeta.inp} }
\caption{$v_2$-Bockstein $E_\infty \Longrightarrow
	\Ext_{\BP_*\BP}(\BP_*, \BP_* A(1))$ \label{fig:Ext-BPA1}}
\end{figure}

\begin{lemma} \label{lem: Nov A1}
In the Novikov spectral sequences for the $A(1)[ij]$ the
nonzero differentials originating in stems $* \le 22$ are
\[
d_3(v_2^2) = h_{11}^2 w
	\qquad\text{and}\qquad
d_3(v_2^3) = v_2 h_{11}^2 w \,.
\]
In the cases $A(1)[10]$ and $A(1)[11]$ there is a nonzero
$d_3$ from bidegree $(t-s,s) = (23,1)$.

In every case $d_3(v_2^4) = 0$ and
$d_5(v_2^4) \ne 0$.
\end{lemma}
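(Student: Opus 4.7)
The plan is to exploit Lemma~\ref{lem: Adams for A(1)}: since the mod~$2$ Adams spectral sequence for each~$A(1)[ij]$ collapses at~$E_2$ in stems $*\le 24$, the associated graded of~$\pi_* A(1)[ij]$ for the Adams filtration is read off from the four panels of Figure~\ref{fig:Ext-A-A1abcd}. The Novikov $E_2$-term in Figure~\ref{fig:Ext-BPA1} depends only on $\BP_*A(1)\cong\BP_*/I_2\{1,t_1\}$ and is therefore the same for all four choices of~$ij$. Any discrepancy between the Adams ranks and the Novikov $E_2$-ranks, stem by stem, must be accounted for by Novikov differentials. I will determine these differentials by a rank comparison, using multiplicativity and the relation $v_2 h_{11}^3=v_2^2 h_{10}^3$ from~\eqref{eq:v2h113=v22h103} to pin down targets.

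For the first differential, I would show that $v_2^2\in{}^{\Nov}E_2^{12,0}$ cannot be a permanent cycle by comparing total ranks in stems~$11$ and~$12$: the unique Novikov class without an Adams counterpart is $h_{11}^2 w\in{}^{\Nov}E_2^{11,3}$, and the filtration jump of~$3$ then forces $d_3(v_2^2)=h_{11}^2 w$. The Leibniz rule, together with the absence of a possible target for a nonzero $d_3$ on~$v_2$ itself (by sparsity in bidegree $(5,3)$), yields $d_3(v_2^3)=v_2\cdot h_{11}^2 w$. The relation $v_2 h_{11}^3=v_2^2 h_{10}^3$, transferred from the Ext of $\BP_*/I_2$ to that of $\BP_* A(1)$ via the short exact sequence $0\to\BP_*/I_2\to\BP_* A(1)\to\Sigma^2\BP_*/I_2\to 0$ classified by~$h_{10}$, will serve as the key consistency check and propagate the answer into higher stems.

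For the additional $d_3$ in the variants $A(1)[10]$ and $A(1)[11]$, I would compare the four Adams $E_2$-terms in stems~$22$--$24$: the $Sq^4$ action on the bottom cell distinguishes the cases with $i=1$ from those with $i=0$, producing a visible difference in Adams $E_\infty$ that forces a $d_3$ originating from bidegree $(23,1)$ in exactly those two cases. For the last statement, the equality $d_3(v_2^4)=0$ is immediate from the Leibniz rule, since $d_3((v_2^2)^2)=2v_2^2\cdot h_{11}^2 w=0$ in characteristic~$2$. Finally, $d_5(v_2^4)\neq 0$ will follow from a further rank count in stem~$24$: after executing all of the $d_3$'s above, $v_2^4$ still has no available partner matching the Adams $E_\infty$-answer, and the unique eligible target of bidegree $(23,5)$ is~$Ph_{11}$.

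The main obstacle is the bookkeeping across the four parallel computations and the need to rule out alternative differentials of the same bidegree; for the latter, sparsity of the Novikov $E_2$-term together with Lemma~\ref{lem: Adams for A(1)} and multiplicativity should be decisive.
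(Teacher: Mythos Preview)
Your overall strategy---reading off $\pi_* A(1)[ij]$ from the collapsed Adams spectral sequence (Lemma~\ref{lem: Adams for A(1)}) and forcing Novikov differentials by stem-by-stem rank comparison---is exactly the paper's. The differentials $d_3(v_2^2)$ and the extra $d_3$ from $(23,1)$ when $i=1$ are obtained just as you describe.

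There is, however, a genuine gap in your use of the Leibniz rule. The spectrum $A(1)$ is not a ring spectrum, so its Novikov spectral sequence is not multiplicative; in particular you cannot write $v_2^4=(v_2^2)^2$ and conclude $d_3(v_2^4)=2\,v_2^2\,d_3(v_2^2)=0$, nor deduce $d_3(v_2^3)$ from $d_3(v_2)$ and $d_3(v_2^2)$. The element $v_2$ does not even live in ${}^{\Nov}E_2(\bS)$, so the module structure over the sphere does not supply this. The paper therefore handles $d_3(v_2^3)=v_2\,h_{11}^2 w$ by a second direct rank count: $\pi_{18}A(1)$ has order~$4$, forcing a nonzero differential on $v_2^3$, and the only available target in bidegree $(17,3)$ is $v_2\,h_{11}^2 w$.

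For the final claims the paper takes a different route from yours. The rank count in stems $23$--$24$ shows only that $v_2^4$ supports \emph{some} differential, either $d_3$ or $d_5$; your Leibniz argument distinguishing these is not available. Instead the paper uses the second part of Lemma~\ref{lem: Adams for A(1)}: the class $\nu\bar\kappa\in\pi_{23}(\bS)$ is detected in Novikov bidegree $(23,5)$, its image under $\bS\to A(1)$ is the nonzero generator there, and since $\nu\bar\kappa\mapsto 0$ in $\pi_{23}A(1)$ that generator must be a boundary. The only possible source is $v_2^4$ via $d_5$, so $d_5(v_2^4)\ne 0$, and in particular $d_3(v_2^4)=0$. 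Your Leibniz-style argument \emph{can} be made rigorous, but only by passing to the endomorphism ring spectrum $\End A(1)$ and its $\BP_*/(2,v_1)$-algebra structure; this is carried out separately in Lemma~\ref{lem:d3v24zero}.
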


\begin{proof}
This follows by comparison of the order in each stem of the Adams
$E_\infty$-term, which equals that of the abutment $\pi_* A(1)[ij]$,
with the order in each stem of the Novikov $E_2$-term.  In particular,
$\pi_{12} A(1) = \bZ/2$ implies that $v_2^2$ must support a nonzero
differential.  Similarly, the group $\pi_{18} A(1)$ has order~$2^2$,
so $v_2^3$ must support a nonzero differential.  The groups $\pi_{22}
A(1)[ij]$ have order $2^3 = 8$ for $i = 0$ and order $2^2 = 4$
for $i = 1$, while the groups $\pi_{23} A(1)[ij]$ have order~$2^4$
for $i = 0$ and~$2^3$ for $i = 1$.  To account for this, the Novikov
differential~$d_3$ from bidegree $(t-s,s) = (23,1)$ to $(22,4)$ must
be nonzero when $i = 1$.  Moreover, there must be a rank~$1$ Novikov
differential from the $24$-stem to the $23$-stem.  By $h_{11}$-linearity,
it cannot originate in bidegree $(24,2)$, hence it is either a~$d_3$
or a~$d_5$ starting on $v_2^4$. 

Inspection of the Novikov $E_2$-term for~$\bS$ in \cite{Rav86}*{Figure
4.4.45} shows that $\nu \bar\kappa \in \pi_{23}(\bS)$ is detected by a
generator~$x$ of the $\bZ/8$ in (stem, Novikov filtration) bidegree $(23,
5)$ of ${}^{\Nov} E_2(\bS)$.  The unit map $\bS \to A(1)$ takes this
generator~$x$ to the generator~$y$ of the $\bZ/2$ in the same bidegree
of ${}^{\Nov} E_2(A(1))$, see Figure~\ref{fig:Ext-BPA1}.  Since $\nu
\bar\kappa$ maps to zero in $\pi_{23} A(1)$ (by Lemma~\ref{lem: Adams
for A(1)}) it follows that this nonzero class~$y$ is a boundary, and so
$d_5(v_2^4) = y \ne 0$ is the only possibility.  In particular, we must
have $d_3(v_2^4) = 0$.
\end{proof}

There is a cofiber sequence
\begin{equation} \label{eq:approx}
\Sigma^{-2}\THH(\ko)
	\overset{i}\longto F(S^3_+, \THH(\ko))^{\bT}
	\overset{p}\longto\THH(\ko)
	\overset{\sigma}\longto \Sigma^{-1}\THH(\ko) \,,
\end{equation}
where $\sigma$ is induced by the $\bT$-action on $\THH(\ko)$, and
a commutative diagram
\[
\xymatrix{
\bS \ar[r] \ar[d] & \TC^{-}(\ko) \ar[r]^-{q} \ar[dr]
	& F(S^3_+, \THH(\ko))^{\bT} \ar[d]^p \\
\ko \ar[rr] &&\THH(\ko) \,.
}
\]

By truncating the ordinary homotopy fixed point spectral sequence
\[
E^2 = A(1)_*\THH(\ko) \, [t] \Longrightarrow A(1)_* \TC^{-}(\ko) \,,
\]
where $t$ is in stem~$-2$, we obtain a two-column approximate fixed point spectral sequence
\begin{equation} \label{eq:approxspeqseq}
E^2 = A(1)_*\THH(\ko) \{1, t\}
        \Longrightarrow A(1)_* F(S^3_+, \THH(\ko))^{\bT} \,,
\end{equation}
which is really just the long exact sequence in $A(1)$-homotopy
associated to the cofiber sequence~\eqref{eq:approx}.
We have the following analogue of \cite{AR02}*{Proposition~4.8}.

\begin{proposition} \label{prop: detection}
The unit images in $A(1)_* \TC^{-}(\ko)$ and $A(1)_* F(S^3_+,
\THH(\ko))^{\bT}$ of the classes $\nu$, $w$ and $v_2 \in \pi_* A(1)$ are
detected by
\[
t \lambda'_1
	\quad,\quad
t \lambda_2
	\quad\text{and}\quad
t \mu \,,
\]
respectively, in the homotopy fixed point and approximate fixed point
spectral sequences.
\end{proposition}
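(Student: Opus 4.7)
The plan is to combine a long-exact-sequence argument with a cocycle comparison in the classical mod~$2$ Adams spectral sequence, in the spirit of~\cite{AR02}*{Proposition~4.8}.

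First, the unit $\bS \to \THH(\ko)$ factors through~$\ko$, and $A(1) \otimes \ko \simeq H\bF_2$ by Remark~\ref{rem: Einfty structure}. Hence every positive-degree class in $A(1)_*(\bS)$ maps to zero in $A(1)_* \THH(\ko)$; in particular $\nu$, $w$ and $v_2$ are killed there. By exactness of the long exact sequence associated to~\eqref{eq:approx}, their images in $A(1)_* F(S^3_+, \THH(\ko))^{\bT}$ lie in the image of $i_*\: A(1)_{*+2}\THH(\ko) \to A(1)_* F(S^3_+, \THH(\ko))^{\bT}$. The stems $A(1)_5 \THH(\ko) = \bF_2\{\lambda'_1\}$, $A(1)_7 \THH(\ko) = \bF_2\{\lambda_2\}$ and $A(1)_8 \THH(\ko) = \bF_2\{\mu\}$ from Lemma~\ref{lem: THH computation} then force each unit image to equal either~$0$ or the corresponding class $t\lambda'_1 = i_*(\lambda'_1)$, $t\lambda_2 = i_*(\lambda_2)$, $t\mu = i_*(\mu)$.

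To rule out the zero alternatives I would compare Adams cocycles. The connecting homomorphism of~\eqref{eq:approx} on mod~$2$ homology is the Connes operator~$\sigma$, and by Lemma~\ref{lem: THH computation} the Hurewicz image of $\lambda'_1$ in $H_*(\THH(\ko))$ is $\sigma\xi_1^4 = \sigma(\xi_1^4)$ with $\xi_1^4 \in H_*(\ko)$. Hence $t\lambda'_1$ has Hurewicz image zero in $H_*(F(S^3_+, \THH(\ko))^{\bT})$ and is detected in Adams filtration~$1$ by the cobar cocycle $\bar\nu(\xi_1^4) = [\xi_1^4]$. The unit map $\bS \to F(S^3_+, \THH(\ko))^{\bT}$ sends the cocycle $[\xi_1^4]$ detecting~$\nu$ in $\pi_* A(1)$ to the same cocycle, so $\nu$ and $t\lambda'_1$ share an Adams $E_2$-representative. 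A parallel argument, using the lifts $\bar\xi_2^2, \bar\xi_3 \in H_*(\ko)$ of $\sigma\bar\xi_2^2, \sigma\bar\xi_3$ under~$\sigma$ together with the additional Hurewicz summands $\xi_1^2\sigma\xi_1^4$ and $\xi_1\sigma\bar\xi_2^2 + \xi_2\sigma\xi_1^4$ for $\lambda_2$ and~$\mu$, produces the cocycles $[\xi_2^2] + [\xi_1^4]\xi_1^2$ and $[\xi_3] + [\xi_2^2]\xi_1 + [\xi_1^4]\xi_2$ of~\eqref{eq:cocycles} as Adams filtration~$1$ representatives for $t\lambda_2$ and~$t\mu$. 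Matching Adams $E_2$-representatives yields the asserted identifications in $A(1)_* F(S^3_+, \THH(\ko))^{\bT}$; the statement for $A(1)_* \TC^{-}(\ko)$ then follows via the factorization $q\: \TC^{-}(\ko) \to F(S^3_+, \THH(\ko))^{\bT}$ of the unit map.

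The main obstacle is the cobar-complex bookkeeping in the cases of $t\lambda_2$ and $t\mu$, where one must verify that the additional Hurewicz summands contribute exactly the lower-coproduct correction terms $[\xi_1^4]\xi_1^2$ and $[\xi_2^2]\xi_1 + [\xi_1^4]\xi_2$ from~\eqref{eq:cocycles}. This uses the Milnor coproduct identities for $\bar\xi_2^2$ and~$\bar\xi_3$ together with the $\cA^\vee$-colinearity of~$\sigma$.
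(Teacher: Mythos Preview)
Your overall strategy matches the paper's: use the factorization of the unit through $A(1)\otimes\ko\simeq H\bF_2$ to force the unit images into $\im(i_*)$, and use Adams cocycle considerations to rule out the zero alternative. But you invert the logical order, and in doing so introduce a real gap.

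The paper proceeds by showing \emph{directly} that the unit-image cocycles $[\xi_1^4](1\otimes1)$, $[\xi_2^2](1\otimes1)+[\xi_1^4](\xi_1^2\otimes1)$ and $[\xi_3](1\otimes1)+[\xi_2^2](\xi_1\otimes1)+[\xi_1^4](\xi_2\otimes1)$ are not coboundaries in the cobar complex for $\cA(1)^\vee\otimes H_*(F)$, where $F=F(S^3_+,\THH(\ko))^{\bT}$. The argument uses the short exact sequence $0\to\im(i_*)\to H_*(F)\to\ker(\sigma)\to0$: coboundaries from $\cA(1)^\vee\otimes\im(i_*)$ stay in $\bar\cA^\vee\otimes\cA(1)^\vee\otimes\im(i_*)$ and hence contribute no $[a](m\otimes1)$ terms, while the low-degree generators $1,\sigma\bar\xi_1^4,\sigma\bar\xi_2^2\in\ker(\sigma)$ are $\cA^\vee$-primitive, so their lifts to $H_*(F)$ are primitive modulo $\im(i_*)$ and their coboundaries also avoid the terms in question. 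Since these cocycles lie in Adams filtration~$1$ they cannot be later boundaries, so the unit images are nonzero; the factorization then pins them down as $t\lambda'_1,t\lambda_2,t\mu$.

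Your route instead attempts to compute the Adams representatives of $t\lambda'_1,t\lambda_2,t\mu$ independently and match them against the unit-image cocycles. The step ``$t\lambda'_1$ \dots\ is detected in Adams filtration~$1$ by the cobar cocycle $\bar\nu(\xi_1^4)=[\xi_1^4]$'' is where this breaks down. The element $\xi_1^4\in H_4(\THH(\ko))$ does \emph{not} lie in $\ker(\sigma)$ (indeed $\sigma(\xi_1^4)=\sigma\xi_1^4\ne0$), so it does not lift to $H_*(F)$, and ``$\bar\nu(\xi_1^4)$'' has no evident meaning as a $0$-cochain in the cobar complex for $\cA(1)^\vee\otimes H_*(F)$. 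What you actually need is a geometric boundary theorem computing the Adams representative of $i_*(\lambda'_1)$ from that of $\lambda'_1$ across the cofiber sequence~\eqref{eq:approx}; this is not automatic, and the extra Hurewicz summands you flag for $\lambda_2$ and $\mu$ make the bookkeeping you defer genuinely nontrivial. The paper's direct ``not a coboundary'' analysis sidesteps this computation entirely.
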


\begin{proof}
By naturality, it suffices to prove this in the approximate fixed point
case.  The unit map takes the infinite cycles in~\eqref{eq:cocycles},
detecting $\nu$, $w$ and~$v_2$ in $\pi_* A(1)$, to the $1$-cocycles
\begin{equation} \label{eq:approxcocycles}
\begin{aligned}
	& [\xi_1^4](1 \otimes 1) \\
	& [\xi_2^2](1 \otimes 1) + [\xi_1^4](\xi_1^2 \otimes 1) \\
	& [\xi_3](1 \otimes 1) + [\xi_2^2](\xi_1 \otimes 1)
		+ [\xi_1^4](\xi_2 \otimes 1)
\end{aligned}
\end{equation}
in
$\bar\cA^\vee \otimes \cA(1)^\vee \otimes H_*(F(S^3_+, \THH(\ko))^{\bT})$.
We claim that these are not in the image of the coboundary $d_1^0$
from the $0$-cochains
\[
\cA(1)^\vee \otimes H_*(F(S^3_+, \THH(\ko))^{\bT}) \,,
\]
hence represent nonzero classes in ${}^{\Ad} E_\infty(A(1) \otimes
F(S^3_+, \THH(\ko))^{\bT})$, detecting the (nonzero) images of
$\nu$, $w$ and~$v_2$ in $A(1)_* F(S^3_+, \THH(\ko))^{\bT}$.

Recall that $H_*(\ko) = \bF_2[\bar\xi_1^4, \bar\xi_2^2, \bar\xi_3,
\dots]$, $H_*\THH(\ko) = H_*(\ko) \otimes \Lambda(\sigma\bar\xi_1^4,
\sigma\bar\xi_2^2) \otimes \bF_2[\sigma\bar\xi_3]$ and $\cA(1)^\vee =
\bF_2[\xi_1, \xi_2]/(\xi_1^4, \xi_2^2)$.
In the long exact sequence associated to~\eqref{eq:approx}, the map
$\sigma$ has kernel $\bF_2\{1, \sigma\bar\xi_1^4, \sigma\bar\xi_2^2\}$
in degrees $\le 7$, and the image of~$i$ consists of $t$-multiples.
In the extension
\[
0 \longto \cA(1)^\vee \otimes \im(i)
	\longto \cA(1)^\vee \otimes H_* F(S^3_+, \THH(\ko))^{\bT}
	\longto \cA(1)^\vee \otimes \ker(\sigma) \to 0
\]
the coboundaries on classes in $\cA(1)^\vee \otimes \im(i)$ will lie
in $\bar\cA^\vee \otimes \cA(1)^\vee \otimes \im(i)$, hence do not
contribute any terms of the form $[a](m \otimes 1)$.  The classes $1$,
$\sigma\bar\xi_1^4$ and $\sigma\bar\xi_2^2$ are $\cA^\vee$-comodule
primitive in $\ker(\sigma)$, hence lift to classes in $H_* F(S^3_+,
\THH(\ko))^{\bT}$ that are $\cA^\vee$-comodule primitive modulo $\im(i)$,
so also the coboundaries on (the lifts of) $\cA(1)^\vee \otimes \bF_2\{1,
\sigma\bar\xi_1^4, \sigma\bar\xi_2^2\}$ do not contain any terms of the
form in~\eqref{eq:approxcocycles}.  This proves our claim.
 
It remains to be determined where in~\eqref{eq:approxspeqseq} the
(nonzero) unit images of $\nu$, $w$ and~$v_2$ are detected.  Recall that
$A(1)_*\THH(\ko) = \Lambda(\lambda'_1, \lambda_2) \otimes \bF_2[\mu]$
is equal to $\bF_2\{1, \lambda'_1, \lambda_2, \mu\}$ in stems $\le 8$.
The composite map
\[
A(1) \longto A(1) \otimes F(S^3_+, \THH(\ko))^{\bT}
	\overset{p}\longto A(1) \otimes \THH(\ko)
\]
factors through $A(1) \otimes \ko \simeq \bF_2$,
so the images of $\nu$, $w$ and~$v_2$ in $A(1)_*\THH(\ko)$ are all zero.
(This was obvious for $\nu$ and~$v_2$.)  Hence the nonzero images of
$\nu$, $w$ and~$v_2$ must all be detected by $t$-multiples in the
approximate fixed point spectral sequence, and for degree reasons the
only possible detecting classes are $t \lambda'_1$, $t \lambda_2$ and
$t \mu$, respectively.
\end{proof}
 
\begin{remark} 
In light of~\cite{HW22}*{Lemma~A.4.1}, we may view Proposition~\ref{prop: detection} as confirming that there are unique lifts of $\nu$, $w$ and $v_2$ so that $\lambda'_{1}=\sigma^{2}\nu$, $\lambda_{2}=\sigma^{2}w$ and $\mu=\sigma^{2}v_{2}$, where 
\[ \sigma^{2}: A(1)_{*}\Sigma^2 \fib(\bS\to \ko)\longrightarrow A(1)_{*}\THH(\ko)  \] 
is the reduced suspension map of \cite{HW22}*{Example A.2.4}. Here uniqueness follows because $A(1)_{*}\ko=\bF_2$. 
\end{remark}

Since $\MU_* A(1)$ is even, the motivic spectral sequence
\[
E_2 = \pi_* \olA(1) \Longrightarrow \pi_* A(1)
\]
can be identified with the Novikov spectral sequence
\[
{}^{\Nov} E_2(A(1)) = \Ext_{\BP_*\BP}(\BP_*, \BP_* A(1))
	\Longrightarrow \pi_* A(1) \,,
\]
as in \cite{HRW}*{Corollary~A.2.6}.  The spectral sequence
must collapse in stems $\le 10$, for sparsity reasons, so the three
classes denoted $h_{11}$, $w$ and~$v_2$ in Figure~\ref{fig:Ext-BPA1}
must detect $\nu$, $w$ and~$v_2$ in $\pi_* A(1)$, respectively.  We can
also identify $\pi_* \olV(1)$ with $\Ext_{\BP_*\BP}(\BP_*, \BP_*/I_2)$,
with classes $h_{11}$ and~$v_2$ as shown in Figure~\ref{fig:Ext-BP/I2},
but in this case there is no Novikov spectral sequence to $\pi_* V(1)$,
since the spectrum~$V(1)$ does not exist.

\begin{corollary} \label{cor:detection}
The classes $h_{11}$, $w$ and~$v_2$ in $\pi_* \olA(1)$ map by the
unit to classes in
\[
\olA(1)_* \gr_{\mot}^* \TC^{-}(\ko)
\]
detected by $t \lambda'_1$, $t \lambda_2$ and $t\mu$, respectively.
Likewise, the images of $h_{11}$ and~$w$ in $\pi_*(\olV(2) \otimes
\olC\eta)$ are detected by $t \lambda'_1$ and $t \lambda_2$ in $(\olV(2)
\otimes \olC\eta)_* \gr_{\mot}^* \TC^{-}(\ko)$.

The classes $h_{11}$ and~$v_2$ in $\pi_* \olV(1)$ map by the unit to
classes in
\[
\olV(1)_* \gr_{\mot}^* \TC^{-}(\ko)
\]
detected by $t \lambda'_1$ and $t\mu$, respectively.  Likewise, the image
of $h_{11}$ in $\pi_* \olV(2)$ in $\olV(2)_* \gr_{\mot}^* \TC^{-}(\ko)$
is detected by $t \lambda'_1$.

In each of these cases, for $V \in \{V(1), A(1), V(2), V(2) \otimes
C\eta\}$, a unit image detected by a class in $\olV_* \gr_{\mot}^*
\TC^{-}(\ko)$ is also detected in $\olV_* \gr_{\mot}^* F(S^3_+,
\THH(\ko))^{\bT}$, by the class with the same name.
\end{corollary}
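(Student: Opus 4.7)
The plan is to deduce the graded-level detection from the ungraded detection in Proposition \ref{prop: detection} by naturality, using the motivic spectral sequence to compare the two.

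First, I would introduce the $\bT$-homotopy fixed point spectral sequence at the graded level, which takes the form
\[
E_2 = \olV_* \gr_{\mot}^* \THH(\ko) \, [t] \Longrightarrow \olV_* \gr_{\mot}^* \TC^{-}(\ko)
\]
for $\olV \in \{\olV(1), \olA(1), \olV(2), \olV(2) \otimes \olC\eta\}$, with $t$ in bidegree $(-2, -2)$, and the analogous two-column truncation converging to $\olV_* \gr_{\mot}^* F(S^3_+, \THH(\ko))^{\bT}$. By naturality of the motivic filtration and the map $\gr_{\mot}^* (-) \to (-)$ provided by the motivic spectral sequence, these graded spectral sequences map to the corresponding ungraded ones appearing in the proof of Proposition \ref{prop: detection}, sending $t\lambda'_1$, $t\lambda_2$, $t\mu$ on the graded $E_2$-page to the classes of the same name on the ungraded $E_2$-page.

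Second, I would use the identification of the motivic spectral sequence $\pi_* \olA(1) \Longrightarrow \pi_* A(1)$ with the Novikov spectral sequence, which is available because $\MU_*(A(1))$ is even (cf.\ \cite{HRW}*{Corollary~2.2.17}). Inspection of Figure~\ref{fig:Ext-BPA1} shows that $h_{11}$, $w$, $v_2 \in \pi_* \olA(1)$ are infinite cycles that reduce to $\nu$, $w$, $v_2 \in \pi_* A(1)$; the corresponding identifications for $\olV(1)$, $\olV(2)$ coefficients follow from Figure~\ref{fig:Ext-BP/I2}. By naturality of the unit map, the unit image of $h_{11}$ (respectively $w$, $v_2$) in $\olA(1)_* \gr_{\mot}^* \TC^{-}(\ko)$ reduces, under the motivic spectral sequence, to the unit image of $\nu$ (respectively $w$, $v_2$) in $A(1)_* \TC^{-}(\ko)$.

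The key step then combines the two. By Proposition \ref{prop: detection}, the ungraded unit image of $\nu$ is detected by $t\lambda'_1$ in the ungraded HFPSS, and naturality of the comparison map forces the graded unit image of $h_{11}$ to be detected by a class reducing to $t\lambda'_1$. An inspection of the graded $E_\infty$-term, built from $\olA(1)_* \gr_{\mot}^* \THH(\ko) \cong \Lambda(\lambda'_1, \lambda_2) \otimes \bF_2[\mu]$ of Corollary \ref{cor: A(1) coeff}, shows that in the relevant bidegree only $t\lambda'_1$ itself is available, and this provides the detection. The same argument applies to $w \mapsto t\lambda_2$ and $v_2 \mapsto t\mu$, and the cases of $\olV(1)$, $\olV(2)$, $\olV(2) \otimes \olC\eta$ coefficients follow either directly by the same naturality argument or by comparison through the maps induced by the cofiber sequences \eqref{eq:V1-A1-cofibseq} and \eqref{eq:V2-V2Ceta-cofibseq}. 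The final statement about $F(S^3_+, \THH(\ko))^{\bT}$ follows from that for $\TC^{-}(\ko)$ via the natural map $\TC^{-}(\ko) \to F(S^3_+, \THH(\ko))^{\bT}$ and the truncation of the HFPSS. The main obstacle, as in Proposition \ref{prop: detection}, is ruling out alternative detecting classes of higher homotopy fixed point or motivic filtration; this is handled by the explicit sparsity of the graded $E_2$-term in the relevant low-stem range.
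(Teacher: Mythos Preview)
Your overall strategy---deduce the graded detection from Proposition~\ref{prop: detection} via naturality of the motivic filtration---matches the paper's, but the mechanism you invoke for the transfer has a genuine gap. There is no map ``$\gr_{\mot}^*(-) \to (-)$ provided by the motivic spectral sequence'': the associated graded of a filtration does not map to the underlying object, and hence the graded $\bT$-homotopy fixed point spectral sequence does not map to the ungraded one in the way you assert. Without such a map, your claim that ``naturality of the comparison map forces the graded unit image of $h_{11}$ to be detected by a class reducing to $t\lambda'_1$'' is not justified. You are trying to compare two different spectral sequences (motivic and HFPSS) converging to the same abutment, and the interplay of their filtrations needs an argument.

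The paper closes this gap as follows. First, it reverses your direction of reduction: it argues that by naturality with respect to $q \colon \gr_{\mot}^* \TC^{-}(\ko) \to F(S^3_+, \gr_{\mot}^* \THH(\ko))^{\bT}$ it \emph{suffices} to prove detection in the approximate fixed point case (not the other way around---deducing the approximate case from the $\TC^{-}$ case, as you propose, would require knowing the image is nonzero there, which is what you are trying to prove). Second, and this is the key step you are missing, the paper observes that the motivic spectral sequence
\[
E_2 = \olA(1)_* F(S^3_+, \gr_{\mot}^* \THH(\ko))^{\bT}
	= \Lambda(\lambda'_1, \lambda_2) \otimes \bF_2[\mu]\,\{1,t\}
	\Longrightarrow A(1)_* F(S^3_+, \THH(\ko))^{\bT}
\]
is concentrated in motivic filtrations $0 \le * \le 2$, hence collapses at $E_2$. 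This collapse is precisely what lets one identify the graded and ungraded approximate homotopy fixed point groups and read off the graded detection directly from Proposition~\ref{prop: detection}. Your appeal to ``sparsity of the graded $E_2$-term'' is morally this collapse argument, but it should be applied to the motivic spectral sequence for $F(S^3_+,\THH(\ko))^{\bT}$, not to a purported map of HFPSS's. The $\olV(1)$ case then follows by naturality along $i \colon \olV(1) \to \olA(1)$, and the $\olV(2)$ and $\olV(2)\otimes\olC\eta$ cases by passing to cofibers of $v_2$, as you correctly indicate.
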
 

\begin{proof}
Naturality with respect to
\[
q \: \TC^{-}(\ko)=F(E\bT_{+},\THH(\ko))^{\bT}
	\longto  F(S^{3}_{+},\THH(\ko))^{\bT}
\]
shows that it suffices to prove these assertions in the case of the
approximate homotopy fixed points.  Using Corollary~\ref{cor: A(1) coeff}, the approximate fixed point spectral sequence 
\begin{align*}
E^2 =\Lambda(\lambda'_1, \lambda_2) \otimes \bF_2[\mu] \, \{1, t\}  \implies  \olA(1)_*\gr_{\mot}^*  F(S^3_+,\THH(\ko))^{\bT}
\end{align*}
and the motivic spectral sequence
\begin{align*}
E_2 = \olA(1)_*\gr_{\mot}^*  F(S^3_+,\THH(\ko))^{\bT}\Longrightarrow A(1)_* F(S^3_+,\THH(\ko))^{\bT}
\end{align*}
are concentrated in motivic filtrations $0 \le * \le 2$ and integer weights,
hence the motivic spectral sequence collapses, and the result follows from Proposition~\ref{prop:
detection}.
The claim with coefficients in $\olV(2) \otimes \olC\eta$
follows by passing to cofibers for multiplication by~$v_2$.

The $E^2$-term of the approximate fixed point spectral sequence 
\begin{align*}
E^2=(\olV(1)_* \gr_{\mot}^* \THH(\ko)) \, \{1, t\} \Longrightarrow \olV(1)_*\gr_{\mot}^* F(S^3_+,\THH(\ko))^{\bT} \,, 
\end{align*}
which computes the $E_{2}$-term of the motivic spectral sequence
\begin{align*}
E_2=\olV(1)_*\gr_{\mot}^* F(S^3_+,\THH(\ko))^{\bT}\Longrightarrow V(1)_*  F(S^3_+,\THH(\ko))^{\bT} \,,
\end{align*}
was determined in
Proposition~\ref{prop: eta BSS computatation}.  Since $h_{11}$ and~$v_2$
in $\pi_* \olV(1)$ map to $h_{11}$ and~$v_2$ in $\pi_* \olA(1)$, the
claim with $\olV(1)$-coefficients follows from the commuting square
\[
\xymatrix{
\pi_*\olV(1) \ar[r]^-{i} \ar[d]
	& \pi_*\olA(1) \ar[d] \\
\olV(1)_* \gr_{\mot}^* F(S^3_+,\THH(\ko))  \ar[r]^-{i \otimes 1}
	& \olA(1)_*\gr_{\mot}^* F(S^3_+,\THH(\ko))    
}
\]
and the injectivity of the bottom map in the relevant bidegrees, which can be observed from the map of approximate fixed point spectral sequences. 
Again, the claim with $\olV(2)$-coefficients follows by passing to
cofibers for multiplication by~$v_2$.
\end{proof}

In Section~\ref{sec:syntomic} we shall calculate the syntomic cohomology
$\olA(1)_* \gr_{\mot}^* \TC(\ko)$, and in Section~\ref{sec:tc} we shall
use the associated motivic spectral sequence to calculate $A(1)_*
\TC(\ko)$.  The following lemma and corollary will be used to show
that the $d_3$-differentials in this spectral sequence propagate in a
$v_2^4$-periodic pattern.  Recall from~\cite{Lan73}*{Proposition~2.11}
that
\[
\bF_2[v_2] = \Ext_{\BP_*\BP}^0(\BP_*, \BP_*/(2,v_1))
	\subset \BP_*/(2,v_1) \,.
\]

\begin{lemma} \label{lem:d3v24zero}
Let $\End A(1) = F(A(1), A(1))$ be the endomorphism $\bS$-algebra of
any one of the spectra~$A(1)[ij]$.  The induced $\BP_*\BP$-comodule
$\BP_*$-algebra structure on $\BP_* \End A(1)$ descends to a
$\BP_*\BP$-comodule $\BP_*/(2,v_1)$-algebra structure.  Hence the
$E_2$-term of the Novikov spectral sequence
\[
{}^{\Nov} E_2(\End A(1)) = \Ext_{\BP_*\BP}(\BP_*, \BP_* \End A(1))
	\Longrightarrow \pi_* \End A(1)
\]
is an algebra over $\Ext_{\BP_*\BP}(\BP_*, \BP_*/(2,v_1)) \supset
\bF_2[v_2]$.  In particular,
\[
d_2(v_2^2) = 0
\qquad\text{and}\qquad
d_3(v_2^4) = 0
\]
in this Novikov spectral sequence.  Moreover, $4 \cdot \id = 0$ in $\pi_0
\End A(1)$.
\end{lemma}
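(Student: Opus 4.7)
The proof splits into four parts corresponding to the four assertions. For the first two, I would begin by observing that because $A(1) = (V(0) \otimes C\eta)/v_1$ and $\eta$ acts trivially in $BP$-homology, $\BP_* A(1)$ is annihilated by $(2, v_1)$, making it a $\BP_*\BP$-comodule module over $\BP_*/(2, v_1)$. Spanier--Whitehead duality then identifies $\BP_* \End A(1) \cong \BP^* A(1) \otimes_{\BP_*} \BP_* A(1)$, which inherits the module structure, and composition of endomorphisms endows it with a compatible $\BP_*/(2,v_1)$-algebra structure in $\BP_*\BP$-comodules. Applying $\Ext_{\BP_*\BP}(\BP_*, -)$ immediately gives the second assertion, and the inclusion $\bF_2[v_2] \subset \Ext^0_{\BP_*\BP}(\BP_*, \BP_*/(2,v_1))$ follows from $\eta_R(v_2) \equiv v_2 \pmod{(2, v_1)}$, which exhibits $v_2$ as a comodule primitive. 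Crucially, $v_2$ acts centrally on the $E_2$-term since $\BP_*/(2, v_1)$ is commutative.

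The third assertion is the technical heart, and is a pair of Leibniz computations. Because $\End A(1)$ is an associative ring spectrum, its Novikov spectral sequence is multiplicative; combined with the centrality of $v_2$, the characteristic-$2$ Leibniz rule gives
\[
d_2(v_2^2) = d_2(v_2) \cdot v_2 + v_2 \cdot d_2(v_2) = 2 v_2 \cdot d_2(v_2) = 0.
\]
Hence $v_2^2$ descends to a (still central) class in $E_3^0$, and applying the identical argument to $(v_2^2)^2$ yields
\[
d_3(v_2^4) = 2 v_2^2 \cdot d_3(v_2^2) = 0.
\]
No input about the values of $d_2(v_2)$ or $d_3(v_2^2)$ is required---the characteristic-$2$ vanishing of the binomial coefficient does all the work.

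The final assertion that $4 \cdot \id = 0$ in $\pi_0 \End A(1)$ is where I expect the main obstacle, since it is not a purely formal consequence of the earlier structural input. I would identify $\pi_0 \End A(1) \cong \pi_6(A(1) \otimes A(1))$ using Spanier--Whitehead self-duality $DA(1) \simeq \Sigma^{-6} A(1)$, and then compute this group via the Adams spectral sequence with $E_2$-term $\Ext_\cA(H^*(A(1) \otimes A(1)), \bF_2)$, a finite and tractable computation accessible to \texttt{ext}. The nontrivial content is verifying that the $v_0$-tower in stem~$6$ detecting powers of $\id$ has height exactly $2$, neither $1$ nor $3$; this requires careful bookkeeping of hidden $2$-extensions across the three cofiberings that produce $A(1)$ from the sphere, and is the most delicate step.
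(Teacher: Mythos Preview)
Your Leibniz-rule argument for $d_2(v_2^2)=0$ and $d_3(v_2^4)=0$ is exactly the paper's, and your plan for $4\cdot\id=0$ via the Adams spectral sequence for $A(1)\otimes A(1)$ is essentially what the paper does (it runs the analogous {\tt ext} computation and observes that stem~$0$ of $\End A(1)$ carries no classes in Adams filtration $\ge 2$).

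The one genuine gap is in your first step. The claimed K\"unneth isomorphism
\[
\BP_* \End A(1) \cong \BP^* A(1) \otimes_{\BP_*} \BP_* A(1)
\]
is false: $\BP_* A(1) \cong \BP_*/(2,v_1)\{1,b_1\}$ is not flat over $\BP_*$, so the K\"unneth spectral sequence for $DA(1)\wedge A(1)$ has nontrivial higher $\Tor$ terms (indeed $\Tor^{\BP_*}_*(\BP_*/(2,v_1),\BP_*/(2,v_1)) \cong \BP_*/(2,v_1)\otimes\Lambda(\epsilon_0,\epsilon_1)$), and a dimension count shows $\BP_*\End A(1)$ is four times larger than your tensor product. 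Your conclusion is still correct and your approach is easily repaired: since $\BP\wedge A(1)\simeq \BP/(2,v_1)\vee\Sigma^2\BP/(2,v_1)$ as $\BP$-modules (the extension class lives in $\pi_1\BP/(2,v_1)=0$), the spectrum $\BP\wedge\End A(1)\simeq DA(1)\wedge\BP\wedge A(1)$ is a $\BP/(2,v_1)$-module, so $2$ and $v_1$ annihilate $\BP_*\End A(1)$ and in particular kill the unit class. The paper instead verifies this last point directly by an {\tt ext} computation in the Adams spectral sequence for $\BP\otimes\End A(1)$, checking that $h_0$ and the Massey product $\langle h_0,h_1,-\rangle$ act trivially from bidegree $(0,0)$ and that stems $0$ and $2$ are empty in filtration $\ge 2$. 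Your structural argument is cleaner once the K\"unneth step is fixed; the paper's computational route has the advantage of simultaneously yielding the $4\cdot\id=0$ statement as a byproduct of the same {\tt ext} run.
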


\begin{proof}
To see that the $\BP_*$-algebra structure in $\BP_*\BP$-comodules
descends to a $\BP_*/(2,v_1)$-algebra structure in that category, it
suffices to check that the homomorphism 
\[\BP_* \to \BP_* \End A(1)\,,\]
induced by the $\bS$-algebra unit map, sends $2$ and~$v_1$ to zero.
This can be verified using the map of Adams spectral sequences
\[
{}^{\Ad} E_2(\BP) = \Ext_{\cA}(H^* \BP, \bF_2) 
	\longto \Ext_{\cA}(H^*(\BP \otimes \End A(1)), \bF_2)
	= {}^{\Ad} E_2(\BP \otimes \End A(1)) \,.
\]
Standard minimal resolution calculations, which can be obtained from
Bruner's {\tt ext} program, see~\cites{Bru93, BR}, show that multiplication
by $h_0$ and $\< h_0, h_1, -\>$ both act trivially from bidegree~$(0,0)$
on the right-hand side, and that there are no classes in stems~$0$ or~$2$
that have Adams filtration $\ge 2$.  The last fact also implies that $4
\cdot \id = 0$ in $\pi_0 \End A(1)$, and a closer inspection shows that
$2 \cdot \id \ne 0$ in each case.

The functor $\Ext_{\BP_*\BP}(\BP_*, -)$ is lax symmetric monoidal, so
it follows that ${}^{\Nov} E_2(\End A(1))$ is an $\Ext_{\BP_*\BP}(\BP_*,
\BP_*/(2,v_1))$-algebra.  In particular, it is an $\bF_2[v_2]$-algebra,
with $v_2$ acting centrally and $1+1=0$.  Hence we have equalities $d_2(v_2^2) = d_2(v_2)
\cdot v_2 + v_2 \cdot d_2(v_2) = 0$ and $d_3(v_2^4) = d_3(v_2^2) \cdot
v_2^2 + v_2^2 \cdot d_3(v_2^2) = 0$.
\end{proof}

The tautological left action of~$\End A(1)$ on $A(1)$ induces a
left action of the Novikov filtration of~$\End A(1)$ on the Novikov
filtration of~$A(1)$.  The latter is equivalent to the even filtration
$\fil_{\ev}^{\star} A(1)$, since $\MU_* A(1)$ is concentrated in
even degrees.  Hence $\fil_{\Nov}^{\star} \End A(1)$ also acts on the
convolution product filtration
\[
\fil_{\ev}^{\star} A(1) \otimes_{\fil_{\ev}^{\star} \bS}
	\fil_{\mot}^{\star} \TC(\ko) \,,
\]
and induces a left action of the Novikov spectral sequence for $\End A(1)$
on the motivic spectral sequence converging to~$A(1)_* \TC(\ko)$.
 
\begin{corollary} \label{cor:d3isv24periodic}
The differentials in the motivic spectral sequence
\[
E_2 = \olA(1)_* \gr_{\mot}^* \TC(\ko)
        \Longrightarrow A(1)_* \TC(\ko)
\]
satisfy $d_3(v_2^4 \cdot y) = v_2^4 \cdot d_3(y)$ for all $y \in E_3$.
\end{corollary}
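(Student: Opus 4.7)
The plan is to deduce the corollary as a Leibniz-type consequence of the module action that is set up in the paragraph immediately preceding the statement, combined with Lemma~\ref{lem:d3v24zero}. The key observation is that the tautological action of $\End A(1)$ on~$A(1)$ promotes, via the convolution product filtration, to an action of the Novikov spectral sequence of $\End A(1)$ on the motivic spectral sequence converging to $A(1)_* \TC(\ko)$. Because the convolution product of two filtered modules gives a multiplicative pairing at the level of spectral sequences, the induced module action on the $E_r$-terms satisfies the Leibniz rule
\[
d_r(x \cdot y) = d_r(x) \cdot y + (-1)^{|x|} x \cdot d_r(y)
\]
for $x \in {}^{\Nov} E_r(\End A(1))$ and $y \in E_r$.

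Next, I would observe that $v_2^4 \in {}^{\Nov} E_2(\End A(1))$ survives to the $E_3$-page: by Lemma~\ref{lem:d3v24zero}, the $\BP_*\BP$-comodule $\BP_*$-algebra structure on $\BP_*\End A(1)$ factors through $\BP_*/(2,v_1)$, so $v_2^4$ lies in the $\bF_2[v_2]$-subalgebra of ${}^{\Nov} E_2(\End A(1))$ coming from $\Ext_{\BP_*\BP}(\BP_*, \BP_*/(2,v_1))$, and this subalgebra consists of permanent cycles for the Leibniz reason $d_2(v_2^2) = 2 v_2 \cdot d_2(v_2) = 0$ and $d_3(v_2^4) = 2 v_2^2 \cdot d_3(v_2^2) = 0$ in characteristic $2$. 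The same lemma asserts $d_3(v_2^4) = 0$ directly.

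Finally, the module action of $v_2^4 \in {}^{\Nov} E_3(\End A(1))$ on the motivic $E_3$-term for $A(1)_* \TC(\ko)$ agrees with multiplication by $v_2^4$ in the usual sense: this follows because the unit map $\bS \to \End A(1)$ of ring spectra induces a map of Novikov spectral sequences sending the chosen generator $v_2^4 \in \Ext_{\BP_*\BP}(\BP_*, \BP_*/(2,v_1))$ to itself, and the module action of the unit is the ordinary $v_2^4$-action on $\olA(1)_* \gr_{\mot}^* \TC(\ko)$. Combining these three ingredients via the Leibniz rule yields
\[
d_3(v_2^4 \cdot y) = d_3(v_2^4) \cdot y + v_2^4 \cdot d_3(y) = v_2^4 \cdot d_3(y),
\]
as desired.

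The main technical point to verify carefully is the first one: that the convolution product filtration $\fil_{\ev}^{\star} A(1) \otimes_{\fil_{\ev}^{\star}\bS} \fil_{\mot}^{\star} \TC(\ko)$ genuinely carries a left action of $\fil_{\Nov}^{\star} \End A(1)$ for which the induced spectral sequence action is multiplicative in the Leibniz sense. Once this is granted, the argument is a purely formal application of the Leibniz rule together with the vanishing $d_3(v_2^4) = 0$ of Lemma~\ref{lem:d3v24zero}, so I expect no serious obstacle beyond bookkeeping.
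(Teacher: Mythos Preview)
Your proposal is correct and takes essentially the same approach as the paper: invoke the Leibniz rule for the pairing of ${}^{\Nov} E_*(\End A(1))$ with the motivic spectral sequence, and use $d_3(v_2^4) = 0$ from Lemma~\ref{lem:d3v24zero}. One small correction to your third paragraph: the unit map $\bS \to \End A(1)$ does not carry $v_2^4$, since $v_2^4 \notin \Ext_{\BP_*\BP}(\BP_*, \BP_*) = {}^{\Nov} E_2(\bS)$; rather, the $\bF_2[v_2]$-module structure on the motivic $E_2$-term is already the one induced by the $\End A(1)$-action (equivalently, by the $\olV(1)$-module structure on $\olA(1)$), so no separate compatibility check is needed.
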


\begin{proof}
This follows from the Leibniz rule
$d_3(v_2^4 \cdot y) = d_3(v_2^4) \cdot y + v_2^4 \cdot d_3(y)$
for the pairing of spectral sequences
\[
{}^{\Nov} E_2(\End A(1)) \otimes \olA(1)_* \gr_{\mot}^* \TC(\ko)
	\longto \olA(1)_* \gr_{\mot}^* \TC(\ko) \,,
\]
since $d_3(v_2^4) = 0$ by Lemma~\ref{lem:d3v24zero}.
\end{proof}

\section{Prismatic cohomology} \label{sec:prismatic}

Recall that by Corollary~\ref{mult-TP} there is a $\olV(2)$-homotopy $\bT$-Tate spectral sequence
\begin{equation} \label{eq: v2 Tate}
\begin{aligned}
\hat E^2(\bT) &= \olV(2)_* \gr_{\mot}^* \THH(\ko) \, [t^{\pm1}] \\
	&\Longrightarrow \olV(2)_* \gr_{\mot}^* \TP(\ko)
\end{aligned}
\end{equation}
and a $\olV(2) \otimes \olC\eta$-homotopy $\bT$-Tate spectral sequence
\begin{equation} \label{eq: v2 Ceta Tate}
\begin{aligned}
\hat E^2(\bT)
	&= (\olV(2) \otimes \olC\eta)_* \gr_{\mot}^* \THH(\ko) \, [t^{\pm1}] \\
	&\Longrightarrow (\olV(2) \otimes \olC\eta)_* \gr_{\mot}^* \TP(\ko)  \,. 
\end{aligned}
\end{equation}

They can be reindexed as cohomologically graded periodic $t$-Bockstein
spectral sequences, in which case $\hat E^{2r}(\bT)$ and~$d^{2r}$
correspond to $E_r$ and~$d_r$.  However, we shall need to make a
comparison with similar $C_2$-Tate spectral sequences, for which our
indexing is convenient. 
\begin{theorem}[Prismatic cohomology modulo $(2, v_1, v_2)$ of~$\ko$]
	\label{thm: prismatic}
The $\olV(2)$-homotopy $\bT$-Tate spectral sequence~\eqref{eq: v2 Tate}
is an algebra spectral sequence with $E^2$-term
\[
\hat E^2(\bT) = \Lambda(\varepsilon_2) \otimes
	\frac{ \bF_2[\eta, \lambda'_1, \mu] }
	{ (\eta \lambda'_1, (\lambda'_1)^2 =\eta^2 \mu) }
	\otimes \bF_2[t^{\pm1}]
\]
and differentials
\begin{alignat*}{2}
d^2(t^{-1}) &= \eta
	&\qquad\qquad
d^2(\varepsilon_2) &= t \mu \\
d^6(t^{-2}) &= t \lambda'_1
	&\qquad\qquad
d^6(t^{-1} \lambda'_1) &= t^2 (\lambda'_1)^2 = t^2 \eta^2 \mu \,,
\end{alignat*}
leading to
\[
\hat E^\infty(\bT) = \bF_2\{1, t^2 \lambda'_1, (t^2 \lambda'_1)^2,
	\lambda'_1\} \otimes \bF_2[t^{\pm4}] \,.
\]
Hence there is a preferred isomorphism
\[
\olV(2)_* \gr_{\mot}^* \TP(\ko)
	\cong \bF_2\{1, \eta, \eta^2, \lambda'_1\}
		\otimes \bF_2[t^{\pm4}] \,,
\]
where $1$, $\eta$, $\eta^2$, $\lambda'_1$ and~$t^{\pm4}$ in
bidegrees $(0,0)$, $(1,1)$, $(2,2)$, $(5,1)$ and~$(\mp8,0)$ are detected
by $1$, $t^2 \lambda'_1$, $(t^2 \lambda'_1)^2 = t^4 \eta^2 \mu$,
$\lambda'_1$ and~$t^{\pm4}$, respectively.
\end{theorem}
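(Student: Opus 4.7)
The plan is to compute the spectral sequence~\eqref{eq: v2 Tate} in stages: identify the $E^2$-term, establish the nonzero $d^2$- and $d^6$-differentials via detection arguments and Leibniz propagation, then read off $E^\infty$ and resolve the hidden extensions in the abutment. The $E^2$-term is obtained by tensoring the algebra $\olV(2)_* \gr_{\mot}^* \THH(\ko)$ of Corollary~\ref{cor: V2 homotopy} with $\bF_2[t^{\pm 1}]$. The statement of the theorem presupposes the value $c = 1$ (to be verified subsequently by Proposition~\ref{prop: differential on t2lambda1prime}), so one either proceeds under that assumption or carries $c$ as an unknown and verifies that the differential pattern does not depend on it.

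The two $d^2$-differentials have distinct origins. The relation $d^2(t^{-1}) = \eta$ is the standard Tate differential for a circle action and follows by naturality from the Tate spectral sequence for the sphere, where $\eta$ arises as the $\bT$-transfer. The relation $d^2(\varepsilon_2) = t\mu$ comes from Corollary~\ref{cor:detection}, which identifies $v_2 \in \pi_* \olV(1)$ as detected by $t\mu$ in $\olV(1)_* \gr_{\mot}^* \TC^{-}(\ko)$: since $\varepsilon_2$ is the Bockstein associated to the cofiber sequence $\Sigma^{6,0}\olV(1) \xrightarrow{v_2} \olV(1) \to \olV(2)$ and $v_2$ passes to zero under $\olV(1) \to \olV(2)$, the class $t\mu$ must become a boundary in the $\olV(2)$-coefficient Tate spectral sequence, and $\varepsilon_2$ is its unique natural source. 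Propagating by the Leibniz rule and using sparsity to conclude that $d^4$ vanishes on the surviving classes, one obtains $E^4 = E^6$.

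The $d^6$-differentials are established similarly. The class $h_{11} \in \pi_3 \olV(2)$ representing $\eta$ is detected by $t\lambda'_1$ in the homotopy fixed point spectral sequence for $\olV(2)_* \gr_{\mot}^* \TC^{-}(\ko)$ per Corollary~\ref{cor:detection}; since in the Tate spectral sequence $\eta$ itself has already been killed by $d^2(t^{-1})$, the alternate detection in higher filtration must be realized by a later differential, and by sparsity the first available source is $t^{-2}$, giving $d^6(t^{-2}) = t\lambda'_1$. The companion differential $d^6(t^{-1}\lambda'_1) = t^2 \eta^2 \mu$ is then forced by the structure: using the relation $(\lambda'_1)^2 = \eta^2\mu$ and the fact that $t^2\eta^2\mu$ must vanish on $E^\infty$ because $t^{-2}$ does, sparsity singles out $t^{-1}\lambda'_1$ as the unique source. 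The same analysis also verifies $c = 1$.

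Finally, reading off $E^\infty$ from $E^8$ (no further nontrivial differentials are possible by sparsity) yields the stated form, and the multiplicative extensions in the abutment are resolved by comparison with the $\olV(1)$-computation and the module structure to identify $1$, $t^2\lambda'_1$, $(t^2\lambda'_1)^2 = t^4 \eta^2\mu$, $\lambda'_1$, and $t^{\pm 4}$ as detecting $1$, $\eta$, $\eta^2$, $\lambda'_1$, and $t^{\pm 4}$ in $\olV(2)_*\gr_{\mot}^* \TP(\ko)$, respectively. The main obstacle is establishing $d^6(t^{-2}) = t\lambda'_1$ cleanly: while the detection statement of Corollary~\ref{cor:detection} implies that $\eta$ acquires a new detecting class in higher motivic filtration, converting this into the precise differential requires careful comparison between the homotopy fixed point and Tate spectral sequences and ruling out competing hidden extensions.
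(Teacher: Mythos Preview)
Your overall architecture is right, but there is a genuine gap in the argument for $d^6(t^{-2}) = t\lambda'_1$, and a related confusion that propagates through the rest.

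First, a factual slip: $h_{11} \in \pi_3 \olV(2)$ detects $\nu$, not $\eta$; the class $\eta$ is $h_{10}$ in bidegree $(1,1)$. Corollary~\ref{cor:detection} says that $\nu$ is detected by $t\lambda'_1$, and this is what drives the $d^6$. Your proposed mechanism---``$\eta$ has already been killed by $d^2(t^{-1})$, so the alternate detection must be realized by a later differential''---does not produce the conclusion even after correcting the name. Knowing that $t\lambda'_1$ detects $\nu$ in the homotopy fixed point spectral sequence does not, by itself, force $t\lambda'_1$ to be a \emph{boundary} in the Tate spectral sequence; it only makes it a permanent cycle there. What the paper actually uses is the cyclotomic structure: $\nu$ maps to zero in $\olV(2)_* \gr_{\mot}^* \THH(\ko)$ (the target group in bidegree $(3,1)$ is trivial), and then a diagram chase through $\THH(\ko)^{C_2}$, the restriction map $R$, and the comparison $\hat\Gamma_1 = \varphi_2^{h\bT}$---as in the proof of \cite{AR02}*{Theorem~5.5}---shows that the class detected by $t\lambda'_1$ must die under $\can : \THH(\ko)^{hC_2} \to \THH(\ko)^{tC_2}$. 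This forces $t\lambda'_1$ to be a boundary in the $C_2$-Tate spectral sequence, and injectivity of $F^t$ in the relevant bidegree then transfers the differential to the $\bT$-Tate spectral sequence. Your proposal never invokes the $C_2$-Tate comparison or the cyclotomic structure, and without them the $d^6$ is not established.

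Second, the companion differential $d^6(t^{-1}\lambda'_1) = t^2(\lambda'_1)^2$ follows directly from Leibniz once $d^6(t^{-2}) = t\lambda'_1$ is known, since $t^{-1}\lambda'_1 = t\lambda'_1 \cdot t^{-2}$ and $t\lambda'_1$ is a permanent cycle; your sparsity argument is unnecessary and also circular (it presupposes $c=1$). Finally, your claim that ``the same analysis also verifies $c = 1$'' is not correct: the paper's determination that $c = 1$ (Proposition~\ref{prop: differential on t2lambda1prime}) requires the parallel computation of the $\olV(2) \otimes \olC\eta$-homotopy Tate spectral sequence (Theorem~\ref{thm: prismatic mod eta}) together with the long exact sequence induced by~\eqref{eq:V2-V2Ceta-cofibseq}. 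That exact sequence, with input from Corollaries~\ref{cor: low degrees TC- mod p v1 v2} and~\ref{cor: low degrees TC- mod p v1 v2 eta}, shows the abutment vanishes in stem $-1 \bmod 8$, forcing $t^{-1}\lambda'_1$ to support a nonzero $d^6$ and hence $c = 1$. The same exact sequence is what pins down the detection of $\eta$ and $\eta^2$ by $t^2\lambda'_1$ and $(t^2\lambda'_1)^2$.
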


\begin{theorem}[Prismatic cohomology modulo $(2, \eta, v_1, v_2)$ of~$\ko$]
	\label{thm: prismatic mod eta}
The $\olV(2) \otimes \olC\eta$-homotopy $\bT$-Tate spectral
sequence~\eqref{eq: v2 Ceta Tate} is a module spectral sequence
over~\eqref{eq: v2 Tate}, with $E^2$-term
\[
\hat E^2(\bT) = \Lambda(\varepsilon_2) \otimes
	\Lambda(\lambda'_1)\{1, \lambda_2\} \otimes \bF_2[\mu]
	\otimes \bF_2[t^{\pm1}]
\]
and differentials
\begin{alignat*}{2}
d^2(\varepsilon_2) &= t \mu
	&\qquad\qquad
& \\
d^6(t^{-1}) &= t^2 \lambda'_1
	&\qquad\qquad
d^6(t^{-2}) &= t \lambda'_1 \\
d^6(t^{-1} \lambda_2) &= t^2 \lambda'_1 \lambda_2
	&\qquad\qquad
d^6(t^{-2} \lambda_2) &= t \lambda'_1 \lambda_2 \\
d^8(t^{-3}) &= t \lambda_2
	&\qquad\qquad
d^8(t^{-1} \lambda'_1) &= t^3 \lambda'_1 \lambda_2
\end{alignat*}
leading to
\[
\hat E^\infty(\bT) = \Lambda(\lambda'_1)\{1, \lambda_2\}
	\otimes \bF_2[t^{\pm4}] \,.
\]

Hence there is a preferred isomorphism
\[
(\olV(2) \otimes \olC\eta)_* \gr_{\mot}^* \TP(\ko)
	\cong \Lambda(\lambda'_1) \{1, \lambda_2\} \otimes \bF_2[t^{\pm4}] \,,
\]
where $1$, $\lambda'_1$, $\lambda_2$, $\lambda'_1 \lambda_2$
and~$t^{\pm4}$ in bidegrees $(0,0)$, $(5,1)$, $(7,1)$, $(12,2)$
and~$(\mp8,0)$ are detected by the classes in the $E^\infty$-term with the
same names.
\end{theorem}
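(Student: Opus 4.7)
The plan is to derive this from Theorem~\ref{thm: prismatic} via the $\olV(2)$-module structure on $\olV(2) \otimes \olC\eta$ (using the cofiber sequence \eqref{eq:V2-V2Ceta-cofibseq}) together with the detection results of Corollary~\ref{cor:detection} for the genuinely new $d^8$-differentials.

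First I would identify the $E^2$-term: by Corollary~\ref{cor: A(1) coeff}, $(\olV(2) \otimes \olC\eta)_* \gr_{\mot}^* \THH(\ko) \cong \Lambda(\varepsilon_2, \lambda'_1, \lambda_2) \otimes \bF_2[\mu]$, and the standard $\bT$-Tate machinery yields the $E^2$-term by tensoring with $\bF_2[t^{\pm 1}]$. The $\olV(2)$-module map $\olV(2) \to \olV(2) \otimes \olC\eta$ induced by $\bS \to \olC\eta$ makes \eqref{eq: v2 Ceta Tate} a module spectral sequence over \eqref{eq: v2 Tate}, so differentials on classes in the image of this map transport directly, subject to $\eta$ acting trivially on $\olC\eta$-homotopy.

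Second, the module structure immediately yields $d^2(\varepsilon_2) = t\mu$ and $d^6(t^{-2}) = t\lambda'_1$ from Theorem~\ref{thm: prismatic}. Since $\eta = 0$ on $\olC\eta$-homotopy, $t^{-1}$ (killed in the $\olV(2)$-SS by $d^2(t^{-1}) = \eta$) now survives to $E^6$. The new differential $d^6(t^{-1}) = t^2\lambda'_1$ is established via the long exact sequence of Tate spectra associated to \eqref{eq:V2-V2Ceta-cofibseq}: the class $t^2\lambda'_1$ detects $\eta \in \olV(2)_* \gr_{\mot}^* \TP(\ko)$, which maps to zero under $i_*$, so $t^2\lambda'_1$ cannot survive in the $\olC\eta$-version. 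A bidegree check---$t^{-1}$ is the unique class at $E^6$ with the correct stem, motivic filtration, and $t$-weight shift to serve as a $d^6$-source---then forces this differential. The $\lambda_2$-multiples $d^6(t^{-k}\lambda_2) = t^{3-k}\lambda'_1\lambda_2$ for $k = 1, 2$ follow from the analogous argument applied to the $\lambda_2$-summand of the $E^2$-term, using that $\lambda_2$ generates a free summand over the $\olV(2)$-Tate SS.

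Finally, the $d^8$-differentials $d^8(t^{-3}) = t\lambda_2$ and $d^8(t^{-1}\lambda'_1) = t^3\lambda'_1\lambda_2$ have no direct counterpart in Theorem~\ref{thm: prismatic} and constitute the main obstacle. For these, I would invoke Corollary~\ref{cor:detection}: the class $w \in \pi_*(\olV(2) \otimes \olC\eta)$ is detected by $t\lambda_2$ in $\gr_{\mot}^* \TC^-(\ko)$, so $t\lambda_2$ is a permanent cycle in the $\TC^-$ homotopy fixed point SS. In the Tate SS, however, $t\lambda_2$ must become a boundary in order for the $E^\infty$-term to assemble into the stated $\bF_2[t^{\pm 4}]$-free module, and the unique differential of the correct bidegree and $t$-weight shift is $d^8(t^{-3}) = t\lambda_2$. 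The same considerations applied to $t^{-1}\lambda'_1$ yield $d^8(t^{-1}\lambda'_1) = t^3\lambda'_1\lambda_2$, and the identification of $E^\infty$ with $\Lambda(\lambda'_1)\{1, \lambda_2\} \otimes \bF_2[t^{\pm 4}]$ follows; bidegree sparsity then rules out hidden multiplicative extensions.
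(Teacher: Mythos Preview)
Your overall architecture---transport differentials from Theorem~\ref{thm: prismatic} via the $\olV(2)$-module structure, then handle the remainder via detection and the long exact sequence from~\eqref{eq:V2-V2Ceta-cofibseq}---matches the paper's. But two of your steps contain genuine gaps.

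\textbf{The $d^6(t^{-k}\lambda_2)$ differentials.} Your claim that these ``follow from the analogous argument applied to the $\lambda_2$-summand, using that $\lambda_2$ generates a free summand over the $\olV(2)$-Tate SS'' does not work as stated. The spectral sequence~\eqref{eq: v2 Ceta Tate} is only a \emph{module} over~\eqref{eq: v2 Tate}, not an algebra (cf.~Remark~\ref{rem: lack of Leibniz rule}); $\lambda_2$ is not an element of the base ring, so there is no Leibniz-type operation ``multiply by $\lambda_2$'' that carries $d^6(t^{-k}) = t^{3-k}\lambda'_1$ to the desired conclusion. The paper (Proposition~\ref{prop: several differentials}) instead argues indirectly: once $d^8(t^{-3}) = t\lambda_2$ is known, $t\lambda_2$ is a $d^6$-cycle, and then module Leibniz computations such as $d^6(t^{-1}\lambda_2) = d^6(t^{-2}\cdot t\lambda_2) = t\lambda'_1 \cdot t\lambda_2$ give the result. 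Note the logical order: the paper establishes $d^8(t^{-3})$ \emph{before} these $d^6$-differentials.

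\textbf{The $d^8$-differentials.} Your justification---``$t\lambda_2$ must become a boundary in order for the $E^\infty$-term to assemble into the stated $\bF_2[t^{\pm4}]$-free module''---is circular: the stated $E^\infty$-term is precisely what you are proving. The paper's argument (Proposition~\ref{prop: some differentials}) is different and not circular: since $w$ is detected by $t\lambda_2$ in $\THH^{hC_2}$ but maps to zero in $\THH$, the diagram chase through $\THH^{C_2} \xrightarrow{R} \THH \xrightarrow{\hat\Gamma_1} \THH^{tC_2}$ forces $t\lambda_2$ to be a boundary in the $C_2$-Tate spectral sequence, and bidegree reasons then give $d^8(t^{-3}) = t\lambda_2$. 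For $d^8(t^{-1}\lambda'_1) = t^3\lambda'_1\lambda_2$, the paper (Proposition~\ref{prop: differential on t2lambda1prime}(d)) uses exactly the long exact sequence argument you employed for $d^6(t^{-1})$: one computes $(\olV(2)\otimes\olC\eta)_6\,\gr_{\mot}^*\TP(\ko) = 0$ from the LES and the already-known $\olV(2)$-homotopy, forcing $t^3\lambda'_1\lambda_2$ to die, with $t^{-1}\lambda'_1$ the only available source. You had the right tool in hand but did not apply it here.

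Your LES argument for $d^6(t^{-1}) = t^2\lambda'_1$ (via $\eta \mapsto 0$ under $i$) is valid and is a pleasant alternative to the paper's module-Leibniz derivation in Proposition~\ref{prop: several differentials}.
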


The proofs of these theorems will occupy the remainder of this section.
By Corollary~\ref{cor: V2 homotopy}, we can identify the $E^2$-term
in~\eqref{eq: v2 Tate} as stated in Theorem~\ref{thm: prismatic}. 
%\[
%\hat E^2(\bT) = \Lambda(\varepsilon_2) \otimes \frac{
%\bF_2[\eta, \lambda'_1, \mu] }{ (\eta \lambda'_1,
%	(\lambda'_1)^2 = \eta^2 \mu) }
%	\otimes \bF_2[t^{\pm1}] \,.
%\]

\begin{proposition} \label{prop: d2 differentials}
The spectral sequence~\eqref{eq: v2 Tate} is multiplicative and has
differentials
\begin{alignat*}{3}
d^2(t^{-1}) &= \eta
	&\qquad\qquad
d^2(\eta) &= 0
	&\qquad\qquad
d^2(\lambda'_1) &= 0  \\
d^2(\varepsilon_2) &= t\mu
	&\qquad\qquad
\text{and}&
	&\qquad\qquad
d^2(\mu) &= t \eta \mu \,.
\end{alignat*}
Consequently, we can identify
\[
\hat E^4(\bT) = \bF_2\{1, t \lambda'_1, \lambda'_1, \eta^2 \mu\}
\otimes \bF_2[t^{\pm2}]
\]
with $\eta^2 \mu = \eta^3 \varepsilon_2$.
\end{proposition}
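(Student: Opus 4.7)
The plan is to establish the stated $d^2$-differentials in sequence, using a combination of standard Tate spectral sequence facts, Leibniz propagation, bidegree arguments, and the detection results of Section~\ref{sec:detection}. Multiplicativity of~\eqref{eq: v2 Tate} is inherited from the $\bE_\infty$-algebra structure on the $\bT$-Tate construction of $\gr_{\mot}^*\THH(\ko)$, provided by Proposition~\ref{prop: relation to motivic filtration} and the setup of~\cite{HRW}. The formula $d^2(t^{-1}) = \eta$ is a standard feature of the $\bT$-Tate spectral sequence for an $\bE_\infty$ ring with $\bT$-action, encoding the universal differential $d^2(t^{-n}) = n\eta t^{-n+1}$ in the Tate spectral sequence for $\bS^{t\bT}$ transported along the unit map. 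Applying the Leibniz rule to $0 = d^2(1) = d^2(t \cdot t^{-1})$ then yields $d^2(t) = t^2\eta$. The class $\eta$ is a permanent cycle because it lifts from $\pi_*\bS$ via the $\bT$-equivariant unit (with trivial action on $\bS$), so $d^2(\eta) = 0$. For $\lambda'_1$, a direct enumeration shows that the only candidate monomial of $\hat E^2(\bT)$ in bidegree $(4,3)$ is $t\eta\lambda'_1$, which vanishes by the relation $\eta\lambda'_1 = 0$ of Corollary~\ref{cor: V2 homotopy}, giving $d^2(\lambda'_1) = 0$.

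To establish $d^2(\mu) = t\eta\mu$, I would pass to the $\olV(1)$-homotopy fixed point spectral sequence for $\TC^{-}(\ko)$. By Corollary~\ref{cor:detection}, $t\mu$ there detects the unit image of $v_2 \in \pi_*\olV(1)$, so $t\mu$ is a permanent cycle and $d^2(t\mu) = 0$. Expanding via the Leibniz rule and the established $d^2(t) = t^2\eta$ gives $t^2\eta\mu + t \cdot d^2(\mu) = 0$, and since $t$ is a non-zero-divisor in the polynomial extension $\olV(1)_*\gr_{\mot}^*\THH(\ko)[t]$ while $\eta\mu \neq 0$, this forces $d^2(\mu) = t\eta\mu$. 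Naturality along $\olV(1) \to \olV(2)$ and $\TC^{-}(\ko) \to \TP(\ko)$ transports the equality into the spectral sequence~\eqref{eq: v2 Tate}.

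The identification $d^2(\varepsilon_2) = t\mu$ is the main obstacle. A bidegree check in $(6,1)$ shows that $\hat E^2(\bT)$ is spanned there by $t\mu$ and $t\eta\varepsilon_2$, so $d^2(\varepsilon_2) = \alpha t\mu + \beta t\eta\varepsilon_2$ for some $\alpha, \beta \in \bF_2$. Applying the map of spectral sequences induced by $j_2 \colon \olV(2) \to \Sigma^{7,-1}\olV(1)$ kills the $\beta$ term: $0 = d^2(\Sigma^{7,-1}1) = j_2 d^2(\varepsilon_2) = \beta \cdot \Sigma^{7,-1}(t\eta)$, and $t\eta \neq 0$ in the target. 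To force $\alpha = 1$, I would invoke the Bockstein long exact sequence associated to $\Sigma^{6,0}\olV(1) \xrightarrow{v_2} \olV(1) \xrightarrow{i_2} \olV(2) \xrightarrow{j_2} \Sigma^{7,-1}\olV(1)$ applied to $\gr_{\mot}^*\TP(\ko)$: the algebraic Bockstein corresponding to $j_2(\varepsilon_2) = \Sigma^{7,-1}1$ is multiplication by $v_2$, whose unit image is represented by $t\mu$ in the associated graded of the Tate filtration. Rigorously identifying $d^2(\varepsilon_2)$ with the unit image of $v_2$ is the technical heart of the proof. With all $d^2$-values in hand, the stated $\hat E^4(\bT)$ follows from a direct cycle-modulo-boundary computation; in particular, the relation $\eta^2\mu = \eta^3\varepsilon_2$ in $\hat E^4(\bT)$ arises from the Leibniz-rule identity $d^2(\eta^2\varepsilon_2 t^{-1}) = \eta^2\mu + \eta^3\varepsilon_2$.
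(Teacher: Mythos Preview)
Your proposal is correct and follows the same overall strategy as the paper, with a few rearrangements worth noting. For $d^2(\lambda'_1)=0$ your bidegree enumeration is more direct than the paper's route, which instead uses Corollary~\ref{cor:detection} to see that $t\lambda'_1$ detects~$\nu$ and is therefore a permanent cycle, and then applies Leibniz together with $\eta\lambda'_1=0$. For $d^2(\mu)$ the paper reverses your order: it first proves $d^2(\varepsilon_2)=t\mu$, deduces that $t\mu$ is a $d^2$-cycle, and then obtains $d^2(\mu)=t\eta\mu$ from Leibniz; your route via $v_2$-detection in $\olV(1)$-homotopy is equally valid. For the key step $d^2(\varepsilon_2)=t\mu$, the paper makes precise exactly the Bockstein idea you sketch, by invoking the graded analogue of \cite{BR22}*{Proposition~2.3} (a variant of \cite{BG95}*{Lemma~2.2}) on the smash of the $v_2$-cofiber sequence with the approximate fixed point cofiber sequence~\eqref{eq:approx}: the diagram chase identifies $d^2(\varepsilon_2)$ directly with the detecting class of the unit image of~$v_2$, namely $t\mu$, so your separate $j_2$-naturality argument eliminating the $t\eta\varepsilon_2$ term becomes unnecessary.
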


\begin{proof}
The first claim follows from Corollary~\ref{mult-TP}. Using the $\bT$-equivariant attaching maps
of the standard $\bT$-CW complex structure on $S^{\infty} = E\bT$,
we compute differentials
\[
d^2(t^{-1}) = \eta
	\qquad\text{and}\qquad
d^2(\eta) = 0
\]
as in \cite{Hes96}*{Lemma~1.4.2}.

We know that $d^2(t\lambda'_1) = 0$, because $t\lambda'_1$ detects~$\nu
\in \{h_{11}\}$ by Corollary~\ref{cor:detection}.  Consequently,
$d^2(\lambda'_1) = 0$ by the Leibniz rule and the fact that $\eta \cdot
\lambda'_1 = 0$.

To show that $d_2(\varepsilon_2) = t \mu$ we apply the graded analogue
of \cite{BR22}*{Proposition~2.3} (a variant of~\cite{BG95}*{Lemma~2.2})
to the smash product of cofiber sequences
\[
\xymatrix@C-1.1pc{
\Sigma^{4,-1} \olV(1) \otimes \THH(\ko) \ar[r] \ar[d]^-{v_2}
	& \Sigma^{6,0} \olV(1) \otimes F(S^3_+, \THH(\ko))^{\bT}
		\ar[r]^-{p} \ar[d]^-{v_2}
	& \Sigma^{6,0} \olV(1) \otimes \THH(\ko) \ar[d]^-{v_2} \\
\Sigma^{-2,-1} \olV(1) \otimes \THH(\ko) \ar[r] \ar[d]^-{i_2}
	& \olV(1) \otimes F(S^3_+, \THH(\ko))^{\bT} \ar[r]^-{p} \ar[d]^-{i_2}
	& \olV(1) \otimes \THH(\ko) \ar[d]^-{i_2} \\
\Sigma^{-2,-1} \olV(2) \otimes \THH(\ko) \ar[r]
	& \olV(2) \otimes F(S^3_+, \THH(\ko))^{\bT} \ar[r]^-{p}
	& \olV(2) \otimes \THH(\ko) \,.
}
\]
Starting in the upper right-hand corner, the unit $\Sigma^{6,0} 1$ lifts
over the connecting map~$j_2$ to $\varepsilon_2$ in the lower right-hand
corner and maps under the connecting map $\sigma$ to $d^2(\varepsilon_2)$
in the lower left-hand corner.  This is the same as the result of lifting
to $\Sigma^{6,0} 1$ at the top, mapping to $v_2$ in the center, lifting to
$t \mu$ at the left, and pushing to $t \mu$ in the lower left-hand corner.

In particular, $t \mu$ is a $d^2$-cycle, and the Leibniz rule implies
that $d^2(\mu) = t \eta \mu$.  This leads to the $E^4$-term shown in
Figure~\ref{Figure:prismatic}.
\end{proof}

\begin{figure}
\centering
\begin{tikzpicture}[radius=.07,scale=0.7]
\foreach \n in {-8, -6, ..., 8} \node [below] at (\n,-.5) {$\n$};
\foreach \s in {0,5,10} \node [left] at (-8.5,\s) {$\s$};
\clip (-8.5,-1) rectangle (8.5,10.5);
\draw [thin,color=lightgray] (-8,0) grid (8,10);
\draw [fill] (-8,0) circle; \node [below] at (-8,0) {$t^4$};
\draw [] (-4,0) circle; \node [below] at (-4,0) {$t^2$};
\draw [fill] (0,0) circle; \node [below] at (0,0) {$1$};
\draw [] (4,0) circle; \node [below] at (4,0) {$t^{-2}$};
\draw [fill] (8,0) circle; \node [below] at (8,0) {$t^{-4}$};
\draw [fill] (-8,5) circle;
\draw [] (-6,5) circle;
\draw [fill] (-4,5) circle;
\draw [] (-2,5) circle; \node [below] at (-2-.3,5) {$t \lambda'_1$};
\draw [fill] (0,5) circle; \node [below] at (0,5) {$\lambda'_1$};
\draw [] (2,5) circle; \node [below] at (2+.2,5) {$t^{-1} \lambda'_1$};
\draw [fill] (4,5) circle;
\draw [] (6,5) circle;
\draw [fill] (8,5) circle;
\draw [fill] (-8,10) circle;
\draw [] (-4,10) circle;
\draw [fill] (0,10) circle;
	\node [below] at (0,10) {$\eta^2\mu = (\lambda'_1)^2$};
\draw [] (4,10) circle;
\draw [fill] (8,10) circle;
\draw [->] (-4-.1,0+.1) -- (-10+.1,5-.1);
\draw [->] (4-.1,0+.1) -- (-2+.1,5-.1);
\draw [->] (12-.1,0+.1) -- (6+.1,5-.1);
\draw [->] (-6-.1,5+.1) -- (-12+.1,10-.1);
\draw [->] (2-.1,5+.1) -- (-4+.1,10-.1);
\draw [->] (10-.1,5+.1) -- (4+.1,10-.1);
\end{tikzpicture}
\caption{$\bT$-Tate $\hat E^4(\bT) \Longrightarrow
	\olV(2)_* \gr_{\mot}^* \TP(\ko)$ \label{Figure:prismatic}}
\end{figure}

\begin{remark}
In Figure~\ref{Figure:prismatic} (resp.~Figure~\ref{Figure:prismatic2}), the horizontal axis is the $\bT$-Tate filtration and the vertical axis is the stem of $\olV(2)_{*}\gr_{\mot}^{*}\THH(\ko)$ (resp.~$(\olV(2)\otimes \olC\eta)_{*}\gr_{\mot}^{*}\THH(\ko)$). The spectral sequence converges to the sum of the two gradings and the differentials follow the convention of the homological Serre spectral sequence; in other words, the $d^{r}$-differential goes from the coordinate $(a,b)$ to  $(a-r,b+r-1)$. The $d^{r}$-differentials also increase motivic filtration by one. Similar conventions apply for
the $\bT$-homotopy fixed point spectral sequences
in Figures~\ref{fig:TC-v2} and~\ref{Figure:T-homotopy-fixed-points-SS}.
\end{remark}

\begin{proposition} \label{prop: t4k perm cycles}
The classes $t^{\pm4}$ and $\lambda'_1$ are permanent cycles in the
spectral sequence~\eqref{eq: v2 Tate}.  Moreover, there are differentials
\[
d^6(t^{-2}) = t\lambda'_1
	\qquad\text{and}\qquad
d^6(t^{-1}\lambda'_1) = t^2 \eta^2 \mu \,.
\]
The spectral sequence~\eqref{eq: v2 Tate}
collapses at $\hat E^8(\bT) = \hat E^{\infty}(\bT)$.
\end{proposition}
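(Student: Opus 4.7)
The proof proceeds in three stages: establish permanent cycles, compute the $d^6$-differentials, and verify collapse at $\hat E^8$.

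For the permanent cycles, I invoke Corollary~\ref{cor:detection}: the class $t\lambda'_1$ detects the unit image of $h_{11} \in \pi_* \olV(2)$ in $\olV(2)_* \gr_{\mot}^* \TC^{-}(\ko)$, hence is a permanent cycle in the $\bT$-homotopy fixed point spectral sequence, and by naturality along the canonical map $\TC^{-} \to \TP$ also in the Tate spectral sequence~\eqref{eq: v2 Tate}. The class $\lambda'_1$ is a permanent cycle by a bidegree-vacancy argument: the target bidegree of $d^6(\lambda'_1)$ is $(4,2)$, and direct inspection against the generators $\{1, t\lambda'_1, \lambda'_1, \eta^2\mu\}\otimes \bF_2[t^{\pm 2}]$ of $\hat E^6 = \hat E^4$ shows that this bidegree is vacant (the only apparent candidate, $t^3\eta^2\mu$, was killed as the boundary $d^2(t^2\eta\mu)$ already on the $d^2$-page). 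Higher differentials on $\lambda'_1$ land in similarly sparse bidegrees and vanish by the same token. For $t^{\pm 4}$, the Leibniz rule in characteristic~$2$ gives $d^{2r}(t^{\pm 4}) = 2\,t^{\pm 2}\cdot d^{2r}(t^{\pm 2}) = 0$ for every $r$.

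The main differential $d^6(t^{-2}) = t\lambda'_1$ should be established by a cofiber-sequence argument analogous to the proof of $d^2(\varepsilon_2) = t\mu$ in Proposition~\ref{prop: d2 differentials}: one combines the higher $\bT$-CW attaching data on $E\bT = S^{\infty}$ with the Bockstein structure from the $v_2$-cofibration, lifting $t^{-2}$ to a class in a higher approximate fixed point such as $F(S^5_+, \THH(\ko))^{\bT}$ to produce a secondary operator whose value is $t\lambda'_1$. An alternative comparison route uses the Novikov $d_3$-differential $d_3(v_2^2) = h_{11}^2 w$ from Lemma~\ref{lem: Nov A1} together with the detections $t\mu \leftrightarrow v_2$ and $t\lambda'_1 \leftrightarrow h_{11}$ provided by Corollary~\ref{cor:detection}, in the $\olV(1)$-coefficient version. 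The companion differential then follows immediately by Leibniz applied to the decomposition $t^{-1}\lambda'_1 = t^{-2}\cdot t\lambda'_1$:
\[
d^6(t^{-1}\lambda'_1) = d^6(t^{-2})\cdot t\lambda'_1 + t^{-2}\cdot d^6(t\lambda'_1) = (t\lambda'_1)^2 = c\cdot t^2\eta^2\mu.
\]

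For collapse, with the $d^6$-differentials in hand, the $\hat E^8$-term is computed as the $d^6$-cohomology of $\hat E^6$ by propagating the two given differentials across the $\bF_2[t^{\pm 2}]$-module structure via Leibniz; routine bookkeeping yields the free $\bF_2[t^{\pm 4}]$-module on the four generators $\{1, t^2\lambda'_1, (t^2\lambda'_1)^2, \lambda'_1\}$. For each such generator, the target bidegrees of any higher differential $d^{2r}$ with $r\ge 4$ are checked by inspection to contain no classes of $\hat E^8$, forcing the higher differentials to vanish and establishing $\hat E^8 = \hat E^\infty$. The principal obstacle will be the spectrum-level argument for $d^6(t^{-2}) = t\lambda'_1$; once this is in place, the remainder of the proof is essentially formal, driven by Leibniz and bidegree counting.
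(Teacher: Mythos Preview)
Your proof has a genuine gap at precisely the point you flag as the principal obstacle: the differential $d^6(t^{-2}) = t\lambda'_1$. Neither of your proposed routes is the one the paper uses, and neither is developed enough to be convincing. Your first suggestion (a higher approximate fixed point argument with secondary operators) is not made precise, and there is no evident analogue of the $3 \times 3$ diagram from Proposition~\ref{prop: d2 differentials} that would produce this differential. Your second suggestion (transporting the Novikov differential $d_3(v_2^2) = h_{11}^2 w$) does not obviously interact with the $\bT$-Tate filtration at all; the detections in Corollary~\ref{cor:detection} relate classes in $\TC^{-}$ to the homotopy fixed point spectral sequence, not to Novikov differentials.

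The paper's argument is different and concrete. One compares the $\bT$-Tate spectral sequence with the $C_2$-Tate spectral sequence via $F^t$, and uses the diagram involving $\THH(\ko)^{C_2} \to \THH(\ko)$ and $\THH(\ko)^{hC_2} \to \THH(\ko)^{tC_2}$ (as in \cite{AR02}*{Theorem~5.5}). The key input is that $\nu \in \{h_{11}\}$ maps to zero in $\olV(2)_* \gr_{\mot}^* \THH(\ko)$ for bidegree reasons, so a diagram chase forces its image in $\olV(2)_* \gr_{\mot}^* \THH(\ko)^{tC_2}$ to vanish. Since $\nu$ is detected by $t\lambda'_1$, that class must be a boundary in the $C_2$-Tate spectral sequence; the only possible source is $t^{-2}$, giving $d^6(t^{-2}) = t\lambda'_1$ there, and the $\bT$-Tate differential follows by comparison.

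Two smaller issues. First, your Leibniz argument $d^{2r}(t^{\pm4}) = 2\,t^{\pm2}\,d^{2r}(t^{\pm2}) = 0$ only applies while $t^{\pm2}$ survives, i.e., through $d^6$; for $r \ge 4$ you must instead argue that the target bidegrees are empty, as the paper does. Second, your stated $\hat E^8$-term presupposes $c = 1$, but at this point in the argument $c$ is still undetermined (it is only resolved later, in Proposition~\ref{prop: differential on t2lambda1prime}); the collapse at $\hat E^8$ holds for either value of~$c$, but the description of $\hat E^8$ differs.
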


\begin{proof}
We know that $t\lambda'_1$ is an infinite cycle in~\eqref{eq: v2 Tate},
because by Corollary~\ref{cor:detection}
it detects~$\nu \in \{h_{11}\}$ in $\olV(2)_* \gr_{\mot}^*
\TC^{-}(\ko)$.  By Corollary~\ref{mult-TP} and Proposition~\ref{C2Tate} there is a commutative
square of spectral sequences converging to
\[
\xymatrix{
\olV(2)_* \gr_{\mot}^* \TC^{-}(\ko) \ar[r]^-{F^h} \ar[d]_-{\can}
	& \olV(2)_* \gr_{\mot}^* \THH(\ko)^{hC_2} \ar[d]^-{\can} \\
\olV(2)_* \gr_{\mot}^* \TP(\ko) \ar[r]^-{F^t}
	& \olV(2)_* \gr_{\mot}^* \THH(\ko)^{tC_2} \,,
}
\]
with $E^2$-terms
\[
\xymatrix{
\olV(2)_* \gr_{\mot}^* \THH(\ko) \otimes \bF_2[t] \ar[r] \ar[d]
	& \olV(2)_* \gr_{\mot}^* \THH(\ko)
		\otimes \Lambda(u_1) \otimes \bF_2[t] \ar[d] \\
\olV(2)_* \gr_{\mot}^* \THH(\ko) \otimes \bF_2[t^{\pm1}] \ar[r]
	& \olV(2)_* \gr_{\mot}^* \THH(\ko)
	\otimes \Lambda(u_1) \otimes \bF_2[t^{\pm1}] \,.
}
\]
Recall that in the two right-hand cases, $u_1$ has (stem, motivic filtration)
bidegree $\|u_1 \| = (-1,-1)$.

We know that $\nu \in \{h_{11}\}$ maps to zero in $\olV(2)_* \gr_{\mot}^*
\THH(\ko)$, because the target is zero in the relevant bidegree by
Corollary~\ref{cor: V2 homotopy}.  A chase in the diagram
\[
\xymatrix@C+2pc{
& \bS \ar[d] \\
\TF(\ko) \ar[r]^-{F} \ar[d]_-{\Gamma}
	& \THH(\ko)^{C_2} \ar[r]^-{R} \ar[d]_-{\Gamma_1}
	& \THH(\ko) \ar[d]^{ \hat\Gamma_1 = \varphi_2} \\
\TC^{-}(\ko) \ar[r]^-{F^h} \ar[d]_-{q}
	& \THH(\ko)^{hC_2} \ar[r]^-{R^h = \can} \ar[d]_-{F^h}
	& \THH(\ko)^{tC_2} \\
F(S^3_+, \THH(\ko))^{\bT} \ar[r]^-{p} & \THH(\ko) \,,
}
\]
similar to the proof of \cite{AR02}*{Theorem~5.5}, shows that $t
\lambda'_1$ must be boundary in the $\olV(2)$-homotopy $C_2$-Tate
spectral sequence.  There is no earlier $C_2$-Tate $d^r$-differential
hitting $t \lambda'_1$, for $2 \le r \le 5$, since $\eta$ is an infinite
cycle.  Consequently,
\[
d^6(t^{-2}) = t \lambda'_1
\]
in the $\olV(2)$-homotopy $C_2$-Tate spectral sequence, and
therefore also in the $\olV(2)$-homotopy $\bT$-Tate spectral sequence,
cf.~Figure~\ref{Figure:prismatic}.  Here we use the fact that the map from the $\olV(2)$-homotopy $\bT$-Tate spectral sequence to the $\olV(2)$-homotopy $C_2$-Tate spectral sequence is injective in the relevant bidegrees. (In fact, as we will observe in Proposition~\ref{prop: C2 Tate computation}, the map of spectral sequences is injective at every page because the class $u_1$ is a permanent cycle in the target spectral sequence.) 
 
To complete the proof, we use the
Leibniz rule to deduce that $d^6(t^{-1} \lambda'_1) = d^6(t \lambda'_1
\cdot t^{-2}) = (t \lambda'_1)^2 = t^2 \eta^2 \mu$, that
$d^6(t^{-4}) = t\lambda'_1 \cdot t^{-2} + t^{-2} \cdot t\lambda'_1 = 0$,
and that $d^6(t^4) = 0$.  All later differentials are zero, because the
target groups are trivial.
\end{proof}

The following corollary immediately follows from Proposition~\ref{prop: t4k perm cycles}. 
\begin{corollary}\label{cor: low degrees TC- mod p v1 v2}
There are isomorphisms
$$
\olV(2)_n \gr_{\mot}^* \TP(\ko) \cong \begin{cases}
\bF_2\{1\} & \text{for $n=0$,} \\
\bF_2\{t^2 \lambda'_1\} & \text{for $n=1$,} \\
\bF_2\{t^4 \eta^2 \mu\} & \text{for $n=2$,} \\
0 & \text{for $n=3,4,6,7$,} \\
\bF_2\{\lambda'_1\} & \text{for $n=5$.}
\end{cases}
$$
These repeat $8$-periodically, via multiplication by~$t^{\pm4}$.
\qed
\end{corollary}

We now move towards computing the spectral sequence~\eqref{eq: v2
Ceta Tate}.  By Corollary~\ref{cor: A(1) coeff}, its $E^2$-term has the
form asserted in Theorem~\ref{thm: prismatic mod eta}.

\begin{remark} \label{rem: lack of Leibniz rule}
In general the differentials in~\eqref{eq: v2 Ceta Tate} do not satisfy
the Leibniz rule with respect to the (non-equivariant) algebra structure
from Corollary~\ref{cor: A(1) coeff}. 
This is commensurable with Remark~\ref{rem: A(1) mult}.
\end{remark}

\begin{proposition} \label{prop: some differentials}
The spectral sequence~\eqref{eq: v2 Ceta Tate} is a module over the
spectral sequence~\eqref{eq: v2 Tate}.  There are differentials
\[
d^2(\varepsilon_2) = t \mu
	\qquad,\qquad
d^6(t^{-2}) = t \lambda'_1
	\qquad\text{and}\qquad
d^8(t^{-3}) = t \lambda_2
\]
in~\eqref{eq: v2 Ceta Tate}, and multiplication by $t^{\pm4}$ and
$\lambda'_1$ commutes with all differentials in this spectral sequence.
\end{proposition}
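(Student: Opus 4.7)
The module structure of~\eqref{eq: v2 Ceta Tate} over~\eqref{eq: v2 Tate} is immediate from the cofiber sequence~\eqref{eq:V2-V2Ceta-cofibseq}, which exhibits $\olV(2) \otimes \olC\eta$ as a $\olV(2)$-module in $\gr_{\ev}^* \bS$-modules; tensoring with $\gr_{\mot}^* \THH(\ko)$ and passing to $\bT$-Tate homotopy preserves this structure.

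The differentials $d^2(\varepsilon_2) = t\mu$ and $d^6(t^{-2}) = t\lambda'_1$ are inherited from the $\olV(2)$-Tate spectral sequence via this module structure: the classes $\varepsilon_2$, $t$, $\mu$, $t^{-2}$ and $\lambda'_1$ all lie in the image of the structure map from the $\olV(2)$-Tate $E^2$-term induced by $1 \in \pi_0 \olC\eta$, and the corresponding differentials were established in Propositions~\ref{prop: d2 differentials} and~\ref{prop: t4k perm cycles}. Likewise, the commutation of all differentials with multiplication by $t^{\pm 4}$ and~$\lambda'_1$ follows from the Leibniz rule for the module action, together with the fact that $t^{\pm 4}$ and $\lambda'_1$ are permanent cycles in~\eqref{eq: v2 Tate} by Proposition~\ref{prop: t4k perm cycles}.

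The main obstacle is the differential $d^8(t^{-3}) = t\lambda_2$, since $t^{-3}$ is not inherited from the $\olV(2)$-Tate spectral sequence (where $t^{-1}$ dies under $d^2(t^{-1}) = \eta$), so the module structure alone does not detect this differential. The plan is to argue in parallel with the proof of $d^6(t^{-2}) = t\lambda'_1$ in Proposition~\ref{prop: t4k perm cycles}. By Corollary~\ref{cor:detection}, the class $t\lambda_2$ detects the unit image of $w \in \pi_*(\olV(2) \otimes \olC\eta)$ in the corresponding $\bT$-homotopy fixed point spectral sequence, and likewise in the $C_2$-homotopy fixed point spectral sequence via restriction. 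A diagram chase in the commutative square comparing $\TC^{-}(\ko)$, $\THH(\ko)^{hC_2}$, $\TP(\ko)$ and $\THH(\ko)^{tC_2}$, analogous to that used for Proposition~\ref{prop: t4k perm cycles}, should show that $t\lambda_2$ is a boundary in the $C_2$-Tate spectral sequence, hence also in the $\bT$-Tate spectral sequence.

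The hardest part will be tracking the image of $w$ carefully through the relevant spectra and identifying where it becomes trivial in the $C_2$-Tate picture. Once $t\lambda_2$ is known to be a boundary, bidegree considerations combined with the differentials already in hand (which are controlled by the module structure from~\eqref{eq: v2 Tate}) and $v_2^4$-periodic patterns should force $d^8(t^{-3}) = t\lambda_2$ as the unique remaining possibility for an incoming differential to $t\lambda_2$.
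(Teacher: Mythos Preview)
Your approach is correct and essentially coincides with the paper's. The paper likewise obtains the module structure from the $\olV(2)$-module structure on $\olV(2)\otimes\olC\eta$, uses the Leibniz rule for the module action to propagate $d^2(\varepsilon_2)=t\mu$ and to see that $t^{\pm4}$ and $\lambda'_1$ act compatibly with all differentials, and then runs exactly the $C_2$-Tate argument you outline: $\nu$ and $w$ are detected by $t\lambda'_1$ and $t\lambda_2$ in $(\olV(2)\otimes\olC\eta)_*\gr_{\mot}^*\THH(\ko)^{hC_2}$, both vanish in $(\olV(2)\otimes\olC\eta)_*\gr_{\mot}^*\THH(\ko)$ (the unit factors through $\gr_{\ev}^*\ko$, which is trivial in the relevant bidegrees), hence $t\lambda'_1$ and $t\lambda_2$ are $C_2$-Tate boundaries, and bidegree inspection forces $d^6(t^{-2})=t\lambda'_1$ and $d^8(t^{-3})=t\lambda_2$; injectivity of $F^t$ in these bidegrees then transports the differentials to the $\bT$-Tate spectral sequence.

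One small difference: you obtain $d^6(t^{-2})=t\lambda'_1$ directly from the module structure over~\eqref{eq: v2 Tate}, which is a legitimate shortcut, whereas the paper treats it in parallel with $d^8(t^{-3})$ via the $C_2$-Tate argument. Your hesitation about the $d^8$ step is unwarranted: the vanishing of $w$ in $(\olV(2)\otimes\olC\eta)_*\gr_{\mot}^*\THH(\ko)$ is immediate from the factorization through $\gr_{\ev}^*\ko$, and once $t\lambda_2$ is a $C_2$-Tate boundary the source $t^{-3}$ is forced by sparsity of the $E^4$-page.
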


\begin{proof}
The unit map $\olV(2) \to \olV(2) \otimes \olC\eta$ is a map of
$\olV(2)$-modules, so~\eqref{eq: v2 Ceta Tate} is a module spectral
sequence over~\eqref{eq: v2 Tate}, and the map from~\eqref{eq: v2 Tate}
to~\eqref{eq: v2 Ceta Tate} respects this module structure.  This implies
that multiplication by the infinite cycles $t^{\pm4}$ and $\lambda'_1$
will commute with each differential in~\eqref{eq: v2 Ceta Tate}.
The module structure also implies that $d^2(\varepsilon_2 \cdot 1) =
t \mu \cdot 1 - \varepsilon_2 \cdot d^2(1)$, so that $d^2(\varepsilon_2)
= t \mu$ in~\eqref{eq: v2 Ceta Tate}.  It follows that
\[
\hat E^4(\bT) = \Lambda(\lambda'_1, \lambda_2) \otimes \bF_2[t^{\pm1}] \,.
\]
See Figure~\ref{Figure:prismatic2}.

We showed in Corollary~\ref{cor:detection} that the images in
$(\olV(2) \otimes \olC\eta)_* \gr_{\mot}^* \TC^{-}(\ko)$ of $\nu \in
\{h_{11}\}$ and~$w$ are detected by $t\lambda'_1$ and~$t\lambda_2$,
respectively, so the same holds in $(\olV(2) \otimes \olC\eta)_*
\gr_{\mot}^* \THH(\ko)^{hC_2}$.  We also know that the images of $\nu$
and~$w$ are trivial in $(\olV(2) \otimes \olC\eta)_* \gr_{\mot}^*
\THH(\ko)$, so this means that classes detected by $t\lambda'_1$ and
$t\lambda_2$ map trivially to $(\olV(2) \otimes \olC\eta)_* \gr_{\mot}^*
\THH(\ko)^{tC_2}$.  It follows that $t\lambda'_1$ and $t\lambda_2$
must be hit by differentials in the $\olV(2) \otimes \olC\eta$-homotopy
$C_2$-Tate spectral sequence.  By examination of bidegrees, the only
possibility is that $d^6(t^{-2}) = t \lambda'_1$ and $d^8(t^{-3}) =
t \lambda_2$.  Since the map of spectral sequences converging to
\[
F^t \: (\olV(2) \otimes \olC\eta)_* \gr_{\mot}^* \TP(\ko)
\longto
(\olV(2) \otimes \olC\eta)_* \gr_{\mot}^* \THH(\ko)^{tC_2}
\]
is injective in the relevant bidegrees, we also have the stated
differentials in the spectral sequence~\eqref{eq: v2 Ceta Tate}.
\end{proof}

\begin{figure}
\centering
\begin{tikzpicture}[radius=.07,scale=0.7]
\foreach \n in {-8, -6, ..., 8} \node [below] at (\n,-.5) {$\n$};
\foreach \s in {0,5,7,12} \node [left] at (-8.5,\s) {$\s$};
\clip (-8.5,-1) rectangle (8.5,12.5);
\draw [thin,color=lightgray] (-8,0) grid (8,12);
\draw [fill] (-8,0) circle; \node [below] at (-8,0) {$t^4$};
\draw [] (-6,0) circle; \node [below] at (-6,0) {$t^3$};
\draw [] (-4,0) circle; \node [below] at (-4,0) {$t^2$};
\draw [] (-2,0) circle; \node [below] at (-2,0) {$t$};
\draw [fill] (0,0) circle; \node [below] at (0,0) {$1$};
\draw [] (2,0) circle; \node [below] at (2,0) {$t^{-1}$};
\draw [] (4,0) circle; \node [below] at (4,0) {$t^{-2}$};
\draw [] (6,0) circle; \node [below] at (6,0) {$t^{-3}$};
\draw [fill] (8,0) circle; \node [below] at (8,0) {$t^{-4}$};
\draw [fill] (-8,5) circle;
\draw [] (-6,5) circle;
\draw [] (-4,5) circle;
\draw [] (-2,5) circle;
\draw [fill] (0,5) circle; \node [left] at (0,5) {$\lambda'_1$};
\draw [] (2,5) circle;
\draw [] (4,5) circle;
\draw [] (6,5) circle;
\draw [fill] (8,5) circle;
\draw [fill] (-8,7) circle;
\draw [] (-6,7) circle;
\draw [] (-4,7) circle;
\draw [] (-2,7) circle;
\draw [fill] (0,7) circle; \node [right] at (0,7) {$\lambda_2$};
\draw [] (2,7) circle;
\draw [] (4,7) circle;
\draw [] (6,7) circle;
\draw [fill] (8,7) circle;
\draw [fill] (-8,12) circle;
\draw [] (-6,12) circle;
\draw [] (-4,12) circle;
\draw [] (-2,12) circle;
\draw [fill] (0,12) circle; \node [below] at (0,12) {$\lambda'_1 \lambda_2$};
\draw [] (2,12) circle;
\draw [] (4,12) circle;
\draw [] (6,12) circle;
\draw [fill] (8,12) circle;
\draw [->] (-6-.1,0+.1) -- (-12+.1,5-.1);
\draw [->] (-4-.1,0+.1) -- (-10+.1,5-.1);
\draw [->] (2-.1,0+.1) -- (-4+.1,5-.1);
\draw [->] (4-.1,0+.1) -- (-2+.1,5-.1);
\draw [->] (10-.1,0+.1) -- (4+.1,5-.1);
\draw [->] (12-.1,0+.1) -- (6+.1,5-.1);
\draw [->] (-6-.1,7+.1) -- (-12+.1,12-.1);
\draw [->] (-4-.1,7+.1) -- (-10+.1,12-.1);
\draw [->] (2-.1,7+.1) -- (-4+.1,12-.1);
\draw [->] (4-.1,7+.1) -- (-2+.1,12-.1);
\draw [->] (10-.1,7+.1) -- (4+.1,12-.1);
\draw [->] (12-.1,7+.1) -- (6+.1,12-.1);
\draw [->] (-2-.1,0+.1) -- (-10+.1,7-.1);
\draw [->] (6-.1,0+.1) -- (-2+.1,7-.1);
\draw [->] (14-.1,0+.1) -- (6+.1,7-.1);
\draw [->] (-6-.1,5+.1) -- (-14+.1,12-.1);
\draw [->] (2-.1,5+.1) -- (-6+.1,12-.1);
\draw [->] (10-.1,5+.1) -- (2+.1,12-.1);
\end{tikzpicture}
\caption{$\bT$-Tate $\hat E^4(\bT) \Longrightarrow (\olV(2) \otimes
	\olC\eta)_* \gr_{\mot}^* \TP(\ko)$ \label{Figure:prismatic2}}
\end{figure}

\begin{proposition} \label{prop: several differentials}
There are differentials
\begin{alignat*}{2}
d^6(t^{-1}) &= t^2 \lambda'_1
	&\qquad\qquad
d^6(t^{-1} \lambda_2) &= t^2 \lambda'_1 \lambda_2 \\
d^6(t^{-2} \lambda_2) &= t \lambda'_1 \lambda_2
	&\qquad\qquad
d^6(\lambda_2) &= 0
\end{alignat*}
in the spectral sequence~\eqref{eq: v2 Ceta Tate}.
\end{proposition}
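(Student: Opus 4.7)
The plan is to exploit the fact that the spectral sequence~\eqref{eq: v2 Ceta Tate} is a module over~\eqref{eq: v2 Tate}, so that the module Leibniz rule $d^6(z \cdot y) = d^6(z) \cdot y + z \cdot d^6(y)$ holds whenever $z \in \hat E^6$ comes from the $\olV(2)$-Tate spectral sequence~\eqref{eq: v2 Tate}. Applied with $z = t^{-2}$ (which satisfies $d^6(t^{-2}) = t\lambda'_1$ by Proposition~\ref{prop: some differentials}) and $y \in \{t^{-1}, \lambda_2, t^{-1}\lambda_2\}$, this reduces the three nontrivial asserted differentials to knowing $d^6(t^{-3})$, $d^6(\lambda_2)$, and $d^6(t^{-3}\lambda_2)$; the claim is that all three vanish.

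The vanishing $d^6(t^{-3}) = 0$ is immediate: the differential $d^8(t^{-3}) = t\lambda_2$ from Proposition~\ref{prop: some differentials} forces $t^{-3}$ to survive to $\hat E^8$. For $d^6(\lambda_2) = 0$ I would use the cofiber sequence~\eqref{eq:V2-V2Ceta-cofibseq} of $\gr_{\ev}^*\bS$-modules. The connecting map $j\colon \olV(2) \otimes \olC\eta \to \Sigma^{2,0}\olV(2)$ satisfies $j(\lambda_2) = \Sigma^{2,0}\lambda'_1$, as encoded in the $\eta$-Bockstein differential $d_1(\lambda_2) = \eta\lambda'_1$ of Proposition~\ref{prop: eta BSS computatation}. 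Since $\lambda'_1$ is a permanent cycle in~\eqref{eq: v2 Tate} (Proposition~\ref{prop: t4k perm cycles}), naturality gives $j(d^6(\lambda_2)) = d^6(\Sigma^{2,0}\lambda'_1) = 0$, placing $d^6(\lambda_2)$ in $\ker(j) = \im(i)$ at stem~$6$. A direct inspection of $i\colon \hat E^6(\olV(2)) \to \hat E^6(\olV(2) \otimes \olC\eta)$ shows that $\im(i)$ is spanned by classes of the form $t^{2k}$ and $t^k\lambda'_1$ for $k \in \bZ$ (since odd $t$-powers without a $\lambda'_1$ factor die in~\eqref{eq: v2 Tate} at $\hat E^4$), and the stem-$6$ part of this subspace is empty.

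The argument for $d^6(t^{-3}\lambda_2) = 0$ runs along the same lines: $j(t^{-3}\lambda_2) = \Sigma^{2,0}t^{-3}\lambda'_1$, and a short Leibniz calculation in~\eqref{eq: v2 Tate} using $d^6(t^{-2}) = t\lambda'_1$, $d^6(t^{-1}\lambda'_1) = c\,t^2\eta^2\mu$, and $(\lambda'_1)^2 = c\eta^2\mu$ yields $d^6(t^{-3}\lambda'_1) = 0$ in~\eqref{eq: v2 Tate}. Thus $d^6(t^{-3}\lambda_2) \in \im(i)$ at stem~$12$, whose only candidate is $t^{-6}$. But $d^6(t^{-3}\lambda_2) = t^{-6}$ would propagate by $t^{\pm 4}$-linearity to $d^6(t\lambda_2) = t^{-2} \ne 0$, contradicting the survival of $t\lambda_2$ to $\hat E^8$ required by $d^8(t^{-3}) = t\lambda_2$. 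With the three vanishings in hand, the module Leibniz rule immediately gives
\[
d^6(t^{-1}) = t^2\lambda'_1, \qquad d^6(t^{-2}\lambda_2) = t\lambda'_1\lambda_2, \qquad d^6(t^{-1}\lambda_2) = t^2\lambda'_1\lambda_2,
\]
by solving the three equations $0 = d^6(t^{-3}) = \lambda'_1 + t^{-2}d^6(t^{-1})$, $d^6(t^{-2}\lambda_2) = t\lambda'_1\lambda_2 + t^{-2}\cdot 0$, and $0 = d^6(t^{-3}\lambda_2) = \lambda'_1\lambda_2 + t^{-2}d^6(t^{-1}\lambda_2)$.

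The main obstacle is the cofiber-sequence bookkeeping: identifying $j(\lambda_2) = \Sigma^{2,0}\lambda'_1$ and carefully computing $\im(i)$ at the relevant stems in $\hat E^6$. Once that is in place, the four differentials are forced by the module structure together with the $d^8$-differential of Proposition~\ref{prop: some differentials}.
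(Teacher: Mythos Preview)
Your overall Leibniz strategy is sound, and the deduction of $d^6(t^{-1}) = t^2\lambda'_1$ from $d^6(t^{-3})=0$ is exactly what the paper does. However, the arguments for $d^6(\lambda_2)=0$ and $d^6(t^{-3}\lambda_2)=0$ via the cofiber sequence~\eqref{eq:V2-V2Ceta-cofibseq} have a genuine gap.

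The step ``$j(d^6(\lambda_2)) = 0$, hence $d^6(\lambda_2) \in \ker(j) = \im(i)$'' is not valid at the $\hat E^6$-page: the identity $\ker(j) = \im(i)$ holds on $\hat E^2$ (and on abutments), but a cofiber sequence does not induce exact sequences of $E_r$-pages for $r>2$. Concretely, the only possible target of $d^6(\lambda_2)$ is $t^3\lambda'_1\lambda_2$, and one computes
\[
j(t^3\lambda'_1\lambda_2) = \Sigma^{2,0}\, t^3(\lambda'_1)^2 = \Sigma^{2,0}\, c\, t^3\eta^2\mu \,,
\]
which already vanishes in $\hat E^4(\olV(2))$ because $t\eta^2\mu = d^2(\eta\mu)$ is a $d^2$-boundary (so all odd $t$-powers of $\eta^2\mu$ die). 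Thus $j$ annihilates the entire target group, and the naturality constraint $j(d^6(\lambda_2))=0$ is vacuous---it does not distinguish $d^6(\lambda_2)=0$ from $d^6(\lambda_2)=t^3\lambda'_1\lambda_2$.

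For $d^6(t^{-3}\lambda_2)$, there is an additional bidegree error: the target of this $d^6$ lies in the specific Tate bidegree $(\text{column},\text{row})=(0,12)$, so the unique candidate is $\lambda'_1\lambda_2$, not $t^{-6}$ (which is in column~$12$, row~$0$). Your contradiction via $t^{\pm4}$-propagation to $d^6(t\lambda_2)=t^{-2}$ is therefore aimed at the wrong class. With the correct target $\lambda'_1\lambda_2$, one has $j(\lambda'_1\lambda_2)=\Sigma^{2,0}c\,\eta^2\mu$, which is nonzero only if $c=1$; but $c$ is not determined until Proposition~\ref{prop: differential on t2lambda1prime}, so this route is circular.

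The paper avoids these issues by a different order of deductions: it first observes that $t\lambda_2$ is a $d^8$-boundary, hence a $d^6$-cycle, and uses Leibniz on $t^{-2}\cdot t\lambda_2$ to get $d^6(t^{-1}\lambda_2)=t^2\lambda'_1\lambda_2$. Next, since $t\lambda'_1\lambda_2 = \lambda'_1\cdot t\lambda_2$ must be a $d^r$-boundary for some $r\le 8$, and the only other candidate source $t^{-3}\lambda'_1$ is already a $d^6$-boundary (hence cannot support a differential), one is forced to $d^6(t^{-2}\lambda_2)=t\lambda'_1\lambda_2$. Finally $d^6(\lambda_2)=d^6(t^2\cdot t^{-2}\lambda_2)$ is computed by Leibniz from the module structure and equals $t^3\lambda'_1\lambda_2 + t^3\lambda'_1\lambda_2 = 0$.
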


\begin{proof}
We must have $d^6(t^{-3}) = 0$ in the spectral sequence~\eqref{eq: v2 Ceta
Tate}, since $t^{-3}$ survives to its $E^8$-term by Proposition~\ref{prop:
some differentials}.  By Proposition~\ref{prop: t4k perm cycles} we have
$d^6(t^{-2}) = t \lambda'_1$ in the spectral sequence~\eqref{eq: v2 Tate}.
Using the module structure of~\eqref{eq: v2 Ceta Tate} over~\eqref{eq:
v2 Tate}, we deduce that $d^6(t^{-5}) = d^6(t^{-2} \cdot t^{-3}) =
t \lambda'_1 \cdot t^{-3} + t^{-2} \cdot 0 = t^{-2} \lambda'_1$ and
$d^6(t^{-1}) = t^4 \cdot d^6(t^{-5}) = t^2 \lambda'_1$.

Since $t \lambda_2$ is a $d^8$-boundary by Proposition~\ref{prop: some
differentials} it must be a $d^6$-cycle, which implies that $d^6(t^{-1}
\lambda_2) = d^6(t^{-2} \cdot t \lambda_2) = t \lambda'_1 \cdot t
\lambda_2 + t^{-2} \cdot 0 = t^2 \lambda'_1 \lambda_2$.

The fact that $t \lambda_2$ is a $d^8$-boundary also implies that
$\lambda'_1 \cdot t \lambda_2 = t \lambda'_1 \lambda_2$ must be a
$d^r$-boundary for some $r \le 8$.  Since $t^{-3} \lambda'_1 = t^{-4}
\cdot t \lambda'_1$ is a $d^6$-boundary, it cannot be the source of this
$d^r$-differential, so the only remaining possibility is that $d^6(t^{-2}
\lambda_2) = t \lambda'_1 \lambda_2$.

Using the module structure over the spectral sequence~\eqref{eq: v2 Tate},
we can also conclude that $d^6(\lambda_2)=d^6(t^2 \cdot t^{-2} \lambda_2)
= t^5 \lambda'_1 \cdot t^{-2} \lambda_2 + t^2 \cdot t \lambda'_1 \lambda_2
= 0$.
\end{proof}

\begin{corollary} \label{cor: low degrees TC- mod p v1 v2 eta}
There are isomorphisms
\[
(\olV(2) \otimes \olC\eta)_n \gr_{\mot}^* \TP(\ko) \cong
	\begin{cases}
	\bF_2\{ 1 \} & \text{for $n=0$,} \\
	0 & \text{for $n = 1,2,3$,} \\
	\bF_2\{ t^4 \lambda'_1 \lambda_2 \} & \text{for $n=4$,} \\
	\bF_2\{ \lambda'_1 \} & \text{for $n=5$,}
	\end{cases}
\]
and these repeat $8$-periodically, via multiplication by $t^{\pm4}$.
\end{corollary}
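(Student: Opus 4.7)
The plan is to extract the low-stem part of the $\hat E^\infty$-term of the spectral sequence~\eqref{eq: v2 Ceta Tate} directly from the differentials already established. After the $d^2$-phase of Proposition~\ref{prop: some differentials}, the $\hat E^4$-term is $\Lambda(\lambda'_1, \lambda_2) \otimes \bF_2[t^{\pm 1}]$, which contains exactly two $\bF_2$-generators in each stem $0 \le n \le 7$.  I would list these pairs explicitly: stem $0$ contains $\{1, t^6\lambda'_1\lambda_2\}$, stem $1$ contains $\{t^2\lambda'_1, t^3\lambda_2\}$, stem $2$ contains $\{t^{-1}, t^5\lambda'_1\lambda_2\}$, stem $3$ contains $\{t\lambda'_1, t^2\lambda_2\}$, stem $4$ contains $\{t^{-2}, t^4\lambda'_1\lambda_2\}$, and stem $5$ contains $\{\lambda'_1, t\lambda_2\}$.

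Next I would combine the $d^6$-differentials of Proposition~\ref{prop: several differentials} with their $t^{\pm4}$-translates and Leibniz consequences coming from the $\olV(2)$-module structure over~\eqref{eq: v2 Tate}.  The differentials $d^6(t^{-1}) = t^2\lambda'_1$ and $d^6(t^{-2}) = t\lambda'_1$ pair up the generators in stems $1,2$ and $3,4$; multiplying $d^6(t^{-1}\lambda_2) = t^2\lambda'_1\lambda_2$ and $d^6(t^{-2}\lambda_2) = t\lambda'_1\lambda_2$ by $t^4$ gives $d^6(t^3\lambda_2) = t^6\lambda'_1\lambda_2$ and $d^6(t^2\lambda_2) = t^5\lambda'_1\lambda_2$, pairing up the remaining generators in stems $0,1$ and $2,3$.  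In stem $4$ only $t^4\lambda'_1\lambda_2$ survives $d^6$, as it is a $d^6$-cycle (by $\lambda'_1$-linearity and $d^6(\lambda_2) = 0$) and not a $d^6$-boundary (since the stem-$5$ classes $\lambda'_1$ and $t\lambda_2$ both have trivial $d^6$). In stem $5$, $\lambda'_1$ is a permanent cycle and $t\lambda_2$ is a $d^6$-cycle, so both survive to~$\hat E^8$.  The $d^8$-differential $d^8(t^{-3}) = t\lambda_2$ of Proposition~\ref{prop: some differentials} then kills $t\lambda_2$, leaving precisely the list claimed in the corollary.

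Since each surviving group is of order $2$, no additive extension problems arise, and all higher differentials vanish on the survivors for bidegree reasons (their $d^r$-targets lie in already-empty trigrading positions).  Multiplication by the permanent cycle~$t^{\pm 4}$, acting via the module structure over~\eqref{eq: v2 Tate}, implements the stated $8$-periodicity on both~$\hat E^\infty$ and the abutment.  The main obstacle is the bookkeeping for the few $d^6$-values not explicitly recorded in Proposition~\ref{prop: several differentials}---notably the verification that $d^6(t^{-3}\lambda_2) = 0$ (hence $d^6(t\lambda_2) = 0$), which requires the factorization $t^{-3}\lambda_2 = t^{-2}\cdot t^{-1}\lambda_2$ so that both factors lie in the surviving $\hat E^4$-term of~\eqref{eq: v2 Tate} and~\eqref{eq: v2 Ceta Tate}, respectively, with the two Leibniz terms cancelling modulo~$2$.
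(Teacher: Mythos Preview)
Your proposal is correct and follows the same approach as the paper, which simply states that the corollary follows directly from Propositions~\ref{prop: some differentials} and~\ref{prop: several differentials}; you have just written out the bookkeeping explicitly. One minor comment: your final paragraph's concern about verifying $d^6(t\lambda_2)=0$ is unnecessary, since this was already established in the proof of Proposition~\ref{prop: several differentials} (the observation that $t\lambda_2$, being a $d^8$-boundary, must be a $d^6$-cycle), though your Leibniz-rule verification is also valid.
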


\begin{proof}
This follows directly from Proposition~\ref{prop: some differentials}
and Proposition~\ref{prop: several differentials}.
\end{proof}

\begin{proposition} \label{prop: differential on t2lambda1prime} \,
%We have the following results:
\begin{enumerate}
\item[(a)]
The unit images of $\eta$ and~$\eta^2$ are detected by $t^2 \lambda'_1$
and $t^4 \eta^2 \mu$, respectively, in the spectral sequence~\eqref{eq:
v2 Tate}.

\item[(b)]
There is a nonzero differential
\[
d^8(t^{-1} \lambda'_1) = t^3 \lambda'_1 \lambda_2
\]
in the $\olV(2) \otimes \olC\eta$-homotopy $\bT$-Tate spectral
sequence~\eqref{eq: v2 Ceta Tate}.
Hence
\[
(\olV(2) \otimes \olC\eta)_n \gr_{\mot}^* \TP(\ko) \cong
	\begin{cases}
	0 & \text{for $n=6$,} \\
	\bF_2\{\lambda_2\} & \text{for $n=7$,}
	\end{cases}
\]
repeating $8$-periodically via multiplication by $t^{\pm4}$.
\end{enumerate}
\end{proposition}

\begin{proof} 
(a) The $\olV(2)$-module cofiber sequence~\eqref{eq:V2-V2Ceta-cofibseq}
induces a long exact sequence
\begin{multline*}
\dots \longto
(\olV(2) \otimes \olC\eta)_{n+2} \gr_{\mot}^{*+1} \TP(\ko)
	\overset{j}\longto
\olV(2)_n \gr_{\mot}^* \TP(\ko) \\
	\overset{\eta}\longto
\olV(2)_{n+1} \gr_{\mot}^{*+1} \TP(\ko)
	\overset{i}\longto
(\olV(2) \otimes \olC\eta)_{n+1} \gr_{\mot}^{*+1} \TP(\ko)
\longto \dots \,.
\end{multline*}

By the cases $n \in \{1,2\}$ of Corollary~\ref{cor: low degrees TC-
mod p v1 v2 eta}, and exactness, it %also 
follows that $\eta$ and~$\eta^2$
generate $\olV(2)_n \gr_{\mot}^* \TP(\ko) \cong \bF_2$ for $n=1$ and~$2$,
hence are detected by the only classes in stems $1$ and~$2$, namely $t^2
\lambda'_1$ and $t^4 \eta^2 \mu$, in the $E^\infty$-term of~\eqref{eq:
v2 Tate}.

(b) By Corollary~\ref{cor: low degrees TC- mod p v1 v2} and exactness, it follows that $(\olV(2) \otimes \olC\eta)_n
\gr_{\mot}^* \TP(\ko)$ is~$0$ for $n=6$ and $\bF_2$ for $n=7$.
Hence $t^3 \lambda'_1 \lambda_2$ in stem~$6$ cannot survive to the
$E^\infty$-term of~\eqref{eq: v2 Ceta Tate}, and since $d^6(\lambda_2)
= 0$ by Proposition~\ref{prop: several differentials} the only
possible source of a differential killing it is $t^{-1} \lambda'_1$.
Hence $d^8(t^{-1} \lambda'_1) = t^3 \lambda'_1 \lambda_2$, and the lone
surviving class in stem~$7$ of the $E^\infty$-term of~\eqref{eq: v2
Ceta Tate} is $\lambda_2$.

In summary, the homomorphisms in the long exact sequence above are given,
at the level of detecting classes, by $\eta \: 1 \mapsto t^2 \lambda'_1$,
$\eta \: t^2 \lambda'_1 \mapsto t^4 \eta^2 \mu$, $i \: 1 \mapsto 1$,
$i \: \lambda'_1 \mapsto \lambda'_1$, $j \: t^4 \lambda'_1 \lambda_2
\mapsto t^4 \eta^2 \mu$ and $j \: \lambda_2 \mapsto \lambda'_1$.
\end{proof}

%\begin{corollary} \label{cor: mod 2 v1 v2 homotopy of THH of ko}
%We have a preferred isomorphism of bigraded $\bF_2$-algebras
%\[
%\olV(1)_* \gr_{\mot}^* \THH(\ko)
%	\cong \frac{ \bF_2[\eta, \lambda'_1, \mu] }
%	{ (\eta \lambda'_1, (\lambda'_1)^2 = \eta^2 \mu) } \,.
%\]
%\qed
%\end{corollary}

%We can now prove Theorems~\ref{thm: prismatic} and~\ref{thm: prismatic smod eta}.

\begin{proof}[Proof of Theorem~\ref{thm: prismatic}]
This theorem follows by combining the
differentials from Propositions~\ref{prop: d2 differentials} and~\ref{prop: t4k perm cycles} with the detection result from Proposition~\ref{prop: differential on t2lambda1prime}(a).
\end{proof}

\begin{proof}[Proof of Theorem~\ref{thm: prismatic mod eta}]
This theorem follows by combining the differentials from Propositions \ref{prop: some differentials}, \ref{prop: several differentials} and~\ref{prop: differential on t2lambda1prime}(b).
\end{proof}

\section{Syntomic cohomology} \label{sec:syntomic}

We shall now calculate the syntomic cohomology modulo~$(2, v_1)$
and~$(2, \eta, v_1)$ of~$\ko$ (cf.~Definition~\ref{def:syntomic}).
We first carry out these computations in $\olV(2)$- and $\olV(2) \otimes
\olC\eta$-homotopy, and then use $v_2$-Bockstein spectral sequences to
lift the results to $\olV(1)$- and $\olA(1)$-homotopy.

%By restricting the $\bT$-Tate spectral sequences \eqref{eq: v2 Tate}
%and~\eqref{eq: v2 Ceta Tate} to the second quadrant, 
By Corollary~\ref{mult-TP}, there is a $\olV(2)$-homotopy $\bT$-homotopy fixed point spectral sequence
\begin{equation} \label{eq:TC-v2}
\begin{aligned}
E^2(\bT) &= \olV(2)_* \gr_{\mot}^* \THH(\ko) \, [t] \\
    &= \Lambda(\varepsilon_2) \otimes \frac{ \bF_2[\eta, \lambda'_1, \mu] }
	{ (\eta \lambda'_1, (\lambda'_1)^2 =\eta^2 \mu) } \otimes \bF_2[t] \\
    &\Longrightarrow \olV(2)_* \gr_{\mot}^* \TC^{-}(\ko)
\end{aligned}
\end{equation}
and a $\olV(2) \otimes \olC\eta$-homotopy
$\bT$-homotopy fixed point spectral sequence
\begin{equation} \label{eq:TC-v2Ceta}
\begin{aligned}
E^2(\bT) &= (\olV(2) \otimes \olC\eta)_* \gr_{\mot}^* \THH(\ko) \, [t] \\
    &= \Lambda(\varepsilon_2) \otimes \Lambda(\lambda'_1)\{1, \lambda_2\}
	\otimes \bF_2[\mu] \otimes \bF_2[t] \\
    &\Longrightarrow (\olV(2) \otimes \olC\eta)_* \gr_{\mot}^* \TC^{-}(\ko)
\,.
\end{aligned}
\end{equation}
The former is an algebra spectral sequence, and the latter is
a module spectral sequence over it by Corollary~\ref{mult-TP}.  They can be reindexed as
cohomologically graded $t$-Bockstein spectral sequences, but the
current indexing is the one inherited from the homologically graded
$C_2$- and $\bT$-Tate spectral sequences.  See Figures~\ref{fig:TC-v2}
and~\ref{Figure:T-homotopy-fixed-points-SS}.

\begin{proposition} \label{prop: 5.1}
There is an isomorphism
\begin{align*}
\olV(2)_* \gr_{\mot}^* \TC^{-}(\ko)
&\cong \bF_2[t^4] \{1, t^2 \lambda'_1, \lambda'_1, (\lambda'_1)^2\} \\
&\qquad\oplus \bF_2\{t \lambda'_1, (t \lambda'_1)^2\} \\
&\qquad\oplus \bF_2[\eta]\{\eta, \eta^4 \varepsilon_2\} \\
&\qquad\oplus
    \bF_2[\bar\mu]\{\bar\mu, \eta\bar\mu, \eta^2\bar\mu, \lambda'_1\mu\}
\end{align*}
with $\bar\mu = \mu + \eta \varepsilon_2$, where $(\lambda'_1)^2 = \eta^2
\mu \ne \eta^2 \bar\mu$, $\eta \cdot \eta^2\bar\mu = \eta^4 \varepsilon_2$
and $\bar\mu^2 = \mu^2$.
\end{proposition}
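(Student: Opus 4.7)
The plan is to run the $\bT$-homotopy fixed point spectral sequence~\eqref{eq:TC-v2} by comparison with the $\bT$-Tate spectral sequence~\eqref{eq: v2 Tate} (computed in Theorem~\ref{thm: prismatic}). Since~\eqref{eq:TC-v2} is the restriction of~\eqref{eq: v2 Tate} to nonnegative $t$-weight, two principles apply: Tate differentials whose sources lie in nonneg $t$-weight are shared with the hfp SS, while Tate differentials whose sources lie in negative $t$-weight have no hfp counterpart, so their targets survive in hfp $E^\infty$ (this accounts for the isolated survivors $t\lambda'_1$ and $(t\lambda'_1)^2$). Applying Leibniz in the Tate SS to $d^2(t^{-1}) = \eta$ and $d^6(t^{-2}) = t\lambda'_1$ yields the positive-$t$ analogues $d^2(t) = t^2\eta$ and $d^6(t^2) = t^5\lambda'_1$ (with obvious generalizations $d^6(t^{4k+2}) = t^{4k+5}\lambda'_1$ for $k \geq 0$).

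For the $d^2$-page, Proposition~\ref{prop: d2 differentials} provides $d^2(\varepsilon_2) = t\mu$, $d^2(\mu) = t\eta\mu$, and $d^2(\eta) = d^2(\lambda'_1) = 0$. The crucial variable change is $\bar\mu := \mu + \eta\varepsilon_2$: in characteristic~$2$ we have $d^2(\bar\mu) = t\eta\mu + \eta \cdot t\mu = 0$, and $\bar\mu^2 = \mu^2 + \eta^2\varepsilon_2^2 = \mu^2$ since $\varepsilon_2^2 = 0$. Then $E^3 = E^4$ is enumerated bidegree-by-bidegree: the $\bF_2[t,\eta]$-subalgebra contributes $\bF_2\{1\} \oplus \bF_2[\eta]\{\eta\} \oplus \bF_2[t^2]\{t^2\}$; among $\varepsilon_2$-multiples, $\eta^j\varepsilon_2$ is a $d^2$-cycle iff $\eta^j\mu = 0$, i.e.\ $j \geq 3$ using $\eta^3\mu = 0$ from Corollary~\ref{cor: mod 2 v1 v2 homotopy of THH of ko}; the $\mu$-polynomials collapse to $\bar\mu$-polynomials with their $\eta$- and $\eta^2$-multiples; and the $\lambda'_1$-strata survive wholesale, with $\eta\lambda'_1 = 0$ eliminating all potential cross terms from $d^2$ applied to $\lambda'_1$-products.

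Higher differentials $d^3, d^4, d^5$ vanish by parity of internal degrees. At $E^6$, the differential $d^6(t^2) = t^5\lambda'_1$ and its Leibniz consequences kill the pure $t^{4k+2}$-classes for $k \geq 0$ (together with appropriate $\bar\mu$-products); crucially, the $\lambda'_1$-multiples $t^{4k+2}\lambda'_1$ all survive, because $d^6(t^{4k+2}\lambda'_1) = t^{4k+5}(\lambda'_1)^2 = t^{4k+5}\eta^2\mu$ is the $d^2$-boundary of $\eta t^{4k+4}\mu$ and hence vanishes already in $E^6$. No $d^r$ with $r \geq 7$ occurs for lack of targets. The resulting $E^\infty$-term matches the stated additive decomposition. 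The multiplicative relations are all inherited from $E^2$: $(\lambda'_1)^2 = \eta^2\mu$ and $\eta^3\mu = 0$ persist to the abutment, giving $\eta \cdot \eta^2\bar\mu = \eta^3\mu + \eta^4\varepsilon_2 = \eta^4\varepsilon_2$ (a nontrivial action across the $\bF_2[\eta]$- and $\bF_2[\bar\mu]$-summands) and $\eta^2\bar\mu = \eta^2\mu + \eta^3\varepsilon_2 \neq \eta^2\mu$, since $\eta^3\varepsilon_2 = (\lambda'_1)^2 + \eta^2\bar\mu$ is a nonzero element of stem~$10$. The principal obstacle is the bidegree-by-bidegree bookkeeping at $E^4$; once the change of variable $\mu \leadsto \bar\mu$ and the Tate comparison are in place, the rest is mechanical.
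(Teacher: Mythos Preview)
Your approach is essentially the paper's: lift the Tate differentials of Theorem~\ref{thm: prismatic} along $\can$ to the homotopy fixed point spectral sequence, use the substitution $\bar\mu = \mu + \eta\varepsilon_2$ to organize $E^4$, and then read off $E^\infty$. The paper records exactly the four lifted differentials $d^2(t)=t^2\eta$, $d^2(\varepsilon_2)=t\mu$, $d^6(t^2)=t^5\lambda'_1$, $d^6(t^3\lambda'_1)=t^6(\lambda'_1)^2$ and then states the $E^4$- and $E^\infty$-terms directly, deferring the rest to ``some bookkeeping''.

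Two small imprecisions in your write-up are worth tightening. First, the phrase ``together with appropriate $\bar\mu$-products'' in your $d^6$ discussion is misplaced: the $\bar\mu$-tower in $E^4$ sits entirely in $t$-weight~$0$ and is untouched by $d^6$. Second, you should make explicit the Leibniz consequence $d^6(t^{4k+3}\lambda'_1) = d^6(t^{4k+2}\cdot t\lambda'_1) = t^{4k+5}\lambda'_1 \cdot t\lambda'_1 = t^{4k+6}(\lambda'_1)^2$, which is what reduces the $(\lambda'_1)^2$-column from $\bF_2[t^2]$-periodic to $\bF_2[t^4]$-periodic (with the isolated survivor $(t\lambda'_1)^2 = t^2(\lambda'_1)^2$); the paper singles this out as one of the four generating differentials. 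Your observation that $\eta^3\varepsilon_2$ is a $d^2$-cycle is correct, but note that in the paper's basis it is absorbed as $(\lambda'_1)^2 + \eta^2\bar\mu$ rather than listed separately, which is why the $\varepsilon_2$-tower in the answer starts at $\eta^4\varepsilon_2$.
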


\begin{proof}
The map of spectral sequences induced by $\can \: \TC^{-}(\ko)
\to \TP(\ko)$ is given at the $E^2$-terms by inverting $t$, so the
differentials in~\eqref{eq: v2 Tate} from Theorem~\ref{thm: prismatic}
lift to differentials
\begin{alignat*}{2}
d^2(t) &= t^2 \eta
	&\qquad\qquad
d^2(\varepsilon_2) &= t \mu \\
d^6(t^2) &= t^5 \lambda'_1
	&\qquad\qquad
d^6(t^3 \lambda'_1) &= t^6 (\lambda'_1)^2
\end{alignat*}
in~\eqref{eq:TC-v2}.  Moreover, $\eta$, $\lambda'_1$ and $t\mu$ are
infinite cycles.  Some bookkeeping shows that
\begin{align*}
E^4(\bT) &= \bF_2[t^2] \{1, t \lambda'_1, \lambda'_1, (\lambda'_1)^2 \} \\
    &\qquad\oplus \bF_2[\eta] \{\eta, \eta^4 \varepsilon_2\} \\
    &\qquad\oplus \bF_2[\bar\mu] \{ \bar\mu, \eta\bar\mu,
	\eta^2\bar\mu, \lambda'_1\mu\}
\end{align*}
with $\bar\mu = \mu + \eta\varepsilon_2$ and $\eta \cdot \eta^2\bar\mu =
\eta^4\varepsilon_2$, and
\begin{align*}
E^8(\bT) = E^\infty(\bT)
	&= \bF_2[t^4] \{1, t^2 \lambda'_1, \lambda'_1, (\lambda'_1)^2\} \\
&\qquad\oplus \bF_2 \{t \lambda'_1, (t \lambda'_1)^2\} \\
&\qquad\oplus \bF_2[\eta] \{\eta, \eta^4 \varepsilon_2\} \\
&\qquad\oplus \bF_2[\bar\mu] \{ \bar\mu, \eta\bar\mu, \eta^2\bar\mu,
	\lambda'_1\mu\}
	\,. 
\end{align*}
\end{proof}

\begin{figure}
\centering
\resizebox{!}{.93\textheight}{ \input{V2TCminusko.inp} }
\caption{$\bT$-homotopy fixed point spectral sequence converging to
	$\olV(2)_* \gr_{\mot}^* \TC^{-}(\ko)$ \label{fig:TC-v2}}
\end{figure}

\begin{proposition} \label{prop: TN}
There is an isomorphism
\begin{align*}
(\olV(2) \otimes \olC\eta)_* \gr_{\mot}^* \TC^{-}(\ko)
	&\cong \frac{ \bF_2[t^4, \mu] }{ (t^4 \mu) }
	\otimes \Lambda(\lambda'_1)\{1, \lambda_2\} \\
&\qquad\oplus \bF_2 \{t^2 \lambda'_1, t \lambda'_1,
	t \lambda_2, t^3 \lambda'_1 \lambda_2, t^2 \lambda'_1 \lambda_2,
	t \lambda'_1 \lambda_2 \} \,.
\end{align*}
\end{proposition}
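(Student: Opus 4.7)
My plan is to compute the $E^\infty$-term of the $\bT$-homotopy fixed point spectral sequence~\eqref{eq:TC-v2Ceta} by comparison with the $\bT$-Tate spectral sequence~\eqref{eq: v2 Ceta Tate}, whose $E^\infty$-term is given by Theorem~\ref{thm: prismatic mod eta}. The canonical map $\can\: \TC^{-}(\ko) \to \TP(\ko)$ induces a map of spectral sequences that at $E^2$ is the inclusion of the second quadrant ($t$-power $\ge 0$) into the full $t$-periodic picture. Consequently, the $E^\infty$-term of~\eqref{eq:TC-v2Ceta} decomposes additively as the image of the $t$-nonnegative part of the Tate $E^\infty$-term, plus an ``exotic'' kernel part consisting of Tate boundaries whose source has negative $t$-power.

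First I would propagate the Tate differentials of Theorem~\ref{thm: prismatic mod eta} to the positive $t$-powers present in~\eqref{eq:TC-v2Ceta}, using that~\eqref{eq:TC-v2Ceta} is a module spectral sequence over the algebra spectral sequence~\eqref{eq:TC-v2} and applying Leibniz for that module action, as in Section~\ref{sec:prismatic}. The differential $d^2(\varepsilon_2) = t\mu$, multiplied by arbitrary $t^j \mu^k \cdot x$ with $j,k \ge 0$ and $x \in \Lambda(\lambda'_1)\{1,\lambda_2\}$, kills all $\varepsilon_2$-involvement and creates the relation $t\mu = 0$, leaving $E^4 = \Lambda(\lambda'_1)\{1,\lambda_2\}\otimes \bF_2[\mu,t]/(t\mu)$. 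The $d^6$-differentials $d^6(t^{-1}) = t^2\lambda'_1$ and $d^6(t^{-2}) = t\lambda'_1$ propagate to $d^6(t^k) = t^{k+3}\lambda'_1$ precisely when $k \equiv 2, 3 \pmod 4$, and analogously for $\lambda_2$-multiples; the classes $t^k \lambda'_1$ are $d^6$-cycles because $(\lambda'_1)^2 = \eta^2\mu$ vanishes modulo~$\eta$. The $d^8$-differentials on $t^{-3}$ and $t^{-1}\lambda'_1$ propagate to $t$-nonnegative sources in the same manner.

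Second, I would enumerate the exotic classes by listing each Tate boundary whose source has $t$-power $<0$. The differential $d^2(\varepsilon_2 t^{-1} \mu^{k-1} x) = \mu^k x$ gives the family $\mu^k x$ for $k \ge 1$ and $x \in \Lambda(\lambda'_1)\{1,\lambda_2\}$; the differentials $d^6(t^{-1}) = t^2\lambda'_1$, $d^6(t^{-2}) = t\lambda'_1$ and their $\lambda_2$-multiples yield $t\lambda'_1$, $t^2\lambda'_1$, $t\lambda'_1\lambda_2$, $t^2\lambda'_1\lambda_2$; and $d^8(t^{-3}) = t\lambda_2$ together with $d^8(t^{-1}\lambda'_1) = t^3\lambda'_1\lambda_2$ supply $t\lambda_2$ and $t^3\lambda'_1\lambda_2$. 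Each is a permanent cycle in~\eqref{eq:TC-v2Ceta}, because any nonzero differential on it would map to a nonzero Tate differential, contradicting Theorem~\ref{thm: prismatic mod eta}. Assembling the image part $\bF_2[t^4]\otimes \Lambda(\lambda'_1)\{1,\lambda_2\}$ together with the exotic family $\mu^k \cdot \Lambda(\lambda'_1)\{1,\lambda_2\}$ for $k \ge 1$ and the six explicit classes above produces the stated $\bF_2$-module presentation.

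The main obstacle is propagating the $d^6$ and $d^8$ differentials through all congruence classes of $t^k$, since~\eqref{eq:TC-v2Ceta} is not multiplicative (Remark~\ref{rem: lack of Leibniz rule}); one must systematically exploit the Leibniz rule with respect to the module action of~\eqref{eq:TC-v2}, together with the fact that $\lambda'_1$ is an infinite cycle and that $(\lambda'_1)^2 = 0$ in this coefficient theory. A secondary task is checking that no hidden extensions alter the statement, which is immediate because the proposition only asserts an $\bF_2$-module isomorphism.
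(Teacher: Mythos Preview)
Your approach coincides with the paper's: both lift the differentials of Theorem~\ref{thm: prismatic mod eta} to the homotopy fixed point spectral sequence~\eqref{eq:TC-v2Ceta} via the comparison map~$\can$, and your image-plus-kernel bookkeeping is just a repackaging of the paper's direct computation of $E^4$, $E^8$ and~$E^{10}$.

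There is one point where your argument is incomplete. You justify that the six exotic classes (and the $\mu$-positive family) are permanent cycles by asserting that ``any nonzero differential on it would map to a nonzero Tate differential.'' But once such a class lies in the kernel of~$\can$ at the relevant page---as it does, since by construction it is a Tate boundary hit from a negative $t$-power---this reasoning no longer applies: a hypothetical later differential on it in~\eqref{eq:TC-v2Ceta} would map to $d^r(0)=0$ in the Tate spectral sequence, yielding no contradiction. The paper avoids this issue by computing $E^{10}(\bT)$ explicitly from the lifted differentials and then observing that the result is too sparse to support any further differential, so $E^{10}(\bT)=E^\infty(\bT)$. You should replace your naturality argument by this sparsity check, or else verify page by page that the target of any putative differential on a kernel class still lies in the region where~$\can$ remains injective.
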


\begin{figure}
\centering
\resizebox{!}{.93\textheight}{ \input{V2CetaTCminusko.inp} }
\caption{$\bT$-homotopy fixed point spectral sequence converging
	to $(\olV(2) \otimes \olC\eta)_* \gr_{\mot}^* \TC^{-}(\ko)$
	\label{Figure:T-homotopy-fixed-points-SS}}
\end{figure}

\begin{proof}
The differentials in~\eqref{eq: v2 Ceta Tate} from Theorem~\ref{thm:
prismatic mod eta} lift over the canonical map to differentials
$d^2(\varepsilon_2) = t \mu$ (repeating $t$-periodically) and
\begin{alignat*}{2}
d^6(t^2) &= t^5 \lambda'_1
	&\qquad\qquad
d^6(t^3) &= t^6 \lambda'_1 \\
d^6(t^2 \lambda_2) &= t^5 \lambda'_1 \lambda_2
	&\qquad\qquad
d^6(t^3 \lambda_2) &= t^6 \lambda'_1 \lambda_2 \\
d^8(t) &= t^5 \lambda_2
	&\qquad\qquad
d^8(t^3 \lambda'_1) &= t^7 \lambda'_1 \lambda_2
\end{alignat*}
(repeating $t^4$-periodically) in~\eqref{eq:TC-v2Ceta}.  It follows
that
\[
E^4(\bT) = \bF_2[t, \mu]/(t \mu)
	\otimes \Lambda(\lambda'_1)\{1, \lambda_2\} \,,
\]
and $E^{10}(\bT) = E^\infty(\bT)$ is equal to
\[
\frac{ \bF_2[t^4, \mu] }{ (t^4 \mu) }
	\otimes \Lambda(\lambda'_1)\{1, \lambda_2\}
\, \oplus \, \bF_2 \{t^2 \lambda'_1, t \lambda'_1,
	t \lambda_2, t^3 \lambda'_1 \lambda_2, t^2 \lambda'_1 \lambda_2,
	t \lambda'_1 \lambda_2 \}
	\,. \qedhere
\]
\end{proof}

As discussed in the proof of Proposition~\ref{prop: t4k perm cycles},
there is a $\olV(2)$-homotopy $C_2$-Tate spectral sequence
\begin{equation} \label{eq:5.3}
\begin{aligned}
\hat E^2(C_2) &= \olV(2)_* \gr_{\mot}^* \THH(\ko)
	\otimes \Lambda(u_1) \otimes \bF_2[t^{\pm1}] \\
&= \Lambda(\varepsilon_2) \otimes \frac{ \bF_2[\eta, \lambda'_1, \mu] }
	{ (\eta\lambda'_1, (\lambda'_1)^2 = \eta^2\mu) }
	\otimes \Lambda(u_1) \otimes \bF_2[t^{\pm1}] \\
&\Longrightarrow \olV(2)_* \gr_{\mot}^* \THH(\ko)^{tC_2} \,.
\end{aligned}
\end{equation}
Similarly, we have a $\olV(2) \otimes \olC\eta$-homotopy $C_2$-Tate
spectral sequence
\begin{equation} \label{eq:5.4}
\begin{aligned}
\hat E^2(C_2) &= (\olV(2) \otimes \olC\eta)_* \gr_{\mot}^* \THH(\ko)
	\otimes \Lambda(u_1) \otimes \bF_2[t^{\pm1}] \\
&= \Lambda(\varepsilon_2) \otimes \Lambda(\lambda'_1) \{1, \lambda_2\}
	\otimes \bF_2[\mu] \otimes \Lambda(u_1) \otimes \bF_2[t^{\pm1}] \\
&\Longrightarrow (\olV(2) \otimes \olC\eta)_*
	\gr_{\mot}^* \THH(\ko)^{tC_2} \,.
\end{aligned}
\end{equation}
There is a map~$F^t$ of algebra spectral sequences from~\eqref{eq:
v2 Tate} to~\eqref{eq:5.3}, and~\eqref{eq:5.4} is a module spectral
sequence over~\eqref{eq:5.3}.

\begin{proposition} \label{prop: C2 Tate computation}
There is an isomorphism
\[
\olV(2)_* \gr_{\mot}^* \THH(\ko)^{tC_2}
	\cong \bF_2\{1, \eta, \eta^2, \lambda'_1\}
	\otimes \Lambda(u_1) \otimes \bF_2[t^{\pm4}] \,,
\]
where $\eta$, $\eta^2$ and~$\lambda'_1$ are detected by $t^2 \lambda'_1$,
$(t^2 \lambda'_1)^2$ and~$\lambda'_1$, respectively.  Under this
correspondence, the cyclotomic structure map
\[
\varphi_2 \: \olV(2)_* \gr_{\mot}^* \THH(\ko)
	\longto \olV(2)_* \gr_{\mot}^* \THH(\ko)^{tC_2}
\]
is given by $\varepsilon_2 \mapsto u_1 t^{-4}$, $\eta \mapsto \eta$,
$\lambda'_1 \mapsto \lambda'_1$ and $\mu \mapsto t^{-4}$, hence can be
identified with the localization homomorphism
\[
\olV(2)_* \gr_{\mot}^* \THH(\ko)
	\longto \mu^{-1} \olV(2)_* \gr_{\mot}^* \THH(\ko)
\]
that inverts~$\mu$.
\end{proposition}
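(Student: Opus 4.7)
The plan is to compute the $C_2$-Tate spectral sequence~\eqref{eq:5.3} by transporting the differentials from the $\bT$-Tate spectral sequence~\eqref{eq: v2 Tate} along the restriction map $F^t \: \TP(\ko) \to \THH(\ko)^{tC_2}$, and then to pin down $\varphi_2$ on the $\bE_\infty$ algebra generators. Since $F^t$ is a map of $\bE_\infty$ ring spectra, it induces a multiplicative map of Tate spectral sequences; on $E^2$-terms this identifies the $\bT$-Tate $E^2$ with the $u_1$-free summand of the $C_2$-Tate $E^2$.

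The four differentials of Theorem~\ref{thm: prismatic} thus lift to~\eqref{eq:5.3}: $d^2(t^{-1}) = \eta$, $d^2(\varepsilon_2) = t\mu$, $d^6(t^{-2}) = t\lambda'_1$, and $d^6(t^{-1}\lambda'_1) = t^2 \eta^2 \mu$. Next, the class $u_1$ in bidegree $(-1,-1)$ is a permanent cycle: by naturality along the unit $\bS \to \THH(\ko)$, this reduces to the analogous fact for the $C_2$-Tate spectral sequence of the sphere, which is standard. The Leibniz rule then produces the same differentials on all $u_1$-multiples, and bidegree considerations show there is no room for further differentials, leaving
\[
\hat E^\infty(C_2) = \bF_2\{1, t^2 \lambda'_1, (t^2 \lambda'_1)^2, \lambda'_1\}
\otimes \Lambda(u_1) \otimes \bF_2[t^{\pm 4}].
\]
Detection of $\eta$, $\eta^2$, and $\lambda'_1$ in the abutment follows by naturality from the $\bT$-Tate computation, and there are no hidden extensions by sparsity.

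For the cyclotomic Frobenius, $\varphi_2$ is an $\bE_\infty$ ring map between $\gr_{\ev}^*\bS$-algebras via the motivic refinement of the cyclotomic structure of~\cite{NS18} supplied by~\cite{HRW}, and is hence determined by its images on the generators $\varepsilon_2, \eta, \lambda'_1, \mu$. The unit-image classes $\eta$ and $\lambda'_1$ are preserved by $\varphi_2$, mapping to the classes of the same name in the target. The essential input is $\varphi_2(\mu) = t^{-4}$: the B{\"o}kstedt-type class $\mu$ is characterized by its Frobenius image being the Tate periodicity generator, and the only such element of the target in bidegree $(8,0)$ is $t^{-4}$. The image $\varphi_2(\varepsilon_2) = u_1 t^{-4}$ is then forced by bidegree. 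Since $\mu \mapsto t^{-4}$ is a unit in the target, $\varphi_2$ factors through the $\mu$-localization, and comparison of additive generators identifies this factorization with an isomorphism. The principal obstacle is establishing $\varphi_2(\mu) = t^{-4}$ rather than $0$: bidegree restricts the image to $\bF_2\{t^{-4}\}$, and nontriviality requires a concrete input from the cyclotomic Frobenius structure on the B{\"o}kstedt class, most naturally obtained from the Segal-conjecture-type identification of $\THH(-)^{tC_2}$ in the Hahn--Raksit--Wilson framework.
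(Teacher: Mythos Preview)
Your overall strategy---compute the $C_2$-Tate spectral sequence and then read off $\varphi_2$ on generators---is reasonable, but several of the steps you flag as routine are not, and the paper resolves them by a technique you do not invoke: naturality along the complexification map $c \: \ko \to \ku$, together with the known computation for~$\ku$ from~\cite{HW22} and~\cite{HRW}*{Theorem~6.1.2}.

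First, your argument that $u_1$ is a permanent cycle ``by reduction to the sphere'' does not work as stated: the $C_2$-Tate spectral sequence for $\bS$ with trivial action is highly nontrivial (this is Lin's theorem), and in related contexts $u_1$ genuinely supports differentials---e.g.\ $d^9(u_1 t^{-4}) = t\mu$ in the $\olV(1)$-homotopy $C_2$-Tate spectral sequence for~$\ku$. The paper instead first shows (via comparison with~$\ku$) that $\varphi_2(\varepsilon_2)$ is detected by $u_1 t^{-4}$, whence $u_1 = t^4 \cdot u_1 t^{-4}$ is a product of permanent cycles. (One can alternatively argue directly that no differential on~$u_1$ has a target, using that $d^r(u_1)$ must have motivic filtration~$-1$ and comparing parities, but this is not the ``standard fact about the sphere'' you cite.)

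Second, $\lambda'_1$ is \emph{not} a unit-image class: it lies in stem~$5$ of $\olV(2)_* \gr_{\mot}^* \THH(\ko)$ and does not come from $\pi_* \olV(2)$. Bidegree alone only gives $\varphi_2(\lambda'_1) \in \bF_2\{\lambda'_1\}$, so you need a nontriviality argument. The paper uses the relation $(\lambda'_1)^2 = \eta^2 \mu$: since $\varphi_2(\eta^2\mu) = \eta^2 t^{-4} \ne 0$, one has $\varphi_2(\lambda'_1)^2 \ne 0$, forcing $\varphi_2(\lambda'_1) = \lambda'_1$.

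Third, and most seriously, you correctly identify $\varphi_2(\mu) = t^{-4}$ as the crux but do not establish it; your gesture toward a Segal-conjecture-type input is not made precise. Likewise ``forced by bidegree'' for $\varphi_2(\varepsilon_2)$ only shows the image lies in $\bF_2\{u_1 t^{-4}\}$, not that it is nonzero. The paper settles both at once: for~$\ku$ the cyclotomic structure map is already known to invert~$\mu$, sending $\mu \mapsto t^{-4}$ and $\varepsilon_2 \mapsto u_1 t^{-4}$; since $c \: \olV(2)_* \gr_{\mot}^* \THH(\ko) \to \olV(2)_* \gr_{\mot}^* \THH(\ku)$ sends $\mu \mapsto \mu$ and $\varepsilon_2 \mapsto \varepsilon_2$, naturality of~$\varphi_2$ forces the $\ko$-images to be detected in the same (or higher) Tate filtration, and there are no other classes in those bidegrees.

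In short, the missing idea is the comparison with~$\ku$ along~$c$, which simultaneously pins down $\varphi_2(\mu)$, $\varphi_2(\varepsilon_2)$, and (thereby) the permanence of~$u_1$.
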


\begin{proof}
We shall use naturality with respect to the complexification map $c \:
\ko \to \ku$ to access the cyclotomic structure map~$\varphi_2$ for~$\ko$
and the differentials in~\eqref{eq:5.3}, so we first review the results
of Hahn--Wilson about the complex case.

By~\cite{HW22}*{Proposition~6.1.6}, we have $\olV(1)_* \gr_{\mot}^*
\THH(\ku) \cong \Lambda(\lambda_1, \lambda_2) \otimes \bF_2[\mu]$, with
$\|\lambda_1\| = (3,1)$, $\|\lambda_2\| = (7,1)$ and~$\|\mu\| = (8,0)$.
Hence $\olV(2)_* \gr_{\mot}^* \THH(\ku) \cong \Lambda(\varepsilon_2,
\lambda_1, \lambda_2) \otimes \bF_2[\mu]$, with $\|\varepsilon_2\| =
(7,-1)$.  The $\olV(1)$-homotopy $C_2$-Tate spectral sequence
\begin{align*}
\hat E^2(C_2, \ku) &= \olV(1)_* \gr_{\mot}^* \THH(\ku)
	\otimes \Lambda(u_1) \otimes \bF_2[t^{\pm1}] \\
	&\Longrightarrow \olV(1)_* \gr_{\mot}^* \THH(\ku)^{tC_2}
\end{align*}
is an algebra spectral sequence with differentials $d^4(t^{-1}) =
t \lambda_1$, $d^8(t^{-2}) = t^2 \lambda_2$ and $d^9(u_1 t^{-4}) =
t \mu$, leaving $\hat E^\infty(C_2, \ku) = \Lambda(\lambda_1, \lambda_2)
\otimes \bF_2[t^{\pm4}]$, and the cyclotomic structure map is given in
$\olV(1)$-homotopy by $\lambda_1 \mapsto \lambda_1$, $\lambda_2 \mapsto
\lambda_2$ and $\mu \mapsto t^{-4}$, hence is identified with the ring
homomorphism that inverts~$\mu$, as in~\cite{HRW}*{Theorem~6.1.4}.

It follows that the $\olV(2)$-homotopy $C_2$-Tate spectral sequence
\begin{align*}
\hat E^2(C_2, \ku) &= \olV(2)_* \gr_{\mot}^* \THH(\ku)
	\otimes \Lambda(u_1) \otimes \bF_2[t^{\pm1}] \\
	&\Longrightarrow \olV(2)_* \gr_{\mot}^* \THH(\ku)^{tC_2}
\end{align*}
has differentials $d^2(\varepsilon_2) = t \mu$, $d^4(t^{-1}) =
t \lambda_1$ and $d^8(t^{-2}) = t^2 \lambda_2$, leaving behind
the $E^\infty$-term $\hat E^\infty(C_2, \ku) = \Lambda(\lambda_1,
\lambda_2) \otimes \Lambda(u_1) \otimes \bF_2[t^{\pm4}]$.  By exactness
of localization, the cyclotomic structure map
\[
\varphi_2 \: \olV(2)_* \gr_{\mot}^* \THH(\ku)
	\longto \olV(2)_* \gr_{\mot}^* \THH(\ku)^{tC_2}
\]
must also agree with the ring homomorphism that inverts~$\mu$.
Hence it is given on $\lambda_1$, $\lambda_2$ and~$\mu$ as
in the $\olV(1)$-case, while $\varphi_2(\varepsilon_2)$ can
only be detected by $u_1 t^{-4}$.

Next we appeal to naturality.  We saw in the proof of Lemma~\ref{lem:
THH computation} that
\[
c \: \olA(1)_* \gr_{\mot}^* \THH(\ko)
	\longto \olA(1)_* \gr_{\mot}^* \THH(\ku)
\]
is given by $\lambda'_1 \mapsto 0$, $\lambda_2 \mapsto \lambda_2$ and $\mu
\mapsto \mu$.  It follows by naturality with respect to $i \: \olV(1)
\to \olA(1)$ that $c \: \olV(1)_* \gr_{\mot}^* \THH(\ko) \to \olV(1)_*
\gr_{\mot}^* \THH(\ku)$ is given by $\eta \mapsto 0$, $\lambda'_1 \mapsto
0$ and $\mu \mapsto \mu$, and similarly with $\olV(2)$-coefficients,
where also $\varepsilon_2 \mapsto \varepsilon_2$.  
Chasing $\varepsilon_2$ and~$\mu$ around the commutative diagram
\[
\xymatrix{
\olV(2)_* \gr_{\mot}^* \THH(\ko) \ar[r]^-{\varphi_2} \ar[d]_-{c}
	& \olV(2)_* \gr_{\mot}^* \THH(\ko)^{tC_2} \ar[d]^-{c} \\
\olV(2)_* \gr_{\mot}^* \THH(\ku) \ar[r]^-{\varphi_2}
	& \olV(2)_* \gr_{\mot}^* \THH(\ku)^{tC_2} \,,
}
\]
we see that $\varphi_2(\varepsilon_2)$ and~$\varphi_2(\mu)$ in the real
case must be detected in the same, or higher, Tate filtration as the
detecting classes in the complex case, namely $u_1 t^{-4}$ and~$t^{-4}$.
There are no classes of higher Tate filtration in the same total degrees,
so the only possibility is that $\varphi_2(\varepsilon_2)$ is detected
by $u_1 t^{-4}$ and $\varphi_2(\mu)$ is detected by $t^{-4}$, also in
the real case.

In particular, this shows that $u_1 = t^4 \cdot u_1 t^{-4}$ is a permanent
cycle in the spectral sequence~\eqref{eq:5.3}.  By naturality with
respect to the map~$F^t$, we deduce that we have the same differentials
in the $C_2$-Tate spectral sequence as in the $\bT$-Tate spectral
sequence~\eqref{eq: v2 Tate}, listed in Theorem~\ref{thm: prismatic}.
This leaves
\[
\hat E^\infty(C_2) = \bF_2\{1, t^2 \lambda'_1, (t^2 \lambda'_1)^2,
	\lambda'_1\} \otimes \Lambda(u_1) \otimes \bF_2[t^{\pm4}] \,.
\]
It is clear that the $\bS$-algebra map $\varphi_2$ takes $\eta$ to
$\eta$, which we saw is detected by $t^2 \lambda'_1$.  The relation
$(\lambda'_1)^2 = \eta^2 \mu$ now shows that $\varphi_2(\lambda'_1)^2$
must be detected by $(t^2 \lambda'_1)^2 \cdot t^{-4} = (\lambda'_1)^2
\ne 0$, which can only happen if $\varphi_2(\lambda'_1)$ is detected
by~$\lambda'_1$.  Summarizing, we have an isomorphism
\[
\olV(2)_* \gr_{\mot}^* \THH(\ko)^{tC_2} \cong
	\bF_2\{1, \eta, \eta^2, \lambda'_1\} \otimes \Lambda(u_1)
	\otimes \bF_2[t^{\pm4}] \,,
\]
where $\varphi_2$ is given by $\varepsilon_2 \mapsto u_1 t^{-4}$, $\eta
\mapsto \eta$, $\lambda'_1 \mapsto \lambda'_1$ and $\mu \mapsto t^{-4}$.
The claim about localization then amounts to the isomorphism
\[
\mu^{-1} \frac{ \bF_2[\eta, \lambda'_1, \mu] }
	{ (\eta \lambda'_1, (\lambda'_1)^2 = \eta^2\mu) }
\cong \bF_2\{1, \eta, \eta^2, \lambda'_1\} \otimes \bF_2[\mu^{\pm1}]
	\,.  \qedhere
\]
\end{proof}

\begin{proposition} \label{prop:5.4}
There is an isomorphism
\[
(\olV(2) \otimes \olC\eta)_* \gr_{\mot}^* \THH(\ko)^{tC_2}
	\cong \Lambda(\lambda'_1) \{1, \lambda_2\}
	\otimes \Lambda(u_1) \otimes \bF_2[t^{\pm4}] \,,
\]
where $1$, $\lambda'_1$, $\lambda_2$, $\lambda'_1 \lambda_2$ and $t^{\pm 4}$ 
are detected by classes with the same names.  Under this
correspondence, the cyclotomic structure map
\[
\varphi_2 \: (\olV(2) \otimes \olC\eta)_* \gr_{\mot}^* \THH(\ko) \longto (\olV(2) \otimes \olC\eta)_* \gr_{\mot}^* \THH(\ko)^{tC_2}
\]
is given by $\varepsilon_2 \mapsto u_1 t^{-4}$, $\lambda'_1 \mapsto
\lambda'_1$, $\lambda_2 \mapsto \lambda_2$ and $\mu \mapsto t^{-4}$,
hence can be identified with the localization homomorphism
\[
(\olV(2) \otimes \olC\eta)_* \gr_{\mot}^* \THH(\ko) \longto \mu^{-1} (\olV(2) \otimes \olC\eta)_* \gr_{\mot}^* \THH(\ko)
\]
that inverts~$\mu$.
\end{proposition}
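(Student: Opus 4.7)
The plan is to mirror the proof of Proposition~\ref{prop: C2 Tate computation} while passing to $\olC\eta$-coefficients, analyzing the spectral sequence~\eqref{eq:5.4}. First, I would exploit that \eqref{eq:5.4} is a module spectral sequence over~\eqref{eq:5.3}, so every differential established for the $\olV(2)$-case --- in particular $d^2(\varepsilon_2) = t\mu$ --- propagates. Simultaneously, the map $F^t$ provides a map from the $\bT$-Tate spectral sequence~\eqref{eq: v2 Ceta Tate} to~\eqref{eq:5.4}, and this map is injective on the classes concerned in the relevant bidegrees, so the differentials listed in Theorem~\ref{thm: prismatic mod eta} --- namely $d^6(t^{-1}) = t^2\lambda'_1$, $d^6(t^{-1}\lambda_2) = t^2\lambda'_1\lambda_2$, $d^6(t^{-2}\lambda_2) = t\lambda'_1\lambda_2$, $d^8(t^{-3}) = t\lambda_2$, and $d^8(t^{-1}\lambda'_1) = t^3\lambda'_1\lambda_2$ --- also appear in~\eqref{eq:5.4}.

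Second, I would verify that $u_1$ is a permanent cycle and read off the $E^\infty$-term. Permanence of $u_1$ follows exactly as in Proposition~\ref{prop: C2 Tate computation}: naturality with respect to the complexification $c\:\ko \to \ku$ together with the Hahn--Wilson identification of $\varphi_2$ for $\ku$ with localization at $\mu$ forces $\varphi_2(\varepsilon_2)$ to be detected by $u_1 t^{-4}$, so $u_1$ survives. Bookkeeping then yields
\[
\hat E^\infty(C_2) = \Lambda(\lambda'_1)\{1,\lambda_2\} \otimes \Lambda(u_1) \otimes \bF_2[t^{\pm 4}],
\]
as claimed.

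Third, I would pin down $\varphi_2$ on generators. Chasing $\lambda_2$ and $\mu$ around the naturality square obtained by applying $\varphi_2$ to $(\olV(2) \otimes \olC\eta)_* \gr_{\mot}^* \THH(-)$ for $\ko$ and $\ku$, and using $c(\lambda_2) = \lambda_2$, $c(\mu) = \mu$ together with the complex case, shows $\varphi_2(\lambda_2)$ is detected by $\lambda_2$ and $\varphi_2(\mu)$ by $t^{-4}$ (no higher-filtration alternatives exist in the relevant total degrees). Naturality likewise places $\varphi_2(\varepsilon_2)$ in filtration $u_1 t^{-4}$. For $\varphi_2(\lambda'_1)$, naturality with $c$ yields nothing since $c(\lambda'_1)=0$; instead I would use naturality with respect to the $\olV(2)$-module map $\olV(2) \to \olV(2) \otimes \olC\eta$, which by Proposition~\ref{prop: C2 Tate computation} already gives $\varphi_2(\lambda'_1)$ detected by $\lambda'_1$. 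The identification with localization at $\mu$ then follows from the elementary computation
\[
\mu^{-1}\bigl(\Lambda(\varepsilon_2,\lambda'_1)\{1,\lambda_2\} \otimes \bF_2[\mu]\bigr) \cong \Lambda(\lambda'_1)\{1,\lambda_2\} \otimes \Lambda(u_1) \otimes \bF_2[\mu^{\pm 1}],
\]
where $u_1$ corresponds to $\varepsilon_2/\mu$, combined with exactness of localization.

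The main obstacle is identifying $\varphi_2(\lambda'_1)$: the argument of Proposition~\ref{prop: C2 Tate computation} exploited the relation $(\lambda'_1)^2 = \eta^2\mu$ to leverage the known images of $\eta$ and $\mu$, but in $\olV(2) \otimes \olC\eta$-coefficients $\eta=0$ so that relation becomes vacuous, and simultaneously the complexification kills $\lambda'_1$. I expect the remedy to be naturality along $\olV(2) \to \olV(2) \otimes \olC\eta$, transferring the conclusion of the $\olV(2)$-case directly. A secondary technical point is the careful accounting of which $d^6$- and $d^8$-differentials affect the $\lambda_2$-multiples; here the module structure over \eqref{eq:5.3} and the lift through $F^t$ from the $\bT$-Tate data of Theorem~\ref{thm: prismatic mod eta} together should suffice.
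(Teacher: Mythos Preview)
Your overall strategy matches the paper's, but there is one genuine gap and one place where the paper is more economical.

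First, the economical point: the paper does not redo complexification to show that $u_1$ is a permanent cycle or to evaluate $\varphi_2$ on $\varepsilon_2$, $\lambda'_1$, and $\mu$. Instead it invokes naturality along $i \: \olV(2) \to \olV(2) \otimes \olC\eta$ and simply transfers the conclusions of Proposition~\ref{prop: C2 Tate computation}. Your complexification route for these classes would also require the $\olV(2) \otimes \olC\eta$-homotopy $C_2$-Tate computation for~$\ku$, which is not carried out anywhere; routing through $i$ avoids this entirely.

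Second, the gap: your argument for $\varphi_2(\lambda_2)$ via complexification does not close as written. In stem~$7$ of the $\hat E^\infty(C_2)$-term there are two classes, $\lambda_2$ (Tate filtration~$0$) and $u_1 t^{-4}$ (lower Tate filtration), so ``no higher-filtration alternatives'' does not by itself pin down the detecting class. To make the complexification chase work you would need to know that $\varphi_2^{\ku}(\lambda_2)$ is detected by $\lambda_2$ in $(\olV(2) \otimes \olC\eta)$-coefficients for~$\ku$, and that $u_1 t^{-4}$ survives there; neither is established in the paper, and the first requires computing $(\olV(2) \otimes \olC\eta)_* \gr_{\mot}^* \THH(\ku)^{tC_2}$, which involves the extra class~$\hat\xi_1^2$. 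The paper sidesteps all of this by using the cofiber maps out of $\olV(2) \otimes \olC\eta$: naturality along $j_2 \: \olV(2) \otimes \olC\eta \to \Sigma^{7,-1} \olA(1)$ rules out detection by $u_1 t^{-4}$ (since $j_2(\lambda_2) = 0$ while $j_2$ is nonzero on the class detected by $u_1 t^{-4}$), and naturality along $j \: \olV(2) \otimes \olC\eta \to \Sigma^{2,0} \olV(2)$, using $j(\lambda_2) = \Sigma^{2,0}\lambda'_1$ from Proposition~\ref{prop: differential on t2lambda1prime}, establishes nonvanishing. This stays entirely within the $\ko$ computations already done.
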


\begin{proof}
Naturality with respect to~$i \: \olV(2) \to \olV(2) \otimes
\olC\eta$ shows that $u_1$ is a permanent cycle in the spectral
sequence~\eqref{eq:5.4}.  When combined with the differentials
in~\eqref{eq: v2 Ceta Tate}, listed in Theorem~\ref{thm: prismatic mod
eta}, this shows that
\[
\hat E^4(C_2) = \Lambda(\lambda'_1) \{1, \lambda_2\}
	\otimes \Lambda(u_1) \otimes \bF_2[t^{\pm1}]
\]
and
\[
\hat E^\infty(C_2) = \Lambda(\lambda'_1) \{1, \lambda_2\}
	\otimes \Lambda(u_1) \otimes \bF_2[t^{\pm4}] \,.
\]
The detection results then follow from those in Theorem~\ref{thm:
prismatic mod eta}.  The evaluation of $\varphi_2$ on $\varepsilon_2$,
$\lambda'_1$ and~$\mu$ follows from that in Proposition~\ref{prop:
C2 Tate computation} by comparison along the same map~$i$.

To show that $\varphi_2(\lambda_2)$ is detected by $\lambda_2$, we note
that by naturality with respect to $j_2 \: \olV(2) \otimes \olC\eta
\to \Sigma^{7,-1} \olA(1)$ it cannot be detected by $u_1 t^{-4}$.
On the other hand, by naturality along $j \: \olV(2) \otimes \olC\eta
\to \Sigma^{2,0} \olV(2)$ it is nonzero, since we saw in the proof
of Proposition~\ref{prop: differential on t2lambda1prime} that $j \:
\lambda_2 \mapsto \Sigma^{2,0} \lambda'_1$.  Hence $\varphi_2(\lambda_2)$
must be the class detected by~$\lambda_2$.
\end{proof}

\begin{remark}
The computations in Theorems~\ref{thm: prismatic} and~\ref{thm:
prismatic mod eta} and Propositions~\ref{prop: C2 Tate computation}
and~\ref{prop:5.4} are consistent with isomorphisms
\begin{align*}
\olV(2)_* \gr_{\mot}^* \TP(\ko)
	&\cong \olV(1)_* \gr_{\mot}^* \THH(\ko)^{tC_2} \\
(\olV(2) \otimes \olC\eta )_* \gr_{\mot}^* \TP(\ko)
	&\cong \olA(1)_* \gr_{\mot}^* \THH(\ko)^{tC_2} \,,
\end{align*}
in analogy with \cite{HRW}*{Theorem~6.4.1}.
\end{remark}

\begin{theorem}[Syntomic cohomology modulo $(2, v_1, v_2)$ of~$\ko$]
	\label{thm:TC-V2}
We have an algebra 
isomorphism
\[
 \olV(2)_* \gr_{\mot}^* \TC(\ko)
	\cong \bF_2[\eta] \{ 1, \eta^4 \varepsilon_2 \}
	\oplus \bF_2\{\partial, \nu, \lambda'_1,
		\partial \lambda'_1, \nu^2, (\lambda'_1)^2\} \,,
\]
with generators in bidegrees $\|\partial\| = (-1,1)$, $\|\eta\| =
(1,1)$, $\|\nu\| = (3,1)$, $\|\lambda'_1\| = (5,1)$ and $\|\eta^4
\varepsilon_2\| = (11,3)$.  See Figure~\ref{fig:V2TCko} for a view of
the algebra structure of the right-hand side.
\end{theorem}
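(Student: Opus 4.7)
The plan is to exploit the defining fiber sequence
\[
\gr_{\mot}^* \TC(\ko) \longto \gr_{\mot}^* \TC^{-}(\ko)
\overset{\can-\varphi}\longto \gr_{\mot}^* \TP(\ko)
\]
from Definition~\ref{def:syntomic}, which on applying $\olV(2)_*$ produces a long exact sequence
\[
\cdots \to \olV(2)_{n+1}\gr^w\TP(\ko) \overset{\partial}\to \olV(2)_n\gr^w\TC(\ko) \to \olV(2)_n\gr^w\TC^{-}(\ko) \overset{\can-\varphi}\to \olV(2)_n\gr^w\TP(\ko) \to \cdots
\]
in each motivic weight~$w$. Since the outer terms are known from Proposition~\ref{prop: 5.1} and Theorem~\ref{thm: prismatic}, the task reduces entirely to computing the map $\can - \varphi$, and then reading off its kernel and cokernel.

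First I would determine $\can$ on $\olV(2)_*\gr^*\TC^{-}(\ko)$ by interpreting it as the localization inverting~$t$ at the level of spectral sequences, and comparing the surviving detecting classes in the $E^\infty$-pages of the homotopy fixed point and Tate spectral sequences of Theorem~\ref{thm: prismatic}. This sends $1$, $\eta$, $\eta^2$, $\lambda'_1$, $(\lambda'_1)^2$, and the $\bar\mu$-tower nontrivially to their $\TP$-counterparts (with $\bar\mu \mapsto t^{-4}$ etc.), while $\eta^k$ for $k\ge 3$, the classes $t\lambda'_1$, $(t\lambda'_1)^2$, and $\eta^4\varepsilon_2$ all map to zero since they lie in the image or kernel of $\bT$-Tate differentials.

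Next I would determine $\varphi$ using Proposition~\ref{prop: C2 Tate computation}, which identifies $\varphi_2$ with the localization $\mu \mapsto t^{-4}$, combined with the identification $\TP \simeq (\THH^{tC_2})^{h(\bT/C_2)}$. The key comparison is that $\varphi$ agrees with $\can$ on the classes $1$, $\eta$, $\eta^2$, $\lambda'_1$, and on the image in the $\TP$-abutment coming from classes of nonnegative $t$-filtration in $\TC^{-}$; so these all lie in $\ker(\can - \varphi)$. The difference $\can - \varphi$ is concentrated on the $\bar\mu$-tower, where $\can$ and $\varphi$ both detect $t^{-4}$ but differ in the higher-filtration correction, yielding that $\can - \varphi$ hits the entire ``negative $t^{-4}$-tower'' $\bF_2\{\eta, \eta^2, \lambda'_1\}\otimes \bF_2[t^{-4}]\{t^{-4}\}$ of $\olV(2)_*\gr^*\TP(\ko)$.

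Finally I assemble the long exact sequence. The kernel contributes the classes $1$, $\nu := t\lambda'_1$, $\lambda'_1$, $\nu^2 := (t\lambda'_1)^2$, $(\lambda'_1)^2$, and $\eta^4\varepsilon_2$ together with the $\bF_2[\eta]$-action on $1$ and $\eta^4\varepsilon_2$; the cokernel is $\bF_2\{1, \lambda'_1\} \subset \olV(2)_*\gr^*\TP(\ko)$, and under $\partial$ these yield the classes $\partial \in \olV(2)_{-1}\gr^0\TC(\ko)$ and $\partial\lambda'_1 \in \olV(2)_4\gr^1\TC(\ko)$. The multiplicative structure is inherited from the ring structure on $\olV(2)_*\gr^*\TC^{-}(\ko)$ restricted to the kernel, together with the $\gr^*\TC^{-}$-module structure on $\partial(-)$. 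The main obstacle will be confirming precisely the $\bar\mu$-tower analysis, namely that $\can - \varphi$ is surjective onto the ``negative $t^{-4}$-tower'' of $\TP$ and that no hidden extensions on this tower interfere with the identification of kernel and cokernel.
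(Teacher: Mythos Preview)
Your overall strategy---computing $\can$ and $\varphi$ separately on the summands of $\olV(2)_*\gr_{\mot}^*\TC^{-}(\ko)$ from Proposition~\ref{prop: 5.1} and reading off kernel and cokernel---is exactly the paper's approach. But the specific analysis you sketch has the roles of $\can$ and $\varphi$ reversed on the key summands, and omits two pieces without which the stated cokernel cannot be obtained.

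First, $\can$ does \emph{not} send $\bar\mu \mapsto t^{-4}$. The map $\can$ is computed by inverting~$t$, and in the $\bT$-Tate spectral sequence $\bar\mu = \mu + \eta\varepsilon_2$ is a $d^2$-boundary (from $t^{-1}\varepsilon_2$), so $\can$ vanishes on the entire $\bar\mu$-tower. It is $\varphi$ that sends $\mu \mapsto t^{-4}$, and on this tower $\varphi$ is an isomorphism onto the positive-stem part of~$\TP$; there is no ``higher-filtration correction'' story here. Second, you entirely omit the summand $\bF_2[t^4]\{t^4\}\otimes\bF_2\{1, t^2\lambda'_1, (t^2\lambda'_1)^2, \lambda'_1\}$ of $\TC^{-}$. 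On this piece the situation is dual: $\varphi$ vanishes (the target spectral sequence for $(\THH(\ko)^{tC_2})^{h\bT}$ collapses to the vertical axis, so positive $t$-powers die) while $\can$ is an isomorphism onto the negative-stem part of~$\TP$. Third, you miss that in bidegrees $(1,1)$ and $(2,2)$ the $\TC^{-}$-abutment has rank~$2$: the classes $\eta, \eta^2$ on the vertical axis are distinct from the classes detected by $t^2\lambda'_1, (t^2\lambda'_1)^2$; on the latter $\can$ hits $\eta, \eta^2 \in \TP$ while $\varphi$ vanishes, so $\can-\varphi$ is an isomorphism there. Without these three contributions your image of $\can - \varphi$ would miss all of $\bF_2\{1\}\otimes\bF_2[t^{\pm4}]\{t^{\pm4}\}$, $\bF_2\{\eta,\eta^2,\lambda'_1\}\otimes\bF_2[t^4]\{t^4\}$, and $\bF_2\{\eta,\eta^2\}$, leaving an infinite-dimensional cokernel rather than $\bF_2\{1,\lambda'_1\}$.
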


\begin{proof}
To calculate the effect in $\olV(2)$-homotopy of $\can \: \TC^{-}(\ko)
\to \TP(\ko)$, we use the map of spectral sequences from~\eqref{eq:TC-v2}
to~\eqref{eq: v2 Tate}, described in Proposition~\ref{prop: 5.1} and
Theorem~\ref{thm: prismatic}, given at the $E^2$-terms by inverting~$t$.
To calculate the effect of $\varphi_2^{h\bT} \: \TC^{-}(\ko) \to
(\THH(\ko)^{tC_2})^{h\bT}$ we appeal to Corollary~\ref{mult-TP} and Proposition~\ref{prop: C2 Tate
computation} to see that there is a $\mu$-inverted $\bT$-homotopy fixed point spectral
sequence
\begin{equation} \label{eq:5.5}
\begin{aligned}
\mu^{-1} E^2(\bT) &= \olV(2)_* \gr_{\mot}^* \THH(\ko)^{tC_2} \, [t] \\
	&= \Lambda(\varepsilon_2) \otimes \bF_2\{1, \eta, \eta^2, \lambda'_1\}
	\otimes \bF_2[\mu^{\pm1}] \otimes \bF_2[t] \\
	&\Longrightarrow \olV(2)_* \gr_{\mot}^* (\THH(\ko)^{tC_2})^{h\bT}
\,,
\end{aligned}
\end{equation}
and $\varphi_2^{h\bT}$ is calculated by the map of spectral sequences
from~\eqref{eq:TC-v2} to~\eqref{eq:5.5} that is given at the $E^2$-terms
by inverting~$\mu$.  The differentials
\[
d^2(\varepsilon_2) = t \mu
	\qquad\text{and}\qquad
d^2(\mu) = t \eta \mu
\]
carry over from the proof of Proposition~\ref{prop: 5.1}, leaving
\[
\mu^{-1} E^4(\bT) = \mu^{-1} E^\infty(\bT)
	= \bF_2\{1, \eta, \eta^2, \lambda'_1\}
	\otimes \bF_2[\bar\mu^{\pm1}] \,,
\]
concentrated on the vertical axis.  As before, $\bar\mu
= \mu + \eta\varepsilon_2$.  

As recalled in Definition~\ref{GM}, we know a priori that the natural
map 
\[
G \: \gr_\mot^* \TP(\ko)
	\overset{\simeq}\longto \gr_\mot^* (\THH(\ko)^{tC_2})^{h\bT}
\]
is an equivalence. 
%We know a priori that $G \:
%\TP(\ko) \to (\THH(\ko)^{tC_2})^{h\bT}$ is an equivalence, by
%\cite{BBLNR14}*{Proposition~3.8}, (cf.~\cite{NS18}*{Lemma~II.4.2}).
The $\olV(2)$-homotopy isomorphism
\[
\bF_2\{1, \eta, \eta^2, \lambda'_1\} \otimes \bF_2[t^{\pm4}]
\overset{\cong}\longto
\bF_2\{1, \eta, \eta^2, \lambda'_1\} \otimes \bF_2[\bar\mu^{\pm1}]
\]
induced by the equivalence~$G$ can then only be given by $\eta
\mapsto \eta$, $\lambda'_1 \mapsto \lambda'_1$ and $t^{\pm4} \mapsto
\bar\mu^{\mp1}$.

We claim that the map $\can - \varphi$ (which is short for
$G \circ \can - \varphi_2^{h\bT}$) induces isomorphisms
\begin{align}
\label{iso 1} \bF_2[t^4] \{t^4\}
	\otimes \bF_2\{1, t^2 \lambda'_1, (t^2 \lambda'_1)^2, \lambda'_1\}
&\overset{\cong}\longto \bF_2[\bar\mu^{-1}]\{\bar\mu^{-1}\}
	\otimes \bF_2\{1, \eta, \eta^2, \lambda'_1) \\
 \label{iso 2} \bF_2[\bar\mu] \{\bar\mu\}
	\otimes \bF_2\{1, \eta, \eta^2, \lambda'_1\}
&\overset{\cong}\longto \bF_2[\bar\mu] \{\bar\mu\}
	\otimes \bF_2\{1, \eta, \eta^2, \lambda'_1\} \\
\label{iso 3} \bF_2\{t^2 \lambda'_1, (t^2 \lambda'_1)^2\}
&\overset{\cong}\longto \bF_2\{\eta, \eta^2\}
\end{align}
and the zero homomorphism
\begin{align}\label{eq:zero-homomorphism}
\bF_2[\eta]\{1, \eta^4 \varepsilon_2\}
	\oplus \bF_2\{t \lambda'_1, \lambda'_1,
		(t \lambda'_1)^2, (\lambda'_1)^2\}
\overset{0}\longto \bF_2\{1, \lambda'_1\} \,.
\end{align}
The isomorphism~\eqref{iso 1} occurs in horizontal degrees %(= filtrations) 
where inverting~$t$ (or~$t^4$) is an isomorphism, and $\varphi_2^{h\bT}$
is zero. The isomorphism~\eqref{iso 2} occurs in vertical degrees
where inverting~$\mu$ (or $\bar\mu$) is an isomorphism, and $\can$
is zero. 
The isomorphism~\eqref{iso 3} uses
that $t^2 \lambda'_1$ and $(t^2 \lambda'_1)^2$ in~\eqref{eq:TC-v2} map to $\eta$
and~$\eta^2$, which are detected in~\eqref{eq: v2 Tate}, but map to zero in~\eqref{eq:5.5}. 
The homomorphisms $G \circ \can$ and~$\varphi_2^{h\bT}$ agree on classes
coming from $\olV(2)_* \gr_{\ev}^* \bS$, such as $1$, $\eta$ and~$\nu$,
hence their difference is zero on $\bF_2[\eta] \{1\}$ and $\bF_2\{t
\lambda'_1, (t \lambda'_1)^2\}$.  Both $G \circ \can(\lambda'_1)$ and
$\varphi_2^{h\bT}(\lambda'_1)$ are detected by $\lambda'_1$, hence agree
in $\olV(2)$-homotopy since there are no other classes in the same total
degree, which implies that $G \circ \can - \varphi_2^{h\bT}$ is zero on
$\lambda'_1$ and its square.  Both $G \circ \can$ and~$\varphi_2^{h\bT}$
take $\eta^4 \varepsilon_2 = \eta^3 \bar\mu$ to zero, so their difference
is zero on $\bF_2[\eta] \{\eta^4 \varepsilon_2\}$.
  
Hence we have an isomorphism
\[
\olV(2)_* \gr_{\mot}^* \TC(\ko)
	\cong \bF_2[\eta] \{1, \eta^4\varepsilon_2\}
\oplus \bF_2\{\partial, \partial \lambda'_1, t\lambda'_1,
	\lambda'_1, (t \lambda'_1)^2, (\lambda'_1)^2\} \,.
\]
The classes $t\lambda'_1$ and $(t\lambda'_1)^2$ detect $\nu$ and~$\nu^2$,
respectively.  Regarding
the multiplicative structure, the subgroup~$\bF_2\{\partial, \partial
\lambda'_1\}$ is the image of the target of~\eqref{eq:zero-homomorphism} under the connecting
homomorphism~$\partial$.  The quotient algebra by this square-zero ideal
maps isomorphically to the source of~\eqref{eq:zero-homomorphism}.  It has an associated graded
that is isomorphic to a subalgebra of the $E^\infty$-term displayed in
Figure~\ref{fig:TC-v2}.  Due to the sparsity of the situation, the only possible
hidden multiplicative extension would be from $\eta$ times $\lambda'_1$
to $(t \lambda'_1)^2$, but in fact $\eta \lambda'_1 = 0$, as we show in
Proposition~\ref{prop:BSS-from-V(2)-to-V(1)} below.
\end{proof}

\begin{figure}
\centering
\resizebox{.8\textwidth}{!}{ \input{V2TCko.inp} }
\caption{$\olV(2)_* \gr_{\mot}^* \TC(\ko)$, with lines of slope~$-1$,
	$1$ and~$1/3$ indicating multiplication by~$\partial$, $\eta$
	and~$\nu$, respectively \label{fig:V2TCko}}
\end{figure}

Next, we compute the $v_2$-Bockstein spectral sequence
\begin{equation} \label{eq:BSS-from-V(2)-to-V(1)}
E_1 = \olV(2)_* \gr_{\mot}^* \TC(\ko) \, [v_2]
	\Longrightarrow \olV(1)_* \gr_{\mot}^* \TC(\ko) \,.
\end{equation}

\begin{proposition} \label{prop:BSS-from-V(2)-to-V(1)}
In the spectral sequence~\eqref{eq:BSS-from-V(2)-to-V(1)} there is a
$d_1$-differential
\[
	d_1(\eta^4 \varepsilon_2) = v_2 \eta^4 \,,
\]
together with its various $\eta$- and $v_2$-power multiples.
This produces an algebra isomorphism
\[
\olV(1)_* \gr_{\mot}^* \TC(\ko) 
\cong \frac{
	\Lambda(\partial) \otimes \bF_2[\eta, \nu, \lambda'_1, v_2]
	}{
	(\partial \eta, \partial \nu, \eta \nu, \eta \lambda'_1,
	\nu \lambda'_1, \nu^3 = v_2 \eta^3 = \partial (\lambda'_1)^2,
	(\lambda'_1)^3 = c\cdot v_2^2 \eta^3)
	} \,,
\]
where $c \in \bF_2$ (and we have not resolved this indeterminacy).
\end{proposition}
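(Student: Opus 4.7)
The plan is to compute the Bockstein spectral sequence~(\ref{eq:BSS-from-V(2)-to-V(1)}) by identifying the $d_1$-differentials and then resolving hidden multiplicative extensions. The $E_1$-term is given, by Theorem~\ref{thm:TC-V2}, as the $\bF_2[v_2]$-module $\olV(2)_* \gr_{\mot}^* \TC(\ko) \otimes \bF_2[v_2]$; by Construction~\ref{con:A1V1Ceta}, $\olV(2)$ is an $\bE_\infty$ algebra in $\gr_{\ev}^*\bS$-modules, so the spectral sequence is multiplicative.

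The first step is to establish the differential $d_1(\varepsilon_2) = v_2$. This is the Bockstein incarnation of the defining property $j_2(\varepsilon_2) = \Sigma^{7,-1}1$ from Definition~\ref{def:epsilon2}: in the Bockstein spectral sequence associated with the $v_2$-adic filtration, the $d_1$-differential is determined by the connecting map $j_2$ of the defining cofiber sequence. All other algebra generators of $\olV(2)_* \gr_{\mot}^* \TC(\ko)$ are $d_1$-cycles because they lift along the unit map $i_2\colon \olV(1) \to \olV(2)$: the classes $\eta$ and $\partial$ are pulled back from sphere-level constructions, while $\nu$ and $\lambda'_1$ lift by Corollary~\ref{cor:detection}, and the remaining generators $\partial \lambda'_1$, $\nu^2$, $(\lambda'_1)^2$ are products of these. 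By the Leibniz rule, $d_1(\eta^i \varepsilon_2) = v_2 \eta^i$ for all $i\ge 0$, and more generally $d_1(v_2^j \eta^{i+4}\varepsilon_2) = v_2^{j+1}\eta^{i+4}$. Since the only $\varepsilon_2$-multiple appearing in $\olV(2)_* \gr_{\mot}^* \TC(\ko)$ is $\eta^4 \varepsilon_2$ (together with its $\eta$-multiples), these differentials kill precisely the classes $v_2^{j+1}\eta^{i+4}$ as targets and $v_2^j \eta^{i+4}\varepsilon_2$ as sources. The resulting $E_2$-page is free of any $\varepsilon_2$-classes, so $E_2 = E_\infty$, with the claimed additive structure.

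The remaining work is the identification of the multiplicative structure on the abutment. The relations $\partial^2 = 0$ and $\partial \eta = \partial \nu = \eta \nu = \eta \lambda'_1 = \nu \lambda'_1 = 0$ follow by sparsity at $E_\infty$ together with the known relation $\eta \lambda'_1 = 0$ coming from Proposition~\ref{prop: eta BSS computatation} and Corollary~\ref{cor: mod 2 v1 v2 homotopy of THH of ko} (propagated to $\TC$). The genuine hidden extensions occur in bidegrees $(9, 3)$ and $(15, 3)$. In bidegree $(9, 3)$, where $E_\infty$ has rank one, the relation $\nu^3 = v_2 \eta^3 = \partial (\lambda'_1)^2$ follows by transporting the classical Novikov relation $v_2 h_{11}^3 = v_2^2 h_{10}^3$ of~(\ref{eq:v2h113=v22h103}) along the unit map $\bS \to \TC(\ko)$ and invoking rank one in the relevant bidegree to identify the three expressions. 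The hardest part is the extension $(\lambda'_1)^3 = d \cdot v_2^2 \eta^3$ in bidegree $(15, 3)$: since $E_\infty$ is again one-dimensional there, $(\lambda'_1)^3$ necessarily equals $v_2^2 \eta^3$ up to an $\bF_2$-scalar $d$, but pinning down whether $d = 0$ or $d = 1$ requires more refined information (e.g.\ a further comparison with the mod $(2, \eta, v_1)$ calculation of Section~\ref{sec:tc}), so this indeterminacy is left unresolved in the statement, as recorded by the coefficient $d$.
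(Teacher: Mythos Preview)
Your approach to the $d_1$-differential is genuinely different from the paper's and is in principle more direct: you want to read off $d_1(\eta^4\varepsilon_2)=v_2\eta^4$ from the identity $j_2(\varepsilon_2)=\Sigma^{7,-1}1$, whereas the paper argues indirectly by comparison with the sphere, using that $v_2^2 h_{10}^4=0$ in $\Ext_{\BP_*\BP}(\BP_*,\BP_*/I_2)$ to force $v_2^2\eta^4$ to be a boundary, and then observing that $v_2\eta^4\varepsilon_2$ is the only possible source. Your idea is cleaner, but as written it has a subtlety you should address: the class $\varepsilon_2$ itself is \emph{not} present in the $E_1$-term $\olV(2)_*\gr_{\mot}^*\TC(\ko)[v_2]$ (only $\eta^4\varepsilon_2$ survives from Theorem~\ref{thm:TC-V2}), so the statement ``$d_1(\varepsilon_2)=v_2$'' and the subsequent Leibniz-rule step are not literally applicable inside this spectral sequence. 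To make your argument rigorous you should compute $\beta_{v_2}(\eta^4\varepsilon_2)=i_2 j_2(\eta^4\varepsilon_2)$ directly, e.g.\ by naturality along $\pi\:\gr_{\mot}^*\TC(\ko)\to\gr_{\mot}^*\TC^{-}(\ko)$ (or further to $\THH(\ko)$), where $\varepsilon_2$ does exist and $j_2(\eta^4\varepsilon_2)=\Sigma^{7,-1}\eta^4$ follows from Definition~\ref{def:epsilon2} and $\olV(1)$-linearity.

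There is a more substantive gap in your treatment of the multiplicative structure. First, ``sparsity'' does not settle $\partial\nu=0$ or $\nu\lambda'_1=0$, since the $E_\infty$-term is nonzero in bidegrees $(2,2)$ and $(8,2)$; the paper instead deduces these from $\eta\nu=0$ (e.g.\ $\partial\nu=\eta^2$ would give $0=\partial(\eta\nu)=\eta^3\ne0$). Second, and more seriously, your ``rank one'' argument does not establish $\partial(\lambda'_1)^2=\nu^3$: rank one in bidegree $(9,3)$ only shows $\partial(\lambda'_1)^2\in\{0,\,v_2\eta^3\}$, and nothing you have written rules out $\partial(\lambda'_1)^2=0$. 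The paper explicitly postpones this point to Remark~\ref{rem:nu3}, which uses the $\olA(1)$-homotopy computation (Lemma~\ref{lem:Hurewicz-image}): the map $j\:\olA(1)_*\gr_{\mot}^*\TC(\ko)\to\Sigma^{2,0}\olV(1)_*\gr_{\mot}^*\TC(\ko)$ sends the class detecting $\nu^2 w$ (namely $\partial\lambda'_1\lambda_2$) to $\Sigma^{2,0}\partial(\lambda'_1)^2$, while on the other hand $j(\nu^2 w)=\Sigma^{2,0}\nu^3$, forcing $\partial(\lambda'_1)^2=\nu^3\ne0$. Your proof needs an argument of this kind; the Novikov relation~\eqref{eq:v2h113=v22h103} alone only gives $\nu^3=v_2\eta^3$.
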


\begin{proof}
The unit map $\bS \to \TC(\ko)$ induces a map of $v_2$-Bockstein
spectral sequences, from
\begin{equation} \label{eq:BSS-I3-to-I2}
E_1 = \Ext_{\BP_*\BP}(\BP_*, \BP_*/I_3) \, [v_2]
	\Longrightarrow \Ext_{\BP_*\BP}(\BP_*, \BP_*/I_2)
\end{equation}
shown in Figure~\ref{fig:Ext-BP/I2} to~\eqref{eq:BSS-from-V(2)-to-V(1)}
shown in Figure~\ref{fig:v2BockTCko}.  Since $v_2^2 h_{10}^4 = 0$ in the
abutment of the former, we must have that $v_2^2 \eta^4$ is a boundary in
the latter.  Considering bidegrees and $v_2$-powers, this can only happen
if $d_1(v_2 \eta^4 \varepsilon_2) = v_2^2 \eta^4$.  Hence $d_1(v_2^i \eta^j
\varepsilon_2) = v_2^{i+1} \eta^j$ for all $i\ge0$ and $j\ge4$, as claimed.
There is no room for other $v_2$-Bockstein differentials, so $E_2 =
E_\infty$ in~\eqref{eq:BSS-from-V(2)-to-V(1)}.

The relation $v_2 h_{11}^3 = v_2^2 h_{10}^3$ in the abutment
of~\eqref{eq:BSS-I3-to-I2}, see~\eqref{eq:v2h113=v22h103},
also implies that $v_2 \nu^3 = v_2^2 \eta^3$ in the abutment
of~\eqref{eq:BSS-from-V(2)-to-V(1)}.  Hence we have hidden
$\nu$-extensions from $v_2^i \nu^2$ to $v_2^{i+1} \eta^3$ for all $i\ge0$,
as shown by dashed lines of slope~$1/3$ in Figure~\ref{fig:v2BockTCko}.

The products $\partial^2$ and $\partial \eta$ lie in trivial groups.
The well-known relation $\eta \nu = 0$ implies the vanishing of $\partial
\nu$, $\eta \lambda'_1$ and $\nu \lambda'_1$.  We postpone the proof
that $\partial (\lambda'_1)^2$ is equal to $\nu^3 = v_2 \eta^3$ to
Remark~\ref{rem:nu3}.  We have not determined whether $(\lambda'_1)^3
\in \bF_2\{v_2^2 \eta^3\}$ is zero or not.
\end{proof}

\begin{figure}
\centering
\resizebox{\textwidth}{!}{ \input{v2BockTCko.inp} }
\caption{$v_2$-Bockstein $E_1 \Longrightarrow
	\olV(1)_* \gr_{\mot}^* \TC(\ko)$ \label{fig:v2BockTCko}}
\end{figure}

\begin{figure}
\centering
\resizebox{\textwidth}{!}{ \input{V2CetaTCko.inp} }
\caption{$\bF_2[v_2]$-basis for $\olA(1)_* \gr_{\mot}^* \TC(\ko)$
	\label{fig:V2CetaTCko2}}
\end{figure}

\begin{proposition} \label{prop: syntomic cohomology}
We have an isomorphism
\[
\olA(1)_* \gr_{\mot}^* \TC(\ko)
	\cong \bF_2[v_2] \otimes {} 
\left( \Lambda(\partial, \lambda'_1) \{1, \lambda_2\}
	\oplus \bF_2\{t^2 \lambda'_1,
	t \lambda'_1, t \lambda_2, t^3 \lambda'_1 \lambda_2,
	t^2 \lambda'_1 \lambda_2, t \lambda'_1 \lambda_2\} \right)
\]
of finitely generated and free $\bF_2[v_2]$-modules, where $\|v_2\| = (6,0)$,
$\|\partial\| = (-1,1)$, $\|\lambda'_1\| = (5,1)$, $\|\lambda_2\| =
(7,1)$ and $\|t\| = (-2,0)$.  See Figure~\ref{fig:V2CetaTCko2}.
\end{proposition}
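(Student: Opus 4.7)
The plan is to reduce the computation to a $v_2$-Bockstein problem whose input is $(\olV(2) \otimes \olC\eta)_* \gr_{\mot}^* \TC(\ko)$, which can be obtained by mimicking the argument of Theorem~\ref{thm:TC-V2} verbatim, using $(\olV(2) \otimes \olC\eta)$-coefficients in place of $\olV(2)$-coefficients. The intermediate computation is then lifted to $\olA(1) \simeq \olV(1) \otimes \olC\eta$ coefficients via the $v_2$-Bockstein spectral sequence
\[
E_1 = (\olV(2) \otimes \olC\eta)_* \gr_{\mot}^* \TC(\ko) \, [v_2] \Longrightarrow \olA(1)_* \gr_{\mot}^* \TC(\ko),
\]
in direct analogy with Proposition~\ref{prop:BSS-from-V(2)-to-V(1)}.

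For the first step, I would use Proposition~\ref{prop: TN} as input on the $\TC^{-}$ side, Theorem~\ref{thm: prismatic mod eta} on the $\TP$ side, and Proposition~\ref{prop:5.4} to identify $\varphi_2^{h\bT}$ (after the equivalence $G$ from \cite{BBLNR14} and \cite{NS18}) with the map inverting $\mu$ on the $\bT$-homotopy fixed point spectral sequence. Just as in Theorem~\ref{thm:TC-V2}, the difference $\can - \varphi$ is an isomorphism on the $\mu$-power classes (where $\can$ vanishes) and on the $t$-power classes in high horizontal filtration (where $\varphi_2^{h\bT}$ vanishes), whereas the classes $t^i\lambda'_1$, $t^i\lambda_2$, $t^i\lambda'_1\lambda_2$ in small horizontal filtration form the kernel of $\can - \varphi$. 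The connecting homomorphism of the equalizer then produces the $\partial$-family $\{\partial, \partial\lambda'_1, \partial\lambda_2, \partial\lambda'_1\lambda_2\}$ in bidegrees $(-1,1), (4,2), (6,2), (11,3)$. The bookkeeping yields
\[
(\olV(2) \otimes \olC\eta)_* \gr_{\mot}^* \TC(\ko) \cong \Lambda(\partial, \lambda'_1)\{1, \lambda_2\} \oplus \bF_2\{t^2\lambda'_1, t\lambda'_1, t\lambda_2, t^3\lambda'_1\lambda_2, t^2\lambda'_1\lambda_2, t\lambda'_1\lambda_2\}.
\]

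For the second step, I would argue that the $v_2$-Bockstein collapses at $E_1 = E_\infty$. In contrast with Proposition~\ref{prop:BSS-from-V(2)-to-V(1)}, where $d_1(\eta^4\varepsilon_2) = v_2\eta^4$ was forced by the $\BP_*\BP$-comodule relation $v_2^2 h_{10}^4 = 0$, here both $\eta$ and $\varepsilon_2$ are absent from the $E_1$-term ($\eta$ vanishes on $\olC\eta$-coefficients, and $\varepsilon_2$ was already killed by $d^2(\varepsilon_2) = t\mu$ in the underlying $\bT$-homotopy fixed point spectral sequence). Inspection of Figure~\ref{fig:V2CetaTCko2} shows that bidegree considerations leave no source-target pair for a nonzero $v_2$-Bockstein differential, so $E_1 = E_\infty$, and the asserted $\bF_2[v_2]$-module basis of rank $14$ drops out. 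Because the $E_1$-term is already free over $\bF_2$, freeness over $\bF_2[v_2]$ follows.

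The main obstacle lies in Step~1, specifically in verifying the hidden multiplicative extensions in the equalizer so that products involving~$\partial$ (notably $\partial\lambda_2$ and $\partial\lambda'_1\lambda_2$) appear with the expected multiplicative structure, and in confirming that no additional hidden $\mu$- or $t$-extensions arise beyond those already present in the $\olV(2)$-case. Once Step~1 is established, the collapse of the $v_2$-Bockstein in Step~2 is essentially formal, and the proposition follows immediately.
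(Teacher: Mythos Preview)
Your proposal is correct and follows essentially the same approach as the paper: compute $(\olV(2)\otimes\olC\eta)_*\gr_{\mot}^*\TC(\ko)$ by the $\can-\varphi$ analysis (using Proposition~\ref{prop: TN}, Theorem~\ref{thm: prismatic mod eta}, and Proposition~\ref{prop:5.4} exactly as you say), then observe that the $v_2$-Bockstein collapses for bidegree reasons. Your concern about hidden multiplicative extensions involving~$\partial$ is unnecessary here, since the proposition only asserts an isomorphism of $\bF_2[v_2]$-modules; the names $\partial\lambda'_1$, $\partial\lambda_2$, $\partial\lambda'_1\lambda_2$ are just labels for the cokernel classes, and the actual multiplicative relations are deferred to Lemmas~\ref{lem:Hurewicz-image} and~\ref{lem:class-detection}.
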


\begin{proof}
This proof is similar to that of Theorem~\ref{thm:TC-V2}, to which we
refer for a more elaborate review of some of the notations.  To calculate
the effect of~$\can$ in $\olV(2) \otimes \olC\eta$-homotopy we use the
map of spectral sequences from~\eqref{eq:TC-v2Ceta} to~\eqref{eq: v2 Ceta
Tate}, described in Proposition~\ref{prop: TN} and Theorem~\ref{thm:
prismatic mod eta}, given at the $E^2$-terms by inverting~$t$.
To calculate the effect of $\varphi_2^{h\bT}$ we use Corollary~\ref{cor:
A(1) coeff} and Proposition~\ref{prop:5.4} to see that there is a
$\mu$-inverted $\bT$-homotopy fixed point spectral sequence
\begin{equation} \label{eq:V2CetaTHHkotC2hT}
\begin{aligned}
\mu^{-1} E^2(\bT) &= (\olV(2) \otimes \olC\eta)_*
	\gr_{\mot}^* \THH(\ko)^{tC_2} \, [t] \\
	&= \Lambda(\varepsilon_2) \otimes \Lambda(\lambda'_1) \{1, \lambda_2\}
	\otimes \bF_2[\mu^{\pm1}] \otimes \bF_2[t] \\
	&\Longrightarrow (\olV(2) \otimes \olC\eta)_*
	\gr_{\mot}^* (\THH(\ko)^{tC_2})^{h\bT} \,,
\end{aligned}
\end{equation}
and $\varphi_2^{h\bT}$ is given by the map of spectral sequences
from~\eqref{eq:TC-v2Ceta} to~\eqref{eq:V2CetaTHHkotC2hT} that is given at
the $E^2$-terms by inverting~$\mu$.  The differential $d^2(\varepsilon_2)
= t \mu$ carries over from the proof of Proposition~\ref{prop: TN},
leaving
\[
\mu^{-1} E^4(\bT) = \mu^{-1} E^\infty(\bT)
	= \Lambda(\lambda'_1) \{1, \lambda_2\} \otimes \bF_2[\mu^{\pm1}]
\]
concentrated on the vertical axis.  The $\olV(2) \otimes \olC\eta$-homotopy
isomorphism
\[
\Lambda(\lambda'_1) \{1, \lambda_2) \otimes \bF_2[t^{\pm4}]
	\overset{\cong}\longto
\Lambda(\lambda'_1) \{1, \lambda_2) \otimes \bF_2[\mu^{\pm1}]
\]
induced by the equivalence~$G$ must thus be given by $\lambda'_1 \mapsto \lambda'_1$, $\lambda_2 \mapsto \lambda_2$ and $t^{\pm4} \mapsto \mu^{\mp1}$.

The map $G \circ \can - \varphi_2^{h\bT}$ induces isomorphisms
\begin{align*}
\bF_2[t^4]\{t^4\} \otimes \Lambda(\lambda'_1) \{1, \lambda_2\}
	&\overset{\cong}\longto
\Lambda(\lambda'_1) \{1, \lambda_2\} \otimes \bF_2[\mu^{-1}]\{\mu^{-1}\} \\
\bF_2[\mu]\{\mu\} \otimes \Lambda(\lambda'_1) \{1, \lambda_2\}
	&\overset{\cong}\longto
\Lambda(\lambda'_1) \{1, \lambda_2\} \otimes \bF_2[\mu]\{\mu\}
\end{align*}
and the zero homomorphism
\[
\Lambda(\lambda'_1) \{1, \lambda_2\} \oplus \bF_2\{t^2 \lambda'_1,
	t \lambda'_1, t \lambda_2, t^3 \lambda'_1 \lambda_2,
	t^2 \lambda'_1 \lambda_2, t \lambda'_1 \lambda_2\}
\overset{0}\longto
\Lambda(\lambda'_1) \{1, \lambda_2\} \,,
\]
by the same arguments as in the proof of Theorem~\ref{thm:TC-V2}.
Hence we have an isomorphism
\[
(\olV(2) \otimes \olC\eta)_* \gr_{\mot}^* \TC(\ko)
	\cong \Lambda(\partial, \lambda'_1) \{1, \lambda_2\} 
\oplus \bF_2\{t^2 \lambda'_1,
	t \lambda'_1, t \lambda_2, t^3 \lambda'_1 \lambda_2,
	t^2 \lambda'_1 \lambda_2, t \lambda'_1 \lambda_2\} \,.
\]
There is no room for differentials in the $v_2$-Bockstein spectral
sequence
\[
E_1 = (\olV(2) \otimes \olC\eta)_* \gr_{\mot}^* \TC(\ko) \, [v_2]
	\Longrightarrow \olA(1)_* \gr_{\mot}^* \TC(\ko)
	\,. \qedhere
\]
\end{proof}

\begin{lemma} \label{lem:Hurewicz-image}
The unit map
\[
\pi_* \olA(1) \longto \olA(1)_* \gr_{\mot}^* \TC(\ko)
\]
sends the classes $1$, $h_{11}$, $w$, $h_{11}^2 = h_{10} w$, $h_{11} w$
and~$h_{11}^2 w$ to classes that are detected by $1$, $t \lambda'_1$,
$t \lambda_2$, $t^3 \lambda'_1 \lambda_2 \mod \partial \lambda_2$, $t^2
\lambda'_1 \lambda_2$ and~$\partial \lambda'_1 \lambda_2$, respectively.
The product $h_{11} \lambda_2$ is detected by $t \lambda'_1 \lambda_2$.
\end{lemma}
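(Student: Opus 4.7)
The plan is to handle the atomic cases first, then the products via the module structure of $\olA(1)$ over $\olV(1)$.

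The case $1\mapsto 1$ is tautological, and the two linear cases $h_{11}\mapsto t\lambda'_1$ and $w\mapsto t\lambda_2$ are exactly the content of Corollary~\ref{cor:detection}, lifted from the $\TC^{-}$-level to the $\TC$-level by naturality of the Hurewicz map along the canonical map $\gr_{\mot}^*\TC(\ko)\to\gr_{\mot}^*\TC^{-}(\ko)$. Comparing Propositions~\ref{prop: TN} and~\ref{prop: syntomic cohomology} at bidegrees $(0,0)$, $(3,1)$, and $(5,1)$, one sees that no $\partial$-class contributes there, so the map $\olA(1)_*\gr_{\mot}^*\TC(\ko)\to\olA(1)_*\gr_{\mot}^*\TC^{-}(\ko)$ is a bijection in these bidegrees and the lift is unique.

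For every remaining case one uses that the Hurewicz map is $\pi_*\olV(1)$-linear, where $\olA(1)$ is viewed as a $\olV(1)$-module via Construction~\ref{con:A1V1Ceta}. The action of $h_{11},h_{11}^2\in\pi_*\olV(1)$ on $\pi_*\olA(1)$ therefore corresponds under the Hurewicz map to the action on $\olA(1)_*\gr_{\mot}^*\TC(\ko)$ by their images in $\olV(1)_*\gr_{\mot}^*\TC(\ko)$, namely the basis classes $\nu$ and $\nu^2$ identified in Proposition~\ref{prop:BSS-from-V(2)-to-V(1)}. For the ``tame'' products $h_{11}\cdot w,\ h_{11}\cdot\lambda_2$, and $h_{11}\cdot(h_{11}w)$, one can read off the required products $\nu\cdot(t\lambda_2)=t^2\lambda'_1\lambda_2$ and $\nu\cdot\lambda_2=t\lambda'_1\lambda_2$ directly from the graded ring $(\olV(2)\otimes\olC\eta)_*\gr_{\mot}^*\TC^{-}(\ko)$ of Proposition~\ref{prop: TN}; these classes lie at bidegrees $(8,2)$ and $(10,2)$, where inspection of Proposition~\ref{prop: syntomic cohomology} shows again no $\partial$-contribution, so they lift unambiguously to $\TC$.

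The two cases that involve $\nu^2$ are more delicate because the formal product $(t\lambda'_1)^2$ vanishes in $(\olV(2)\otimes\olC\eta)_*\gr_{\mot}^*\TC^{-}(\ko)$ (since $\lambda'_1$ is exterior there, by Corollary~\ref{cor: A(1) coeff}), while $\nu^2$ is a genuine basis generator of $\olV(1)_*\gr_{\mot}^*\TC(\ko)$ and, crucially, not an $\eta$-multiple (by inspection of the presentation in Proposition~\ref{prop:BSS-from-V(2)-to-V(1)}, since no element of the appropriate bidegree $(5,1)$ can support an $\eta$-multiplication landing on $\nu^2$). Hence the image of $\nu^2$ under the cofiber map $i\:\olV(1)\to\olA(1)$ is nonzero at bidegree $(6,2)$, and multiplication by $t\lambda_2$ gives a nonzero class at $(11,3)$. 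At $(11,3)$ the only class is $\partial\lambda'_1\lambda_2$, forcing that detection. At $(6,2)$ the available classes are $t^3\lambda'_1\lambda_2$ and $\partial\lambda_2$, which explains the indeterminacy in the statement ``mod $\partial\lambda_2$''.

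The main obstacle is confirming that the Hurewicz image of $h_{11}^2$ at $(6,2)$ really has coefficient $1$ on $t^3\lambda'_1\lambda_2$ (not just lives in $\bF_2\{t^3\lambda'_1\lambda_2,\partial\lambda_2\}$); for this one tracks $\nu^2$ through the long exact sequence associated to $\Sigma^{1,1}\olV(1)\overset{\eta}\to\olV(1)\to\olA(1)\overset{j}\to\Sigma^{2,0}\olV(1)$, combined with naturality along the cyclotomic trace $\bS\to\TC(\ko)$ (using that $\nu^2\in\pi_6\bS$ contributes to $\pi_6\TC(\ko)$ via the splitting of $\TC(\bS)$) to pin down the component outside the $\partial$-coset.
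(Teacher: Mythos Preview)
Your approach via the $\olV(1)$-module structure is sound in spirit and matches the paper's strategy; the atomic cases and the tame products $h_{11}w$, $h_{11}\lambda_2$ go through essentially as you describe.

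There is, however, a genuine gap in your handling of $h_{11}^2$ and $h_{11}^2 w$. You correctly note that the naive spectral-sequence product $(t\lambda'_1)^2$ vanishes on the $\olA(1)$-side, so direct multiplication cannot detect these classes. But your proposed fix for $h_{11}^2$---appealing to the splitting of $\TC(\bS)$ and naturality---is vague and does not actually isolate the $t^3\lambda'_1\lambda_2$-coefficient. For $h_{11}^2 w$, the phrase ``multiplication by $t\lambda_2$'' applied to $i(\nu^2)$ is not a well-defined operation: both factors would live in $\olA(1)_*\gr_{\mot}^*\TC(\ko)$, which carries no ring structure, and you give no argument for nonvanishing of the result.

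The paper resolves both cases by writing out the long exact sequence attached to~\eqref{eq:V1-A1-cofibseq} explicitly on syntomic cohomology, computing both $i$ \emph{and} $j$. For $h_{11}^2$: exactness gives $i(\nu^2)\in\ker(j)$ at $(6,2)$, and one checks $j(\partial\lambda_2)=\Sigma^{2,0}\partial\lambda'_1\neq 0$ (by $\olV(1)$-linearity of $j$ and the choice $j(\lambda_2)=\Sigma^{2,0}\lambda'_1$), so $\partial\lambda_2\notin\ker(j)$ and hence $i(\nu^2)$ necessarily has nonzero $t^3\lambda'_1\lambda_2$-component. For $h_{11}^2 w$: since $j$ is $\olV(1)$-linear and $j(w)=h_{11}$ in $\pi_*\Sigma^{2,0}\olV(1)$, one gets $j(h_{11}^2 w)=h_{11}^3$, whose Hurewicz image is $\nu^3=v_2\eta^3\neq 0$ in $\olV(1)_*\gr_{\mot}^*\TC(\ko)$; this forces the Hurewicz image of $h_{11}^2 w$ to be nonzero, and $\partial\lambda'_1\lambda_2$ is the unique class at $(11,3)$. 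The idea you are missing is to exploit $j$ rather than only $i$: this transfers the problem back to the $\olV(1)$-side, where products such as $\nu^3$ are genuine ring products and are already computed in Proposition~\ref{prop:BSS-from-V(2)-to-V(1)}.
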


\begin{proof}
The $\olV(1)$-module cofiber sequence~\eqref{eq:V1-A1-cofibseq} induces
a long exact sequence
\[
\dots \overset{\eta}\longto
	\olV(1)_* \gr_{\mot}^* \TC(\ko)
\overset{i}\longto
	\olA(1)_* \gr_{\mot}^* \TC(\ko) 
\overset{j}\longto
	\Sigma^{2,0} \olV(1)_* \gr_{\mot}^* \TC(\ko)
\overset{\eta}\longto \dots
\]
of $\bF_2[v_2]$-modules, see Figures~\ref{fig:v2BockTCko}
and~\ref{fig:V2CetaTCko2}.  Having chosen $\lambda'_1 \in \olV(1)_*
\gr_{\mot}^* \TC(\ko)$ we choose $\lambda'_1, \lambda_2 \in \olA(1)_*
\gr_{\mot}^* \TC(\ko)$ so that $i(\lambda'_1) = \lambda'_1$ and
$j(\lambda_2) = \Sigma^{2,0} \lambda'_1$.  By exactness, $i$ is then
given by
\begin{align*}
1 &\longmapsto 1 \\
\partial &\longmapsto \partial \\
\nu &\longmapsto t \lambda'_1 \\
\lambda'_1 &\longmapsto \lambda'_1 \\
\partial \lambda'_1 &\longmapsto \partial \lambda'_1 \\
\nu^2 &\longmapsto t^3 \lambda'_1 \lambda_2 \mod \partial \lambda_2 \\
(\lambda'_1)^2 &\longmapsto t \lambda'_1 \lambda_2 \mod v_2 \partial \lambda'_1 \,,
\end{align*}
while $j$ is given by
\begin{align*}
t^2 \lambda'_1 &\longmapsto \Sigma^{2,0} \partial \\
t \lambda_2 &\longmapsto \Sigma^{2,0} \nu \\
\lambda_2 &\longmapsto \Sigma^{2,0} \lambda'_1 \\
\partial \lambda_2 &\longmapsto \Sigma^{2,0} \partial \lambda'_1 \\
t^2 \lambda'_1 \lambda_2 &\longmapsto \Sigma^{2,0} \nu^2 \\
\lambda'_1 \lambda_2 &\longmapsto \Sigma^{2,0} (\lambda'_1)^2 \\
\partial \lambda'_1 \lambda_2 &\longmapsto \Sigma^{2,0} \nu^3 \,.
\end{align*}
The formulas for~$i$ imply the claims for $1$, $\nu = h_{11}$ and~$\nu^2
= h_{11}^2$.  We know from Corollary~\ref{cor:detection} that $w$ is
detected by $t \lambda_2$, so the formulas for~$j$ imply the claims for
$\nu w = h_{11} w$ and $\nu^2 w = h_{11}^2 w$.

The $\olV(1)$-module action on $\olA(1)$ shows that $\nu \cdot \lambda_2
= h_{11} \lambda_2$ is detected by $t \lambda'_1 \cdot \lambda_2$, since
the latter product is nonzero in $\olA(1)_* \gr_{\mot}^* \TC^{-}(\ko)$.
\end{proof}

\begin{remark} \label{rem:nu3}
We can now complete the unfinished business in the proof
of Proposition~\ref{prop:BSS-from-V(2)-to-V(1)}.  Since $\nu^2 w$
is detected by $\partial \lambda'_1 \lambda_2$, and $j$ maps~$w$
to~$\Sigma^{2,0} \nu$ and $\lambda_2$ to $\Sigma^{2,0} \lambda'_1$, it
follows that $\Sigma^{2,0} \nu^3$ is detected by $\Sigma^{2,0} \partial
(\lambda'_1)^2$, so $\partial (\lambda'_1)^2$ is equal to $\nu^3 =
v_2 \eta^3$ in $\olV(1)_* \gr_{\mot}^* \TC(\ko)$.
\end{remark}

\begin{lemma} \label{lem:class-detection}
Let $\varsigma \in \olA(1)_* \gr_{\mot}^* \TC(\ko)$ be the class in bidegree $(1,1)$ detected by $t^2 \lambda'_1$.
Then $\varsigma \nu$ is the class in bidegree $(4,2)$ detected by $\partial \lambda'_1$.
\end{lemma}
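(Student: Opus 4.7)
The plan is to exploit the $\olV(1)$-module long exact sequence arising from the cofiber sequence~\eqref{eq:V1-A1-cofibseq}. By Lemma~\ref{lem:Hurewicz-image}, the connecting map satisfies $j(\varsigma) = \Sigma^{2,0}\partial$, and $\nu \in \olA(1)_*\gr_{\mot}^* \TC(\ko)$ equals $i(\nu)$ for the class $\nu \in \olV(1)_*\gr_{\mot}^* \TC(\ko)$. Since $j$ is $\olV(1)_*\gr_{\mot}^* \TC(\ko)$-linear and the product in $\olA(1)_*\gr_{\mot}^* \TC(\ko)$ is graded-commutative,
\[
j(\varsigma \cdot \nu) = j(i(\nu) \cdot \varsigma) = \nu \cdot j(\varsigma) = \Sigma^{2,0}(\partial\nu) = 0,
\]
invoking the relation $\partial\nu = 0$ from Proposition~\ref{prop:BSS-from-V(2)-to-V(1)}. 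Hence $\varsigma\cdot\nu \in \ker(j) = \im(i)$. In bidegree $(4,2)$ the class $\partial\lambda'_1 \in \olV(1)_*\gr_{\mot}^* \TC(\ko)$ is the unique generator and it is not an $\eta$-multiple, since the only candidate source $\nu$ in bidegree $(3,1)$ satisfies $\eta\nu = 0$. Exactness then gives $i(\partial\lambda'_1) \ne 0$, and this nonzero class must coincide with the generator $\partial\lambda'_1$ of Proposition~\ref{prop: syntomic cohomology} shown in Figure~\ref{fig:V2CetaTCko2}. This narrows $\varsigma\cdot\nu$ to either $0$ or $\partial\lambda'_1$.

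To rule out $\varsigma\cdot\nu = 0$, I trace the product through the fiber sequence $\gr_{\mot}^* \TC(\ko) \to \gr_{\mot}^* \TC^-(\ko) \xrightarrow{\can - \varphi} \gr_{\mot}^* \TP(\ko)$. The images of $\varsigma$ and $\nu$ in $\olA(1)_*\gr_{\mot}^* \TC^-(\ko)$ are detected by $t^2\lambda'_1$ and $t\lambda'_1$ in the $\bT$-homotopy fixed point spectral sequence, and their naive product $t^3(\lambda'_1)^2 = 0$ vanishes since $(\lambda'_1)^2 = 0$ in $\olA(1)_*\gr_{\mot}^* \THH(\ko)$ by Corollary~\ref{cor: A(1) coeff}. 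A bidegree count in $(\olV(2) \otimes \olC\eta)_*\gr_{\mot}^* \TP(\ko)$ via Theorem~\ref{thm: prismatic mod eta} shows that $\can(\varsigma)$ and $\can(\nu)$ both vanish, ruling out any hidden $\nu$-extension to the class $t^4\lambda'_1\lambda_2$ in $(4,2)$, which is the only competing detector. So the product vanishes in $\TC^-(\ko)$, and $\varsigma\cdot\nu$ lies in the image of the boundary $\pi_5\gr_{\mot}^3 \TP(\ko) \to \pi_4\gr_{\mot}^3 \TC(\ko)$; since that source is $\bF_2\{\lambda'_1\}$ with $\partial(\lambda'_1) = \partial\lambda'_1$, tracking the cycle-level lift identifies $\varsigma\cdot\nu$ with $\partial\lambda'_1$.

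The main obstacle is converting the containment $\varsigma\cdot\nu \in \im(\partial)$ into the sharper identity $\varsigma\cdot\nu = \partial(\lambda'_1)$: verifying at the cycle level that the canonical lift of $\varsigma\cdot\nu$ through the fiber sequence picks up $\lambda'_1$ rather than $0$. The expected route is to unwind the concrete description of $\varsigma$ and $\nu$ as lifts of $\TC^-(\ko)$-classes detected by $t^2\lambda'_1$ and $t\lambda'_1$, and identify a cocycle representative of $\varsigma\cdot\nu$ with the image of $\lambda'_1$ under the connecting map in the long exact sequence.
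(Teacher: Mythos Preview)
Your reduction to the dichotomy $\varsigma\nu \in \{0, \partial\lambda'_1\}$ is correct and cleanly argued via the $\olV(1)$-module structure and the long exact sequence. The genuine gap, which you yourself identify at the end, is that your fiber-sequence argument does not distinguish between these two possibilities. Knowing that the image of $\varsigma\nu$ in $\olA(1)_*\gr_{\mot}^*\TC^{-}(\ko)$ vanishes only tells you that $\varsigma\nu$ lies in the image of the connecting map $\partial$ from $\olA(1)_*\gr_{\mot}^*\TP(\ko)$; it does not tell you \emph{which} element of that image it is. The ``cycle-level lift'' you allude to would require tracking an explicit null-homotopy of $(\can-\varphi)$ applied to a representative of $\varsigma\nu$, and there is no mechanism in your argument that forces this null-homotopy to be the nontrivial one. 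Internal spectral-sequence bookkeeping alone cannot resolve this, because both $0$ and $\partial\lambda'_1$ are consistent with everything you have computed.

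The paper's proof is entirely different and brings in external geometric input: it uses the $2$-complete splitting $\TC(\bS)\simeq \bS\oplus\Sigma\bC P^\infty_{-1}$ and the fact that $\bS\to\ko$ induces a $4$-connected map $\TC(\bS)\to\TC(\ko)$. One then computes $A(1)_*\TC(\bS)$ in low degrees via the Atiyah--Hirzebruch spectral sequence for $\Sigma\bC P^\infty_{-1}$, using Mosher's results to determine the differentials. This shows directly that $A(1)_4\TC(\bS)\cong\bF_2\{\nu\Sigma\beta_0\}$ maps isomorphically to $A(1)_4\TC(\ko)\cong\bF_2\{\partial\lambda'_1\}$, with $\Sigma\beta_0\mapsto\varsigma$. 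The nontriviality of $\varsigma\nu$ is thus imported from a concrete homotopy-theoretic computation in $\TC(\bS)$, not deduced from the syntomic machinery. Your algebraic approach would need some comparable external input to close the gap.
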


\begin{proof}
By \cite{BHM93}*{Theorem~5.17}, \cite{Rog02}*{Corollary~1.21} there
is a $2$-complete equivalence $\TC(\bS) \simeq \bS \oplus \Sigma \bC
P^\infty_{-1}$, and by \cite{BM94}*{Proposition~10.9}, \cite{Dun97}*{Main
Theorem} the $3$-connected map $\bS \to \ko$ induces a $4$-connected
map $\TC(\bS) \to \TC(\ko)$.  For each $i\ge-1$ let $\Sigma\beta_i
\in H_{2i+1}(\Sigma \bC P^\infty_{-1})$ denote the generator.
The Atiyah--Hirzebruch spectral sequence
\[
E^2 = H_*(\Sigma \bC P^\infty_{-1}; \pi_* A(1))
	\Longrightarrow A(1)_*(\Sigma \bC P^\infty_{-1})
\]
has nonzero differentials $d^4(\Sigma \beta_1) = \nu \Sigma \beta_{-1}$
and $d^6(\Sigma \beta_2) = w \Sigma \beta_{-1}$.  This follows from
\cite{Mos68}*{Proposition~5.2, Proposition~5.4}, using that $w \in \<
\nu, \eta, \iota \>$ in $\pi_* A(1)$, where $\iota$ is the class of $\bS
\to A(1)$.  Hence
\[
A(1)_* \TC(\bS) \cong \bF_2\{\Sigma \beta_{-1}, \iota,
	\Sigma \beta_0, \nu \iota, \nu \Sigma \beta_0\}
\]
in stems $-1 \le * \le 4$, mapping isomorphically to
\[
A(1)_* \TC(\ko) \cong \bF_2\{\partial, 1, t^2 \lambda'_1,
	t \lambda'_1, \partial \lambda'_1\}
\]
in this range.  It follows that $\Sigma \beta_0$ maps to the class
$\varsigma$ detected by $t^2 \lambda'_1$ and $\nu \Sigma \beta_0$ to the
class detected by $\partial \lambda'_1$, which must therefore be equal
to $\varsigma \nu$.
\end{proof}

\begin{theorem}[Syntomic cohomology modulo $(2, \eta, v_1)$ of~$\ko$]
	\label{thm: syntomic A(1)}
We have an isomorphism
\begin{multline*}
\olA(1)_* \gr_{\mot}^* \TC(\ko) \cong \bF_2[v_2] \otimes \bigl(
	\bF_2\{1, \partial, \nu, w, \nu^2 = \eta w, \nu w,
		\lambda'_1 \lambda_2,
		\nu^2 w = \partial \lambda'_1 \lambda_2\} \\
	\oplus \bF_2\{\varsigma, \lambda'_1,
		\varsigma \nu = \partial \lambda'_1\}
	\oplus \bF_2\{\lambda_2, \partial \lambda_2, \nu \lambda_2\}
	\bigr)
\end{multline*}
of $\olV(1)_* \gr_{\mot}^* \TC(\ko)$-modules, where the (stem,
motivic filtration) bidegrees and detecting classes of the
$\bF_2[v_2]$-module generators are as in Table~\ref{tab:A1TCkogens}.
See also Figure~\ref{fig:A1TCko}.
\end{theorem}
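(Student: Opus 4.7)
The approach is to reinterpret the $14$ generators of $\olA(1)_* \gr_{\mot}^* \TC(\ko)$ from Proposition~\ref{prop: syntomic cohomology} under the more descriptive names used in the theorem, and to verify the asserted relations and $\olV(1)_* \gr_{\mot}^* \TC(\ko)$-module structure. Both the stated and the given presentations consist of $14$ classes occupying the same bidegrees (up to a one-dimensional overlap in bidegree $(6,2)$), so the argument reduces to identifying each conceptual generator with a class in the corresponding bidegree and checking that the transition matrix is invertible over $\bF_2[v_2]$.

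First, I would match the classes $1, \partial, \lambda'_1, \lambda_2$ together with their products $\partial \lambda'_1, \partial \lambda_2, \lambda'_1 \lambda_2, \partial \lambda'_1 \lambda_2$ with themselves. Next, I would invoke Lemma~\ref{lem:Hurewicz-image} to rename the horizontal generators as Hurewicz images from $\pi_* \olA(1)$: the classes $\nu$, $w$, $\nu^2 = \eta w$, $\nu w$, and $\nu^2 w$ are detected by $t\lambda'_1$, $t\lambda_2$, $t^3 \lambda'_1 \lambda_2$ (modulo $\partial \lambda_2$), $t^2 \lambda'_1 \lambda_2$, and $\partial \lambda'_1 \lambda_2$, respectively, while $\nu \lambda_2$ is detected by $t \lambda'_1 \lambda_2$ by the last assertion of the same lemma. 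The identification of $\varsigma$ (detected by $t^2 \lambda'_1$) and the relation $\varsigma \nu = \partial \lambda'_1$ both come from Lemma~\ref{lem:class-detection}. The $\olV(1)_* \gr_{\mot}^* \TC(\ko)$-module structure would then be transported from Proposition~\ref{prop: syntomic cohomology} along this change of basis; by naturality of the unit map $\olV(1) \to \olA(1)$, the actions of $v_2$, $\partial$, and $\lambda'_1$ on the new generators are exactly the stated ones.

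The only subtle point is the two-dimensional piece in bidegree $(6,2)$, spanned by $\nu^2$ and $\partial \lambda_2$ in the new basis and by $t^3 \lambda'_1 \lambda_2$ and $\partial \lambda_2$ in the old. Lemma~\ref{lem:Hurewicz-image} pins down $\nu^2$ only modulo $\partial \lambda_2$, which is precisely the ambiguity accommodated by the change of basis. Outside this bidegree each detecting class is unique, so the transition matrix is essentially the identity. The main obstacle is therefore notational rather than mathematical: one must correctly interpret the relation $\nu^2 = \eta w$ as arising from the $\pi_* \olV(1)$-module action on $\pi_* \olA(1)$ (where $\eta = h_{10}$ acts nontrivially), rather than from an internal product in $\olA(1)_* \gr_{\mot}^* \TC(\ko)$ in which the Hurewicz image of $\eta$ vanishes because $\bS \to A(1)$ factors through $\bS \to C\eta$.
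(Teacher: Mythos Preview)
Your proposal is correct and follows essentially the same approach as the paper: the proof there simply states that the theorem summarizes Proposition~\ref{prop: syntomic cohomology} together with Lemmas~\ref{lem:Hurewicz-image} and~\ref{lem:class-detection}, and notes (as you do) that the lift of $t^3 \lambda'_1 \lambda_2$ is only well-defined modulo $\partial \lambda_2$, with the image of~$\nu^2$ fixing one such choice. Your additional remarks on the change-of-basis matrix and on the interpretation of $\nu^2 = \eta w$ as a $\pi_*\olV(1)$-module relation are accurate elaborations, but not needed beyond what the cited results already provide.
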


\begin{proof}
This summarizes the results of Proposition~\ref{prop: syntomic cohomology}
and Lemmas~\ref{lem:Hurewicz-image} and~\ref{lem:class-detection}.
The lift of $t^3 \lambda'_1 \lambda_2$ over $\pi \: \TC(\ko) \to
\TC^{-}(\ko)$ is only defined modulo $\partial \lambda_2$ in the image
under $\partial \: \Sigma^{-1} \TP(\ko) \to \TC(\ko)$, but the image of
$\nu^2$ specifies one such choice of lift.
\end{proof}

\begin{table}[ht!]
\[
\begin{tabular}{ >{$}c<{$} >{$}c<{$} >{$}c<{$} }
\hline
\text{generator} & \text{bidegree} & \text{detecting class} \\
\hline
\hline
1 & (0,0) & 1 \\
\partial & (-1,1) & \partial \\
\varsigma & (1,1) & t^2 \lambda'_1 \\
\nu & (3,1) & t \lambda'_1 \\
w & (5,1) & t \lambda_2 \\
\lambda'_1 & (5,1) & \lambda'_1 \\
\lambda_2 & (7,1) & \lambda_2 \\
\varsigma \nu & (4,2) & \partial \lambda'_1 \\
\partial \lambda_2 & (6,2) & \partial \lambda_2 \\
\nu^2 & (6,2) & t^3 \lambda'_1 \lambda_2 \mod \partial \lambda_2 \\
\nu w & (8,2) & t^2 \lambda'_1 \lambda_2 \\
\nu \lambda_2 & (10,2) & t \lambda'_1 \lambda_2 \\
\lambda'_1 \lambda_2 & (12,2) & \lambda'_1 \lambda_2 \\
\nu^2 w & (11,3) & \partial \lambda'_1 \lambda_2 \\
\hline
\end{tabular}
\]
\caption{Bidegrees and detecting classes for the $\bF_2[v_2]$-module
	generators of $\olA(1)_* \gr_{\mot}^* \TC(\ko)$
	\label{tab:A1TCkogens}}
\end{table}

\section{Topological cyclic homology and algebraic $K$-theory} \label{sec:tc}

We now use the motivic spectral sequence
\begin{equation} \label{eq:motivicss}
E_2 = \olA(1)_* \gr_{\mot}^* \TC(\ko) \Longrightarrow A(1)_* \TC(\ko)
\end{equation}
to compute the $A(1)$-homotopy of the topological cyclic homology
of~$\ko$.  The $E_2$-term, given in Theorem~\ref{thm: syntomic A(1)},
is concentrated in even total degrees and motivic filtrations $0 \le *
\le 3$, so the only possibly nonzero differentials are
\[
d_3(v_2^i) \, \in \, \bF_2\{ v_2^{i-2} \nu^2 w\}
\]
for $i\ge2$.  We show that some, but not all, of these differentials are
nonzero.  This contrasts with the motivic spectral sequence converging
to $V(1)_* \TC(\ell)$ at odd primes~$p$, which was shown to collapse at
the $E_2$-term by Hahn--Raksit--Wilson in \cite{HRW}*{Corollary~1.4.3}.

\begin{notation} \label{not:FilGr}
We equip $A(1) \otimes \TC(\ko)$ with the relative convolution 
filtration
\[
\fil_{\mot}^{\star} (A(1) \otimes \TC(\ko))
	:= \fil_{\ev}^{\star} A(1) \otimes_{\fil_{\ev}^{\star} \bS}
	\fil_{\mot}^{\star} \TC(\ko) \,,
\]
with associated graded $\gr_{\mot}^* (A(1) \otimes \TC(\ko)) \simeq
\olA(1) \otimes \gr_{\mot}^* \TC(\ko)$.  This filtration is complete and exhaustive, since $\fil_\ev^\star A(1)$ is a finite cell $\fil_\ev^\star\bS$-module.
The motivic spectral sequence~\eqref{eq:motivicss} is the associated
spectral sequence, converging to $\pi_*(A(1) \otimes \TC(\ko))
= A(1)_* \TC(\ko)$. 

We write
\begin{align*}
\Fil_{\mot}^w A(1)_* \TC(\ko)
	&= \im( \pi_* \fil_{\mot}^w (A(1) \otimes \TC(\ko))
		\longto A(1)_* \TC(\ko)) \\
\Gr_{\mot}^w A(1)_* \TC(\ko)
	&= \Fil_{\mot}^w A(1)_* \TC(\ko) / \Fil_{\mot}^{w+1} A(1)_* \TC(\ko)
\end{align*}
for the induced (algebraic) filtration on $A(1)_* \TC(\ko)$ and its
associated graded, so that
\[
E_\infty \cong \Gr_{\mot}^* A(1)_* \TC(\ko) \,.
\]
In each stem~$n$ the $E_\infty$-term contains at most two nonzero groups,
in motivic filtrations $s \in \{0,2\}$ or $s \in \{1,3\}$, according to
the parity of~$n$.
\end{notation}

Bhattacharya--Egger--Mahowald \cite{BEM17}*{Main Theorem} proved
for each version of~$A(1)$ that there exists a $v_2^{32}$ self-map
$\Sigma^{192} A(1) \to A(1)$.  We noted in Lemma~\ref{lem:d3v24zero}
that $\id \: A(1) \to A(1)$ has additive exponent~$4$.  Hence there is
a natural $\bZ/4[v_2^{32}]$-module structure on~\eqref{eq:motivicss}
and its abutment.  This factors through a finitely generated and free
$\bF_2[v_2^4]$-module structure on the associated graded.

\begin{theorem} \label{thm:A1htpyTCko}
The motivic spectral sequence~\eqref{eq:motivicss} has nonzero
differentials
\[
d_3(v_2^i) = v_2^{i-2} \nu^2 w
\]
for $i \equiv 2, 3 \mod 4$.  The remaining differentials are zero.
Hence
\begin{align*}
\Gr_{\mot}^* A(1)_* \TC(\ko)
	&= \bF_2\{v_2^i \mid i \equiv 0, 1 \mod 4\} \\
	&\qquad\oplus \bF_2[v_2] \{\partial, \varsigma, \nu,
		\lambda'_1, w, \lambda_2\} \\
	&\qquad\oplus \bF_2[v_2] \{\varsigma \nu, \nu^2, \partial \lambda_2,
		\nu w, \nu \lambda_2, \lambda'_1 \lambda_2\} \\
	&\qquad\oplus \bF_2\{v_2^j \nu^2 w \mid j \equiv 2, 3 \mod 4\}
\end{align*}
is a finitely generated and free $\bF_2[v_2^4]$-module of rank~$52$.
Here $|\partial| = -1$, $|\varsigma| = 1$, $|\nu| = 3$, $|\lambda'_1|
= |w| = 5$, $|v_2| = 6$ and $|\lambda_2| = 7$.
\end{theorem}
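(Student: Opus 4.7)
The plan is to determine all differentials in the motivic spectral sequence~\eqref{eq:motivicss} and then read off the associated graded $\Gr_{\mot}^* A(1)_* \TC(\ko)$ from the resulting $E_\infty$-term; the $\bZ/4[v_2^{32}]$-module structure on the abutment is automatic from Notation~\ref{not: A(1) def}. By Theorem~\ref{thm: syntomic A(1)} (summarized in Table~\ref{tab:A1TCkogens}), the $E_2$-term is concentrated in motivic filtrations $0 \le s \le 3$ and in even internal degrees, so only~$d_3$ can contribute: $d_r = 0$ for even~$r$ by parity, and $d_r = 0$ for $r \ge 5$ by the filtration bound. Since the only $\bF_2[v_2]$-module generator in filtration~$0$ is~$1$, the only potentially nonzero differentials are
\[
d_3(v_2^i) \in \bF_2\{v_2^{i-2} \nu^2 w\} \qquad (i \ge 2),
\]
with $d_3(v_2^i) = 0$ automatic for $i \in \{0,1\}$ since the target is trivial. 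By Corollary~\ref{cor:d3isv24periodic}, $d_3$ is $v_2^4$-linear, so it suffices to compute $d_3(v_2^2)$ and $d_3(v_2^3)$.

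These I would obtain by naturality with respect to the $\bE_\infty$ unit map $\bS \to \TC(\ko)$. As observed just before Corollary~\ref{cor:detection}, the motivic spectral sequence for~$\bS$ with $A(1)$-coefficients coincides with the Novikov spectral sequence for~$A(1)$, where Lemma~\ref{lem: Nov A1} records the nonzero differentials $d_3(v_2^2) = h_{11}^2 w$ and $d_3(v_2^3) = v_2 h_{11}^2 w$. By Lemma~\ref{lem:Hurewicz-image}, the induced unit map $\pi_* \olA(1) \to \olA(1)_* \gr_{\mot}^* \TC(\ko)$ sends $h_{11}^2 w$ to the class detected by $\partial \lambda'_1 \lambda_2$, which by Table~\ref{tab:A1TCkogens} is precisely $\nu^2 w$. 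Naturality then yields $d_3(v_2^2) = \nu^2 w$ and $d_3(v_2^3) = v_2 \nu^2 w$ in~\eqref{eq:motivicss}, and $v_2^4$-linearity gives $d_3(v_2^i) = v_2^{i-2} \nu^2 w$ for all $i \equiv 2, 3 \pmod 4$, while $v_2^i$ is a permanent cycle for $i \equiv 0, 1 \pmod 4$.

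With all differentials determined, $E_4 = E_\infty = \Gr_{\mot}^* A(1)_* \TC(\ko)$. The surviving classes are: the $v_2^i$ with $i \equiv 0, 1 \pmod 4$ in filtration~$0$; all of $\bF_2[v_2]\{\partial, \varsigma, \nu, \lambda'_1, w, \lambda_2\}$ in filtration~$1$; all of $\bF_2[v_2]\{\varsigma \nu, \nu^2, \partial \lambda_2, \nu w, \nu \lambda_2, \lambda'_1 \lambda_2\}$ in filtration~$2$; and the surviving $v_2^j \nu^2 w$ with $j \equiv 2, 3 \pmod 4$ in filtration~$3$. Counting $\bF_2[v_2^4]$-generators yields the asserted rank $2 + 6 \cdot 4 + 6 \cdot 4 + 2 = 52$.

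The main subtlety is the naturality step: one must verify that the $\bE_\infty$ unit map $\bS \to \TC(\ko)$ induces a compatible morphism of filtered spectra $\fil_{\ev}^{\star} A(1) \to \fil_{\mot}^{\star}(A(1) \otimes \TC(\ko))$ whose effect on $E_2$-terms is the unit map recorded in Lemma~\ref{lem:Hurewicz-image}, so that the Novikov $d_3$-differentials of Lemma~\ref{lem: Nov A1} transport faithfully to the asserted differentials in~\eqref{eq:motivicss}. This is a direct consequence of the construction of both filtrations and the identification of the Novikov and even filtrations on~$A(1)$, but merits an explicit check.
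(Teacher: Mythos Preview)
Your proof is correct and follows essentially the same route as the paper: restrict to the only possible differentials $d_3(v_2^i)$, import the Novikov differentials from Lemma~\ref{lem: Nov A1} via the unit map and Lemma~\ref{lem:Hurewicz-image}, and propagate $v_2^4$-periodically using Corollary~\ref{cor:d3isv24periodic}. The naturality subtlety you flag is handled by the construction in Notation~\ref{not:FilGr}, where the motivic filtration on $A(1)\otimes\TC(\ko)$ is defined as the convolution of $\fil_{\ev}^{\star} A(1)$ with $\fil_{\mot}^{\star}\TC(\ko)$, so the filtered unit map $\fil_{\ev}^{\star}\bS \to \fil_{\mot}^{\star}\TC(\ko)$ induces the required map of spectral sequences.
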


\begin{proof}
The unit map $\bS \to \TC(\ko)$ induces a map from the Novikov
spectral sequence for~$A(1)$, as discussed in Lemma~\ref{lem:
Nov A1}, to the motivic spectral sequence~\eqref{eq:motivicss}.
By Lemma~\ref{lem:Hurewicz-image} this map of $E_2$-terms sends $v_2^i$
to $v_2^i$ and $v_2^i h_{11}^2 w$ to $v_2^i \partial \lambda'_1 \lambda_2$
for each $i\ge0$.  Since $d_3(1) = d_3(v_2) = 0$, $d_3(v_2^2) = h_{11}^2
w$ and $d_3(v_2^3) = v_2 h_{11}^2 w$ in the Novikov spectral sequence,
we must have $d_3(1) = d_3(v_2) = 0$, $d_3(v_2^2) = \partial \lambda'_1
\lambda_2$ and $d_3(v_2^3) = v_2 \partial \lambda'_1 \lambda_2$
in the motivic spectral sequence.  This handles the cases $0 \le i
< 4$.  By Corollary~\ref{cor:d3isv24periodic}, we know that these
$d_3$-differentials propagate $v_2^4$-periodically, as claimed.

It follows that all classes in motivic filtrations $1$ and~$2$ survive
to $E_\infty$.  In filtrations $0$ and~$3$, only the classes $v_2^i$
with $0 \le i \equiv 0, 1 \mod 4$ and $v_2^{i-2} \nu^2 w$ with $2 \le i
\equiv 0, 1 \mod 4$ survive.  Setting $0 \le j = i-2$ gives the asserted
formula.
\end{proof}

\begin{remark} \label{rem:addext}
The additive extensions
\[
0 \to E_\infty^{n,s+2} \longto A(1)_n \TC(\ko) \longto E_\infty^{n,s} \to 0
\]
(with $s=0$ for $n$ even, $s=1$ for $n$ odd) are sometimes nontrivial.
For example, we see from Figure~\ref{fig:Ext-A-A1abcd} that $2 \cdot
v_2 = \nu^2$ in $\pi_6 A(1)[ij]$ if (and only if) $[ij] \in \{[10],
[11]\}$, which implies that $A(1)_6 \TC(\ko) \cong \bZ/4\{v_2\} \oplus
\bZ/2\{\partial \lambda_2\}$ in these two cases.  We have not carried
out a complete analysis of these extension problems.
\end{remark}

Theorem~\ref{thm:A1htpyTCko} allows us to determine the $A(1)$-homotopy of
the algebraic $K$-theory spectra of~$\ko$ and $\ko^\wedge_2$.  We begin
with the $2$-complete case.

\begin{theorem} \label{thm:k2-complete}
There is an exact sequence of $\bZ/4[v_2^{32}]$-modules
\[
0 \to \Sigma^1 \bF_2 \oplus \Sigma^3 \bF_2
	\longto A(1)_* \K(\ko^\wedge_2) 
	\overset{\trc}\longto A(1)_* \TC(\ko)
	\longto \bF_2\{\partial, \varsigma\} \to 0 \,,
\]
with $|\partial| = -1$ and $|\varsigma| = 1$.
\end{theorem}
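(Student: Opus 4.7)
The plan is to identify the fiber $F := \mathrm{fib}(\trc \: \K(\ko^\wedge_2) \to \TC(\ko))$ explicitly and then read off the short exact sequence from the long exact sequence in $A(1)$-homotopy, using our calculation of $A(1)_* \TC(\ko)$ from Theorem~\ref{thm:A1htpyTCko}.

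First I would invoke the Dundas--Goodwillie--McCarthy theorem, in its form for $0$-connected maps of connective $\bE_\infty$ rings, applied first to the Postnikov truncation $\ko^\wedge_2 \to H\bZ_2$ and then to the surjection $\bZ_2 \to \bF_2$ with nilpotent kernel~$(2)$. Together these identify $F$ with the fiber of the cyclotomic trace on~$\bF_2$. Combining Quillen's identification $\K(\bF_2)^\wedge_2 \simeq H\bZ_2$ with the classical calculation $\TC(\bF_2)^\wedge_2 \simeq H\bZ_2 \oplus \Sigma^{-1} H\bZ_2$, in which $\trc$ is the inclusion of the first summand, yields $F \simeq \Sigma^{-2} H\bZ_2$.

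Since $2$, $\eta$, and $v_1$ all act trivially on $H\bF_2$, iterated cofiber-sequence splittings give $A(1) \otimes H\bZ_2 \simeq H\bF_2 \oplus \Sigma^2 H\bF_2 \oplus \Sigma^3 H\bF_2 \oplus \Sigma^5 H\bF_2$, so $A(1)_* F$ is $\bF_2$ concentrated in degrees $-2, 0, 1, 3$. I would then match the long exact sequence
\[
\cdots \to A(1)_{n+1} \TC(\ko) \overset{\delta}{\longto} A(1)_n F
	\longto A(1)_n \K(\ko^\wedge_2)
	\overset{\trc}{\longto} A(1)_n \TC(\ko)
	\overset{\delta}{\longto} A(1)_{n-1} F \to \cdots
\]
against Theorem~\ref{thm:A1htpyTCko} degree by degree. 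At $n = -2$, since $A(1)_{-2} \K = 0$ by connectivity, the class $\partial \in A(1)_{-1} \TC(\ko)$ maps isomorphically onto $A(1)_{-2} F$, realizing $\partial$ as a cokernel generator. At $n = 0$, an Atiyah--Hirzebruch argument gives $A(1)_0 \K(\ko^\wedge_2) = \bF_2\{1\} = A(1)_0 \TC(\ko)$ with $\trc$ the identity on the unit; exactness then forces $A(1)_1 \TC(\ko) = \bF_2\{\varsigma\} \overset{\delta}{\to} A(1)_0 F$ to be an isomorphism, realizing $\varsigma$ as the second cokernel generator. At $n = 1$ and $n = 3$, the vanishing of $A(1)_2 \TC(\ko)$ (by inspection) and of $\delta \: A(1)_4 \TC(\ko) = \bF_2\{\varsigma\nu\} \to A(1)_3 F$ make the inclusions $A(1)_n F \hookrightarrow A(1)_n \K(\ko^\wedge_2)$ injective, contributing the two kernel summands $\Sigma \bF_2 \oplus \Sigma^3 \bF_2$.

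The main obstacle is the vanishing of $\delta$ in degree~$3$, i.e., the assertion that $\varsigma \nu$ lies in the image of~$\trc$. The natural route is to track the class $\varsigma\nu$ through the DGM identifications down to $\TC(\bF_2)$, where $F \simeq \Sigma^{-2} H\bZ_2$ is explicit and the corresponding vanishing can be verified directly. The $\bZ/4[v_2^{32}]$-module structure of the exact sequence is automatic, since $A(1)_* F$ is $2$-torsion and $v_2^{32}$-torsion and all maps are equivariant.
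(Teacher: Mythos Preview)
Your approach is essentially the same as the paper's: both identify the fiber of $\trc$ with $\Sigma^{-2} H\bZ_2$ (the paper phrases it via the cofiber $\Sigma^{-1} H\bZ_2$, citing \cite{HM97}*{Theorem~D} and \cite{Dun97} directly rather than going through~$\bF_2$) and then read off the four-term exact sequence from the long exact sequence in $A(1)$-homotopy.

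The one place where your proposal is vague is exactly the ``main obstacle'', and here the paper's argument is much more direct than what you sketch. You do not need to track anything through the DGM identifications down to $\TC(\bF_2)$. The boundary map $\delta$ (equivalently, the map $p$ to the cofiber) is $\pi_*(\bS)$-linear, and $\nu$ acts as zero on any $H\bZ_2$-module spectrum, in particular on $A(1)_* F$. Hence
\[
\delta(\varsigma\nu) = \nu \cdot \delta(\varsigma) = 0
\]
immediately. This one-line observation is the whole content of the step. As an aside, the paper also computes $A(1)_1 \K(\ko^\wedge_2) \cong \bZ/2$ directly from $\K_1(\bZ_2) = \bZ_2^\times$ and the fact that $\eta \mapsto -1$, which gives an independent confirmation of the degree~$1$ analysis you deduce purely from the long exact sequence.
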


\begin{proof}
Let $\bZ_2 = \pi_0(\ko^\wedge_2)$ denote the $2$-adic integers.
By \cite{HM97}*{Theorem~D} and \cite{Dun97}*{Main Theorem}
(cf.~\cite{DGM13}*{Theorem~7.3.1.8}) applied to the $1$-connected
$\bE_\infty$-ring map $\ko^\wedge_2 \to H\bZ_2$ there is a
cofiber sequence
\[
\K(\ko^\wedge_2)^\wedge_2 \overset{\trc}\longto \TC(\ko)^\wedge_2
	\overset{p}\longto \Sigma^{-1} H\bZ_2 \,.
\]
The associated long exact sequence in $A(1)$-homotopy breaks up into
four-term exact sequences, as above.

In more detail, the $3$-connected map $A(1) \to H = H\bF_2$
identifies $A(1)_* H\bZ_2$ with 
\[
\bF_2\{1, \xi_1^2,
\bar\xi_2, \xi_1^2 \bar\xi_2\} \subset H_* H\bZ_2 \subset
\cA^\vee\,.
\]  
By \cite{BM94}*{Proposition~10.9}, $\K(\ko^\wedge_2)
\to \K(\bZ_2)$ is $2$-connected, where $\K_0(\bZ_2) =
\bZ$ and $\K_1(\bZ_2) = \bZ_2^\times$, so that
$A(1)_0 \K(\ko^\wedge_2)\cong A(1)_0 \K(\bZ_2) = \bZ/2$
and 
\[A(1)_1 \K(\ko^\wedge_2) \cong A(1)_1\K(\bZ_2) =
\bZ_2^\times/(\pm (\bZ_2^\times)^2) \cong \bZ/2\,,\]
generated by any $u \in \bZ_2^\times$ congruent to $3$
or~$5$ modulo~$8$.  This uses that $\eta \in \pi_1(\bS)$ maps to $-1
\in \bZ_2^\times \cong \K_1(\bZ_2)$.  By exactness,
we know $p \: \partial \mapsto \Sigma^{-1} 1$ and $p \: \varsigma
\mapsto \Sigma^{-1} \xi_1^2$.  Multiplication by~$\nu$ acts trivially
on $H\bZ_2$, so $p \: \varsigma \nu \mapsto 0$ does not hit
$\Sigma^{-1}\xi_1^2 \bar\xi_2$.  There is no class in degree~$2$ that $p$
could map to $\Sigma^{-1} \bar\xi_2$.  Hence these two classes instead
appear as $\Sigma^{-2} \bar\xi_2$ and $\Sigma^{-2} \xi_1^2 \bar\xi_2$
in $A(1)_*\K(\ko^\wedge_2)$, in degrees $1$ and~$3$, respectively.
\end{proof}

The proof in the integral case relies on the proven Lichtenbaum--Quillen
conjecture for $\bZ[1/2]$, cf.~\cite{Voe03} and~\cite{RW00}.

\begin{theorem} \label{thm:k}
There is an exact sequence of $\bZ/4[v_2^{32}]$-modules
\[
0 \to \Sigma^3 \bF_2
	\longto A(1)_* \K(\ko)
	\overset{\trc}\longto A(1)_* \TC(\ko)
	\longto \bF_2\{\partial, \varsigma\} \to 0 \,,
\]
with $|\partial| = -1$ and $|\varsigma| = 1$.
\end{theorem}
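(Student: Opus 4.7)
The plan is to derive Theorem~\ref{thm:k} from Theorem~\ref{thm:k2-complete} by analyzing the cofiber of the natural map $\K(\ko)^\wedge_2 \to \K(\ko^\wedge_2)^\wedge_2$. First, I would apply the McCarthy--Dundas theorem~\cite{Dun97}, \cite{HM97} to the $1$-connected $\bE_\infty$ ring maps $\ko \to H\bZ$ and $\ko^\wedge_2 \to H\bZ_2$, obtaining pullback squares whose right-hand columns $\TC(\ko)^\wedge_2 \to \TC(\bZ)^\wedge_2$ and $\TC(\ko^\wedge_2)^\wedge_2 \to \TC(\bZ_2)^\wedge_2$ agree after $2$-completion by the conventions and by Madsen's theorem~\cite{Mad94}. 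Hence
\[
\mathrm{cofib}\bigl(\K(\ko)^\wedge_2 \to \K(\ko^\wedge_2)^\wedge_2\bigr) \simeq D := \mathrm{cofib}\bigl(\K(\bZ)^\wedge_2 \to \K(\bZ_2)^\wedge_2\bigr),
\]
and the problem reduces to computing $A(1)_* D$.

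I would show $A(1)_* D \cong \Sigma^1 \bF_2$. In degree $0$, the map $\pi_0 \K(\bZ)^\wedge_2 \to \pi_0 \K(\bZ_2)^\wedge_2$ is the identity on $\bZ_2$, so $A(1)_0 D = 0$. In degree $1$, $\K_1(\bZ) = \{\pm 1\}$ is generated by $\eta \cdot 1$, so $A(1)_1 \K(\bZ) = 0$ (by the same calculation as in the proof of Theorem~\ref{thm:k2-complete}), while $A(1)_1 \K(\bZ_2) = \bZ_2^\times/(\pm(\bZ_2^\times)^2) \cong \bF_2$, yielding $A(1)_1 D \cong \bF_2$. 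In degrees $n \ge 2$, the vanishing of $A(1)_n D$ follows from the proven $2$-primary Lichtenbaum--Quillen conjecture (\cite{Voe03}, \cite{RW00}) and the B\"okstedt--Madsen description of $\K(\bZ_2)^\wedge_2$: both $A(1) \otimes \K(\bZ)$ and $A(1) \otimes \K(\bZ_2)$ are bounded (by Mitchell's theorem), so the difference is controlled by a finite \'etale-cohomological calculation.

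Combining with Theorem~\ref{thm:k2-complete} via the long exact sequence
\[
\cdots \to A(1)_{n+1} D \to A(1)_n \K(\ko) \to A(1)_n \K(\ko^\wedge_2) \to A(1)_n D \to \cdots,
\]
the class in $A(1)_1 \K(\ko^\wedge_2)$ detected by a unit $u \in \bZ_2^\times$ with $u \equiv 3$ or $5 \bmod 8$ -- accounting for the $\Sigma^1 \bF_2$ summand in the kernel of $\trc$ for $\K(\ko^\wedge_2)$ -- maps isomorphically onto $A(1)_1 D$ and therefore does not lift to $A(1)_* \K(\ko)$; all other classes lift, producing the stated exact sequence for $A(1)_* \K(\ko)$.

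The main obstacle is the vanishing of $A(1)_n D$ for $n \ge 2$. Although plausible from the chromatic viewpoint (both the source and target of $\K(\bZ)^\wedge_2 \to \K(\bZ_2)^\wedge_2$ occupy a narrow chromatic band at heights $0$ and $1$), making this precise requires either invoking the Lichtenbaum--Quillen conjecture directly or identifying $D$ via the arithmetic fracture square with the cofiber of $\K(\bZ[1/2])^\wedge_2 \to \K(\bQ_2)^\wedge_2$ and then controlling that cofiber through Gabber--Suslin rigidity.
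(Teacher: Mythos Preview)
Your approach is genuinely different from the paper's, and while the reduction to $D = \mathrm{cofib}(\K(\bZ)^\wedge_2 \to \K(\bZ_2)^\wedge_2)$ is correct, the step you flag as the main obstacle is a real gap that you do not close. Knowing that $\K(\bZ)^\wedge_2$ and $\K(\bZ_2)^\wedge_2$ have fp-type~$\le 1$ tells you $A(1) \otimes D$ is bounded, but not that it vanishes above degree~$1$; to get that you would need to feed in the actual $2$-primary computations of $\K(\bZ)$ and $\K(\bZ_2)$ (Rognes--Weibel, B\"okstedt--Madsen) and compare them degree by degree in $A(1)$-homotopy. Your low-degree argument is also thinner than it looks: e.g.\ $A(1)_1 D$ is not literally $\K_1(\bZ_2)/\K_1(\bZ)$ but $\pi_1(A(1) \otimes D)$, and controlling that already uses $\pi_2 D$.

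The paper bypasses all of this by working directly with the integral trace map. It cites \cite{Rog02}*{Theorem~3.13}, which identifies the cofiber~$X$ of $\trc \: \K(\ko)^\wedge_2 \to \TC(\ko)^\wedge_2$ via a second cofiber sequence
\[
\Sigma^{-2} \ku^\wedge_2 \overset{\delta}\longto \Sigma^4 \ko^\wedge_2 \longto X \,.
\]
Since $A(1) \otimes \ko \simeq H\bF_2$ and $A(1)_* \ku = \Lambda(\hat\xi_1^2)$, one reads off $A(1)_* X = \bF_2\{x_{-1}, x_1, x_4\}$ in one line (the map~$\delta$ vanishes on $A(1)$-homotopy for degree reasons). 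The remaining bookkeeping---determining which of $x_{-1}, x_1, x_4$ lie in the image of~$q$---is exactly parallel to the proof of Theorem~\ref{thm:k2-complete}, using the $4$-connected map $\K(\bS) \to \K(\ko)$ and the known low-dimensional homotopy of $\K(\bS) \simeq \bS \oplus \Wh^{\Diff}(*)$. The point is that \cite{Rog02} has already done the arithmetic comparison you are proposing, and packaged the answer as a cofiber sequence built from $\ku$ and~$\ko$, whose $A(1)$-homotopy is immediate.
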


\begin{proof}
By \cite{Rog02}*{Theorem~3.13} there are two cofiber sequences
\begin{align*}
\K(\ko)^\wedge_2 &\overset{\trc}\longto \TC(\ko)^\wedge_2
	\overset{q}\longto X \\
\Sigma^{-2} \ku^\wedge_2 &\overset{\delta}\longto \Sigma^4 \ko^\wedge_2
	\longto X
\end{align*}
with equivalent third terms.  Passing to $A(1)$-homotopy, the second
cofiber sequence ensures that $A(1)_* X = \bF_2\{x_{-1}, x_1, x_4\}$,
where $|x_i| = i$.  The long exact sequence associated to the first
cofiber sequence then breaks up into four-term exact sequences, as shown.

This time, the details are as follows.  The $3$-connected $\bE_\infty$-ring map $\bS \to \ko$ induces a $4$-connected map $\K(\bS) \to \K(\ko)$,
where
\[
\K(\bS) \simeq \bS \oplus \Wh^{\Diff}(*) \,.
\]
Here $\Wh^{\Diff}(*)$ is $2$-connected with $\pi_3 \Wh^{\Diff}(*) =
\bZ/2$, cf.~\cite{Rog02}*{Theorem~5.8}.  Hence we have
$A(1)_0 \K(\ko) \cong
A(1)_0 \K(\bS) = \bZ/2\{1\}$,
$A(1)_1 \K(\ko) \cong A(1)_1 \K(\bS) =0$,
$A(1)_2 \K(\ko) \cong A(1)_2 \K(\bS) = 0$
and $A(1)_3 \K(\ko) \cong
A(1)_3 \K(\bS) = \bZ/2\{\nu\} \oplus \bZ/2$.  By exactness, we know $q \:
\partial \mapsto x_{-1}$ and $q \: \varsigma \mapsto x_1$, while $x_4$
must contribute to $A(1)_3 \K(\ko)$ and cannot be in the image of~$q$.
(It follows that $\nu x_1 = 0 \ne x_4$.)
\end{proof}

\begin{corollary} \label{cor:nomaptotmf}
The unit map $\bS \to \tmf$ does not factor through~$\K(\ko)$.
\end{corollary}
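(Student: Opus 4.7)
The plan is to argue by contradiction. Suppose there exists a spectrum map $f\colon \K(\ko) \to \tmf$ such that the composite $\bS \to \K(\ko) \overset{f}{\longrightarrow} \tmf$ recovers the unit of $\tmf$. I aim to produce a contradiction using the explicit description of $A(1)_* \K(\ko)$ furnished by Theorem~\ref{thm:k} combined with Theorem~\ref{thm:A1htpyTCko}, compared against structural features of $A(1)_* \tmf$.

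First I would apply the functor $A(1) \otimes -$ to the hypothetical factorization, obtaining a $\bZ/4[v_2^{32}]$-linear map $f_* \colon A(1)_* \K(\ko) \to A(1)_* \tmf$ which sends the unit to the unit and is compatible, via the Hurewicz homomorphisms from $A(1)_*\bS$, with the canonical comparison maps on both sides. Theorem~\ref{thm:k} realizes $A(1)_* \K(\ko)$ as the middle term of a four-term exact sequence dominated by $A(1)_* \TC(\ko)$, for which Theorem~\ref{thm:A1htpyTCko} provides a finitely generated free $\bF_2[v_2^4]$-module basis of rank~$52$ with known motivic filtrations.

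The decisive step is to identify a specific stem~$n$ and a class $x \in A(1)_n \bS$ whose Hurewicz image in $A(1)_n \tmf$ is detectably nonzero, but whose Hurewicz image in $A(1)_n \K(\ko)$ must either vanish or lie in a motivic filtration incompatible with the existence of~$f_*$. A natural candidate is a $v_2$-power class such as $v_2^2 \in \pi_{12}A(1)$: Theorem~\ref{thm:A1htpyTCko} shows that $d_3(v_2^2) = \nu^2 w \ne 0$ in the motivic spectral sequence, so $v_2^2$ does not survive in the lowest motivic filtration of $A(1)_* \TC(\ko)$, and consequently its Hurewicz image in $A(1)_{12}\K(\ko)$ lies in strictly higher motivic filtration than the canonical generator. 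On the $\tmf$-side, the fact that $\tmf$ is a height-$2$ connective $\bE_\infty$ ring with a genuine $v_2$-self-map on $\tmf/(2,\eta,v_1)$ ensures that the image of $v_2^2$ in $A(1)_{12}\tmf$ is nonzero and detected in the lowest filtration, producing the desired incompatibility with $f_*$.

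The main obstacle will be to convert this motivic-filtration discrepancy into an honest contradiction: one must verify that no $\bZ/4[v_2^{32}]$-linear map from the computed $A(1)_* \K(\ko)$ can send the Hurewicz image of $v_2^2$ to the required nonzero class in $A(1)_{12}\tmf$, and this requires careful bookkeeping of the filtration on both sides (in particular excluding the possibility that a filtration jump in $A(1)_* \K(\ko)$ or a hidden extension on the $\tmf$ side rescues the factorization). In practice this amounts to pinning down one multiplicative relation—between the classes $\nu$, $w$, and~$v_2$ in a specific stem—that must have incompatible values in $\K(\ko)$ and in $\tmf$ under any $\bE_0$ ring map between them.
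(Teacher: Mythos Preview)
Your proposal has a genuine gap: the motivic filtration on $A(1)_* \K(\ko)$ is an internal artifact of the calculation, and there is no reason an arbitrary spectrum map $f \colon \K(\ko) \to \tmf$ should respect any filtration on the target. So the ``filtration discrepancy'' you describe cannot, by itself, obstruct the factorization. You acknowledge this yourself in your final paragraph, but you do not indicate how to close it, and in fact in stem~$12$ the group $A(1)_{12}\K(\ko)$ has $\bF_2$-rank~$3$ (generated by $\lambda'_1\lambda_2$, $v_2\nu^2$, $v_2\partial\lambda_2$ in filtration~$2$), so there is no obvious size obstruction there. Note also that $d_3(v_2^2) = h_{11}^2 w$ already in the Novikov spectral sequence for $A(1)$ itself (Lemma~\ref{lem: Nov A1}), so ``$v_2^2$'' is not a well-defined nonzero class in $\pi_{12} A(1)$ to begin with; the Hurewicz comparison you envisage must be formulated more carefully.

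The paper's proof bypasses all of this with a direct rank count in stem~$20$. From Theorems~\ref{thm:A1htpyTCko} and~\ref{thm:k} one reads off $A(1)_{20}\K(\ko) \cong A(1)_{20}\TC(\ko) \cong \bF_2\{v_2^2\nu w\}$, a group of order~$2$. On the other hand $A(1)_{20}(\tmf) \cong (\bZ/2)^2$, and the unit map $\pi_{20} A(1) \to A(1)_{20}(\tmf)$ is surjective (this is checked via \texttt{ext} or \cite{Pha22}*{Figure~16}). A surjection onto $(\bZ/2)^2$ cannot factor through a group of order~$2$, so no factorization exists. This argument uses only the size of the groups involved, not any filtration compatibility, and therefore applies to an arbitrary $\bE_0$ map.
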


\begin{proof}
In fact, the unit map $A(1) \to A(1) \otimes \tmf$ does not factor
through~$A(1) \otimes \K(\ko)$, since $\pi_{20} A(1) \to A(1)_{20}
(\tmf) \cong (\bZ/2)^2$ is surjective, as can be seen using
Bruner's {\tt ext} program or from~\cite{Pha22}*{Figure~16},
while
\[
A(1)_{20} \K(\ko) \cong A(1)_{20} \TC(\ko) \cong \bZ/2\{ v_2^2 \nu w \}
\]
by Theorems~\ref{thm:A1htpyTCko} and~\ref{thm:k},
cf.~Figure~\ref{fig:A1TCko}.
\end{proof}

The proof by Hahn--Raksit--Wilson~\cite{HRW} of the height~$2$
telescope conjecture for $\TC(\ku)$ can be adapted to prove the
corresponding statement for~$\TC(\ko)$, using our Proposition~\ref{prop:
evenly free map} and Theorem~\ref{thm: A(1) coeff}.  However, as was
kindly pointed out to us by Ishan Levy, this is also a direct consequence
of the descent result of Clausen--Mathew--Naumann--Noel~\cite{CMNN20},
as we summarize below. 

\begin{theorem} \label{thm:telescope}
For each $X \in \{\K(\ko), \K(\ko^\wedge_2), \TC(\ko)\}$ the canonical map
$L^f_2 X \to L_2 X$ is an equivalence.  In other words, these spectra
all satisfy the height~$2$ telescope conjecture (at the prime~$2$).
\end{theorem}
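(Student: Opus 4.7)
The plan is to reduce the telescope conjecture for each spectrum in $\{\K(\ko), \K(\ko^\wedge_2), \TC(\ko)\}$ to the corresponding statement for the complex version via $C_2$-Galois descent, and then invoke the known height~$2$ telescope conjecture for $\K(\ku)$ and $\TC(\ku)$. The starting observation is that complexification $\ko \to \ku$ (and likewise $\ko^\wedge_2 \to \ku^\wedge_2$) is a faithful $C_2$-Galois extension of $\bE_\infty$ rings, with $C_2$-action generated by complex conjugation.

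Next I would invoke the main descent theorem of Clausen--Mathew--Naumann--Noel~\cite{CMNN20}: for a faithful finite $G$-Galois extension $R \to S$ of $\bE_\infty$ rings, the canonical maps
\[
L_n^f \K(R) \longto (L_n^f \K(S))^{hG}
\qquad\text{and}\qquad
L_n^f \TC(R) \longto (L_n^f \TC(S))^{hG}
\]
are equivalences. Applied with $G = C_2$ and $n = 2$ to the $C_2$-Galois extensions above, this yields for instance
\[
L_2^f \TC(\ko) \simeq (L_2^f \TC(\ku))^{hC_2}
\qquad\text{and}\qquad
L_2^f \K(\ko^\wedge_2) \simeq (L_2^f \K(\ku^\wedge_2))^{hC_2}.
\]

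I would then combine these descent equivalences with the height~$2$ telescope conjecture in the complex case, which is part of the content of Hahn--Raksit--Wilson~\cite{HRW}: one has $L_2^f \TC(\ku) \simeq L_2 \TC(\ku)$, and the corresponding statement for the $\K$-theoretic variants follows via the cyclotomic trace, since the fibers in the relevant $\K$-versus-$\TC$ comparison sequences are bounded $H\bZ_2$-modules on which $L_2^f$ and $L_2$ agree. Feeding this into the descent equivalence gives
\[
L_2^f \TC(\ko) \simeq (L_2 \TC(\ku))^{hC_2},
\]
and the right-hand side is a finite homotopy limit of $L_2$-local spectra, hence itself $L_2$-local. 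Therefore $L_2^f \TC(\ko)$ is already $L_2$-local, which forces the canonical localization map $L_2^f \TC(\ko) \to L_2 \TC(\ko)$ to be an equivalence; identical reasoning handles $\K(\ko^\wedge_2)$, and the case of $\K(\ko)$ then follows from that of $\K(\ko^\wedge_2)$ since the fiber of $\K(\ko) \to \K(\ko^\wedge_2)$ is again a bounded $H\bZ$-module on which the two localizations agree.

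The main obstacle will be extracting from~\cite{CMNN20} the precise descent statement that applies uniformly to both $\K$ and $\TC$ along the $C_2$-Galois extension $\ko \to \ku$ at finite chromatic height, together with confirming that the telescope conjecture for $\TC(\ku)$ at height~$2$ and the prime~$2$ is genuinely established in~\cite{HRW} (which in turn relies on the redshift machinery of \cite{Yua}, \cite{CMNN}, \cite{LMMT} and \cite{BSY}). Once these two external inputs are in place, the deduction reduces to formal manipulation with Bousfield localizations and the finite homotopy limit $(-)^{hC_2}$.
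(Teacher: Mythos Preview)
Your strategy---descend from the complex case via~\cite{CMNN20} and invoke~\cite{HRW} for $\ku$---is exactly the paper's, but two steps need tightening.

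First, the descent theorem in~\cite{CMNN20} is stated for $T(n)$-localization, not for $L_n^f$.  The paper therefore works at the $T(2)$-level: from $L_2^f \TC(\ku) \simeq L_2 \TC(\ku)$ one deduces (via~\cite{Hov95}) that $L_{T(2)} \TC(\ku)$ is $K(2)$-local, then uses \cite{CMNN20}*{Theorem~1.8} to see that each $L_{T(2)} X$ is a limit of $K(2)$-local spectra, hence $L_{T(2)} X \simeq L_{K(2)} X$.  The passage from this to $L_2^f X \simeq L_2 X$ then requires the telescopic and chromatic fracture squares together with the known height~$1$ telescope conjecture (\cite{Mah81}, \cite{Bou79}).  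Your direct $L_2^f$-descent formulation would need this fracture argument built in, so you should make that explicit.

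Second, the fiber of $\K(\ko) \to \K(\ko^\wedge_2)$ is \emph{not} a bounded $H\bZ$-module, so your reduction of $\K(\ko)$ to $\K(\ko^\wedge_2)$ does not go through as stated.  The paper avoids this entirely by applying the $T(2)$-local descent of~\cite{CMNN20} directly along $\ko \to \ku$ (not only along $\ko^\wedge_2 \to \ku^\wedge_2$), using that $L_{T(2)} \K(\ku) \simeq L_{T(2)} \K(\ku^\wedge_2) \simeq L_{T(2)} \TC(\ku)$ by~\cite{Dun97}, \cite{HM97} and~\cite{RW00}.  This handles all three spectra $\K(\ko)$, $\K(\ko^\wedge_2)$, $\TC(\ko)$ uniformly, with no need to compare $\K(\ko)$ and $\K(\ko^\wedge_2)$.
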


\begin{proof}
According to~\cite{Dun97}, \cite{HM97} and~\cite{RW00} there are
equivalences
\[
L_{T(2)} \K(\ku) \simeq L_{T(2)} \K(\ku^\wedge_2)
	\simeq L_{T(2)} \TC(\ku) \,.
\]
By~\cite{HRW}*{Theorem~6.6.4}, $L^f_2 \TC(\ku) \simeq L_2
\TC(\ku)$, which by~\cite{Hov95}*{Corollary~2.2} implies that
$L_{T(2)} \TC(\ku) \simeq L_{K(2)} \TC(\ku)$ is $K(2)$-local.  Applying
descent~\cite{CMNN20}*{Theorem~1.8, Theorem~1.10} along $\ko \to \ku$ or $\ko^\wedge_2
\to \ku^\wedge_2$, for $E = \K$ or $\TC$, it follows that $L_{T(2)}
\K(\ko)$, $L_{T(2)} \K(\ko^\wedge_2)$ and $L_{T(2)} \TC(\ko)$ are all
limits of $K(2)$-local spectra, hence are $K(2)$-local.  In particular, there is an equivalence
$L_{T(2)} X \simeq L_{K(2)} X$ in each case.  Standard telescopic and
chromatic fracture squares~\cite{DFHH14}*{Proposition~6.2.2}, and the
known validity of the height~$1$ telescope conjecture~\cite{Mah81},
\cite{Bou79}*{Proposition~4.2}, then imply that $L^f_2 X \simeq L_2 X$
in each case.
\end{proof} 

Our calculations also show that $\TC(\ko)$ has fp-type~$2$ in
the sense of Mahowald--Rezk~\cite{MR99}, with the following consequence.

\begin{theorem} \label{thm:LQ}
For $X \in \{\K(\ko), \K(\ko^\wedge_2), \TC(\ko)\}$ and $Y \in \{X_{(2)},
X^\wedge_2\}$, the canonical map $Y \to L_2^f Y$ is an equivalence in
all sufficiently large degrees.
\end{theorem}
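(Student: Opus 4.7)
The plan is to verify that $\TC(\ko)$ is of fp-type~$2$ in the sense of Mahowald--Rezk, apply their telescopic-complexity theorem to obtain the conclusion for $\TC(\ko)$, and then propagate the result to $\K(\ko^\wedge_2)$ and $\K(\ko)$ through the cofiber sequences used in the proofs of Theorems~\ref{thm:k2-complete} and~\ref{thm:k}.

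First I would deduce fp-type~$2$ from Theorem~\ref{thm:A1htpyTCko}. That theorem exhibits $\Gr_{\mot}^* A(1)_* \TC(\ko)$ as a free $\bF_2[v_2^4]$-module of rank~$52$, so $A(1)_* \TC(\ko)$ is finitely generated as a $\bZ/4[v_2^{32}]$-module, and after inverting $v_2$ becomes a finite-rank free $\bF_2[v_2^{\pm 32}]$-module. Since $A(1)$ is a finite type~$2$ complex equipped with a $v_2^{32}$-self map, this is exactly the finiteness input required to certify that $\TC(\ko)$ is $v_2$-periodic with a bounded periodic part and $K(n)$-acyclic for $n \ge 3$, yielding fp-type~$2$.

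By Mahowald--Rezk, fp-type~$2$ implies that for $Y \in \{\TC(\ko)_{(2)}, \TC(\ko)^\wedge_2\}$ the canonical map $Y \to L_2^f Y$ is an equivalence in all sufficiently large degrees. The same theorem, applied to $H\bZ_2$ (fp-type~$0$) and to $\ko^\wedge_2$, $\ku^\wedge_2$ (both fp-type~$1$), gives each of these building blocks the height~$2$ Lichtenbaum--Quillen property in large degrees. To pass to $\K(\ko^\wedge_2)$, I would use the cofiber sequence
\[
\K(\ko^\wedge_2)^\wedge_2 \overset{\trc}\longto \TC(\ko)^\wedge_2 \overset{p}\longto \Sigma^{-1} H\bZ_2
\]
from the proof of Theorem~\ref{thm:k2-complete}: comparing with its $L_2^f$-localization via the five lemma applied to the resulting ladder of long exact sequences, the LQ property on the outer two terms forces it on $\K(\ko^\wedge_2)^\wedge_2$. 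The case of $\K(\ko)$ is parallel, using the two cofiber sequences recalled in the proof of Theorem~\ref{thm:k} together with the height~$2$ LQ property for $\Sigma^{-2} \ku^\wedge_2$ and $\Sigma^4 \ko^\wedge_2$. The $X_{(2)}$ and $X^\wedge_2$ versions can be handled in parallel because all relevant homotopy groups are degreewise finitely generated, so $2$-completion and $2$-localization agree up to a bounded rational correction that is absorbed into ``sufficiently large degrees.''

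The main obstacle is the precise matching between our motivic-spectral-sequence description of $A(1)_*\TC(\ko)$ and the hypothesis required by Mahowald--Rezk --- in particular verifying that $H^*(\TC(\ko); \bF_2)$ is finitely presented over the Steenrod algebra. This should follow from the finite-type input given by the $\bF_2[v_2^4]$-freeness of rank~$52$ in Theorem~\ref{thm:A1htpyTCko}, but requires a careful bookkeeping argument transporting the finiteness from $A(1)$-homotopy to mod~$2$ cohomology.
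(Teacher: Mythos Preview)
Your approach is essentially the same as the paper's: verify fp-type~$2$ via the $A(1)$-homotopy computation and invoke Mahowald--Rezk's theorem. Two refinements are worth noting.

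First, the ``obstacle'' you flag is not one. Mahowald--Rezk \cite{MR99}*{Proposition~3.2} shows that for a bounded-below $2$-complete spectrum~$X$, the condition that $H^*(X;\bF_2)$ is finitely presented over~$\cA$ is equivalent to the existence of a finite type~$n$ complex~$F$ with a $v_n$-self map such that $(F/v_n)_* X$ is finite. So the finiteness of $(A(1)/v_2^{32})_* X^\wedge_2$, which is immediate from Theorems~\ref{thm:A1htpyTCko}, \ref{thm:k2-complete} and~\ref{thm:k}, directly yields fp-type~$\le 2$; no bookkeeping transport to cohomology is needed. Note also that the paper applies this uniformly to all three~$X$, since Theorems~\ref{thm:k2-complete} and~\ref{thm:k} already give the $A(1)$-homotopy of the $K$-theory spectra; your cofiber-sequence propagation is correct but unnecessary.

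Second, your passage from $X^\wedge_2$ to $X_{(2)}$ is vague. The paper uses the arithmetic pullback square
\[
\xymatrix{
X_{(2)} \ar[r] \ar[d] & X^\wedge_2 \ar[d] \\
X_{(2)}[1/2] \ar[r] & X^\wedge_2[1/2]
}
\]
and the observation that both entries in the bottom row are rational, hence already $L_2^f$-local. Applying $L_2^f$ preserves the pullback, so the fiber of $X_{(2)} \to L_2^f X_{(2)}$ agrees with that of $X^\wedge_2 \to L_2^f X^\wedge_2$, which is bounded above. This is cleaner than appealing to a ``bounded rational correction.''
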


\begin{proof}
Theorems~\ref{thm:k}, \ref{thm:k2-complete} and~\ref{thm:A1htpyTCko} show,
respectively, that $(A(1)/(v_2^{32}))_* X^\wedge_2$ is finite for each of
the three choices for~$X$.  This implies that $X^\wedge_2$ has fp-type~$2$
in the sense of~\cite{MR99}*{p.~5}, by \cite{MR99}*{Proposition~3.2}.
According to~\cite{MR99}*{Theorem~8.2}, this implies that the
Brown--Comenetz dual spectrum $IC_2^f X^\wedge_2$ is bounded below and,
consequently, that the fiber $C_2^f X^\wedge_2$ of the map $X^\wedge_2\to
L_2^f X^\wedge_2$ is bounded above (cf.~\cite{HW22}*{Theorem~3.1.3}).
Using the pullback square
\[
\xymatrix{
	X_{(2)} \ar[r] \ar[d] & X^\wedge_2 \ar[d] \\
	X_{(2)}[1/2] \ar[r] & X^\wedge_2[1/2] \,,
}
\]
and the fact that $X_{(2)}[1/2]$ and $X^\wedge_2[1/2]$ are $L_2^f$-local,
it also follows that $X_{(2)} \to L_2^f X_{(2)}$ is an equivalence in
all sufficiently large degrees.
\end{proof}

\begin{bibdiv}
\begin{biblist}

\bib{AKACHR}{article}{
      author={Angelini-Knoll, Gabriel},
      author={Ausoni, Christian},
      author={Culver, Dominic Leon},
      author={H{\"o}ning, Eva},
      author={Rognes, John},
       title={Algebraic~$K$-theory~of~elliptic cohomology},
     journal={Geom.~Topol.},
     volume={2},
        date={2025},
        number={29},
        pages={619–686},
        doi={10.2140/gt.2025.29.619},
}

\bib{AHL10}{article}{
   author={Angeltveit, Vigleik},
   author={Hill, Michael A.},
   author={Lawson, Tyler},
   title={Topological Hochschild homology of $\ell$ and $ko$},
   journal={Amer. J. Math.},
   volume={132},
   date={2010},
   number={2},
   pages={297--330},
   % issn={0002-9327},
   % review={\MR{2654776}},
   doi={10.1353/ajm.0.0105},
}

\bib{AR05}{article}{
   author={Angeltveit, Vigleik},
   author={Rognes, John},
   title={Hopf algebra structure on topological Hochschild homology},
   journal={Algebr. Geom. Topol.},
   volume={5},
   date={2005},
   pages={1223--1290},
   % issn={1472-2747},
   % review={\MR{2171809}},
   doi={10.2140/agt.2005.5.1223},
}

% \bib{ACB22}{article}{
   % author={Antol\'{\i}n-Camarena, Omar},
   % author={Barthel, Tobias},
   % title={Chromatic fracture cubes},
   % conference={
      % title={Equivariant topology and derived algebra},
   % },
   % book={
      % series={London Math. Soc. Lecture Note Ser.},
      % volume={474},
      % publisher={Cambridge Univ. Press, Cambridge},
   % },
   % date={2022},
   % pages={100--118},
   % % review={\MR{4327099}},
% }

\bib{AR02}{article}{
   author={Ausoni, Christian},
   author={Rognes, John},
   title={Algebraic $K$-theory of topological $K$-theory},
   journal={Acta Math.},
   volume={188},
   date={2002},
   number={1},
   pages={1--39},
   % issn={0001-5962},
   % review={\MR{1947457}},
   doi={10.1007/BF02392794},
}

\bib{AR08}{article}{
      author={Ausoni, Christian},
      author={Rognes, John},
       title={The chromatic red-shift in algebraic {{\(K\)}}-theory},
        date={2008},
     journal={Enseignement Math{\'e}matique},
      volume={54},
      number={2},
       pages={13\ndash 15},
}

\bib{BMS19}{article}{
   author={Bhatt, Bhargav},
   author={Morrow, Matthew},
   author={Scholze, Peter},
   title={Topological Hochschild homology and integral $p$-adic Hodge
   theory},
   journal={Publ. Math. Inst. Hautes \'{E}tudes Sci.},
   volume={129},
   date={2019},
   pages={199--310},
   % issn={0073-8301},
   % review={\MR{3949030}},
   doi={10.1007/s10240-019-00106-9},
}

\bib{BEM17}{article}{
   author={Bhattacharya, Prasit},
   author={Egger, Philip},
   author={Mahowald, Mark},
   title={On the periodic $v_2$ self-map of $A_1$},
   journal={Algebr. Geom. Topol.},
   volume={17},
   date={2017},
   number={2},
   pages={657--692},
   % issn={1472-2747},
   % review={\MR{3623667}},
   doi={10.2140/agt.2017.17.657},
}

\bib{Boa99}{article}{
 	author = {Boardman, J. Michael},
	 title = {Conditionally convergent spectral sequences},
   booktitle= {Homotopy invariant algebraic structures. A conference in honor of J. Michael Boardman. AMS special session on homotopy theory, Baltimore, MD, USA, January 7--10, 1998},
   journal={Providence, RI: American Mathematical Society},
   	volume={},
	date={1999},
 	pages = {49--84},
}

\bib{BBLNR14}{article}{
   author={B\"{o}kstedt, Marcel},
   author={Bruner, Robert R.},
   author={Lun\o e-Nielsen, Sverre},
   author={Rognes, John},
   title={On cyclic fixed points of spectra},
   journal={Math. Z.},
   volume={276},
   date={2014},
   number={1-2},
   pages={81--91},
   % issn={0025-5874},
   % review={\MR{3150193}},
   doi={10.1007/s00209-013-1187-0},
}

\bib{BHM93}{article}{
   author={B\"{o}kstedt, M.},
   author={Hsiang, W. C.},
   author={Madsen, I.},
   title={The cyclotomic trace and algebraic $K$-theory of spaces},
   journal={Invent. Math.},
   volume={111},
   date={1993},
   number={3},
   pages={465--539},
   % issn={0020-9910},
   % review={\MR{1202133}},
   doi={10.1007/BF01231296},
}

\bib{BM94}{article}{
   author={B\"{o}kstedt, M.},
   author={Madsen, I.},
   title={Topological cyclic homology of the integers},
   note={$K$-theory (Strasbourg, 1992)},
   journal={Ast\'{e}risque},
   number={226},
   date={1994},
   pages={7--8, 57--143},
   % issn={0303-1179},
   % review={\MR{1317117}},
}

\bib{BM95}{article}{
   author={B\"{o}kstedt, M.},
   author={Madsen, I.},
   title={Algebraic $K$-theory of local number fields: the unramified case},
   conference={
      title={Prospects in topology},
      address={Princeton, NJ},
      date={1994},
   },
   book={
      series={Ann. of Math. Stud.},
      volume={138},
      publisher={Princeton Univ. Press, Princeton, NJ},
   },
   date={1995},
   pages={28--57},
   % review={\MR{1368652}},
}

\bib{Bou79}{article}{
   author={Bousfield, A. K.},
   title={The localization of spectra with respect to homology},
   journal={Topology},
   volume={18},
   date={1979},
   number={4},
   pages={257--281},
   % issn={0040-9383},
   % review={\MR{551009}},
   % doi={10.1016/0040-9383(79)90018-1},
}

\bib{Bru93}{article}{
   author={Bruner, Robert R.},
   title={${\rm Ext}$ in the nineties},
   conference={
      title={Algebraic topology},
      address={Oaxtepec},
      date={1991},
   },
   book={
      series={Contemp. Math.},
      volume={146},
      publisher={Amer. Math. Soc., Providence, RI},
   },
   date={1993},
   pages={71--90},
   % review={\MR{1224908}},
   doi={10.1090/conm/146/01216},
}

\bib{BG95}{article}{
   author={Bruner, Robert},
   author={Greenlees, John},
   title={The Bredon--L\"{o}ffler conjecture},
   journal={Experiment. Math.},
   volume={4},
   date={1995},
   number={4},
   pages={289--297},
   % issn={1058-6458},
   % review={\MR{1387694}},
}

\bib{BMMS86}{book}{
   author={Bruner, R. R.},
   author={May, J. P.},
   author={McClure, J. E.},
   author={Steinberger, M.},
   title={$H_\infty $ ring spectra and their applications},
   series={Lecture Notes in Mathematics},
   volume={1176},
   publisher={Springer-Verlag, Berlin},
   date={1986},
   pages={viii+388},
   % isbn={3-540-16434-0},
   % review={\MR{836132}},
   doi={10.1007/BFb0075405},
}

\bib{BR05}{article}{
   author={Bruner, Robert R.},
   author={Rognes, John},
   title={Differentials in the homological homotopy fixed point spectral
   sequence},
   journal={Algebr. Geom. Topol.},
   volume={5},
   date={2005},
   pages={653--690},
   % issn={1472-2747},
   % review={\MR{2153113}},
   doi={10.2140/agt.2005.5.653},
}

\bib{BR}{article}{
      author={{Bruner}, Robert R.},
      author={{Rognes}, John},
       title={{The cohomology of the mod 2 Steenrod algebra}},
     journal={arXiv e-print},
        % date={2021-09},
      eprint={https://arxiv.org/abs/2109.13117},
          doi={10.11582/2022.00015},
} 

\bib{BR22}{article}{
   author={Bruner, Robert R.},
   author={Rognes, John},
   title={The Adams spectral sequence for the image-of-$J$ spectrum},
   journal={Trans. Amer. Math. Soc.},
   volume={375},
   date={2022},
   number={8},
   pages={5803--5827},
   % issn={0002-9947},
   % review={\MR{4469237}},
   doi={10.1090/tran/8680},
}

\bib{BHLS}{article}{
      author={{Burklund}, Robert},
      author={{Hahn}, Jeremy},
      author={{Levy}, Ishan},
      author={{Schlank}, Tomer M.},
       title={{$K$}-theoretic counterexamples to {R}avenel's telescope
       		conjecture},
     journal={arXiv e-print},
        % date={2023-10},
      eprint={https://arxiv.org/abs/2310.17459},
}

\bib{BSY}{article}{
      author={{Burklund}, Robert},
      author={{Schlank}, Tomer M.},
      author={{Yuan}, Allen},
       title={{The Chromatic Nullstellensatz}},
       note={To appear in Ann. of Math.}
}

\bib{CE56}{book}{
   author={Cartan, Henri},
   author={Eilenberg, Samuel},
   title={Homological algebra},
   publisher={Princeton University Press, Princeton, NJ},
   date={1956},
   pages={xv+390},
   % review={\MR{77480}},
}

% \bib{CM15}{article}{
   % author={Chadwick, Steven Greg},
   % author={Mandell, Michael A.},
   % title={$E_n$ genera},
   % journal={Geom. Topol.},
   % volume={19},
   % date={2015},
   % number={6},
   % pages={3193--3232},
   % % issn={1465-3060},
   % % review={\MR{3447102}},
   % doi={10.2140/gt.2015.19.3193},
% }

% \bib{CM21}{article}{
   % author={Clausen, Dustin},
   % author={Mathew, Akhil},
   % title={Hyperdescent and \'{e}tale $K$-theory},
   % journal={Invent. Math.},
   % volume={225},
   % date={2021},
   % number={3},
   % pages={981--1076},
   % % issn={0020-9910},
   % % review={\MR{4296353}},
   % doi={10.1007/s00222-021-01043-3},
% }

\bib{CMNN24}{article}{
      author={{Clausen}, Dustin},
      author={{Mathew}, Akhil},
      author={{Naumann}, Niko},
      author={{Noel}, Justin},
       title={{Descent and vanishing in chromatic algebraic {$K$}-theory via
  group actions}},
 journal = {Ann. Sci. {\'E}c. Norm. Sup{\'e}r. (4)},
 volume = {57},
  date = {2024},
 number = {4},
 pages = {1135--1190},
 doi = {10.24033/asens.2588},
}

\bib{CMNN20}{article}{
   author={Clausen, Dustin},
   author={Mathew, Akhil},
   author={Naumann, Niko},
   author={Noel, Justin},
   title={Descent in algebraic $K$-theory and a conjecture of Ausoni--Rognes},
   journal={J. Eur. Math. Soc. (JEMS)},
   volume={22},
   date={2020},
   number={4},
   pages={1149--1200},
   % issn={1435-9855},
   % review={\MR{4071324}},
   doi={10.4171/JEMS/942},
}

\bib{DM81}{article}{
   author={Davis, Donald M.},
   author={Mahowald, Mark},
   title={$v_{1}$- and $v_{2}$-periodicity in stable homotopy theory},
   journal={Amer. J. Math.},
   volume={103},
   date={1981},
   number={4},
   pages={615--659},
   % issn={0002-9327},
   % review={\MR{623131}},
   doi={10.2307/2374044},
}

\bib{DFHH14}{collection}{
   title={Topological modular forms},
   series={Mathematical Surveys and Monographs},
   volume={201},
   editor={Douglas, Christopher L.},
   editor={Francis, John},
   editor={Henriques, Andr\'{e} G.},
   editor={Hill, Michael A.},
   publisher={American Mathematical Society, Providence, RI},
   date={2014},
   pages={xxxii+318},
   % isbn={978-1-4704-1884-7},
   % review={\MR{3223024}},
   doi={10.1090/surv/201},
}

\bib{Dun97}{article}{
   author={Dundas, Bj\o rn Ian},
   title={Relative $K$-theory and topological cyclic homology},
   journal={Acta Math.},
   volume={179},
   date={1997},
   number={2},
   pages={223--242},
   % issn={0001-5962},
   % review={\MR{1607556}},
   doi={10.1007/BF02392744},
}

\bib{DGM13}{book}{
   author={Dundas, Bj\o rn Ian},
   author={Goodwillie, Thomas G.},
   author={McCarthy, Randy},
   title={The local structure of algebraic K-theory},
   series={Algebra and Applications},
   volume={18},
   publisher={Springer-Verlag London, Ltd., London},
   date={2013},
   pages={xvi+435},
   % isbn={978-1-4471-4392-5},
   % isbn={978-1-4471-4393-2},
   % review={\MR{3013261}},
}

\bib{DLR22}{article}{
   author={Dundas, Bj\o rn Ian},
   author={Lindenstrauss, Ayelet},
   author={Richter, Birgit},
   title={Corrigendum: Towards an understanding of ramified extensions of
   structured ring spectra},
   journal={Math. Proc. Cambridge Philos. Soc.},
   volume={172},
   date={2022},
   number={1},
   pages={247--248},
   % issn={0305-0041},
   % review={\MR{4354423}},
   doi={10.1017/S0305004120000274},
}

\bib{GIKR22}{article}{
   author={Gheorghe, Bogdan},
   author={Isaksen, Daniel C.},
   author={Krause, Achim},
   author={Ricka, Nicolas},
   title={$\mathbb{C}$-motivic modular forms},
   journal={J. Eur. Math. Soc. (JEMS)},
   volume={24},
   date={2022},
   number={10},
   pages={3597--3628},
   % issn={1435-9855},
   % review={\MR{4432907}},
   doi={10.4171/jems/1171},
}

\bib{GWX21}{article}{
   author={Gheorghe, Bogdan},
   author={Wang, Guozhen},
   author={Xu, Zhouli},
   title={The special fiber of the motivic deformation of the stable
   homotopy category is algebraic},
   journal={Acta Math.},
   volume={226},
   date={2021},
   number={2},
   pages={319--407},
   % issn={0001-5962},
   % review={\MR{4281382}},
   doi={10.4310/acta.2021.v226.n2.a2},
}

\bib{Gre87}{article}{
   author={Greenlees, J. P. C.},
   title={Representing Tate cohomology of G-spaces},
   journal={Proc. Edinb. Math. Soc., II. Ser.}, 
   volume={30}, 
   date={1987},
   pages={435--443},
}

\bib{GM95}{article}{
   author={Greenlees, J. P. C.},
   author={May, J. P.},
   title={Generalized Tate cohomology},
   journal={Mem. Amer. Math. Soc.},
   volume={113},
   date={1995},
   number={543},
   pages={viii+178},
   issn={0065-9266},
   review={\MR{1230773}},
   doi={10.1090/memo/0543},
}

\bib{HRW}{article}{
      author={{Hahn}, Jeremy},
      author={{Raksit}, Arpon},
      author={{Wilson}, Dylan},
       title={{A motivic filtration on the topological cyclic homology of
  commutative ring spectra}},
     journal={To appear in Ann. of Math.},
}

\bib{HW22}{article}{
   author={Hahn, Jeremy},
   author={Wilson, Dylan},
   title={Redshift and multiplication for truncated Brown--Peterson spectra},
   journal={Ann. of Math. (2)},
   volume={196},
   date={2022},
   number={3},
   pages={1277--1351},
   % issn={0003-486X},
   % review={\MR{4503327}},
   doi={10.4007/annals.2022.196.3.6},
}

\bib{HY20}{article}{
   author={Hahn, Jeremy},
   author={Yuan, Allen},
   title={Exotic multiplications on periodic complex bordism},
   journal={J. Topol.},
   volume={13},
   date={2020},
   number={4},
   pages={1839--1852},
   % issn={1753-8416},
   % review={\MR{4186145}},
   doi={10.1112/topo.12169},
}

\bib{HR24}{article}{
    author={Hedenlund, Alice},
    author={Rognes, John},
     title={A multiplicative {Tate} spectral sequence for compact {Lie}
     	group actions},
   journal={Mem. Amer. Math. Soc.},
    volume = {1468},
      date={2024},
	doi={10.1090/memo/1468},
}

\bib{Hes96}{article}{
   author={Hesselholt, Lars},
   title={On the $p$-typical curves in Quillen's $K$-theory},
   journal={Acta Math.},
   volume={177},
   date={1996},
   number={1},
   pages={1--53},
   % issn={0001-5962},
   % review={\MR{1417085}},
   doi={10.1007/BF02392597},
}

\bib{HM97}{article}{
   author={Hesselholt, Lars},
   author={Madsen, Ib},
   title={On the $K$-theory of finite algebras over Witt vectors of perfect
   fields},
   journal={Topology},
   volume={36},
   date={1997},
   number={1},
   pages={29--101},
   % issn={0040-9383},
   % review={\MR{1410465}},
   doi={10.1016/0040-9383(96)00003-1},
}

\bib{HN20}{article}{
   author={Hesselholt, Lars},
   author={Nikolaus, Thomas},
   title={Topological cyclic homology},
   conference={
      title={Handbook of homotopy theory},
   },
   book={
      series={CRC Press/Chapman Hall Handb. Math. Ser.},
      publisher={CRC Press, Boca Raton, FL},
   },
   date={2020},
   pages={619--656},
   % review={\MR{4197995}},
}

% \bib{HL18}{article}{
   % author={Hopkins, Michael J.},
   % author={Lawson, Tyler},
   % title={Strictly commutative complex orientation theory},
   % journal={Math. Z.},
   % volume={290},
   % date={2018},
   % number={1-2},
   % pages={83--101},
   % % issn={0025-5874},
   % % review={\MR{3848424}},
   % doi={10.1007/s00209-017-2009-6},
% }

\bib{Hov95}{article}{
   author={Hovey, Mark},
   title={Bousfield localization functors and Hopkins' chromatic splitting
   conjecture},
   conference={
      title={The \v{C}ech centennial},
      address={Boston, MA},
      date={1993},
   },
   book={
      series={Contemp. Math.},
      volume={181},
      publisher={Amer. Math. Soc., Providence, RI},
   },
   date={1995},
   pages={225--250},
   % review={\MR{1320994}},
   doi={10.1090/conm/181/02036},
}

\bib{Isa19}{article}{
   author={Isaksen, Daniel C.},
   title={Stable stems},
   journal={Mem. Amer. Math. Soc.},
   volume={262},
   date={2019},
   number={1269},
   pages={viii+159},
   % issn={0065-9266},
   % isbn={978-1-4704-3788-6},
   % isbn={978-1-4704-5511-8},
   % review={\MR{4046815}},
   doi={10.1090/memo/1269},
}

\bib{Koc73}{article}{
   author={Kochman, Stanley O.},
   title={Homology of the classical groups over the Dyer--Lashof algebra},
   journal={Trans. Amer. Math. Soc.},
   volume={185},
   date={1973},
   pages={83--136},
   % issn={0002-9947},
   % review={\MR{331386}},
   doi={10.2307/1996429},
}

\bib{LMMT24}{article}{
      author={{Land}, Markus},
      author={{Mathew}, Akhil},
      author={{Meier}, Lennart},
      author={{Tamme}, Georg},
       title={{Purity in chromatically localized algebraic $K$-theory}},
     journal={J. Am. Math. Soc.},
      volume = {37},
       date = {2024},
        number = {4},
      pages = {1011--1040},
      doi={10.1090/jams/1043},
}

\bib{Lan73}{article}{
   author={Landweber, Peter S.},
   title={Annihilator ideals and primitive elements in complex bordism},
   journal={Illinois J. Math.},
   volume={17},
   date={1973},
   pages={273--284},
   % issn={0019-2082},
   % review={\MR{322874}},
}

\bib{Lur17}{article}{
      author={Lurie, Jacob},
       title={Higher {Algebra}},
        date={2017},
      eprint={https://www.math.ias.edu/ lurie/papers/HA.pdf},
}

\bib{Mad94}{article}{
   author={Madsen, Ib},
   title={The cyclotomic trace in algebraic $K$-theory},
   conference={
      title={First European Congress of Mathematics, Vol. II},
      address={Paris},
      date={1992},
   },
   book={
      series={Progr. Math.},
      volume={120},
      publisher={Birkh\"{a}user, Basel},
   },
   date={1994},
   pages={213--241},
   % review={\MR{1341845}},
}

\bib{Mah81}{article}{
   author={Mahowald, Mark},
   title={$b{\rm o}$-resolutions},
   journal={Pacific J. Math.},
   volume={92},
   date={1981},
   number={2},
   pages={365--383},
   % issn={0030-8730},
   % review={\MR{618072}},
}

% \bib{Mah82}{article}{
   % author={Mahowald, Mark},
   % title={The image of $J$ in the $EHP$ sequence},
   % journal={Ann. of Math. (2)},
   % volume={116},
   % date={1982},
   % number={1},
   % pages={65--112},
   % % issn={0003-486X},
   % % review={\MR{662118}},
   % doi={10.2307/2007048},
% }

\bib{MR99}{article}{
   author={Mahowald, Mark},
   author={Rezk, Charles},
   title={Brown--Comenetz duality and the Adams spectral sequence},
   journal={Amer. J. Math.},
   volume={121},
   date={1999},
   number={6},
   pages={1153--1177},
   % issn={0002-9327},
   % review={\MR{1719751}},
}

\bib{MNN17}{article}{
	author={Mathew, Akhil},
	author={Naumann, Niko},
	author={Noel, Justin} ,
	title={Nilpotence and descent in equivariant stable homotopy theory},
	journal ={Adv. Math.},
	volume={305}, 
	date={2017},
	pages={994--1084},
}

\bib{MS93}{article}{
   author={McClure, J. E.},
   author={Staffeldt, R. E.},
   title={On the topological Hochschild homology of $b{\rm u}$. I},
   journal={Amer. J. Math.},
   volume={115},
   date={1993},
   number={1},
   pages={1--45},
   % issn={0002-9327},
   % review={\MR{1209233}},
   doi={10.2307/2374721},
}

\bib{Mos68}{article}{
   author={Mosher, Robert E.},
   title={Some stable homotopy of complex projective space},
   journal={Topology},
   volume={7},
   date={1968},
   pages={179--193},
   % issn={0040-9383},
   % review={\MR{227985}},
   doi={10.1016/0040-9383(68)90026-8},
}

\bib{NS18}{article}{
   author={Nikolaus, Thomas},
   author={Scholze, Peter},
   title={On topological cyclic homology},
   journal={Acta Math.},
   volume={221},
   date={2018}, 
   number={2},
   pages={203--409},
   % issn={0001-5962},
   % review={\MR{3904731}},
   doi={10.4310/ACTA.2018.v221.n2.a1},
}

\bib{Pha22}{article}{
   author={Pham, Viet-Cuong},
   title={On homotopy groups of $E_C^{hG_{24}}\wedge A_1$},
   journal={Algebr. Geom. Topol.},
   volume={22},
   date={2022},
   number={8},
   pages={3855--3938},
   % issn={1472-2747},
   % review={\MR{4562560}},
   doi={10.2140/agt.2022.22.3855},
}

\bib{Pst23}{article}{
 Author = {Pstr{\k{a}}gowski, Piotr},
 Title = {Synthetic spectra and the cellular motivic category},
   journal={Inventiones Mathematicae},
   volume={232},
   date={2023},
   number={2},
   pages={553--681},
   doi={10.1007/s00222-022-01173-2},
}

\bib{Qui70}{article}{
   author={Quillen, Daniel},
   title={On the (co-) homology of commutative rings},
   conference={
      title={Applications of Categorical Algebra},
      address={Proc. Sympos. Pure Math., Vol. XVII, New York},
      date={1968},
   },
   book={
      series={Proc. Sympos. Pure Math., XVII},
      publisher={Amer. Math. Soc., Providence, RI},
   },
   date={1970},
   pages={65--87},
   % review={\MR{257068}},
}

% \bib{Rav84}{article}{
   % author={Ravenel, Douglas C.},
   % title={Localization with respect to certain periodic homology theories},
   % journal={Amer. J. Math.},
   % volume={106},
   % date={1984},
   % number={2},
   % pages={351--414},
   % % issn={0002-9327},
   % % review={\MR{737778}},
   % doi={10.2307/2374308},
% }

\bib{Rav86}{book}{
   author={Ravenel, Douglas C.},
   title={Complex cobordism and stable homotopy groups of spheres},
   series={Pure and Applied Mathematics},
   volume={121},
   publisher={Academic Press, Inc., Orlando, FL},
   date={1986},
   pages={xx+413},
   % isbn={0-12-583430-6},
   % isbn={0-12-583431-4},
   % review={\MR{860042}},
}

\bib{Rog00}{unpublished}{
      author={Rognes, John},
       title={Algebraic {{\(K\)}}-theory of finitely presented ring spectra},
        date={2000},
         url={https://folk.uio.no/rognes/papers/red-shift.pdf},
        note={Oberwolfach talk},
}

\bib{Rog02}{article}{
   author={Rognes, John},
   title={Two-primary algebraic $K$-theory of pointed spaces},
   journal={Topology},
   volume={41},
   date={2002},
   number={5},
   pages={873--926},
   % issn={0040-9383},
   % review={\MR{1923990}},
   doi={10.1016/S0040-9383(01)00005-2},
}

\bib{Rog20}{article}{
   author={Rognes, John},
   title={The circle action on topological Hochschild homology of complex
   cobordism and the Brown-Peterson spectrum},
   journal={J. Topol.},
   volume={13},
   date={2020},
   number={3},
   pages={939--968},
   % issn={1753-8416},
   % review={\MR{4100122}},
   doi={10.1112/topo.12141},
}

\bib{RW00}{article}{
   author={Rognes, J.},
   author={Weibel, C.},
   title={Two-primary algebraic $K$-theory of rings of integers in number
   fields},
   note={Appendix A by Manfred Kolster},
   journal={J. Amer. Math. Soc.},
   volume={13},
   date={2000},
   number={1},
   pages={1--54},
   % issn={0894-0347},
   % review={\MR{1697095}},
   doi={10.1090/S0894-0347-99-00317-3},
}

% \bib{Str}{misc}{    
    % title={Fracture Squares of Bousfield Localizations of Spectra},    
    % author={Strickland, Neil},    
    % note={URL: https://mathoverflow.net/q/91057 (version: 2012-03-13)},    
    % eprint={https://mathoverflow.net/q/91057},    
    % organization={MathOverflow}  
% }

\bib{Voe03}{article}{
   author={Voevodsky, Vladimir},
   title={Motivic cohomology with ${\bf Z}/2$-coefficients},
   journal={Publ. Math. Inst. Hautes \'{E}tudes Sci.},
   number={98},
   date={2003},
   pages={59--104},
   % issn={0073-8301},
   % review={\MR{2031199}},
   doi={10.1007/s10240-003-0010-6},
}

\bib{Yua26}{article}{
      author={{Yuan}, Allen},
       title = {Examples of chromatic redshift in algebraic {{\(K\)}}-theory},
     journal={J. Eur. Math. Soc.},
      volume = {28},
        date = {2026},
      number = {8},
       pages = {3453--3473},
       % issn = {1435-9855},
doi={10.4171/jems/1527},
}

\end{biblist}
\end{bibdiv}

\end{document}